\definecolor{blue}{rgb}{0,0,0.7}
\definecolor{red}{rgb}{0.75, 0, 0}
\definecolor{midnight}{rgb}{0.0,0.2,0.4}
\numberwithin{equation}{section}
\definecolor{blue}{rgb}{0,0,0.7}
\definecolor{red}{rgb}{0.75, 0, 0}
\newtheorem{theorem}{Theorem}[section]
\newtheorem*{theorem*}{Theorem}
\newtheorem{lemma}[theorem]{Lemma}
\newtheorem{proposition}[theorem]{Proposition}
\newtheorem{corollary}[theorem]{Corollary}
\newtheorem{definition}[theorem]{Definition}
\newtheorem{remark}[theorem]{Remark}
\newtheorem{example}[theorem]{Example}
\newcommand{\be}{\begin{equation}}
\newcommand{\bs}{\begin{split}}
\newcommand{\ee}{\end{equation}}
\newcommand{\es}{\end{split}}
\newcommand{\lra}{\longrightarrow}
\def\H{\mathcal{H}}
\def\F{\mathcal{F}}
\def\P{\mathrm{P}}
\def\L{\mathcal{L}}
\def\C{\mathbb{C}}
\def\Q{\mathbb{Q}}
\def\Li{\textup{Li}}
\def\ort{\textup{ort}}
\newcommand\rsmraise[1]{%
  \ifx#1\displaystyle .8\else
    \ifx#1\textstyle .8\else
      \ifx#1\scriptstyle .6\else
        .45%
      \fi
    \fi
  \fi}
\title[]{On the Goncharov depth conjecture and a formula for volumes of orthoschemes}
\author[]{Daniil Rudenko}
\date{}
\begin{document}
\dedicatory{{\it \large To Alexander Goncharov for his 60th birthday}}
\begin{abstract}
We prove a conjecture of Goncharov,  which says that any multiple polylogarithm can be expressed via polylogarithms of depth at most half of the weight. We give an explicit formula for this presentation, involving a summation over trees that correspond to decompositions of a polygon into quadrangles. 

Our second result is a formula for  volume of hyperbolic orthoschemes, generalizing the formula of Lobachevsky in  dimension three to an arbitrary dimension. We show a surprising relation between the two results, which comes from the fact that hyperbolic orthoschemes are parametrized by configurations of points on $\mathbb{P}^1.$ In particular, we derive both formulas from their common generalization.
\end{abstract}

\maketitle

\begin{figure*}[!hb]
\centering
\begin{tikzpicture}[remember picture, overlay,scale=4.5]
\pgfmathsetmacro{\radius}{1};

  \foreach \number in {1,...,8}{
        \mycount=\number
        \advance\mycount by -1
  \multiply\mycount by 45
        \advance\mycount by  11.25
      \coordinate (N-\number) at (\the\mycount:\radius) {};
    }
  \foreach \number in {9,...,16}{
        \mycount=\number
        \advance\mycount by -1
  \multiply\mycount by 45
        \advance\mycount by 33.75
      \coordinate (N-\number) at (\the\mycount:\radius) {};
    }

\tkzDefPoint(0,0){O}
\tkzDefPoint(\radius,0){A}
  
\tkzDefCircle[orthogonal through=N-1 and N-9](O,A) \tkzGetPoint{B}
\tkzInterCC(O,A)(B,N-1)
\tkzClipCircle(O,A)
\tkzDrawCircle[thick,color=midnight](B,N-1) 
 
\tkzDefCircle[orthogonal through=N-2 and N-10](O,A) \tkzGetPoint{B}
\tkzInterCC(O,A)(B,N-2)
\tkzDrawCircle[thick,color=midnight](B,N-2) 
 
\tkzDefCircle[orthogonal through=N-3 and N-11](O,A) \tkzGetPoint{B}
\tkzInterCC(O,A)(B,N-3)
\tkzDrawCircle[thick,color=midnight](B,N-3) 

\tkzDefCircle[orthogonal through=N-4 and N-12](O,A) \tkzGetPoint{B}
\tkzInterCC(O,A)(B,N-4)
\tkzDrawCircle[thick,color=midnight](B,N-4) 

\tkzDefCircle[orthogonal through=N-5 and N-13](O,A) \tkzGetPoint{B}
\tkzInterCC(O,A)(B,N-5)
\tkzDrawCircle[thick,color=midnight](B,N-5) 

\tkzDefCircle[orthogonal through=N-6 and N-14](O,A) \tkzGetPoint{B}
\tkzInterCC(O,A)(B,N-6)
\tkzDrawCircle[thick,color=midnight](B,N-6)

\tkzDefCircle[orthogonal through=N-7 and N-15](O,A) \tkzGetPoint{B}
\tkzInterCC(O,A)(B,N-7)
\tkzDrawCircle[thick,color=midnight](B,N-7)  

\tkzDefCircle[orthogonal through=N-8 and N-16](O,A) \tkzGetPoint{B}
\tkzInterCC(O,A)(B,N-8)
\tkzDrawCircle[thick,color=midnight](B,N-8)  
 
\tkzDefCircle[orthogonal through=N-1 and N-16](O,A) \tkzGetPoint{B}
\tkzInterCC(O,A)(B,N-1)
\tkzDrawCircle[thick,color=midnight](B,N-1)

\tkzDefCircle[orthogonal through=N-2 and N-9](O,A) \tkzGetPoint{B}
\tkzInterCC(O,A)(B,N-2)
\tkzDrawCircle[thick,color=midnight](B,N-2)  

\tkzDefCircle[orthogonal through=N-3 and N-10](O,A) \tkzGetPoint{B}
\tkzInterCC(O,A)(B,N-3)
\tkzDrawCircle[thick,color=midnight](B,N-3)  

\tkzDefCircle[orthogonal through=N-4 and N-11](O,A) \tkzGetPoint{B}
\tkzInterCC(O,A)(B,N-4)
\tkzDrawCircle[thick,color=midnight](B,N-4)

\tkzDefCircle[orthogonal through=N-5 and N-12](O,A) \tkzGetPoint{B}
\tkzInterCC(O,A)(B,N-5)
\tkzDrawCircle[thick,color=midnight](B,N-5)   

\tkzDefCircle[orthogonal through=N-6 and N-13](O,A) \tkzGetPoint{B}
\tkzInterCC(O,A)(B,N-6)
\tkzDrawCircle[thick,color=midnight](B,N-6)

\tkzDefCircle[orthogonal through=N-7 and N-14](O,A) \tkzGetPoint{B}
\tkzInterCC(O,A)(B,N-7)
\tkzDrawCircle[thick,color=midnight](B,N-7)     

\tkzDefCircle[orthogonal through=N-8 and N-15](O,A) \tkzGetPoint{B}
\tkzInterCC(O,A)(B,N-8)
\tkzDrawCircle[thick,color=midnight](B,N-8)  


\tkzDefCircle[orthogonal through=N-7 and N-16](O,A) \tkzGetPoint{B}
\tkzInterCC(O,A)(B,N-7)
\tkzDrawCircle[thick,color=midnight](B,N-7)  

\tkzDefCircle[orthogonal through=N-6 and N-16](O,A) \tkzGetPoint{B}
\tkzInterCC(O,A)(B,N-6)
\tkzDrawCircle[thick,color=midnight](B,N-6)  

\tkzDefCircle[orthogonal through=N-6 and N-11](O,A) \tkzGetPoint{B}
\tkzInterCC(O,A)(B,N-6)
\tkzDrawCircle[thick,color=midnight](B,N-6)  

\tkzDefCircle[orthogonal through=N-3 and N-16](O,A) \tkzGetPoint{B}
\tkzInterCC(O,A)(B,N-3)
\tkzDrawCircle[thick,color=midnight](B,N-3)  

\tkzDefCircle[orthogonal through=N-12 and N-6](O,A) \tkzGetPoint{B}
\tkzInterCC(O,A)(B,N-12)
\tkzDrawCircle[thick,color=midnight](B,N-12)

\tkzDefCircle[orthogonal through=N-3 and N-9](O,A) \tkzGetPoint{B}
\tkzInterCC(O,A)(B,N-3)
\tkzDrawCircle[thick,color=midnight](B,N-3)    

\end{tikzpicture} 
\end{figure*}

\pagebreak 

\tableofcontents

\section{Introduction}
\subsection{Depth reduction for multiple polylogarithms}
Multiple polylogarithms are multivalued functions on $a_1,\dots,a_k\in \C$ defined by Goncharov in \cite{Gon95} for $n_1,\dots,n_k\in \mathbb{N}$ by the power series 
\[
\begin{split}
&\Li_{n_1,n_2,\dots, n_k}(a_1,a_2,\dots,a_k)=\sum_{m_1>m_2>\dots>m_k>0}\frac{a_1^{m_1} a_2^{m_2}\dots a_k^{m_k}}{m_1^{n_1}m_2^{n_2}\dots m_k^{n_k}}
 \ \  \text{for} \ \  |a_1|,|a_2|,\dots, |a_k| <1.\\
\end{split}
\]
The number $n=n_1+\dots+n_k$ is called the weight of the multiple polylogarithm, and the number $k$ is called its depth. The case of $k=1$ is of particular interest and is known as the classical polylogarithm:
\[
\Li_n(a)=\sum_{m=1}^{\infty}\frac{a^m}{m^n} \ \ \text{for} \ \  |a|<1.
\]

Classical polylogarithms  appeared in the 18th and 19th centuries under different guises in the works of Leibniz, Euler, Spence,  Abel, Kummer, Lobachevsky, and many others. It was noticed early on that polylogarithms satisfy functional equations of an algebraic nature. Here is the famous five-term relation for the 
 dilogarithm obtained by Abel:
\[
\begin{split}
	&\Li_2(a_1)+\Li_2(a_2)+\Li_2\left(\frac{1-a_1}{1-a_1a_2}\right)+\Li_2\left(1-a_1a_2\right)+\Li_2\left(\frac{1-a_2}{1-a_1a_2}\right)\\
	&=\frac{\pi^2}{6}-\log(a_1)\log(1-a_1)-\log(a_2)\log(1-a_2)+\log\left(\frac{1-a_1}{1-a_1a_2}\right)\log\left(\frac{1-a_2}{1-a_1a_2}\right).\\
\end{split}
\]

The structure of this equation can be clarified with the following two observations. First, all the terms in the RHS are products of polylogarithms of lower weight. We introduce the symbol  $\stackrel{\shuffle}{=}$ for ``equal modulo products of polylogarithms of lower weight.'' The precise meaning of $\stackrel{\shuffle}{=}$ is explained in \S\ref{SectionMHTS}. Second, define the cross-ratio of four points $x_1, x_2 ,x_3,x_4 \in \mathbb{P}^1$ by the formula
\[
[x_1,x_2,x_3,x_4]=\frac{(x_{1}-x_{2})(x_{3}-x_{4})}{(x_{1}-x_{4})(x_{3}-x_{2})}.
\]
The five-term relation implies that:
\be \label{FormulaFiveTerm}
\sum_{i=0}^4 (-1)^i\Li_2([x_0,\dots,\widehat{x_i},\dots,x_4])\stackrel{\shuffle}{=}0.
\ee

Finding equations for classical polylogarithms of higher weight is notoriously difficult as their length grows very fast. The first equation for $\Li_4$ in more than two variables   was found by Gangl in \cite{Gan16} with computer-assisted search and contains more than 1000 terms; similar equations for $\Li_5$ are not known yet. It seems that a more manageable goal is to write down equations for multiple polylogarithms and deduce from them equations for  $\Li_{n}.$ Here is an example in weight two:
\[
\begin{split}
&\Li_{1,1}(a_1,a_2)+\Li_{1,1}(a_2,a_1)+\Li_{2}(a_1a_2)=\Li_{1}(a_1)\Li_{1}(a_2);\\
&\Li_{1,1}(a_1,a_2)=\Li_{2}\left(\frac{1-a_1\ \  }{1-a_2^{-1}}\right)-\Li_{2}\left(\frac{a_2}{a_2-1}\right)-\Li_{2}\left(a_1a_2\right).\\
\end{split}
\]
The first of these equations can be generalized to an arbitrary weight and is a part of a family of so-called quasi-shuffle relations (also called ``second shuffle relations'') for multiple polylogarithms. The second relation is more intricate. In combination, these relations imply the five-term relation and allow expression of any polylogarithm of weight two via the  dilogarithm  $\Li_{2}(a)$ and products of logarithms.

In \cite[Conjecture 7.6]{Gon01} Goncharov formulated the depth conjecture giving a criterion for when a sum of polylogarithms can be expressed using polylogarithms of lower depths. For polylogarithms of depth larger than half of the weight, the criterion is always satisfied. We prove this part of the depth conjecture.
 
\begin{theorem}\label{MainTheoremDepth}
Any multiple polylogarithm 
of weight $n\geq 2$ can be expressed as a linear combination of multiple polylogarithms of depth at most $\left \lfloor n/2 \right \rfloor$ 
 and products of polylogarithms of lower weight.
\end{theorem}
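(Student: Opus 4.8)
The plan is to reduce the problem to a statement about the motivic coalgebra (or Hopf algebra) of multiple polylogarithms and to exploit the cobracket structure, where depth corresponds to a filtration compatible with the coproduct. First I would pass from functions to their motivic avatars, working modulo products (the relation $\stackrel{\shuffle}{=}$), so that we live in the Lie coalgebra $\mathcal{L}_\bullet$ of motivic iterated integrals on $\mathbb{P}^1$. The depth filtration on $\mathcal{L}_n$ is induced by the number of nonzero arguments, and the key structural input is that the cobracket $\delta:\mathcal{L}_n\to\bigoplus_{p+q=n}\mathcal{L}_p\wedge\mathcal{L}_q$ roughly respects depth (a term of depth $d$ maps to a sum of wedges whose depths add up to at most $d+1$, the $+1$ coming from the ``diagonal'' contribution). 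The strategy is then inductive on the weight $n$: assuming every element of weight $<n$ can be written using depth $\le\lfloor\cdot/2\rfloor$, one wants to correct a given weight-$n$ element to reduce its depth down to $\lfloor n/2\rfloor$.

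The heart of the argument is a ``depth-reduction'' construction: given a multiple polylogarithm of depth $d>\lfloor n/2\rfloor$, I would produce an explicit element of lower depth with the same cobracket, so that their difference has trivial cobracket and hence (by the structure of the motivic Lie coalgebra in weight $\ge 2$, where the kernel of $\delta$ is spanned by $\Li_n$ of a single argument, i.e. depth one) is already of depth $1\le\lfloor n/2\rfloor$. Concretely, I expect the explicit formula promised in the abstract — a summation over trivalent trees dual to quadrangulations of a polygon whose vertices are the points $x_i\in\mathbb{P}^1$ parametrizing the iterated integral — to be exactly this correction term. One builds, for each quadrangulation of the $(n+?)$-gon, a product of cross-ratios feeding into $\Li$'s of controlled depth; the combinatorics of how quadrangulations glue along diagonals should match the Leibniz rule for $\delta$, so that the alternating sum over all quadrangulations telescopes to the desired identity on cobrackets. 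The reason quadrangles (rather than triangles) appear is precisely that each quadrangle carries a cross-ratio, the natural weight-one-to-weight-two building block, and quadrangulations of an $m$-gon use $\lfloor (m-2)/2\rfloor$-ish quadrangles, which is what produces the $\lfloor n/2\rfloor$ bound on the depth.

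The main obstacle will be establishing that this tree/quadrangulation sum actually has the same cobracket as the original polylogarithm — i.e. verifying the cocycle identity. This requires (i) a clean combinatorial model for how $\delta$ acts on the generating iterated integrals in terms of ``cutting'' the polygon along a chord, splitting it into two sub-polygons, and (ii) checking that summing the construction over all quadrangulations and taking the alternating difference of the two pieces produced by each chord causes all intermediate terms to cancel in pairs, leaving only the ``boundary'' contribution that equals $\delta$ of the original symbol. Managing signs and the $\wedge$-antisymmetrization, and handling degenerate configurations (coinciding points, points at $0$ or $\infty$, which is where the logarithmic correction terms and the lower-weight products live) will be the delicate bookkeeping. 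Once the cobracket identity is in hand, an induction on weight together with the known description of $\ker\delta$ in each weight closes the argument, and specializing the general formula recovers Theorem~\ref{MainTheoremDepth}; the same framework, reinterpreted via the identification of orthoschemes with point configurations on $\mathbb{P}^1$, should simultaneously yield the orthoscheme volume formula.
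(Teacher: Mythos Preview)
Your overall strategy --- work in the Lie coalgebra, match cobrackets, use quadrangulations --- is in the right spirit, but there is a structural gap that would block the argument as written.

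You propose that the quadrangulation sum is built on the polygon whose vertices are ``the points $x_i\in\mathbb{P}^1$ parametrizing the iterated integral''. This does not line up: a weight-$n$ iterated integral has $n+2$ arguments, whereas the paper's quadrangulation formula produces a weight-$(n+k)$ object from a $(2n+2)$-gon, with the extra parameter $k\ge 0$ unrelated to the number of vertices. There is no direct quadrangulation of the iterated-integral polygon that yields the depth bound (for odd $n$ you cannot even quadrangulate it). The paper bridges this with an intermediate object, the \emph{quadrangular polylogarithm} $\textup{QLi}_{n,k}(x_0,\dots,x_{2n+1})$, defined explicitly as a signed sum of Hodge correlators (Definition~\ref{DefClusterPolylog}), and the proof has two independent halves:
\begin{enumerate}
\item Show $\textup{QLi}_{n,k}$ equals the quadrangulation sum $\Li_k^{\L}(\textup{T}_\P)$, hence has depth $\le n$ (Theorem~\ref{TheoremQuadrangulationClusterPolylog}, Corollary~\ref{CorollaryDepthOfClusterPolylogarithm}). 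This is where the cobracket-matching and the tree combinatorics live, and your sketch corresponds roughly to this step.
\item Show every multiple polylogarithm of weight $2n$ or $2n+1$ is a linear combination of $\textup{QLi}_{n,n}$'s or $\textup{QLi}_{n,n+1}$'s (Proposition~\ref{PropositionUniversality}, Corollary~\ref{CorollaryUniverality}). This is a separate combinatorial identity, obtained by an alternating inversion over insertions of a basepoint, and your proposal does not contain it.
\end{enumerate}
Without step~(2) there is no passage from an arbitrary multiple polylogarithm to something the quadrangulation formula applies to.

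A secondary point: your endgame relies on the assertion that ``$\ker\delta$ in weight $\ge 2$ is spanned by $\Li_n$ of a single argument''. The paper does not use this. Once the cobrackets agree, the paper instead invokes rigidity (\S\ref{SectionRigidity}): the difference is a \emph{constant} framed variation over the configuration space, and one specializes to a degenerate boundary point where both sides visibly vanish, obtaining exact equality rather than equality modulo depth-one terms. Your version would require an independent justification that primitives are depth one, which in the motivic setting is itself close to what one is trying to establish.
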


This result is sharp as it is easy to show using a coproduct (discussed in \S \ref{SectionHodgeIterated}) that a general multiple polylogarithm of weight $n$ cannot be expressed via  multiple  polylogarithms of depth strictly less than $\left \lfloor n/2 \right \rfloor.$ Theorem \ref{MainTheoremDepth} was known for $n\leq 5$; see \cite{Cha17}, \cite{CGR19b}, and \cite{CGR19}  for further results on the depth reduction problem for multiple polylogarithms.

The proof of Theorem \ref{MainTheoremDepth} is based on the construction of so-called quadrangular polylogarithms, which are certain multivalued functions on the moduli space $\mathfrak{M}_{0,2n+2}$ of configurations of $2n+2$ distinct points on $\mathbb{P}^1.$ We show that multiple polylogarithms can be expressed via quadrangular polylogarithms.  On the other hand, we provide an explicit formula for quadrangular polylogarithms via multiple polylogarithms of depth at most $n.$ The combination of these results implies Theorem \ref{MainTheoremDepth}.

Our work came out of an attempt to understand the results of Coxeter in \cite{Cox36}.
Coxeter found a relation between  non-Euclidean orthoschemes, which are higher-dimensional generalizations of right triangles, and configurations of points in $\mathbb{P}^1.$ Based on  Coxeter's results   B{\"o}hm outlined in \cite{Boh63} an integration procedure, from which one can deduce that the volume of a non-Euclidean orthoscheme in any dimension can be expressed via multiple polylogarithms. After a simple adjustment,  our formula for quadrangular polylogarithms gives an explicit formula for the volume of a non-Euclidean orthoscheme.

\subsection{Quadrangular polylogarithms}

The proof of Theorem \ref{MainTheoremDepth}  is based on the construction of quadrangular polylogarithms
\[
\textup{QLi}_{n,k}(x_0,\dots,x_{2n+1})
\]
for $n\geq 1,\ k\geq 0,$ which are certain iterated integrals of weight $n+k$ on the configuration space of $ 2n+2$ points on $\mathbb{P}^1.$ For $n=1$ we recover the classical polylogarithm 
\[
\textup{QLi}_{1,k}(x_0,x_1,x_2,x_3)=(-1)^{k+1}\Li_{k+1}([x_0,x_1,x_2,x_3]).
\]
We show that multiple polylogarithms can be expressed (modulo products) as linear combinations of the functions $\textup{QLi}_{n,n}$ and $\textup{QLi}_{n,n+1}$ by an explicit formula, see \S \ref{SectionQuadrangularUniversality}.

 On the other hand, the quadrangular polylogarithm $\textup{QLi}_{n,k}(x_0,\dots, x_{2n+1})$ can be expressed as a linear combination of multiple polylogarithms of depth at most $n,$ whose arguments are certain products of cross-ratios of points $x_0,\dots, x_{2n+1}$. We position points $x_0,\dots,x_{2n+1}$ at the vertices of a convex polygon $\P$.  Every quadrangle inside $\P$ with vertices $x_{i_0}, x_{i_1},x_{i_2},x_{i_3}$  determines a cross-ratio $[x_{i_0}, x_{i_1},x_{i_2},x_{i_3}],$ which is a regular function
 on $\mathfrak{M}_{0,2n+2}.$ The quadrangular polylogarithm $\textup{QLi}_{n,k}$ is equal to a sum of multiple polylogarithms of depth at most $n$, whose arguments are certain products of cross-ratios, corresponding to disjoint quadrangles in $\P.$
 
Consider a rooted tree $\mathrm{t}$ with $n$ vertices of two types (``even'' and ``odd'') labeled by complex numbers. Using quasi-shuffle relations for polylogarithms, we construct a polylogarithm $\Li_k(\mathrm{t})$ which is a certain sum of multiple polylogarithms of weight $n+k$ and depth at most $n$ evaluated at products of numbers labeling the vertices of $\mathrm{t}$. Every decomposition of $\P$ into disjoint quadrangles (we call such decompositions ``quadrangulations'') determines such a tree: $\mathrm{t}$ is the dual graph of the triangulation with vertices labeled by the cross-ratios of the corresponding points. Then the following theorem holds.
\begin{theorem}[A formula for quadrangular polylogarithms via multiple polylogarithms]\label{MainTheoremQuadrangulationCluster}
Consider a configuration of points $x_0,\dots,x_{2n+1}\in \mathbb{P}^1$.
Let $\mathcal{Q}(\P)$ be the set of quadrangulations of a convex $(2n+2)$-gon $\P$ with vertices labeled by points $x_0,\dots, x_{2n+1}\in \mathbb{P}^1.$ For a quadrangulation $Q\in \mathcal{Q}$ denote by $\mathrm{t}_Q$ the corresponding labeled tree. Then the following formula holds:
\be
\label{FormulaMainTHeorem2}
\textup{QLi}_{n,k}(x_0,\dots,x_{2n+1})\stackrel{\shuffle}{=}\sum_{Q\in \mathcal{Q}(\P)} \Li_{k}(\mathrm{t}_Q).
\ee
\end{theorem}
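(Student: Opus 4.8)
The plan is to pass to the Lie coalgebra of iterated integrals and induct on the weight $w=n+k$. By the interpretation of $\stackrel{\shuffle}{=}$ recalled in \S\ref{SectionMHTS}, formula \eqref{FormulaMainTHeorem2} is equivalent to the equality of the images of $\textup{QLi}_{n,k}(x_0,\dots,x_{2n+1})$ and $\sum_{Q}\Li_k(\mathrm{t}_Q)$ in the weight-$w$ part $\L_w$ of the Lie coalgebra of iterated integrals on $\mathfrak{M}_{0,2n+2}$. For $w\ge 2$ such an element is determined up to a constant by its cobracket $\delta\colon\L_w\to\bigoplus_{a+b=w}\L_a\wedge\L_b$, and the constant is determined by the value at a single configuration, so the proof splits into: (i) a direct verification in the lowest weights, in particular the base case $n=1$, where $\P$ has a unique quadrangulation, $\mathrm{t}_Q$ is a single labelled vertex, and \eqref{FormulaMainTHeorem2} reduces to the normalization $\textup{QLi}_{1,k}(x_0,x_1,x_2,x_3)=(-1)^{k+1}\Li_{k+1}([x_0,x_1,x_2,x_3])$; (ii) matching $\delta$ of the two sides; and (iii) a one-point check to kill the remaining constant.

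Step (ii) for the left-hand side requires a coproduct formula for $\textup{QLi}_{n,k}$, which I would derive by applying Goncharov's coproduct for iterated integrals to the explicit combination defining $\textup{QLi}_{n,k}$. The outcome should be that $\delta\,\textup{QLi}_{n,k}$ is a sum of wedges of lower-weight quadrangular and classical polylogarithms, organized by the combinatorics of the polygon $\P$: a diagonal cutting the $(2n+2)$-gon into a $(2n_1+2)$-gon and a $(2n_2+2)$-gon with $n_1+n_2=n$ contributes a term $\textup{QLi}_{n_1,k_1}\wedge\textup{QLi}_{n_2,k_2}$, while collapsing $\P$ onto a distinguished quadrangle contributes terms $\textup{QLi}_{n,k'}\wedge\Li_{k''}$ of a cross-ratio. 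In every such wedge both factors have weight $<w$, so the inductive hypothesis lets me replace each $\textup{QLi}$ factor by the corresponding sum $\sum_{Q}\Li_{\bullet}(\mathrm{t}_{Q})$ over quadrangulations of the relevant sub-polygon.

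For the right-hand side I would compute $\delta\big(\sum_Q\Li_k(\mathrm{t}_Q)\big)$ by applying Goncharov's cobracket formula for multiple polylogarithms termwise. Since $\Li_k(\mathrm{t})$ is assembled from quasi-shuffle relations run along the tree $\mathrm{t}$, its cobracket unfolds into wedges indexed by a connected subtree $\mathrm{t}'\subseteq\mathrm{t}$ containing the root together with the complementary forest, the arguments being the products of cross-ratios over the quadrangles in $\mathrm{t}'$, respectively in its complement. The heart of the proof is then a bijective reorganization of the resulting double sum over pairs $(Q,\mathrm{t}'\subseteq\mathrm{t}_Q)$: each edge of $\mathrm{t}_Q$ is dual to a diagonal of $\P$ splitting it into two even-gons, and summing $\Li_{\bullet}(\mathrm{t}')$ over all quadrangulations $Q$ inducing a fixed such diagonal reproduces $\sum_{Q'}\Li_{\bullet}(\mathrm{t}_{Q'})$ for the two sub-polygons. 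Matching this against the expansion of $\delta\,\textup{QLi}_{n,k}$ from the previous paragraph completes step (ii); for step (iii) I would evaluate at a point of the boundary of $\overline{\mathfrak{M}}_{0,2n+2}$ where two cyclically adjacent points collide, so that $\textup{QLi}_{n,k}$ degenerates to a lower-rank instance (or to zero) and, on the right, the quadrangulations whose distinguished quadrangle carries the collapsing side also degenerate, making both sides reduce compatibly so that the inductive hypothesis applies.

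The step I expect to be the main obstacle is the combinatorial matching in step (ii): one must check that the geometric cuts of $\P$ appearing in the coproduct of $\textup{QLi}_{n,k}$ correspond, with the right signs and multiplicities, to the subtree/forest decompositions produced by the cobracket of the tree polylogarithms $\Li_k(\mathrm{t}_Q)$ — in particular that the products of cross-ratios over disjoint quadrangles match exactly, and that the quasi-shuffle recipe defining $\Li_k(\mathrm{t})$ is compatible with passing to a subtree. Getting the signs consistent (the alternating signs of \eqref{FormulaFiveTerm}-type relations, the orientation of a cut, and the $(-1)^{k+1}$ of the $n=1$ normalization) is the delicate point; once the combinatorics is pinned down, the induction and the rigidity argument killing $\ker\delta$ are routine.
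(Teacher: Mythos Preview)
Your overall strategy is the same as the paper's: work in the Lie coalgebra, prove the base case $n=1$, match cobrackets inductively, and finish with a rigidity/specialization argument. The paper proves exactly this as Theorem~\ref{TheoremQuadrangulationClusterPolylog}, using Theorem~\ref{TheoremClusterPolylogarithmCoproduct} for $\delta\,\textup{QLi}$ and the boundary specialization $x_{p_{2i}}=x_{p_{2i+1}}$.

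However, there is a genuine gap in your step (ii) for the right-hand side. You write that the cobracket of $\Li_k(\mathrm{t}_Q)$ ``unfolds into wedges indexed by a connected subtree $\mathrm{t}'\subseteq\mathrm{t}$ containing the root together with the complementary forest.'' This is only the \emph{deconcatenation} part of Goncharov's cobracket. For a single term $\Li_{n_1,\dots,n_k}^\L(\varphi_1,\dots,\varphi_k)$ the full cobracket (see formula~\eqref{FormulaCoproductPolylogWithZeros}) also contains terms of the form
\[
\textup{I}^\L(x_{i_p};x_{i_p+1},\dots;x_{i_{p+1}})\wedge \Li^\L_\bullet(\dots),
\]
where the first factor is a general Hodge iterated integral whose arguments are various partial products of the $\varphi_i$, not a multiple polylogarithm at products of cross-ratios of quadrangles. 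These terms are exactly what the paper packages as $(1\wedge\Li_\bullet^\L)\Delta^{\H\F}(x)$; they are \emph{not} organized by subtrees of $\mathrm{t}_Q$ or by diagonals of $\P$, and for a single quadrangulation $Q$ they do not cancel.

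The missing idea is that these ``bad'' terms cancel only after summing over \emph{all} quadrangulations of $\P$. That cancellation is the content of Theorem~\ref{TheoremFormalQF} (the Formal Quadrangulation Formula), which asserts $\widetilde\Delta^{\H\F}\textup{T}_\P=0$. Its proof is not a bijective bookkeeping argument: it requires computing the principal coefficient $\pi_1(\textup{T}_\P)$ explicitly (Lemmas~\ref{LemmaMapPiEven}--\ref{LemmaMapPiOdd} and Corollary~\ref{CorollaryVanishingPrimitive}), combined with a coassociativity/rigidity argument to propagate the vanishing to higher components. Your proposal identifies the right difficulty (``the combinatorial matching in step (ii)'') but underestimates it: the matching you describe cannot close because the extra terms are not even of subtree/forest shape, and you need a separate argument---essentially the $\pi_1$ computation---to show they cancel in the sum over $\mathcal{Q}(\P)$.
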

 
\begin{figure}
\centering
\begin{subfigure}[b]{0.3\textwidth}
 \centering
\begin{tikzpicture}[transform shape]
  \foreach \number in {1,2,3}{
        \mycount=\number
        \advance\mycount by -1
  \multiply\mycount by 120
        \advance\mycount by 60
      \node[draw, very thin, color=gray,inner sep=0.001cm] (N-\number) at (\the\mycount:2.0cm) {};
    }
  \foreach \number in {4,5,6}{
        \mycount=\number
        \advance\mycount by -1
  \multiply\mycount by 120
        \advance\mycount by 120
      \node[draw, very thin, color=gray,inner sep=0.001cm](N-\number) at (\the\mycount:2cm) {};
    }
\draw[midnight, line width=0.25mm] (N-1) -- (N-4);
\draw[midnight, line width=0.25mm] (N-4) -- (N-2);
\draw[midnight, line width=0.25mm] (N-2) -- (N-5);
\draw[midnight, line width=0.25mm] (N-5) -- (N-3);
\draw[midnight, line width=0.25mm] (N-3) -- (N-6);
\draw[midnight, line width=0.25mm] (N-6) -- (N-1);

\draw[midnight, line width=0.25mm] (N-5) -- (N-1);

\node[yshift=-0.2cm, xshift=-0.14cm] at (N-5) {$0$};
\node[yshift=-0.03 cm, xshift=-0.13cm] at (N-2) {$1$};
\node[yshift=0.13cm, xshift=-0.21cm] at (N-4) {$2$};
\node[yshift=0.13cm, xshift=0.21cm] at (N-1) {$3$};
\node[yshift=-0.03 cm, xshift=0.13cm] at (N-6) {$4$};
\node[yshift=-0.2cm, xshift=0.14cm] at (N-3) {$5$};

\end{tikzpicture}
\caption{Quadrangles $(0,3,4,5)$ and $(0,1,2,3)$ correspond to arguments of ${\scriptstyle \Li_{1,1}([x_0, x_3, x_4, x_5],[x_0, x_1, x_2, x_3])}.$}
\end{subfigure}
\hfill
\begin{subfigure}[b]{0.3\textwidth}
  \centering
\begin{tikzpicture}[transform shape]
  \foreach \number in {1,2,3}{
        \mycount=\number
        \advance\mycount by -1
  \multiply\mycount by 120
        \advance\mycount by 60
      \node[draw, very thin, color=gray,inner sep=0.001cm] (N-\number) at (\the\mycount:2cm) {};
    }
  \foreach \number in {4,5,6}{
        \mycount=\number
        \advance\mycount by -1
  \multiply\mycount by 120
        \advance\mycount by 120
      \node[draw, very thin, color=gray,inner sep=0.001cm](N-\number) at (\the\mycount:2cm) {};
    }
\draw[midnight, line width=0.25mm] (N-1) -- (N-4);
\draw[midnight, line width=0.25mm] (N-4) -- (N-2);
\draw[midnight, line width=0.25mm] (N-2) -- (N-5);
\draw[midnight, line width=0.25mm] (N-5) -- (N-3);
\draw[midnight, line width=0.25mm] (N-3) -- (N-6);
\draw[midnight, line width=0.25mm] (N-6) -- (N-1);

\draw[midnight, line width=0.25mm] (N-6) -- (N-2);

\node[yshift=-0.2cm, xshift=-0.14cm] at (N-5) {$0$};
\node[yshift=-0.03 cm, xshift=-0.13cm] at (N-2) {$1$};
\node[yshift=0.13cm, xshift=-0.21cm] at (N-4) {$2$};
\node[yshift=0.13cm, xshift=0.21cm] at (N-1) {$3$};
\node[yshift=-0.03 cm, xshift=0.13cm] at (N-6) {$4$};
\node[yshift=-0.2cm, xshift=0.14cm] at (N-3) {$5$};

\end{tikzpicture}
\caption{Quadrangles $(0,1,4,5)$ and $(1,2,3,4)$ correspond to arguments of ${\scriptstyle \Li_{1,1}([x_0, x_1, x_4, x_5],[x_1,x_2, x_3, x_4]^{-1})}.$}
\end{subfigure}
\hfill
\begin{subfigure}[b]{0.3\textwidth}
\centering
\begin{tikzpicture}[transform shape]
  \foreach \number in {1,2,3}{
        \mycount=\number
        \advance\mycount by -1
  \multiply\mycount by 120
        \advance\mycount by 60
      \node[draw, very thin, color=gray,inner sep=0.001cm] (N-\number) at (\the\mycount:2cm) {};
    }
  \foreach \number in {4,5,6}{
        \mycount=\number
        \advance\mycount by -1
  \multiply\mycount by 120
        \advance\mycount by 120
      \node[draw, very thin, color=gray,inner sep=0.001cm](N-\number) at (\the\mycount:2cm) {};
    }
\draw[midnight, line width=0.25mm] (N-1) -- (N-4);
\draw[midnight, line width=0.25mm] (N-4) -- (N-2);
\draw[midnight, line width=0.25mm] (N-2) -- (N-5);
\draw[midnight, line width=0.25mm] (N-5) -- (N-3);
\draw[midnight, line width=0.25mm] (N-3) -- (N-6);
\draw[midnight, line width=0.25mm] (N-6) -- (N-1);

\draw[midnight, line width=0.25mm] (N-4) -- (N-3);

\node[yshift=-0.2cm, xshift=-0.14cm] at (N-5) {$0$};
\node[yshift=-0.03 cm, xshift=-0.13cm] at (N-2) {$1$};
\node[yshift=0.13cm, xshift=-0.21cm] at (N-4) {$2$};
\node[yshift=0.13cm, xshift=0.21cm] at (N-1) {$3$};
\node[yshift=-0.03 cm, xshift=0.13cm] at (N-6) {$4$};
\node[yshift=-0.2cm, xshift=0.14cm] at (N-3) {$5$};

\end{tikzpicture}
\caption{Quadrangles $(0,1,2,5)$ and $(2,3,4,5)$ correspond to arguments of ${\scriptstyle \Li _{1,1}([x_0, x_1, x_2, x_5],[x_2, x_3, x_4, x_5])}$.}
\end{subfigure}
\caption{Three quadrangulation of a hexagon correspond to three terms in (\ref{Example1}).}
\label{FigureHexagonQuadrangulations}
 \end{figure}
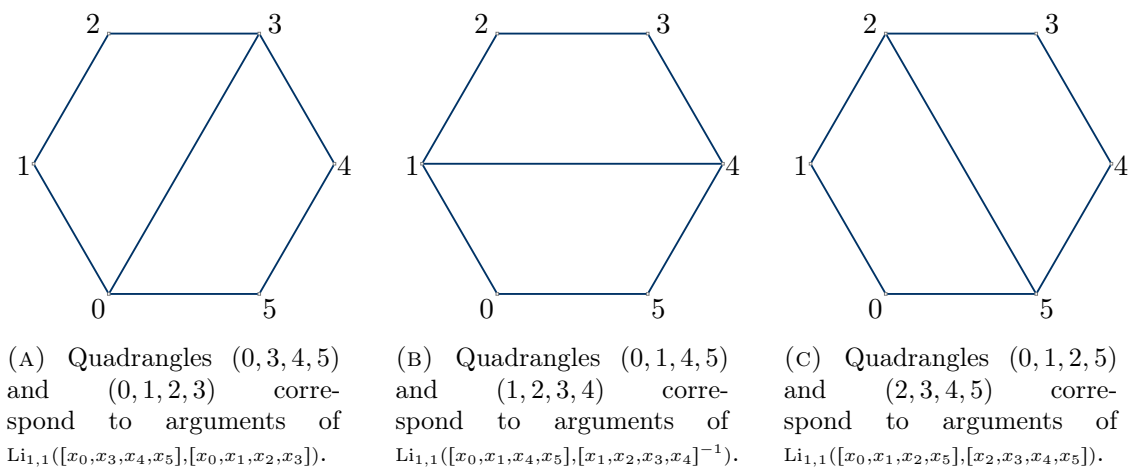 
 For instance, we have the relation
\be
\begin{split}
\label{Example1}
&\textup{QLi}_{2,0}(x_0,x_1,x_2, x_3,x_4,x_5)\stackrel{\shuffle}{=}
\Li_{1,1}([x_0, x_3, x_4, x_5],[x_0, x_1, x_2, x_3])\\
&-\Li_{1,1}([x_0, x_1, x_4, x_5],[x_1,x_2, x_3, x_4]^{-1})+\Li _{1,1}([x_0, x_1, x_2, x_5],[x_2, x_3, x_4, x_5]),\\
\end{split}
\ee
where each term corresponds to one of the three quadrangulations of a convex hexagon, see Figure~\ref{FigureHexagonQuadrangulations}.
Note that this formula is ``nonlinear'' because functions appearing   in~(\ref{Example1}) are not iterated integrals on $\mathfrak{M}_{0,6}$   as they have a nontrivial monodromy around the divisor 
\[
[x_0,x_3,x_4,x_5][x_0,x_1,x_2,x_3]=-\frac{(x_0-x_1)(x_2-x_3)(x_4-x_5)}{(x_1-x_2)(x_3-x_4)(x_5-x_0)}=1.
\]
Theorem~\ref{MainTheoremQuadrangulationCluster} implies Theorem~\ref{MainTheoremDepth}, because  multiple polylogarithms are linear combinations of $\textup{QLi}_{n,k}$ for $k=n$ or $k=n+1,$ which can be expressed via multiple polylogarithms of depth at most $n$ by~(\ref{FormulaMainTHeorem2}).

\subsection{Volumes of non-Euclidean polytopes and a theorem of B{\"o}hm}\label{SectionIntroductionVolumes}

Multiple polylogarithms appear in  computations of volumes of non-Euclidean polytopes, starting with the work of Lobachevsky at the dawn of the hyperbolic geometry era. In \cite{Lob36} Lobachevsky was able to express the volume of an orthoscheme in $\mathbb{H}^3$ as a sum of seven dilogarithms evaluated at certain functions of dihedral angles of an orthoscheme. Every polytope in $\mathbb{H}^3$ can be dissected into orthoschemes, so its volume can be expressed via the dilogarithm.  Lobachevsky applied this argument to a pair of scissors congruent polytopes and obtained a nontrivial identity between integrals, which he was able to check using classical integration techniques. To Lobachevsky, this was a strong argument in favor of his ``Imaginary Geometry.'' 

It is natural to ask if volumes of non-Euclidean polytopes in higher dimensions could be expressed via multiple polylogarithms of higher weight. After we had completed our work, we discovered that a positive answer to this question  could have been extracted from ``Coxeters Integrationmethode'' developed by B{\"o}hm in \cite{Boh63} based on the results of Coxeter (see \cite{Cox35}, \cite{Cox36}). 

\begin{theorem}[B\"ohm, 1962] \label{TheoremBohmVolumes}
	The volume of a hyperbolic orthoscheme of dimension $2n-1$ or $2n$  can be expressed\footnote{For a precise statement see Theorem \ref{MainTheoremQuadrangulationVolume}.} via multiple polylogarithms of weight  $n$ evaluated at algebraic functions in exponents of dihedral angles of the orthoscheme.
\end{theorem}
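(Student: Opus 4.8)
The plan is to reduce the computation of the volume of a hyperbolic orthoscheme to an integral that is already understood through the quadrangular polylogarithms $\textup{QLi}_{n,k}$, exploiting Coxeter's observation that orthoschemes are parametrized by configurations of points on $\mathbb{P}^1$. First I would recall Coxeter's construction from \cite{Cox36}: a hyperbolic orthoscheme of dimension $d$ is determined by its sequence of dihedral angles along the edges of its ``orthogonal'' edge-path, and Coxeter shows these angles can be encoded by $d+2$ points $x_0,\dots,x_{d+1}$ on $\mathbb{P}^1$ via the requirement that consecutive bounding hyperplanes meet at the prescribed angles; concretely, the exponentials of the dihedral angles become cross-ratios of consecutive quadruples of the $x_i$. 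Thus for $d=2n-1$ or $d=2n$ we land exactly on a configuration of $2n+1$ or $2n+2$ points, matching the arity of $\textup{QLi}_{n,k}$.

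Next I would set up B\"ohm's integration procedure (``Coxeters Integrationmethode''). The idea is to compute the volume by slicing the orthoscheme into a one-parameter family of lower-dimensional orthoschemes and integrating: $\mathrm{vol}_d = \int \mathrm{vol}_{d-1}\, \omega$ for an appropriate $1$-form $\omega$ in the remaining parameter. Carrying this out inductively produces an iterated integral of length $n$ (one genuine ``integration'' step per pair of dimensions, since the odd$\to$even step contributes only an algebraic factor, which explains why dimensions $2n-1$ and $2n$ give the same weight $n$). The integrand at each stage, written in the cross-ratio coordinates coming from Coxeter's parametrization, is a rational $1$-form with logarithmic singularities along the divisors $x_i = x_j$ — precisely the kind of form whose iterated integral is a multiple polylogarithm on $\mathfrak{M}_{0,d+2}$. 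One must check the two base cases: $\mathrm{vol}_1$ (length of a geodesic segment) is a logarithm of a cross-ratio, and $\mathrm{vol}_2$ (area of a right triangle, i.e. the angle defect) contributes the classical dilogarithm term, consistent with $n=1$ giving $\textup{QLi}_{1,k} = (-1)^{k+1}\Li_{k+1}$ of a cross-ratio.

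The final step is to identify the resulting iterated integral with the quadrangular polylogarithm. Both are weight-$n$ iterated integrals on the same configuration space $\mathfrak{M}_{0,2n+2}$ (or its odd-dimensional boundary stratum), with the same logarithmic singularity pattern; the work is to show B\"ohm's integrand has, up to an explicit algebraic/constant adjustment, the same differential as $\textup{QLi}_{n,k}$ for the relevant $k$ (I expect $k=n$, so weight $2n$ over a $(2n+2)$-gon for the even-dimensional case, and a degenerate limit for the odd-dimensional one), and that the boundary conditions match. Once this identification is made, Theorem \ref{MainTheoremQuadrangulationCluster} supplies the explicit expression via multiple polylogarithms of depth $\leq n$ and weight $n$ evaluated at products of cross-ratios, which are algebraic functions of the exponentials of the dihedral angles — exactly the asserted conclusion, with the precise bookkeeping deferred to Theorem \ref{MainTheoremQuadrangulationVolume}.

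\medskip

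\emph{Main obstacle.} The delicate point is the bookkeeping in B\"ohm's inductive slicing: making the change of variables to Coxeter's cross-ratio coordinates explicit at \emph{every} stage of the induction, tracking how the differential form transforms, and — most subtly — controlling the lower-order (product) terms and the regularization of the iterated integral at the boundary, so that the final answer matches $\textup{QLi}_{n,k}$ on the nose rather than merely ``up to something.'' Put differently, turning Coxeter's and B\"ohm's geometric recipe into a clean statement about iterated integrals on $\mathfrak{M}_{0,2n+2}$, with all algebraic prefactors and signs pinned down, is where the real effort lies; the passage from that identification to the depth bound is then immediate from Theorem \ref{MainTheoremQuadrangulationCluster}.
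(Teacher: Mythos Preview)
Your proposal starts from the right place (Coxeter's parametrization) but diverges from the paper's argument and contains a genuine gap in the identification step.

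The paper does \emph{not} carry out B\"ohm's inductive slicing. It works entirely in the Hodge-theoretic framework: to an oriented orthoscheme it attaches the framed mixed Hodge-Tate structure $h(\textup{ort}_\P)\in\H$ and proves (Theorem~\ref{TheoremVolumeOrt}) that $h(\textup{ort}_\P)=\textup{ALi}^{\H}(\textup{T}_\P)$ by comparing coproducts. The point is that the Dehn invariant $\Delta^{\mathcal{G}}[\textup{ort}_\P]$ (Proposition~\ref{PropositionOrthoschemeDehn}) has exactly the same combinatorial shape as $\Delta^{\F\F}\textup{T}_\P$ (Proposition~\ref{PropositionCoproduct}); together with the formal quadrangulation formula (Theorem~\ref{TheoremFormalQF}) and the coproduct computation for $\textup{ALi}$ (Proposition~\ref{LemmaAlternatingCoproduct}) this forces $\Delta^{\H\H}$ of both sides to agree, and rigidity plus a degeneration to $x_{p_0}=x_{p_1}$ finishes. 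Applying the real period map gives Theorem~\ref{MainTheoremQuadrangulationVolume}, and Theorem~\ref{TheoremBohmVolumes} follows.

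The gap in your plan is the proposed identification of the volume with $\textup{QLi}_{n,k}$. First a weight inconsistency: you correctly say B\"ohm's procedure yields an iterated integral of weight $n$, but then write ``I expect $k=n$, so weight $2n$''; these cannot both hold. More seriously, the volume is \emph{not} an iterated integral on $\mathfrak{M}_{0,2n+2}$ at all. Oriented orthoschemes are parametrized by the abelian cover $\mathfrak{M}_{0,2n+2}^{s}$ on which the square roots $\sqrt{\textup{cr}(\P')}$ are single-valued (\S\ref{SecOrOrt}), and the volume is expressed through the \emph{alternating} polylogarithms $\textup{ALi}$ evaluated at these square roots, not through $\textup{QLi}_{n,k}$. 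Consequently your last step --- invoking Theorem~\ref{MainTheoremQuadrangulationCluster} --- does not apply; the correct endpoint is the parallel but distinct Theorem~\ref{MainTheoremQuadrangulationVolume}, and the ``algebraic functions in exponents of dihedral angles'' in the statement are precisely these square roots of cross-ratios.
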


We want to make a few remarks about the significance and the peculiar history of Theorem \ref{TheoremBohmVolumes}.  Using the Schl{\"a}fli formula, one can easily see that the volume of a non-Euclidean simplex is an iterated integral of $d\log$-forms on a non-rational variety, which was written explicitly (for the spherical case) in \cite[Theorem 1]{Aom77}. Not all such integrals can be expressed via multiple polylogarithms (see  \cite{BD20}), so Theorem \ref{TheoremBohmVolumes} is a much more subtle result. Goncharov showed  in \cite{Gon99} that the volume of a non-Euclidean simplex is a period of a mixed Hodge structure of mixed Tate type of geometric origin. The universality conjecture of Goncharov (\cite[Conjecture 7.4]{Gon01}) implies that any such period can be expressed via multiple polylogarithms.

It was conjectured in  \cite{BG47} and \cite{Mul54} that the volume of an orthoscheme can be expressed via classical polylogarithms. In \cite{Boh63} B{\"o}hm found that in seven-dimensional space, polylogarithms of  weight four and depth two are unavoidable, raising doubts in this conjecture. Using the coproduct defined in  \S \ref{SectionHodgeIterated}, it is easy to show that the conjecture is indeed false, see also \cite[Theorem 8.10]{Wec91}. In the same work, B{\"o}hm shows how Coxeter's approach in \cite{Cox36}  to the proof of the Lobachevsky formula could be generalized to higher dimensions (see also \cite[\S 5.4-5.9]{BH80} for a detailed exposition). With modern techniques, the method of B{\"o}hm can be used to prove Theorem \ref{TheoremBohmVolumes}. B{\"o}hm neither formulates Theorem \ref{TheoremBohmVolumes} explicitly nor gives a formula for  the volume of an orthoscheme. Such formula for orthoschemes of dimensions up to five can be found in \cite{BG47}, \cite{Boh60}, \cite{Mul54}, \cite{Kel92}, and \cite{Kel95}. 

In \S \ref{SecOrt} we revisit these questions from a new perspective. We show that non-Euclidean orthoschemes of dimension $m-1$ are parametrized by a certain abelian cover of  $\mathfrak{M}_{0,m+2}$ and deduce the results of Coxeter from \cite{Cox36}. Next, we derive an explicit formula for the volume of an orthoscheme, which implies Theorem \ref{TheoremBohmVolumes}. This formula is almost identical to (\ref{FormulaMainTHeorem2}) and we deduce both formulas from a more general statement, see \S \ref{SectionIntroductionOnProofs}.

\subsection{Non-Euclidean orthoschemes and $\mathfrak{M}_{0,m+2}$}

A non-Euclidean orthoscheme is a geodesic simplex in  $\mathbb{H}^{m-1}$ or $\mathbb{S}^{m-1}$ with faces $H_1, \dots, H_m$ such that $H_i$ is orthogonal to $H_j$ for $|i-j|>1.$ For $m=3$ this is a right triangle and for $m=4$ this is a tetrahedron with all faces being right triangles. 

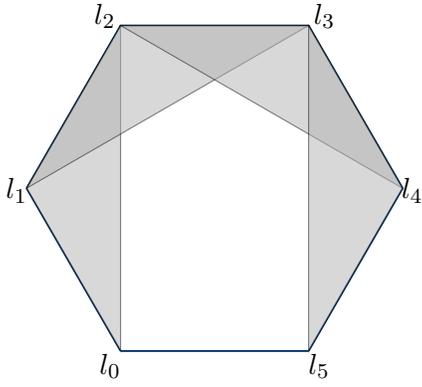
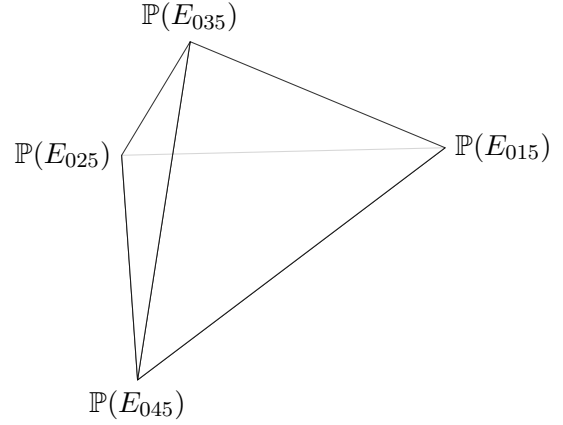
\begin{figure}
\centering
\begin{subfigure}[b]{0.45\textwidth}
\centering
\begin{tikzpicture}[transform shape]
  \foreach \number in {1,2,3}{
        \mycount=\number
        \advance\mycount by -1
  \multiply\mycount by 120
        \advance\mycount by 60
      \coordinate (N-\number) at (\the\mycount:2.5cm) {};
    }
  \foreach \number in {4,5,6}{
        \mycount=\number
        \advance\mycount by -1
  \multiply\mycount by 120
        \advance\mycount by 120
      \coordinate (N-\number) at (\the\mycount:2.5cm) {};
    }
\draw[midnight, line width=0.25mm] (N-1) -- (N-4);
\draw[midnight, line width=0.25mm] (N-4) -- (N-2);
\draw[midnight, line width=0.25mm] (N-2) -- (N-5);
\draw[midnight, line width=0.25mm] (N-5) -- (N-3);
\draw[midnight, line width=0.25mm] (N-3) -- (N-6);
\draw[midnight, line width=0.25mm] (N-6) -- (N-1);

\node[yshift=-0.2cm, xshift=-0.14cm] at (N-5) {$l_0$};
\node[yshift=-0.03 cm, xshift=-0.13cm] at (N-2) {$l_1$};
\node[yshift=0.13cm, xshift=-0.21cm] at (N-4) {$l_2$};
\node[yshift=0.13cm, xshift=0.21cm] at (N-1) {$l_3$};
\node[yshift=-0.03 cm, xshift=0.13cm] at (N-6) {$l_4$};
\node[yshift=-0.2cm, xshift=0.14cm] at (N-3) {$l_5$};

\draw[-, fill=black!30, opacity=.5] (N-5)--(N-2)--(N-4)--cycle;
\draw[-, fill=black!30, opacity=.5] (N-2)--(N-4)--(N-1)--cycle;
\draw[-, fill=black!30, opacity=.5] (N-4)--(N-1)--(N-6)--cycle;
\draw[-, fill=black!30, opacity=.5] (N-1)--(N-6)--(N-3)--cycle;
\end{tikzpicture}
\caption{Corner triangles are dual to faces of the orthoscheme. Triangles with vertices $l_0$ and $l_5$ are vertices of the orthoscheme \textup{ort}(x).}
\label{FigureOrthoschemeA}
\end{subfigure}
\hfill
\begin{subfigure}[b]{0.45\textwidth}
 \centering
\begin{tikzpicture}[line join = round, line cap = round]
\pgfmathsetmacro{\factor}{1/sqrt(2)};
\coordinate [label=right:$\mathbb{P}(E_{015})$] (A) at (2.3,0,-2*\factor);
\coordinate [label=left:$\mathbb{P}(E_{025})$] (B) at (-2,-0.1,-2*\factor);
\coordinate [label=above:$\mathbb{P}(E_{035})$] (C) at (0,2.5,2*\factor);
\coordinate [label=below:$\mathbb{P}(E_{045})$] (D) at (-0.7,-2,2*\factor);
\draw[-, fill=white, opacity=0.8] (A)--(D)--(B)--cycle;
\draw[-, fill=white, opacity=0.8] (A) --(D)--(C)--cycle;
\draw[-, fill=white, opacity=0.8] (B)--(D)--(C)--cycle;
\end{tikzpicture}
\caption{Orthoscheme  $(Q;H_1,H_2,H_3,H_4)$ in $\mathbb{P}^3=\mathbb{P}(E)$ with vertices $\mathbb{P}(E_{0,i,5})$.}
\label{FigureOrthoschemeB}
\end{subfigure}
\caption{Bijection between points of $\mathfrak{M}_{0,6}$ and generic $3$-dimensional orthoschemes.}
\label{FigureOrthoscheme}
\end{figure}
It will be convenient for us to work with an algebro-geometric generalization of non-Euclidean orthoschemes, which we call projective orthoschemes. A {\it projective simplex} $S=(Q; H_1,\dots, H_{m})$ in $\mathbb{P}^{m-1}$  is a configuration of a quadric $Q$ and a collection of hyperplanes $H_1,\dots, H_{m}.$ Given a non-Euclidean simplex, we construct a projective simplex by complexification and projectivization of Klein's model, see \S \ref{SectionProjectiveSimplex}. Hyperplanes $H_i$ are obtained from the faces of the simplex; the quadric $Q$ is obtained from the absolute. 

A projective simplex is called an {\it orthoscheme} if $Q$ is smooth and $H_i$ is orthogonal to $H_j$ with respect to the quadric for $|i-j|>1.$ In \S \ref{SecOrt1} we define a notion of a {\it generic orthoscheme}; being generic is an open condition. Inspired by the results of Coxeter from \cite{Cox36} we construct a bijection $x\mapsto \ort(x)$ between points of $\mathfrak{M}_{0,m+2}$ and generic orthoschemes in $\mathbb{P}^{m-1}.$ For $m=3$ this recovers Gauss's pentagramma mirificum \cite{Gau66}. Figure \ref{FigureOrthoscheme} illustrates the case $m=4.$ Our construction of  orthoscheme $\textup{ort}(x)$ is based on an algebraic version of Maslov index, introduced in an unpublished note by Kashiwara. Later this construction was reinterpreted by Beilinson in terms of sheaf cohomology, see \cite[\S 1.5]{LV80} and \cite{Tho06}.

A point $x\in \mathfrak{M}_{0,m+2}$ defines a configuration of  $m+2$ lines $l_0,\dots, l_{m+1}$ in a $2$-dimensional vector space $V$.  Consider a convex polygon with $m+2$ vertices labeled by the lines (see Figure \ref{FigureOrthoschemeA} for the case $m=4$). Denote by $E$ a  vector space of dimension $m,$ parametrizing collections of vectors in $l_i$ with the sum equal to zero:
\[
E=\textup{Ker}\left(\bigoplus_{i=0}^{m+1} l_i\stackrel{\sum} {\lra}V\right).
\]
 For every triangle $(i_1,i_2,i_3)$ we have a line 
\[
E_{i_1,i_2,i_3}=\textup{Ker}\left(l_{i_1}\oplus l_{i_2}\oplus l_{i_3}\stackrel{\sum} {\lra}V\right)
\]
in $E.$ The  Maslov index is a quadratic form $q$ on $E$ such that  lines $E_{i_1,i_2,i_3}$ and $E_{j_1,j_2,j_3}$ are orthogonal with respect to $q$ if and only if triangles $(i_1,i_2,i_3)$ and $(j_1,j_2,j_3)$ have disjoint interiors. From here it is easy to construct an orthoscheme $\textup{ort}(x)$ in $\mathbb{P}(E)$ (see Figure \ref{FigureOrthoschemeB}). It is a configuration of a quadric $Q$ defined by equation $q=0$ and hyperplanes $H_i,$ which are polar to lines $E_{i-1, i, i+1}$ for $1\leq i \leq m.$ These lines correspond to corner triangles of the polygon, and interiors of two corner triangles $(i-1,i,i+1)$  and $(j-1,j,j+1)$ intersect only if $|i-j|\leq 1.$ Thus the configuration ${\textup{ort}(x)=(Q;H_1,\dots,H_m)}$ is a projective orthoscheme. 

\begin{theorem}\label{MainTheoremOrthoschemes}
Generic orthoschemes in $\mathbb{P}^{m-1}$  are in bijection with points of $\mathfrak{M}_{0,m+2}.$ Faces of an orthoscheme $\textup{ort}(x)$ corresponding to a configuration $x=(x_0,\dots,x_{m+1})$ are isometric to orthoschemes, and correspond to configurations 
\[
(x_0,\dots,\widehat{x}_i,\dots,x_{m+1})\in \mathfrak{M}_{0,m+1}
\] 
for $1\leq i\leq m.$ 
\end{theorem}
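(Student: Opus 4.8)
The plan is to show that the orthoscheme $\textup{ort}(x)$ and the configuration $x$ determine one another, comparing them through the collection of $m+2$ hyperplanes $E^{(i)}:=\ker(E\to l_i)$ in $\mathbb{P}(E)$, $i=0,\dots,m+1$. First I would record that $\textup{ort}(x)$ really is a generic orthoscheme: the pole of $H_i$ with respect to $q$ is the line $E_{i-1,i,i+1}$ by construction, and the defining property of the Maslov quadratic form (established in \S\ref{SecOrt1}) says that two lines $E_{i_1i_2i_3}$ and $E_{j_1j_2j_3}$ are $q$-orthogonal precisely when the triangles $(i_1,i_2,i_3)$ and $(j_1,j_2,j_3)$ have disjoint interiors; for the corner triangles at $i$ and $j$ this means $|i-j|>1$ (when $|i-j|=1$ the two corner triangles share an edge and lie on the same side of it). Smoothness of $Q$ and nondegeneracy of the simplex bounded by $H_1,\dots,H_m$ are open conditions, verified on one configuration. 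The same statement identifies the vertex $\bigcap_{j\ne i}H_j$ with $\mathbb{P}(E_{0,i,m+1})$, as in Figure~\ref{FigureOrthoscheme}.

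The crucial observation is that $H_i=\mathbb{P}(E^{(i)})$. Writing $q(e)=\sum_{a<b}\omega(e_a,e_b)$ for a fixed area form $\omega$ on $V$, a short computation shows the associated bilinear form vanishes on $E^{(i)}\times E_{i-1,i,i+1}$, so $E^{(i)}\subseteq(E_{i-1,i,i+1})^{\perp_q}$, hence equality by dimension. Now $E^{(i)}=\ker\big(\bigoplus_{j\ne i}l_j\xrightarrow{\sum}V\big)$ is exactly the space playing the role of $E$ for $x^{(i)}:=(x_0,\dots,\widehat{x_i},\dots,x_{m+1})$, and the same formula shows $q|_{E^{(i)}}$ is its Maslov form; thus $Q\cap H_i$ is the quadric of $\textup{ort}(x^{(i)})$. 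It then remains to match the bounding hyperplanes of $\textup{ort}(x^{(i)})$ with the facet hyperplanes $H_k\cap H_i$ ($k\ne i$) of the face of $\textup{ort}(x)$ on $H_i$: for a corner $k$ away from the deleted vertex the relevant line is $E_{k-1,k,k+1}$ and its polar inside $H_i$ is manifestly $H_k\cap H_i$, while for the two corners adjacent to the gap the relevant line is $E_{i-2,i-1,i+1}$ (resp.\ $E_{i-1,i+1,i+2}$), and using $(E^{(i)})^{\perp_q}=E_{i-1,i,i+1}$ one checks $E_{i-2,i-1,i+1}\equiv E_{i-2,i-1,i}$ modulo that line inside the plane $E_{\{i-2,i-1,i,i+1\}}$, so its polar inside $H_i$ is $H_{i-1}\cap H_i$ (resp.\ $H_{i+1}\cap H_i$). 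Since the construction is compatible with the real structure of Klein's model, this face is then isometric to $\textup{ort}(x^{(i)})$, which proves part~(2).

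For the bijection, $\textup{ort}$ factors through $x\mapsto(E^{(0)},\dots,E^{(m+1)})$, a configuration of $m+2$ hyperplanes in general position in $\mathbb{P}^{m-1}$: the quadric $Q$ is the Maslov form of $x$, which is determined up to scalar by the $E^{(i)}$ alone (recover $l_i=E/E^{(i)}$, then $V=\mathrm{coker}(E\to\bigoplus l_j)$, then $\omega$ up to scale), and $H_i$ is the $q$-polar of $E_{i-1,i,i+1}=\bigcap_{j\notin\{i-1,i,i+1\}}E^{(j)}$. By Gale duality (the Gelfand--MacPherson correspondence), $x\mapsto(E^{(i)})$ identifies $\mathfrak{M}_{0,m+2}$ with the moduli of such hyperplane configurations, so it is enough to recover the $E^{(i)}$ from a generic orthoscheme $(Q;H_1,\dots,H_m)$. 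I would set $v_j:=\bigcap_{\ell\ne j}H_\ell$, $p_i:=\mathrm{pole}_Q(H_i)$, and put $E^{(i)}:=\langle v_j:j\ne i\rangle$ for $1\le i\le m$, $E^{(0)}:=\langle p_2,\dots,p_m\rangle$, $E^{(m+1)}:=\langle p_1,\dots,p_{m-1}\rangle$. Using $v_j=\mathbb{P}(E_{0,j,m+1})$ and $p_i=\mathbb{P}(E_{i-1,i,i+1})$ one sees this inverts $\textup{ort}$ on its image; conversely, in coordinates with $H_i=\{z_i=0\}$ (so $v_j=[e_j]$ and the inverse Gram matrix of $Q$ is tridiagonal) one gets $E^{(i)}=\{z_i=0\}$ for $1\le i\le m$ and $p_i\in\langle e_{i-1},e_i,e_{i+1}\rangle$, hence $\bigcap_{j\notin\{i-1,i,i+1\}}E^{(j)}=\langle p_i\rangle$, and the identity $(E_{i-1,i,i+1})^{\perp_q}=E^{(i)}$ for the Maslov form $q$ of the recovered configuration forces $\mathrm{pole}_q(H_i)=p_i$ for every $i$; the inverse Gram matrices of $q$ and $Q$ then agree up to right multiplication by a diagonal matrix, which is scalar since an irreducible tridiagonal matrix commutes only with scalars, so $q$ is proportional to $Q$ and $\textup{ort}$ of the recovered configuration is $(Q;H_\bullet)$. (Injectivity also follows from part~(2) by induction on $m$: for $m\ge 3$ the facets and their common restriction determine $\textup{ort}(x)$, hence the $x^{(i)}$, hence $x$; the base case is Gauss's pentagramma mirificum.)

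The step I expect to be the main obstacle is this surjectivity — equivalently, that $\textup{ort}$ applied to the recovered configuration returns the original $Q$. The tridiagonal orthogonality relations alone leave an $(m-1)$-parameter family of quadrics compatible with given hyperplanes $H_1,\dots,H_m$, matching $\dim\mathfrak{M}_{0,m+2}$, so the quadric genuinely carries the modulus and the inversion must exploit it through $E^{(0)}$ and $E^{(m+1)}$ in a way that closes up. Getting this right is inseparable from fixing the notion of \emph{generic orthoscheme} in \S\ref{SecOrt1} so that all the spans above have their expected dimensions, the tridiagonal matrix is irreducible, and the recovered hyperplanes lie in general position; granted that, the remaining checks are routine linear algebra with the explicit Maslov form.
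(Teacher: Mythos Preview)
Your proposal is correct and follows essentially the same route as the paper, which packages the argument as Lemma~\ref{LemmaOrtSum} (the orthogonal decomposition $E=E_I\oplus E_{I'}$ for any diagonal cut of the polygon), Theorem~\ref{TheoremOrthoschemesConfigurations} (the bijection, with inverse $\textup{conf}(S)$ built from $E\to\bigoplus E/h_i$ exactly as you describe), and Proposition~\ref{PropositionFacesAngles} (faces and angles via iterated use of the decomposition lemma); your $E^{(i)}$ is the paper's $E_{\{0,\dots,\widehat{i},\dots,m+1\}}$ and your identification $H_i=\mathbb{P}(E^{(i)})$ is its statement $E_{i-1,i,i+1}^{\perp}=E_{\{0,\dots,i-1,i+1,\dots,m+1\}}$. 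The only differences are presentational: the paper's decomposition lemma handles all faces $S_I$ uniformly and gives genericity directly via \eqref{FormulaGeneric}, avoiding your separate treatment of adjacent versus non-adjacent corners and the appeal to openness; conversely, your tridiagonal--irreducibility argument supplies the uniqueness of $Q$ from $H_0,\dots,H_{m+1}$ that the paper leaves to the reader.
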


An orientation of a projective simplex $S$ is a choice of a family of projective subspaces of maximal dimension  on quadrics $Q\cap \bigcap_{i\in I} H_i$ for all subsets $I\subseteq\{1,\dots,m\}.$  Oriented projective orthoschemes are parametrized by an abelian cover  $\mathfrak{M}_{0,m+2}^{s}$ of $\mathfrak{M}_{0,m+2}.$  This cover can be characterized as the minimal cover on which square roots of cross-ratios are single-valued. Projective orthoschemes coming from hyperbolic geometry have a canonical orientation and are parametrized by a subset $\mathfrak{M}_{0,m+2}^h\subseteq \mathfrak{M}_{0,m+2}^s,$ which is a real ball of dimension $m-1.$

The volume of an oriented orthoscheme $\ort(x^s)$ for $x^s\in \mathfrak{M}_{0,m+2}^s$ is an iterated integral on $\mathfrak{M}_{0,m+2}^{s}$. Our goal is to express the volume explicitly  in terms of multiple polylogarithms. By the Chern-Gauss-Bonnet Theorem, it  suffices to do that for orthoschemes of odd dimension, so we assume that $m=2n.$ 
 
Consider a point $x^s\in \mathfrak{M}_{0,2n+2}^{s},$ which lies over  $x=(x_0,\dots,x_{2n+1})\in\mathfrak{M}_{0,2n+2}.$ Let $\P$ be a convex $(2n+2)$-gon with vertices labeled by points $x_0,\dots,x_{2n+1}\in \mathbb{P}^1$.  For a quadrangulation  $Q\in \mathcal{Q}(\P)$ of $\P$ we consider a tree $\mathrm{t},$ which is  dual to the quadrangulation $Q$. We label a vertex of the tree $\mathrm{t}$ corresponding to a $4$-gon $(i_0, i_1,i_2,i_3)$ by the square root of the cross-ratio of the corresponding points $\sqrt{[x_{i_0},x_{i_1},x_{i_2},x_{i_3}]}.$ Let $\textup{ALi}(\mathrm{t})$ be the alternating sum of $\textup{Li}_0(\mathrm{t})$ over all choices of signs of square roots divided by $2^n$ (see \S \ref{SectionAlternatingPolylogarithms}). Multiple polylogarithms are periods of framed mixed Hodge-Tate structures and in \cite[\S 4.7]{Gon99} Goncharov defined their real periods $\mathrm{per}_\mathbb{R},$ see \S \ref{SectionMHTS}.

\begin{theorem}[Quadrangulation formula volumes of orthoschemes]\label{MainTheoremQuadrangulationVolume}
For $x^s\in \mathfrak{M}^h_{2n+2}$ we have the following formula for the hyperbolic volume of an orthoscheme $\ort(x^s):$ 
\be \label{FormulaQuadrangulationVolume}
\textup{Vol}(\ort(x^s))=\sum_{Q\in \mathcal{Q}(\mathrm{P})} \mathrm{per}_{ \mathbb{R}}\left (\textup{ALi}(\mathrm{t}_Q)\right).
\ee
\end{theorem}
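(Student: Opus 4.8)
The plan is to factor the proof through B\"ohm's identity
\[
\textup{Vol}\bigl(\ort(x^s)\bigr)=\mathrm{per}_\mathbb{R}\bigl(\textup{AQLi}_n(x^s)\bigr),\qquad x^s\in\mathfrak{M}_{2n+2}^h,
\]
where $\textup{AQLi}_n$ is the alternating quadrangular polylogarithm: the weight-$n$ iterated integral on the cover $\mathfrak{M}_{0,2n+2}^s$ obtained from $\textup{QLi}_{n,0}$ by passing to the square roots of the cross-ratios of the quadrangles and taking the alternating sum over sign choices, divided by $2^n$. Granting this identity, Theorem~\ref{MainTheoremQuadrangulationVolume} follows at once: the argument proving Theorem~\ref{MainTheoremQuadrangulationCluster} goes through on $\mathfrak{M}_{0,2n+2}^s$ with every cross-ratio replaced by its square root --- this is the common generalization referred to in the introduction --- and in the case $k=0$ gives $\textup{AQLi}_n(x^s)\stackrel{\shuffle}{=}\sum_{Q}\textup{ALi}(\mathrm{t}_Q)$; applying Goncharov's real period $\mathrm{per}_\mathbb{R}$, which vanishes on products of lower weight and hence turns $\stackrel{\shuffle}{=}$ into an honest equality of real-analytic functions, yields exactly \eqref{FormulaQuadrangulationVolume}. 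So everything rests on B\"ohm's identity, which I would prove by induction on $n$. Both sides are real-analytic on the connected real ball $\mathfrak{M}_{2n+2}^h$, so it is enough to show that they have the same differential and agree at one configuration. The base cases are classical: $n=1$ is the expression of the length of a hyperbolic geodesic segment as half the logarithm of a cross-ratio, and $n=2$ is Lobachevsky's formula for the volume of an orthoscheme in $\mathbb{H}^3$.

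For the differential of the left-hand side I would use the bijection of Theorem~\ref{MainTheoremOrthoschemes} to regard $\textup{Vol}(\ort(x^s))$ as a function of $x^s$ and invoke the Schl\"afli differential formula. Since $H_i$ is orthogonal to $H_j$ for $|i-j|>1$, only the $2n-1$ dihedral angles $\theta_{i,i+1}$ between consecutive facets vary, and (up to the standard sign of the hyperbolic Schl\"afli formula)
\[
d\,\textup{Vol}\bigl(\ort(x^s)\bigr)=\frac{1}{2n-2}\sum_{i=1}^{2n-1}\textup{Vol}\bigl(F_{i,i+1}\bigr)\,d\theta_{i,i+1},
\]
where $F_{i,i+1}$ is the codimension-two face cut out by $H_i$ and $H_{i+1}$. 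Iterating Theorem~\ref{MainTheoremOrthoschemes}, each $F_{i,i+1}$ is again isometric to an orthoscheme, the one attached to the configuration $(x_0,\dots,\widehat{x}_i,\widehat{x}_{i+1},\dots,x_{2n+1})\in\mathfrak{M}_{2n}^h$ obtained by deleting the two vertices lying between $H_i$ and $H_{i+1}$, so its volume is supplied by the inductive hypothesis. The remaining ingredient is the dictionary between dihedral angles and cross-ratios: from the explicit description in \S\ref{SecOrt1} of the Maslov quadratic form $q$ and of the hyperplane $H_i$ polar to the line $E_{i-1,i,i+1}$, one reads off that $\theta_{i,i+1}$ is a single-valued function of $\sqrt{[x_{i-1},x_i,x_{i+1},x_{i+2}]}$ --- the four vertices surrounding the deleted pair --- so that $d\theta_{i,i+1}$ is a fixed rational multiple of $d\log[x_{i-1},x_i,x_{i+1},x_{i+2}]$, the factor $\tfrac12$ coming from the square root.

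For the differential of the right-hand side I would combine Goncharov's formula for the differential of a real period with the fact that $\textup{AQLi}_n$ is, by construction, an iterated integral of weight $n$, so that its differential peels off a $d\log$ of a square-rooted cross-ratio and leaves a quadrangular polylogarithm one level down. Explicitly, $d\,\textup{AQLi}_n(x^s)$ should be a sum, over the consecutive quadruples $(x_{i-1},x_i,x_{i+1},x_{i+2})$, of $\textup{AQLi}_{n-1}(x_0,\dots,\widehat{x}_i,\widehat{x}_{i+1},\dots,x_{2n+1})$ against $d\log\sqrt{[x_{i-1},x_i,x_{i+1},x_{i+2}]}$, plus product and log terms that are annihilated by $\mathrm{per}_\mathbb{R}$ or cancel in the alternating sum; after applying $\mathrm{per}_\mathbb{R}$ the surviving terms are in bijection with those of the Schl\"afli formula, and the inductive hypothesis identifies $\mathrm{per}_\mathbb{R}(\textup{AQLi}_{n-1})$ of the face configuration with $\textup{Vol}(F_{i,i+1})$. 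Up to an overall normalization --- forced by the definition of $\textup{QLi}_{n,k}$ and by the Schl\"afli constant $\tfrac{1}{2n-2}$ --- the two differentials then coincide. Finally, to pin down the integration constant, I would let $x^s$ approach a boundary point of the ball $\mathfrak{M}_{2n+2}^h$ at which two consecutive points collide and the orthoscheme degenerates to one of lower dimension: there $\textup{Vol}\to 0$, while on the right every cross-ratio tends to $0$, $1$ or $\infty$, where the single-valued polylogarithms and the relevant alternating weight-one combinations vanish, so the locally constant difference of the two sides is identically zero, closing the induction.

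The step I expect to be hardest is the geometric--combinatorial dictionary that ties the two differentials together. One must verify that $\theta_{i,i+1}$ is governed by exactly the cross-ratio $[x_{i-1},x_i,x_{i+1},x_{i+2}]$ that appears in $d\,\textup{AQLi}_n$, with signs dictated by the orientation data --- the choice of maximal isotropic subspaces on the quadrics $Q\cap\bigcap_{i\in I}H_i$ that singles out the cover $\mathfrak{M}_{0,2n+2}^s$ and, inside it, the hyperbolic locus $\mathfrak{M}_{2n+2}^h$ --- and that deleting the pair $x_i,x_{i+1}$, which realizes the codimension-two face $F_{i,i+1}$ geometrically, corresponds under the duality between quadrangulations of $\mathrm{P}$ and the trees $\mathrm{t}_Q$ to exactly the lower-level quadrangulation sum on the polylogarithmic side. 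Carrying the half-integer shifts introduced by the square roots and the signs of the alternating sums through both the Schl\"afli reduction and the reduction via Theorem~\ref{MainTheoremQuadrangulationCluster} is the delicate bookkeeping on which the whole argument hinges; everything else is the Coxeter--B\"ohm integration method recast in the language of $\mathfrak{M}_{0,m+2}$.
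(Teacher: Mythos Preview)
Your route differs substantially from the paper's, and it rests on a step you have not justified.

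The paper never introduces an intermediate object like your $\textup{AQLi}_n$ and does not argue via the Schl\"afli formula. Instead it proves the identity one level up, in the Hopf algebra $\H$: Theorem~\ref{TheoremVolumeOrt} asserts the \emph{exact} equality of framed variations $h(\textup{ort}_\P^s)=\textup{ALi}^{\H}(\textup{T}_\P)$ on $\mathfrak{M}_\P^s$. The induction matches the full coproducts $\Delta^{\H\H}$ of the two sides. On the geometric side this is the Dehn invariant of the oriented orthoscheme (Proposition~\ref{PropositionOrthoschemeDehn}), transported to $\H$ by Goncharov's Hopf algebra map $h$; on the polylogarithmic side the formal quadrangulation formula $\widetilde{\Delta}^{\H\F}\textup{T}_\P=0$ (Theorem~\ref{TheoremFormalQF}) together with Proposition~\ref{LemmaAlternatingCoproduct} forces $\Delta^{\H\H}\textup{ALi}^\H(\textup{T}_\P)$ to be the same sum over alternating subpolygons. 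Rigidity plus a degeneration $x_{p_0}=x_{p_1}$ (where the quadric becomes singular and both sides vanish) closes the induction; only then is $\mathrm{per}_\mathbb{R}$ applied. Your Schl\"afli matching is essentially the $(1,n-1)$-component of this coproduct comparison, read through the differential of the real period; the paper matches all components at once and thereby obtains an identity in $\H$, not merely in $\L$.

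The genuine gap in your argument is the claim that $\mathrm{per}_\mathbb{R}$ ``vanishes on products of lower weight and hence turns $\stackrel{\shuffle}{=}$ into an honest equality.'' You offer no justification, yet this is doing essential work: your quadrangulation identity for $\textup{AQLi}_n$ lives only in $\L$, so without knowing that $\mathrm{per}_\mathbb{R}$ factors through $\L$ you cannot conclude the volume formula. The paper sidesteps the question entirely by proving the identity already in $\H$. There is a second, related problem: your $\textup{AQLi}_n$ is not well-defined as written. In this paper $\textup{QLi}_{n,0}$ is constructed from Hodge correlators and lives in $\L_n$, not in $\H_n$; hence its alternation is in $\L$, where $\mathrm{per}_\mathbb{R}$ is not defined. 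To speak of $\mathrm{per}_\mathbb{R}(\textup{AQLi}_n)$ you must choose a lift to $\H$, and the natural lift is precisely $\textup{ALi}^\H(\textup{T}_\P)$ --- at which point you are back to the paper's formulation and its coproduct argument, not a Schl\"afli argument.
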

 
 For $n=2$ the formula (\ref{FormulaQuadrangulationVolume}) is equivalent to the formula of Lobachevsky from \cite{Lob36}. Theorem \ref{MainTheoremQuadrangulationVolume} immediately implies Theorem \ref{TheoremBohmVolumes}.

\subsection{Philosophy of mixed Tate motives}\label{SectionIntroduction_Philosophy}
The philosophy of mixed Tate motives and their relation to multiple polylogarithms and volumes of hyperbolic polytopes was developed by Goncharov, see \cite{Gon95} for an overview. According to this philosophy, there should exist a graded Hopf algebra $\H_{\mathcal{M}}$ of framed mixed Tate motives over an arbitrary field $\mathrm{F}.$ Currently, the existence of $\H_{\mathcal{M}}$ is known only for some fields, see \cite{DG05}.  Next, Goncharov conjectures that  $\H_{\mathcal{M}}$ is  generated as a $\mathbb{Q}$-vector space by 
  elements 
\[ 
\Li^{\mathcal{M}}_{n_1,n_2,\dots, n_k}(a_1,a_2,\dots,a_k)\text{\ \ for\ \ }a_1,\dots,a_k\in \mathrm{F},
\]
called {\it motivic multiple polylogarithms}, see \cite[Conjecture 7.4]{Gon01}. All relations between multiple polylogarithms in the usual sense are expected to originate from the corresponding relations in $\H_{\mathcal{M}}.$ 
 
 Let 
 \[
 \L_{\mathcal{M}}=\H_{\mathcal{M}}/(\H_{\mathcal{M},>0}\cdot \H_{\mathcal{M},>0})
 \] 
 be the corresponding Lie coalgebra of indecomposable elements and denote by $\mathcal{I}_\mathcal{M}$ its coideal of elements of weight greater than one.
 Denote the graded dual Lie algebra to $\L_\mathcal{M}$ by $\L_\mathcal{M}^{\vee}.$  Consider a filtration on $\L_\mathcal{M}^{\vee}$ defined by the formula $\F_0 \L_\mathcal{M}^{\vee}=\L_\mathcal{M}^{\vee},$ $\F_{-1} \L_\mathcal{M}^{\vee}=\mathcal{I}_\mathcal{M}^{\vee}$ and 
\[
\F_{-m-1} \L^{\vee}_\mathcal{M}=[\mathcal{I}_\mathcal{M}^{\vee},\F_{-m} \L^{\vee}_\mathcal{M}] \text{\ \ for\ \ } m\geq 1.
\]
The Goncharov depth conjecture \cite[Conjecture 7.6]{Gon01} states that the filtration $\F_{\bullet} \L^{\vee}_\mathcal{M}$ is dual to the filtration on $\L_\mathcal{M}$ by the depth of motivic multiple polylogarithms. The Lie coalgebra $\L_\mathcal{M}$ is graded by weight, from where it is easy to see that (for $m\geq 1$) elements in $(\F_{-m} \L^{\vee}_\mathcal{M})^{\vee}$ have weight at least $2m,$ so motivic multiple polylogarithms of weight $2m$ and $2m+1$ should lie in the subspace spanned by multiple polylogarithms of depth at most $m.$ We prove this result unconditionally, see Theorem \ref{MainTheoremDepth}.

The philosophy of mixed Tate motives predicts that there is a way to construct elements of $\H_{\mathcal{M}}$ out of cohomology groups of algebraic varieties, which carry mixed Hodge structure of mixed Tate type, see \cite{Del71}, \cite{Del74}. For a projective simplex $S=(Q; H_1,\dots, H_{2n})$ in $\mathbb{P}^{2n-1}$  consider the following  cohomology group with rational coefficients:
\be \label{FormulaMHSorthoscheme}
H^{2n-1}\left(\mathbb{P}^{2n-1}\fgebackslash Q, \left (\bigcup_{i=1}^{2n} H_i\right )\fgebackslash Q;\mathbb{Q}\right).
\ee
Goncharov showed in \cite[\S 3.4]{Gon99} that it has mixed Tate type and thus is supposed to define an element $m(S)\in \H_{\mathcal{M}}$.                               

The existence  of $\H_\mathcal{M}(F)$ is only known for some classes of fields, including finite fields and number fields, but not $\C.$ Because of that, we work not with mixed Tate motives, but with their {\it Hodge realizations}. Let $\H$ be the Hopf algebra of framed mixed Hodge-Tate structures, see \S \ref{SectionMHTS}. Conjecturally,  $\H_\mathcal{M}(\C)$ is a subalgebra of $\H$, which consists of mixed Hodge-Tate structures of geometric origin, i.e., coming from cohomology groups of algebraic varieties. Goncharov constructed elements in $\H$ called Hodge multiple polylogarithms 
\[
\Li^{\H}_{n_1,n_2,\dots, n_k}(a_1,a_2,\dots,a_k) \text{\ for \ } a_1,a_2,\dots,a_k\in \C.
\]
For an oriented projective simplex $S$ he also constructed an  element $h(S)\in \H.$ We derive Theorems \ref{MainTheoremDepth}, \ref{MainTheoremQuadrangulationCluster} and \ref{MainTheoremQuadrangulationVolume} from the corresponding statements about elements of $\H.$ 

The main tool for working with mixed Hodge-Tate structures is the coproduct $\Delta^\H.$ On one hand, the coproduct is very explicit, see \S \ref{SectionMHTS}. On the other hand, the coproduct can be used to prove identities between elements of $\H$ by reducing them to identities of lower weight, see \ref{SectionRigidity}. Goncharov found the coproduct of Hodge multiple polylogarithms and the coproduct of $h(S),$ which is related to the Dehn invariant of the scissors  congruence class of a projective simplex $S.$

\subsection{On proofs of Theorems \ref{MainTheoremQuadrangulationCluster} and \ref{MainTheoremQuadrangulationVolume}}
\label{SectionIntroductionOnProofs}

Our main result is Theorem \ref{TheoremFormalQF}, which we call the ``formal quadrangulation formula'' and which we now  motivate. Consider a convex $(2n+2)$-gon $\P$ with vertices labeled by points $x_0,\dots,x_{2n+1}\in \mathbb{P}^1.$ Recall that for every convex quadrangle $(i_0,i_1,i_2,i_3)$ in $\P$ we defined a cross-ratio $[x_{i_0},x_{i_1},x_{i_2},x_{i_3}]$.  For a quadrangulation $Q\in \mathcal{Q}(\P)$ consider  a multiple polylogarithm, whose arguments are products of cross-ratios corresponding to quadrangles in $Q$. For example, for $n=3$ we could take
\be \label{FormulaFormalPolylogExample}
\Li_{1,1,1}([x_0,x_1,x_2,x_3],[x_0,x_3,x_4,x_5],[x_0,x_5,x_6,x_7]).
\ee
The coproduct of (\ref{FormulaFormalPolylogExample}) in $\H$ is quite complicated. It has a ``simple part'', which looks like a deconcatenation coproduct

\begin{align*}
&1\otimes \Li_{1,1,1}([x_0,x_1,x_2,x_3],[x_0,x_3,x_4,x_5],[x_0,x_5,x_6,x_7])\\
+&\Li_{1}([x_0,x_1,x_2,x_3])\otimes \Li_{1,1}([x_0,x_3,x_4,x_5],[x_0,x_5,x_6,x_7])\\
+&\Li_{1,1}([x_0,x_1,x_2,x_3],[x_0,x_3,x_4,x_5])\otimes \Li_{1}([x_0,x_5,x_6,x_7])\\
+&\Li_{1,1,1}([x_0,x_1,x_2,x_3],[x_0,x_3,x_4,x_5],[x_0,x_5,x_6,x_7])\otimes 1,\\
\end{align*}
while other terms form the ``complicated part.'' The element 
\[
\textup{T}_\P=\sum_{Q\in \mathcal{Q}(\P)}\Li (\mathrm{t}_Q)
\]
appearing in Theorems \ref{MainTheoremQuadrangulationCluster} and \ref{MainTheoremQuadrangulationVolume} is characterized by the fact that the ``complicated part'' of the coproduct vanishes. 
 
To formalize this idea, we define a Hopf algebra $\F_\P$, whose elements are symbols like
\[
[\textup{cr}(0,1,2,3),1|\textup{cr}(1,3,4,7),1|\textup{cr}(4,5,6,7),1]
\]
which we think of as formal versions of polylogarithms of like  (\ref{FormulaFormalPolylogExample}). This is a very simple object: essentially, a free commutative Hopf algebra with the quasi-shuffle product and the deconcatenation coproduct ${\Delta^{\mathcal{PP}}\colon \F_\P \lra \F_\P\otimes \F_\P.}$ Using the Connes-Kreimer Hopf algebra of rooted trees, we construct a special element $\textup{T}_\P\in \F_\P$ whose coproduct can be expressed via elements of a similar type for subpolygons of $\P.$  We define a coaction ${\Delta^{\mathcal{HP}}\colon \F_\P \lra \H\otimes \F_\P}$ which is related to the ``complicated part'' of the coproduct of multiple polylogarithms. The formal quadrangulation formula states that elements $\textup{T}_\P \in \F_\P$ are coinvariants of this coaction. 

In order to prove Theorem \ref{MainTheoremQuadrangulationCluster} we construct a collection of maps $\Li_k$ for $k\geq 0$ from $\F_\P$ to  the  Lie coalgebra $\L$ of  indecomposable elements of $\H.$  $\Li_0$ is just the multiple polylogarithm and $\Li_k$ for  $k>0$  is a slight generalization of it. We deduce from the formal quadrangulation formula that the ``complicated part'' of  the coproduct of $\Li_k(\textup{T}_\P)$ vanishes, and the remaining part is identical to the coproduct for the quadrangular polylogarithm $\textup{QLi}_{n,k}.$ This implies Theorem \ref{MainTheoremQuadrangulationCluster}.

Similarly, in \S \ref{SectionAlternatingPolylogarithms}  we construct a  map $\textup{ALi}$  from $\F_\P$ to $\H,$ which we call alternating polylogarithm. The formal quadrangulation formula implies that the ``complicated part'' of  the coproduct of $\textup{ALi}(\textup{T}_\P)$ vanishes and the remaining part is identical to the coproduct of the class of an oriented orthoscheme $h(\textup{ort}(x^s))$ for $x^s\in \mathfrak{M}^s_{0,2n+2},$ which implies Theorem~\ref{MainTheoremQuadrangulationVolume}.

\subsection{Contents and Acknowledgements} 
In \S \ref{SectionHodgeMultiplePolylogarithmsHodgeTateStructures} we review some background material, including mixed Hodge-Tate structures, iterated integrals and polylogarithms. In \S \ref{SectionWorkingWithoutMHS} we describe how one can read the paper without familiarity with mixed Hodge structures.

In \S \ref{SectionHopfAlgebraFormal} we construct the Hopf algebra of formal polylogarithms over any variety $S.$ The most important result of this section is Proposition \ref{TheoremFormalSSR}, which we call the formal quasi-shuffle relation. 

In \S \ref{SectionFormalOnConfiguration} we define formal quadrangular polylogarithms, which are certain formal polylogarithms on the configuration space. The definition of formal quadrangular polylogarithms is based on the universal property of the Connes-Kreimer Hopf algebra, which we recall in \S \ref{Arborification}. We prove the formal quadrangulation formula (Theorem \ref{TheoremFormalQF}); our proof of Theorem \ref{TheoremFormalQF} is very close to the proof of Proposition \ref{TheoremFormalSSR} from \S \ref{SectionFormalSSR}. In particular, the principal coefficient map introduced in \S \ref{SectionProjectionToPrimitiveElements} plays a key role in the proof. 

In \S \ref{SectionQuadrangulaPolylogs} we define quadrangular polylogarithms and prove their main properties. The first one is the formula for a presentation of a quadrangular polylogarithm via multiple polylogarithms (Theorem \ref{TheoremQuadrangulationClusterPolylog}), which we deduce  from Theorem \ref{TheoremFormalQF}. The second one is the formula for a presentation of Hodge correlators via quadrangular polylogarithms (Proposition \ref{PropositionUniversality}). These two results imply Theorem \ref{MainTheoremDepth}.

Finally, in \S \ref{SecOrt} we discuss non-Euclidean orthoschemes.  \S\S \ref{SectionProjectiveSimplex}-\ref{SecOrOrt} contain our reflections  on the ideas of Coxeter in \cite{Cox36}. In \S \ref{SectionVolumeOrthoschemes} we prove the formula for volumes of orthoschemes (Theorem \ref{TheoremVolumeOrt}), which implies the Theorem of B{\"o}hm from \S \ref{SectionIntroductionVolumes}.

\

\paragraph{\bf{ Acknowledgements}}
 Special thanks are due to A. Matveiakin for developing the software, which allowed me to check most results in this paper. In particular, Theorem \ref{TheoremClusterPolylogarithmCoproduct} was discovered jointly with A. Matveiakin via a computer-assisted search.   I am grateful to V. Fock for pointing me to the Maslov index construction.  I am indebted to A. Goncharov for numerous discussions and explanations of the theory of mixed Tate motives. I thank A. Beilinson, S. Bloch, F. Brown and B. Farb for useful discussion and comments.  Also, I thank G. Paseman for his help on a draft of this article. Finally, I am very grateful to the referee who made a lot of useful comments and suggestions.

\section{Multiple polylogarithms and mixed Hodge-Tate structures}\label{SectionHodgeMultiplePolylogarithmsHodgeTateStructures}

\subsection{Multiple polylogarithms and iterated integrals}\label{SectionMultiplePolylogarithmsIteratedIntegrals}

From the most naive point of view, multiple polylogarithm is an analytic function, defined in the polydisc 
\[
|a_1|,|a_2|,\dots, |a_k| <1
\] 
by the power series
\[
\Li_{n_1,n_2,\dots, n_k}(a_1,a_2,\dots,a_k)=\sum_{m_1>m_2>\dots>m_k>0}\frac{a_1^{m_1} a_2^{m_2}\dots a_k^{m_k}}{m_1^{n_1}m_2^{n_2}\dots m_k^{n_k}}.
\]

The properties of multiple polylogarithms, such as shuffle and quasi-shuffle relations, were studied by Goncharov, see \cite[\S 2]{Gon01}. The key idea is the relation between multiple polylogarithms and iterated integrals, which we now recall. 

For a smooth complex manifold $M$ consider a collection of $1$-forms $\omega_1,\ldots,\omega_m$ on $M$ and a piecewise smooth path $\gamma\colon [0,1]\lra M.$ Denote by $\gamma^*\omega_i=f_i(t)d t$
the pull-back of the form $\omega_i$ to the segment $[0,1].$ The iterated integral of $\omega_1,\ldots,\omega_n$ along $\gamma$ is defined as follows: 
\[
\int_\gamma\omega_1\circ\dots\circ\omega_n\\
=\int_{0\leq t_1\leq t_2\leq \dots\leq t_n\leq 1}f_1(t_1)dt_1 \wedge \dots \wedge f_n(t_n)dt_n.
\]

One can show (see \cite[Theorem 2.1]{Gon01}) that multiple polylogarithms can be presented as iterated integrals:
\be \label{FormulaPolylogarithmsIteratedIntegrals}
\begin{split}
	&\Li_{n_1,n_2,\dots, n_k}(a_1,a_2,\dots,a_k)\\
	=&(-1)^k \int_0^1 \underbrace{\frac{dt}{t-(a_1\dots a_k)^{-1}} \circ \frac{dt}{t}\circ\dots \circ\frac{dt}{t}}_{n_1}\circ \dots \circ \underbrace{\frac{dt}{t-a_k^{-1}} \circ \frac{dt}{t}\circ\dots \circ\frac{dt}{t}}_{n_k}.
\end{split}
\ee
Formula (\ref{FormulaPolylogarithmsIteratedIntegrals}) allows defining multiple polylogarithms outside of the polydisc by analytic continuation. Moreover, it shows that multiple polylogarithms are {\it motivic periods}, see \cite{KZ01} and \cite{Bro17}. The theory of motivic periods is largely conjectural and may be viewed as an extension of the Galois theory to certain classes of transcendental numbers. The main advantage of this point of view is that one can study motivic periods via {\it motivic coaction}.

For our purposes, it will be convenient not to view polylogarithms  as motivic periods, but rather as {\it framed mixed Hodge-Tate structures}. Intuitively, this corresponds to working modulo an ideal of the ring of periods, generated by $2\pi i.$ In this setting, the motivic coaction is substituted with the motivic coproduct.

For a detailed exposition of the  background material on mixed Hodge structures and multiple polylogarithms, see \cite[\S 5-6]{Gon01}. In the next few sections, we briefly review the key definitions and facts that we will use later. Also, there is a way to read the paper without working with mixed Hodge structures, see \S \ref{SectionWorkingWithoutMHS}.

\subsection{Mixed Hodge-Tate structures}\label{SectionMHTS} Mixed Hodge structures were introduced in \cite{Del71}, \cite{Del71b}, and \cite{Del74}. Let $H$ be a finite-dimensional vector space over $\Q.$ A {\it pure Hodge structure} of weight $n\in \mathbb{Z}$ on $H$ is a  decreasing filtration $F^\bullet$ on the complexification $H_\C$
\[
\dots \supseteq  F^p H_{\C}\supseteq F^{p+1} H_{\C} \supseteq\dots
\]	
such that
\[
F^p H_\C \oplus \overline{F^q H_\C}=H_\C \text{ for any }p+q=n+1.
\]
A {\it mixed Hodge structure} on $H$ is a pair of an increasing filtration $W_\bullet$ on $H$ (the weight filtration) and a decreasing filtration $F^\bullet$ on $H_\C$ such that the  filtration, induced by $F^\bullet$ on every graded piece
\[
\textup{gr}_n^W (H)=\frac{W_nH}{W_{n-1}H},
\] 
is a pure Hodge structure of weight $n.$ Mixed Hodge structures form a category, which turns out to be abelian; moreover, it is a Tannakian category.

The unique $1$-dimensional pure Hodge structure $H$ of weight $2n$ with $F^{n}H=H$ and $F^{n+1}H=0$ is denoted by $\Q(-n)$ and is called {\it pure Tate}; we have $\Q(-1)=\Q(1)^{\vee}$ and $\Q(n)=\Q(1)^{\otimes n}.$ A mixed Hodge structure $H$ is called {\it mixed Hodge-Tate} if its graded  pieces  $\textup{gr}^W_{2n+1} (H)$ vanish and graded  pieces  $\textup{gr}^W_{2n} (H)$ are sums of pure Tate structures $\Q(-n).$ The category of mixed Hodge-Tate structures is a Tannakian subcategory of the category of mixed Hodge structures.

Deligne showed that singular cohomology groups of algebraic varieties carry a canonical mixed Hodge structure; so do relative cohomology groups of algebraic pairs.  For instance, both $H^{2n}(\mathbb{P}^n,\Q)$ and $H^{2n-1}\left(\mathbb{P}^{2n-1}\fgebackslash Q ,\Q\right)$  for a smooth quadric $Q$ in $\mathbb{P}^{2n-1}$ equal to $\Q(-n)$ and are pure Tate. Here is an example of a mixed Hodge-Tate structure, which is not pure.

\begin{example}
For $a\in \C^\times \fgebackslash \{1\}$ consider the cohomology group $H^1(\mathbb{P}^1 \fgebackslash \{0,\infty\},\{1,a\}).$ From the long exact sequence of pairs one can deduce that the corresponding mixed Hodge structure is mixed Hodge-Tate and defines an extension 
\be \label{FormulaExtensionMHS}
0\lra \mathbb{Q}(-1) \lra H^1\left (\mathbb{P}^1 \fgebackslash \{0,\infty\},\{1,a\}\right) \lra \mathbb{Q}(0)\lra 0.
\ee
One can show that all extensions of $\Q(0)$ by $\Q(-1)$ are obtained in this way. Moreover,
$\textup{Ext}^1(\mathbb{Q}(0), \mathbb{Q}(-1))=\mathbb{C}^\times_\Q$ and $a\in \mathbb{C}^\times_\Q$ corresponds to the extension (\ref{FormulaExtensionMHS}). Here we define the rationalization of an abelian group $A$ by  $A_\Q:=A\otimes \Q.$
\end{example}

A general family of examples of mixed Hodge-Tate structures comes from non-Euclidean geometry. Goncharov showed that for a projective simplex $S=(Q; H_1,\dots, H_{2n})$ in $\mathbb{P}^{2n-1}$  the following  cohomology group is mixed Hodge-Tate:
\[
H(S)=H^{2n-1}\left(\mathbb{P}^{2n-1}\fgebackslash Q, \left (\bigcup_{i=1}^{2n} H_i\right )\fgebackslash Q;\mathbb{Q}\right).
\]
The dimension of $H(S)$ depends on the relative position of the hyperplanes and the quadric. For instance, for a generic projective tetrahedron $S$ in $\mathbb{P}^3$ we have  
\[
\textup{gr}_0^W \left (H(S)\right)=\Q(0), \ \ \ \textup{gr}_2^W \left(H(S)\right)=\Q(-1)^{\oplus 6}, \ \ \ \textup{gr}_4^W \left(H(S)\right)=\Q(-2).
\]

The category of mixed Hodge-Tate structures is Tannakian, so by Tannakian duality, it can be described  via its Hopf algebra of framed objects, which we denote by $\H$ (see \cite{BMS87} and \cite{BGSV90}). Hopf algebra $\H$ is graded: $\H=\bigoplus_{n\geq 0}\H_n.$ The elements of $\H$ can be described explicitly as follows.

An $n$-framed mixed Hodge-Tate structure is a triple $[H; v, f],$ where $H$ is a  mixed Hodge-Tate structure, $v$ is a vector in $\textup{gr}_{2n}^W (H)$, and $f$ is a vector in $\left(\textup{gr}_0^W (H)\right)^\vee.$ Consider the coarsest equivalence relation on the set of all $n$-framed Tate structures for which $H_1 \sim  H_2$ if there is a morphism of mixed Hodge structures $H_1 \rightarrow H_2$ respecting the frames. Then $\H_n$ is the set of equivalence classes; $\H_0=\Q$. 

Addition in $\H_n$ is defined by the following rule:
\[
[H_1;v_1,f_1]+[H_2;v_2,f_2]=[H_1\oplus H_2; v_1\oplus v_2, f_1+f_2].
\]
The tensor product of mixed Hodge structures induces a commutative product. Next, the $(n-k,k)$-component of the coproduct
$
\Delta^{\H\H}\colon{\H_n\lra \bigoplus_{k=0}^n \H_{n-k}\otimes \H_{k}}
$ 
is defined by formula \[
\Delta^{\H\H}_{n-k,k}[H;v,f]=\sum_{i}[H\otimes \Q(k);v,e^i]\otimes [H;e_i,f]
\]
for a basis $(e_i)$ of $\textup{gr}_{2k}^W(H)$ and the dual basis $(e^i)$ of $\left(\textup{gr}_{2k}^W(H)\right)^\vee$.

\begin{example}
The Hodge logarithm $\log^\H(a)\in \H_1$ is the triple $[H,v,f]$, where 
\[
H=H^1\left (\mathbb{P}^1 \fgebackslash \{0,\infty\},\{1,a\}\right),
\] 
vector $v$ is the generator of  
$\textup{gr}_2^W(H)=H^1(\mathbb{P}^1 \fgebackslash \{0,\infty\})$ represented by the form $\dfrac{dz}{z}$, and  vector $f$ is the generator of $\left(\textup{gr}_0^W(H)\right)^{\vee}=H_1(\mathbb{P}^1,\{1,a\})$ represented by a
 path from $1$ to $a.$ Hodge logarithms generate $\H_1=\mathbb{C}^\times_\mathbb{Q}.$ It is easy to see that Hodge logarithms are primitive elements for the coproduct $\Delta^{\H\H}:$
 \[
 \Delta^{\H\H} \log^\H(a) =\log^\H(a)\otimes 1+ 1\otimes \log^\H(a).
 \] 
 \end{example}

	The usual logarithm	 $\log(a)$ is defined only up to a multiple of $2\pi i.$ Notice that Hodge logarithm $\log^\H(a)$ is defined canonically. In particular, there does not exist an evaluation morphism from $\H$ to $\C.$ Remarkably, there exists a canonical morphism 
\be \label{FormulaRealPeriod}
\mathrm{per}_{\mathbb{R}}\colon \H_n \lra \mathbb{R}(n-1),
\ee
where $\mathbb{R}(n-1)=\mathbb{R}(2\pi i)^{n-1}\subseteq \C.$ It was defined by Goncharov in \cite[\S 4]{Gon99} and is called ``real period.'' The construction of the real period is explicit, but rather  tricky, we do not repeat it here (see \cite[\S 3]{GZ18} for a more conceptual explanation).  For $n=1$ we have 
\[
\mathrm{per}_{\mathbb{R}}(\log^\H(a))=\log(|a|)\in \mathbb{R}.
\]
\begin{example}
Consider an nondegenerate oriented projective tetrahedron $S=(Q; H_1,\dots, H_{4})$ in $\mathbb{P}^{3}$ defined in \S \ref{SectionProjectiveSimplex}; let $q=0$ be an equation of the quadric $Q$. We associate to $S$ a framed mixed Hodge structure 
\[
h(S)=[H(S),v,f]\in \H_2,
\] 
where $H(S)=H^3\left(\mathbb{P}^{3}\fgebackslash Q, \left (\bigcup_{i=1}^{4} H_i\right )\fgebackslash Q\right ),$ $v$ is a generator of  $H^3\left(\mathbb{P}^3\fgebackslash Q\right),$
and $f$ is a generator of $H_3\left(\mathbb{P}^3, \bigcup H_i \right).$ The coproduct $\Delta^{\H\H} h(S)\in \H \otimes \H$ is a version of the Dehn invariant.

Notice that both $H_3\left(\mathbb{P}^3, \bigcup H_i \right)=\Q(0)$ and  $H^3(\mathbb{P}^3\fgebackslash Q)=\Q(-2)$ have two natural generators. The choice of $f$ is fixed by the ordering of the hyperplanes $H_1,\dots,H_4$ and the choice  of  
\[
v=\pm\sqrt{\textup{disc}(q)}\frac{\sum_{i=0}^3(-1)^ix_idx_1\wedge \dots \wedge \widehat{dx_i}\wedge \ldots \wedge dx_4}{q^2}
\]
depends on the orientation of $S,$ see \cite[\S 3.3]{Gon99} for the details.
If the projective tetrahedron $S$ comes from a hyperbolic simplex $\mathcal{S}$ in $\mathbb{H}^{3}$ then the real period of $h(S)$ equals to the hyperbolic volume of $\mathcal{S}.$
\end{example}

We will use the same symbol $\H$ for the completion of $\H$ with respect to the $\mathcal{I}$-adic topology, where $\mathcal{I}$ is the fundamental ideal $\bigoplus_{n>0} \H_n.$  The Lie coalgebra of indecomposable elements in $\H$ is denoted by $\L:$
\[
\L=\H_{>0}/(\H_{>0}\cdot\H_{>0}).
\] 
For $A, B\in \H_{>0}$ we say that $A$ and $B$ {\it equal modulo products} if their projections coincide in $\L;$ we write $A\stackrel{\shuffle}{=}B.$ We denote the Lie cobracket in $\L$ by $\Delta^\L.$ 

Sometimes we work not with  framed mixed Hodge-Tate structures but with framed unipotent variations of mixed Hodge-Tate structures over an algebraic variety $S$; we denote the corresponding Hopf algebra by $\H[S]$. Not much changes in this setting, see \cite[\S5.2]{Gon01} and \cite[\S7]{Bro17} for more details.

\subsection{Hodge iterated integrals, correlators and multiple polylogarithms}\label{SectionHodgeIterated}
Goncharov constructed elements of $\H$ called Hodge iterated integrals in \cite{Gon01}, \cite{Gon02} and \cite{Gon05}. For every collection of points $x_0,\dots, x_{n+1} \in \mathbb{C}$ Goncharov constructed {\it Hodge iterated integrals}
\be \label{FormulaHodgeIterated}
\textup{I}^{\H}(x_0;x_1,x_2,\dots,x_n;x_{n+1})\in \H_n,
\ee
which are Hodge versions of iterated integrals
\[
\int_{x_0}^{x_{n+1}}\frac{dt}{t-x_1}\circ \dots \circ \frac{dt}{t-x_n}=\int_{x_0\leq t_1\leq t_2\leq \dots\leq t_n\leq x_{n+1}}\frac{dt_1}{t_1-x_1}\wedge  \frac{dt_2}{t_2-x_2}\wedge\dots \wedge\frac{dt_n}{t_n-x_n}.
\]
Note that unlike usual iterated integrals, Hodge iterated integrals do not depend on the paths from $x_0$ to $x_{n+1}.$  We give a brief outline of their construction for $x_i\notin \{x_0,x_{n+1}\}.$ For the general case one needs to work with tangential base points, see \cite[\S 5.3]{Gon01}.

Let 
\[
\pi^{un}=\pi^{un}(\mathbb{C}\fgebackslash \{x_1,\ldots,x_n\}; x_0, x_{n+1})
\]
be the prounipotent completion to the topological torsor of paths from $x_0$ to $x_{n+1}$. In \cite{Hai87} Hain showed that its algebra of functions $\mathcal{O}(\pi^{un})$ is equipped with the structure of a projective limit of mixed Hodge-Tate structures, see also \cite[\S4]{Gon01}. In particular, it has a weight filtration, such that 
\[
\textup{gr}_0^W\left(\mathcal{O}(\pi^{un})\right)=\Q(0) \text{ and } \textup{gr}_{2n}^W\left(\mathcal{O}(\pi^{un})\right)=H^1(\C\fgebackslash \{x_1,\ldots,x_n\})^{\otimes n}.
\] 
The Hodge iterated integral (\ref{FormulaHodgeIterated}) is a framed mixed Hodge-Tate structure 
\[
\left[\mathcal{O}(\pi^{un});v,f\right]
\] for the obvious $f\in \left (\textup{gr}_0^W\right)^{\vee}$ and  
\[
v=\frac{dz}{z-x_1}\otimes \dots \otimes \frac{dz}{z-x_n}\in\textup{gr}_{2n}^W\left(\mathcal{O}(\pi^{un})\right).
\]

\begin{example}\label{ExampleLog}
We have  $I^\H(x_0;x_1)=1\in \H_0$ and 
\[
I^\H(x_0;x_1;x_2)=\log^\H(x_2-x_1)-\log^\H(x_1-x_0)\in \H_1.
\]	
Next, we define the Hodge dilogarithm:
\[
\textup{Li}_2^\H(a)=-I^\H(0;1,0;a)\in \H_2.
\]
\end{example}

\begin{figure}
\begin{center}
\begin{tikzpicture}[transform shape]
  \foreach \number in {1,...,5}{
        \mycount=\number
        \advance\mycount by -1
  \multiply\mycount by 45
        \advance\mycount by 0
      \node[draw, very thin, color=gray,inner sep=0.001cm] (N-\number) at (\the\mycount:3.4cm) {};
    }
  \foreach \number in {9,10,11,12}{
        \mycount=\number
        \advance\mycount by -1
  \multiply\mycount by 45
        \advance\mycount by 22.5
      \node[draw, very thin, color=gray,inner sep=0.001cm](N-\number) at (\the\mycount:3.4cm) {};
    }
\draw[midnight, line width=0.25mm] (N-1) -- (N-9);
\draw[midnight, line width=0.25mm] (N-9) -- (N-2);
\draw[midnight, line width=0.25mm] (N-2) -- (N-10);
\draw[midnight, line width=0.25mm] (N-10) -- (N-3);
\draw[midnight, line width=0.25mm] (N-3) -- (N-11);
\draw[midnight, line width=0.25mm] (N-11) -- (N-4);
\draw[midnight, line width=0.25mm] (N-4) -- (N-12);
\draw[midnight, line width=0.25mm] (N-12) -- (N-5);
\draw[midnight, line width=0.25mm] (N-5) -- (N-1);

\node[left] at (N-5) {$x_0$};
\node[left] at (N-12) {$x_1$};
\node[yshift=0.11cm, xshift=-0.31cm] at (N-4) {$x_2$};
\node[yshift=0.21cm, xshift=-0.21cm] at (N-11) {$x_3$};
\node[above] at (N-3) {$x_4$};
\node[yshift=0.21cm, xshift=0.21cm] at (N-10) {$x_5$};
\node[yshift=0.11cm, xshift=0.31cm] at (N-2) {$x_6$};
\node[right] at (N-9) {$x_7$};
\node[right] at (N-1) {$x_8$};

\draw[midnight, line width=0.25mm] (N-5) -- (N-11);
\draw[midnight, line width=0.25mm] (N-3) -- (N-2);
\draw[midnight, line width=0.25mm] (N-2) -- (N-1);
\end{tikzpicture}
\end{center}
\caption{The term  corresponding to the sequence $(0,3,4,6,8)$  is equal to 
 $\left( \textup{I}^{\H}(x_0;x_1,x_2;x_3)\textup{I}^{\H}(x_4;x_5;x_6)\textup{I}^{\H}(x_6;x_7;x_8)\right)\otimes \textup{I}^{\H}(x_0;x_3,x_4,x_6;x_8).$  
 }
\label{FigureCoproduct}
\end{figure}
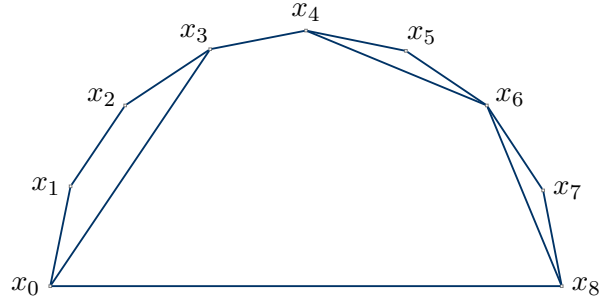

In \cite[Theorem 3.3(c)]{Gon05} Goncharov computed the coproduct of Hodge iterated integrals:
\be
\label{FormulaCoproduct}
\begin{split}
	&\Delta^{\H\H} \textup{I}^{\H}(x_0;x_1,\dots,x_n;x_{n+1})\\
	=&\sum_{(i_0,i_1,\dots,i_{r+1})}\left (\prod_{p=0}^{r} \textup{I}^{\H}(x_{i_p};x_{i_p+1},\dots,x_{i_{p+1}-1};x_{i_{p+1}})\right )\otimes  \textup{I}^{\H}(x_{i_0};x_{i_1},\dots, x_{i_r} ; x_{i_{r+1}}),
\end{split}
\ee
where the summation goes over all sequences $0=i_0<i_1<\dots<i_r<i_{r+1}=n+1$ (see Figure \ref{FigureCoproduct}). 

We will use the following property of Hodge iterated integrals: if $x_0\neq x_1$ and $n>1$ then for any $\lambda \in \C^{\times}$ we have
\be \label{FormulaHomogenuity}
\textup{I}^{\H}(x_0;x_1,x_2,\dots,x_n;x_{n+1})=\textup{I}^{\H}( \lambda x_0;\lambda x_1,\lambda x_2,\dots,\lambda x_n;\lambda x_{n+1}).
\ee
For $x_0=x_1$ this is true only modulo products. In general, the following property holds. For $x_0\neq x_1$ we put
\[
\begin{split}
	&\textup{I}^{\H}_\bullet(x_0;x_1,x_2,\dots,x_n;x_{n+1})
	=\sum_{n\geq 0}\textup{I}^{\H}_\bullet(x_0;\underbrace{x_0,\ldots,x_0}_n,x_1,x_2,\dots,x_n;x_{n+1})\in \H.
\end{split}
\]
It is well-known that for $n\geq  0$ and $\lambda\neq 0$ we have
\be \label{FormulaLogPower}
 \textup{I}^\H(0;\underbrace{0,\dots,0}_{n};\lambda)=\frac{\left (\log^\H(\lambda)\right )^{n}}{n!},
\ee
and so
$
\textup{I}^\H_\bullet(0;\lambda)=e^{\log^\H(\lambda)}.
$
Then the following formula generalizes (\ref{FormulaHomogenuity}):
\be \label{FormulaHomogenuityDivergent}
\textup{I}^{\H}_\bullet(\lambda x_0;\lambda x_1,\lambda x_2,\dots,\lambda x_n;\lambda x_{n+1})=
e^{\log^\H(\lambda)}\textup{I}^{\H}_\bullet(x_0;x_1,x_2,\dots,x_n;x_{n+1});
\ee
we leave the proof to the reader.

Another important fact is that if $x_0=x_{n+1}$ then
\[
\textup{I}^{\H}_\bullet(x_0;x_1,x_2,\dots,x_n;x_{n+1})=0.
\]
Finally, in the proof of Proposition \ref{LemmaAlternatingCoproduct}  we will use the following statement, which can be easily checked:
\be \label{EqualityRoots}
\sum_{\epsilon_{1},\dots, \epsilon_{n}\in \{-1,1\} }  \left ( \prod_{i=1}^k \epsilon_i\right )
\textup{I}^{\H}(x_0;x_1,\dots,x_n;x_{n+1})=\textup{I}^{\H}(x_0^2;x_1^2,\dots,x_n^2;x_{n+1}^2).
\ee

In \cite{Gon19} Goncharov constructed elements in $\mathcal{L}$
called Hodge correlators, see also \cite[\S 2]{GR18}. Consider $a\in \C$ and $x_0,\dots, x_{n}\in \mathbb{C}\fgebackslash\{a\}.$ Hodge correlators are certain elements
$\textup{Cor}_a(x_0,\dots,x_n) \in \mathcal{L}_n;$ there construction is based on a natural description of the prounipotent completion of the fundamental group $\pi^{un}(\mathbb{P}^1 \fgebackslash \{x_1,\ldots,x_n\},a)$, see \cite[\S 2]{GR18}. For $a=\infty$ we put
\[
\textup{Cor}(x_0,\dots,x_n)=\textup{Cor}_{\infty}(x_0,\dots,x_n).
\]
There is a simple formula reducing the correlator with an arbitrary $a$ to the correlator with $a=\infty:$
\be \label{CorrelatorChangeBasepoint}
\textup{Cor}_{a}(x_0,\dots,x_n)=
\sum_{s=0}^{n} \sum_{0\leq i_1<\dots<i_s\leq n}(-1)^s \textup{Cor}(x_0,\dots,a,\dots,a,\dots,x_{n}).
\ee
Here the $s$-$th$ term is $(-1)^s$ times a sum over $0\leq i_1<\dots<i_s\leq n$ of correlators obtained from the correlator $\textup{Cor}(x_0,\dots,x_{n})
$ by inserting the point $a$ instead of points $x_{i_1} , \dots , x_{i_s}.$

Denote the projection of $I^\H\in \H$ to $\L$ by $I^\L.$  Hodge correlators and Hodge iterated integrals are related by a simple formula
\be \label{FormulaIIviaCor}
(-1)^{n+1}\textup{I}^{\mathcal{L}}(x_0;x_1,x_2,\dots,x_n;x_{n+1})=\textup{Cor}(x_1,\dots, x_{n+1})-\textup{Cor}(x_0,\dots, x_n).
\ee
It is easy to see that $\textup{Cor}(x_0,\dots, x_n)=0$ for $x_0=x_1=\dots=x_n.$
Formula (\ref{FormulaIIviaCor}) implies that 
\[
\textup{Cor}(x_0,\dots, x_n)=(-1)^{n+1}\sum_{i=1}^{n}\textup{I}^{\mathcal{L}}(x_0;\underbrace{x_0,\ldots,x_0}_i,x_1,\dots,x_{n-i};x_{n-i+1}).
\]

\begin{example}
\[
\begin{split}
&\textup{Cor}(x_0,x_1)=\log^{\mathcal{L}}(x_0-x_1);\\
&\textup{Cor}(x_0,\underbrace{x_1,x_1,\dots,x_1}_{n})=0 \text{\ for \ } n\geq 2;\\	
&\textup{Cor}_a(x_0,x_1,x_2)=\Li^{\mathcal{L}}_2([a,x_0,x_1,x_2]).\\	
\end{split}
\]
\end{example}
Hodge correlators are cyclically symmetric, i.e.,
\be \label{FormulaCorrelatorsSymmetry}
\textup{Cor}_a(x_0,\dots, x_{n-1}, x_n)=\textup{Cor}_a(x_1,\dots, x_n, x_0).
\ee
Their coproduct can be computed by the following formula:
\be\label{FormulaCoproductCorrelators}
\Delta^{\mathcal{L}}\textup{Cor}_a(x_0,\dots, x_n)=\sum_{i<j}  \textup{Cor}_a(x_j,\dots,x_{i-1})\wedge \textup{Cor}_a(x_i,\dots,x_j), 
\ee
where the notation ``$i < j$'' means that the order of the points $x_i,...,x_j$ in the first factor, and hence in the second, is compatible with the cyclic order of the points $x_k$. We say that the term $\textup{Cor}_a(x_j,\dots,x_{i-1}) \wedge \textup{Cor}_a(x_i,\dots,x_j)$ comes from the ``cut'' beginning at $x_j$ and ending between $x_{i-1}$ and $x_{i},$ see Figure \ref{FigureCoproductCut}.
 
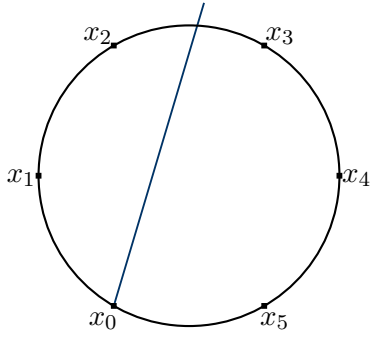
\begin{figure}
 \centering
\begin{tikzpicture}[transform shape]
  \foreach \number in {1,2,3}{
        \mycount=\number
        \advance\mycount by -1
  \multiply\mycount by 120
        \advance\mycount by 60
      \node[draw, very thick, color=black, inner sep=0.01cm] (N-\number) at (\the\mycount:2.0cm) {};
    }
  \foreach \number in {4,5,6}{
        \mycount=\number
        \advance\mycount by -1
  \multiply\mycount by 120
        \advance\mycount by 120
      \node[draw, very thick, color=black, inner sep=0.01cm](N-\number) at (\the\mycount:2cm) {};
    }

\draw[midnight, line width=0.25mm] (N-5) -- (0.2,2.3);

\node[yshift=-0.2cm, xshift=-0.14cm] at (N-5) {$x_0$};
\node[yshift=-0.03 cm, xshift=-0.23cm] at (N-2) {$x_1$};
\node[yshift=0.13cm, xshift=-0.21cm] at (N-4) {$x_2$};
\node[yshift=0.13cm, xshift=0.21cm] at (N-1) {$x_3$};
\node[yshift=-0.03 cm, xshift=0.23cm] at (N-6) {$x_4$};
\node[yshift=-0.2cm, xshift=0.14cm] at (N-3) {$x_5$};
\draw[black, thick] (0,0) circle (2 cm);

\end{tikzpicture}
\caption{The term $\textup{Cor}(x_0,x_3,x_4,x_5) \wedge \textup{Cor}(x_0,x_1,x_2)$  in the coproduct $\Delta^\L\left(\textup{Cor}(x_0,x_1,x_2, x_3,x_4,x_5)\right)$ comes from the cut beginning at $x_0$ and ending between $x_2$ and $x_3.$}
\label{FigureCoproductCut}
\end{figure}

For $n_1,\dots,n_k\in \mathbb{N}$ we define the multiple polylogarithm
\be \label{FormulaMPviaII}
\begin{split}
	&\Li^{\H}_{n_1,\dots,n_k}(a_1,a_2,\dots,a_k)\\
&=(-1)^{k}\textup{I}^{\H}(0;1,\underbrace{0,\dots,0,a_1}_{n_1},\dots,\underbrace{0,\dots,0,a_1a_2\dots a_{k-1}}_{n_{k-1}},\underbrace{0,\dots,0;a_1a_2\dots a_{k}}_{n_k});
\end{split}
\ee
this definition is motivated by (\ref{FormulaPolylogarithmsIteratedIntegrals}).
We will need a slight generalization: for $n_1,\dots,n_k\in \mathbb{N}$ and $n_0\geq 0$ we  define the (generalized) multiple polylogarithm
\[
\begin{split}
	&\Li^{\H}_{n_0;n_1,\dots,n_k}(a_1,a_2,\dots,a_k)\\
&=(-1)^{k}\textup{I}^{\H}(0;\underbrace{0,\dots,0}_{n_0},1,\underbrace{0,\dots,0,a_1}_{n_1},\dots,\underbrace{0,\dots,0,a_1a_2\dots a_{k-1}}_{n_{k-1}},\underbrace{0,\dots,0;a_1a_2\dots a_{k}}_{n_k}).
\end{split}
\]
For $n_0>0$ the corresponding iterated integral is divergent, but can be regularized. We will only work with their image in $\L.$

The number $n=n_0+n_1+n_2+\dots+n_k$ is called the weight of the multiple polylogarithm and  $k$ is called its depth. We have $\Li^{\H}_1\left(1-\frac{1}{x}\right)=\log^{\H}(x),$ see Example \ref{ExampleLog}. It is often convenient to consider the sum 
\[
\Li^{\H}_{\bullet;n_1,\dots,n_k}(a_1,a_2,\dots,a_k)=\sum_{n_0=0}^{\infty} \Li^{\H}_{n_0;n_1,\dots,n_k}(a_1,a_2,\dots,a_k)\in \H.
\]

Using (\ref{FormulaCoproduct}) one can compute the coproduct of multiple polylogarithms. 
\begin{example} \label{ExampleCoproduct}
 In weight two and depth one we have
\[
\Delta^{\H\H}\Li^{\H}_{2}(a)=-\Delta^{\H\H}\textup{I}^{\H}(0;1,0;a)=1 \otimes\Li^{\H}_{2}(a)+\Li^{\H}_{2}(a)\otimes 1+\log^{\H}(a)\otimes \Li^{\H}_1(a).
\]
In weight two and depth two we have
\begin{align*}
\Delta^{\H\H}\Li^{\H}_{1,1}(a_1,a_2)&=\Delta^{\H\H}\textup{I}^{\H}(0;1,a_1;a_1a_2)\\
  &=1\otimes\Li^{\H}_{1,1}(a_1,a_2)+  \Li^{\H}_1(a_1)\otimes \Li^{\H}_1(a_2)+\Li^{\H}_{1,1}(a_1,a_2)\otimes 1\\
  &\quad+(\Li^{\H}_1(a_2)-\Li^{\H}_1(a_1^{-1}))\otimes \Li^{\H}_1(a_1a_2).\\
\end{align*}
\end{example}

\subsection{Proving identities between periods} \label{SectionRigidity}
Some of the main results of our work, i.e., Theorems \ref{MainTheoremDepth} and \ref{MainTheoremQuadrangulationCluster}, are certain identities between multiple polylogarithms ``modulo products of polylogarithms of lower weight.'' Polylogarithms are multivalued functions, which are defined only up to a product of $2\pi i$ and a function of lower weight, so writing explicit ``numerically checkable'' identities between multiple polylogarithms is possible only locally. 

Instead of working with actual functions, we work with framed unipotent variations of mixed Hodge-Tate structures: elements of the Hopf algebra $\H[S]$. Given an element $[H;v,f]\in \H[S]$, one can recover the multivalued function by choosing a (local) splitting of the weight filtration $W_\bullet H_\Q$ and  taking a flat section of the corresponding variation. For details, see the definition of the {\it complex period} in \cite[\S4.1]{Gon99}. The resulting function is defined up to a product of $2\pi i$ and a complex period of a framed variation of mixed Hodge-Tate structure of lower weight. 

So, every identity between elements of $\H[S]$ implies an identity between the corresponding multivalued functions. Similarly, every identity between  elements of $\L[S]$ leads to an identity between the corresponding multivalued functions modulo product of functions of lower weight.

The general method of proving such identities is discussed in \cite[\S 8]{Gon02}; here we sketch the main idea. If we need to check an identity $A=B$ between actual functions, we can proceed in two steps. First, we prove that the differentials of both sides coincide ($d A =d B$), thus showing that the identity is valid up to a constant. Then we specialize the functions to a point and check that the constant $A-B$ is zero.	

A similar method works for an identity $A=B$ between elements of $\H_n.$ In most examples that we encounter, terms in $A$ and $B$ are Hodge iterated integrals (\ref{FormulaHodgeIterated}) with arguments being rational functions on a smooth connected algebraic variety $S.$ It means that equality $A=B$ is equality of framed unipotent variations of  mixed Hodge-Tate structures on $S.$ To check that $A=B$, we first check equality of coproducts $\Delta^{\H\H}A=\Delta^{\H\H}B$. Note that this step is parallel to checking the equality of differentials of functions, as the $(1,n-1)$-component of the coproduct is closely related to the differential.

By the Rigidity Lemma \cite[Lemma 8.2]{Gon02}, vanishing of the coproduct $\Delta^{\H\H}(A-B)$ implies that $A-B$ is a framed constant variation on $S.$ Thus, to prove  the equality $A=B$ it suffices to prove it for any specialization to a point in $S.$ Specialization of framed variations of mixed Hodge structures is discussed in \cite[\S 4]{Gon02}. Notice that one can  prove identities in the Lie coalgebra $\L$ in a similar way, see \cite[Lemma 2.7]{GR18}.

\subsection{Working with polylogarithms without knowledge of mixed Hodge structures} \label{SectionWorkingWithoutMHS}

In \S\S \ref{SectionMHTS}--\ref{SectionRigidity} we gave the background material on mixed Hodge-Tate structures and their relation to polylogarithms. The notion of a mixed Hodge structure is neither intuitive nor an easy one to learn; in this section, we sketch an alternative way to read the paper. We construct a Hopf algebra of polylogarithms in a way similar to the formal definition of higher Bloch groups, see \cite[\S 1.9]{Gon95B}.

Let $\mathrm{F}$ be a field of zero characteristic. Our goal is to define a commutative graded Hopf algebra $\mathbb{G}_\bullet(\mathrm{F})$ of multiple polylogarithms. Conjecturally, $\mathbb{G}_\bullet(\mathrm{F})$ is isomorphic to the Hopf algebra $\H_{\mathcal{M}}$ from \S \ref{SectionIntroduction_Philosophy}.

 First, we define a  commutative Hopf algebra $\mathbb{I}_\bullet(\mathrm{F})$ generated by symbols 
\be \label{FormulaIter}
\mathbb{I}(x_0;x_1,\dots,x_n;x_{n+1})\in \mathbb{I}_n(\mathrm{F})
\ee
for $x_0,x_1,\dots,x_n, x_{n+1}\in \mathrm{F}$ subject to the following relations:
\begin{enumerate}
\item $\mathbb{I}(x_0;x_1)=1;$
\item $\mathbb{I}(x_0;x_1,\dots,x_n;x_{n+1})=0$ for $x_0=x_{m+1};$
\item $\mathbb{I}(x_0;x_1,\dots,x_n;x_{n+1})=\sum_{r=0}^n\mathbb{I}(x_0;x_1,\dots,x_r;x)\mathbb{I}(x;x_{r+1},\dots,x_n;x_{n+1}).$
\end{enumerate}

The cobracket 
 \[
\Delta\colon \mathbb{I}_\bullet(\mathrm{F})\lra\mathbb{I}_\bullet(\mathrm{F})\otimes \mathbb{I}_\bullet(\mathrm{F})
\]
is defined by the formula
\[
\begin{split}
	&\Delta \mathbb{I}(x_0;x_1,\dots,x_n;x_{n+1})\\
	=&\sum_{(i_0,i_1,\dots,i_{r+1})}\left (\prod_{p=0}^{r} \mathbb{I}(x_{i_p};x_{i_p+1},\dots,x_{i_{p+1}-1};x_{i_{p+1}})\right )\otimes  \mathbb{I}(x_{i_0};x_{i_1},\dots, x_{i_r} ; x_{i_{r+1}}),
\end{split}
\]
where the summation goes over all sequences $0=i_0<i_1<\dots<i_r<i_{r+1}=n+1.$ 
It is easy to see that  $\mathbb{I}_\bullet(\mathrm{F})$ is a commutative Hopf algebra, see \cite[Proposition 2.2]{Gon05}.

Next, we define the space of relations $R_n(\mathrm{F})\subseteq \mathbb{I}_n(\mathrm{F})$ inductively; the coalgebra of multiple polylogarithms is defined as the quotient
\[
\mathbb{G}_n(\mathrm{F})=\frac{\mathbb{I}_n(\mathrm{F})}{R_n(\mathrm{F})}.
\]

In weight one we define $R_1(\mathrm{F})$ to be the kernel of the map sending $\mathbb{I}(x_0; x_1;x_2)\in \mathbb{G}_1(\mathrm{F})$ to $\dfrac{x_2-x_0}{x_0-x_1}\in \mathrm{F}^\times_{\Q}:=\mathrm{F}$ for distinct $x_0,x_1,x_2$ and to zero otherwise.  By definition, $\mathbb{G}_1(\mathrm{F})$. Next, assume that spaces $R_i(\mathrm{F})$ are defined for $i<n.$ The space $R_n(\mathrm{F})$ is spanned by elements
\be \label{FormulaEquationsHomotopy}
r(0)-r(1)\in \mathbb{I}_n(\mathrm{F})
\ee
for elements $r(t) \in \mathbb{I}_n(\mathrm{F}(t)),$ which are primitive in $\mathbb{G}_\bullet(\mathrm{F}(t))\otimes \mathbb{G}_\bullet(\mathrm{F}(t)).$  Proving that this construction is well-defined requires some work; we will develop this approach elsewhere. 

Elements (\ref{FormulaIter}) are analogs of Hodge iterated integrals. More precisely, one can show that there exists a homomorphism of Hopf algebras from $\mathbb{G}(\C)$ to $\H$ sending $\mathbb{I}$ to $\textup{I}^\H.$  We can easily define elements of $\mathbb{G}_\bullet(\C)$ corresponding to Hodge correlators and Hodge multiple polylogarithms using  formulas like (\ref{FormulaIIviaCor}) and (\ref{FormulaMPviaII}). Theorem \ref{MainTheoremQuadrangulationCluster} can be proven for these elements as the rigidity argument can be also performed in $\mathbb{G}_\bullet(\C)$ thanks to (\ref{FormulaEquationsHomotopy}).

\section{Hopf algebra of formal polylogarithms}\label{SectionHopfAlgebraFormal}

\subsection{Quasi-shuffle algebra and quasi-shuffle relation} \label{SectionQuasiShuffle}

Recall the definition of a quasi-shuffle algebra from \cite{Hof00}, \cite{HI17}. Consider a set  of letters $\mathcal{A}$ which carries a structure of a commutative semigroup. We denote the product of $a, b\in \mathcal{A}$ by $a\cdot b\in \mathcal{A}.$ Next, consider a set of words  $\mathcal{A}^*$ over the alphabet $\mathcal{A}$. For a word $\omega \in\mathcal{A}^*$ we denote by $l(\omega)$ the number of letters in $\omega,$ called its length. The only word of length $0$ is the empty word $1\in \mathcal{A}^*.$   For two words $\omega_1, \omega_2\in\mathcal{A}^*$ we denote their concatenation by $\omega_1\omega_2\in\mathcal{A}^*.$ Next, for a word $\omega=a_1a_2\dots a_n\in \mathcal{A}^*$ and $a\in \mathcal{A}$ we denote the word $(a\cdot a_1)a_2\dots a_n$ by $a\cdot w$; we have  $a\cdot 1=0.$

Let $A$ be a $\Q$-vector space with a basis given by $\mathcal{A}^*.$ The tensor algebra on $A$ carries a commutative Hopf algebra structure with  quasi-shuffle product and the deconcatenation coproduct which we denote by $\textup{QSh}^\mathcal{A}$ and call a free quasi-shuffle algebra on $\mathcal{A}.$ We use the bar notation for its elements, i.e., we denote $v_1\otimes v_2\otimes \dots\otimes v_n$ by $[v_1|v_2|\dots|v_n].$ Next, we use the same notation for a word $\omega=a_1a_2\dots a_n\in \mathcal{A}^*$ and an element  $[a_1|\dots|a_n]$ of the quasi-shuffle algebra; such elements form a basis of $\textup{QSh}^\mathcal{A}.$ The quasi-shuffle product of the basis elements $\omega_1, \omega_2 \in \textup{QSh}^\mathcal{A}$ is defined recursively: 
\[
\begin{split}
    & \omega \star 1=1 \star \omega=\omega,\\
	&(a_1\omega_1)\star (a_2\omega_2)=a_1 (\omega_1\star (a_2\omega_2))+a_2((a_1\omega_1)\star (\omega_2))+(a_1\cdot a_2)(\omega_1\star \omega_2).\\
\end{split}
\]
The deconcatenation coproduct is defined by the formula
\be \label{FormulaCoproductQS}
\Delta(\omega)=\sum_{i=0}^n a_1 \dots a_i \otimes a_{i+1}\dots a_n.
\ee

As an abstract Hopf algebra, $\textup{QSh}^\mathcal{A}$ does not depend on the semigroup structure on $\mathcal{A}.$ In particular, if we take the product on $\mathcal{A}$ to be zero, we obtain the free shuffle algebra. Also, we will use the fact that the space of primitive elements in  $\textup{QSh}^\mathcal{A}$ is spanned by the words of length 1.
 
Here is a particular example which we will use. Let $S$ be an irreducible algebraic variety over $\C.$ Consider the set of letters $(\varphi,n)$ where $\varphi\in\C(S)^{\times}$ is a nonzero rational function on $S$ and $n$ is a positive integer. The product 
\[
(\varphi_1,n_1)\cdot (\varphi_2,n_2)=(\varphi_1\varphi_2,n_1+n_2)
\]
 defines a semigroup structure on the alphabet $(\varphi,n)$. Let $\mathcal{F}_S$ be the corresponding quasi-shuffle algebra with deconcatenation coproduct $\Delta^{\mathcal{F}\mathcal{F}}$. 

Let $\H_S$ be the inductive limit of Hopf algebras $\H[U]$ of framed unipotent variations of mixed Hodge-Tate structures over all nonempty open subset $U\subseteq S.$  Consider a map $\Li^{\H}\colon \mathcal{F}_S\lra \H_S$ defined by the rule
\[
\Li^{\H}([\varphi_1,n_1|\varphi_2,n_2|\dots|\varphi_k,n_k])=
\Li^{\H}_{n_1,n_2,\dots,n_k}(\varphi_1,\varphi_2,\dots,\varphi_k).
\]
This map is very far from being a homomorphism of coalgebras but remarkably is a homomorphism of algebras. This statement is known as the quasi-shuffle relation for multiple polylogarithms \cite[Theorem 1.2]{Gon02}, see also \cite[\S 2.5]{Gon01} for an analytic version (where it is called the first shuffle relation). 

\begin{example} For $S=\mathbb{A}^2$ the  equality 
\[
 [a_1,n_1|a_2,n_2]+ [a_2,n_2|a_1,n_1]+[a_1a_2,n_1+n_2]= [a_1,n_1]\star[a_2,n_2]
\]
implies that
\[
\Li_{n_1,n_2}^{\H}(a_1,a_2)+\Li_{n_2,n_1}^{\H}(a_2,a_1)+
\Li_{n_1+n_2}^{\H}(a_1a_2)=\Li_{n_1}^{\H}(a_1)\Li_{n_2}^{\H}(a_2).
\]
\end{example}

The quasi-shuffle relation is almost immediate for the corresponding power series but is much harder to prove on the Hodge level. A cohomological proof is explained in \cite[\S 9]{Gon02} and a proof using the rigidity argument (\S \ref{SectionRigidity}) of a similar statement for Hodge correlators can be found in \cite{Mal20}.  In this section we give another proof of  the quasi-shuffle relation, deriving it from a more general Proposition \ref{TheoremFormalSSR}, which we call ``formal quasi-shuffle relation.'' The proof of Proposition \ref{TheoremFormalSSR} uses the same technique as a more complicated formal quadrangulation formula (Theorem \ref{TheoremFormalQF}).

\subsection{Smash coproduct Hopf algebras}\label{SectionSmashCoproduct}
Let $(\F,\Delta^{\F\F},m_\F)$ and $(\H,\Delta^{\H\H},m_\H)$ be  commutative Hopf algebras. In applications we will take $\F=\F_S$ and $\H=\H_S$ defined in \S \ref{SectionQuasiShuffle}.  In this section, we recall the construction of a smash coproduct Hopf algebra $\H \times \F$ from \cite{Mol77}, which plays a major role in what follows. Commutative Hopf algebras are dual to pro-affine algebraic groups. In the ``dual'' language a smash coproduct Hopf algebra corresponds to a split extension of  the corresponding groups.
 
A coaction of $\H$ on $\F$ is a map 
\[
\Delta^{\mathcal{HF}}\colon \F \lra  \H \otimes \F,
\]
which gives $\F$ a structure of a comodule over $\H.$ Comodules over $\H$ form a tensor category; in particular, we have a comodule structure on $\F\otimes \F$ 
\[
\Delta^{\mathcal{HFF}}\colon \F\otimes \F \lra  \H\otimes \F \otimes  \F.
\]
defined by a map
\[
\Delta^{\mathcal{HFF}}(x\otimes y)= (m_\H \otimes 1 \otimes 1)(1\otimes T \otimes 1)(\Delta^{\H\F}(x)\otimes \Delta^{\H\F}(y)),
\]
where $T$ denotes the twist map $T(t_1\otimes t_2)=t_2 \otimes t_1$. Assume that the coaction $\Delta^{\H\F}$ commutes with the product $m_{\F}$ and coproduct $\Delta^{\F\F},$ as well as with the unit  and the counit.  Under these assumptions we can construct the smash coproduct Hopf algebra $\H\times \F,$ see \cite[Theorem (2.14)]{Mol77}. As an algebra, $\H\times \F$  is the tensor product $\H \otimes \F.$ The coproduct 
\be \label{FormulaSmashCoproduct}
\Delta^{\H\times\F}\colon \H\times \F \lra (\H\times \F)\otimes (\H\times \F)
\ee 
in  $\H\times \F$ is defined by a rather complicated formula
\be \label{FormulaCoproductSmash}
\Delta^{\H\times \F}=(1\otimes T \otimes 1)(1\otimes m_\H \otimes 1 \otimes 1)(1\otimes 1 \otimes T \otimes 1)(\Delta^{\H\H}\otimes 1\otimes \Delta^{\H\F})(1 \otimes \Delta^{\F\F}).
\ee

The space  of coinvariants
\[
\F^{\H}=\{x\in \F\mid \Delta^{\H\F}x=1\otimes x\}
\]
is a subalgebra of $\F;$ it is closed under the product $m_{\F}.$ Let $i\colon \F \lra \H \times \F$ be an embedding $i(x)=1\otimes x.$ The map $i$ is an algebra homomorphism, but (in general) it does not commute with coproducts. Nevertheless, it is easy to see from formula (\ref{FormulaSmashCoproduct}) that  if $x \in \F^\H$ and $\Delta(x) \in \F^\H\otimes \F^\H,$ we have 
\be\label{FormulaCoinvariantsEmbedding}
\Delta^{\H\times\F}(i(x))=(i\otimes i)\Delta^{\F\F}(x).
\ee
We use the same symbol for $x\in \F^{\H}$ and $i(x)\in \H \times \F.$

Next, we give a group-theoretic interpretation of the smash coproduct Hopf algebra $\H \times \F$.  Let $\widehat{\F}$ and $\widehat{\H}$  be pro-affine algebraic groups dual to $\F$ and $\H.$ Then the coaction $\Delta^{\H\F}$ that commutes with product, coproduct, unit, and counit gives a homomorphism \[\varphi \colon \widehat{\H}\lra \textup{Aut}(\widehat{\F}).\] The dual of $\H \times \F$ is the semidirect product $\widehat{\H}\ltimes_{\varphi} \widehat{\F}.$ Finally, $\F^{\H}$ is the algebra of functions on $\widehat{\F}$ that are invariant under the action of $\widehat{\H}.$

We use the following notation for the reduced coaction:
\[
\widetilde{\Delta}^{\H\F}(x)=\Delta^{\H\F}(x)-1\otimes x
\]
and for the reduced coproduct:
\[
\begin{split}
&\widetilde{\Delta}^{\F\F}(x)=\Delta^{\F\F}(x)-x\otimes 1-1\otimes x,\\
&\widetilde{\Delta}^{\H\H}(x)=\Delta^{\H\H}(x)-x\otimes 1-1\otimes x.\\	
\end{split}
\]

\subsection{Coaction of $\H_S$ on $\mathcal{F}_S$} \label{SectionCoaction}
In \S \ref{SectionQuasiShuffle} we defined  Hopf algebras $\F_S$ and  $\H_S.$
In this section, we define a coaction of   $\H_S$ on $\F_S$ inspired by the coproduct of multiple polylogarithms.

For an element
${x=[\varphi_1,n_1|\dots|\varphi_k,n_k]\in \F_S}$  and any $\varphi_0\in \C(S)^{\times}$
consider a sequence 
\[
\begin{split}
&v_{\varphi_0}(x)=(x_0,\dots,x_{n+1})\\
=&(0,\varphi_0,\underbrace{0,\dots,0,\varphi_0 \varphi_1}_{n_1},\underbrace{0,\dots,0,\varphi_0 \varphi_1 \varphi_2}_{n_2},\dots,\underbrace{0,\dots,0,\varphi_0 \varphi_1 \varphi_2\dots \varphi_{k}}_{n_k}).
\end{split}
\]
For an increasing sequence $I$ of indices $0=i_0<i_1<\dots<i_r<i_{r+1}=n+1$ with $i_1=1$ the subsequence 
\[
(x_{i_0},x_{i_1},\dots,x_{i_{r+1}})
\]
is equal to $v_{\varphi_0}(x_I)$ for a unique element $x_I\in \F_S.$
 We define the coaction 
\[
 \Delta^{\H\F}\colon \F_S\lra \H_S \otimes \F_S
\]
 by the formula
\be\label{FormulaCoaction}
\begin{split}
&\Delta^{\mathcal{HF}}(x)=(-1)^{l(x)-l(x_I)}\sum_{I=(i_0,\dots,i_{r+1})} \left (\prod_{p=1}^{r} \textup{I}^{\H}(x_{i_p};x_{i_p+1},\dots,x_{i_{p+1}-1};x_{i_{p+1}})\right )\otimes  x_I
\end{split}
\ee
where the summation goes over all sequences $0=i_0<i_1<\dots<i_r<i_{r+1}=n+1$ with $i_1=1.$ For the empty word we have $\Delta^{\H\F}(1)=1\otimes 1.$ Using (\ref{FormulaHomogenuityDivergent}) one can show that the coaction does not depend on $\varphi_0,$ so we can assume that $\varphi_0=1.$

\begin{figure}
\begin{center}
\begin{tikzpicture}[transform shape]
  \foreach \number in {1,...,5}{
        \mycount=\number
        \advance\mycount by -1
  \multiply\mycount by 45
        \advance\mycount by 0
      \node[draw, very thin, color=gray,inner sep=0.001cm] (N-\number) at (\the\mycount:3.4cm) {};
    }
  \foreach \number in {9,10,11,12}{
        \mycount=\number
        \advance\mycount by -1
  \multiply\mycount by 45
        \advance\mycount by 22.5
      \node[draw, very thin, color=gray,inner sep=0.001cm](N-\number) at (\the\mycount:3.4cm) {};
    }
\draw[midnight, line width=0.25mm] (N-1) -- (N-9);
\draw[midnight, line width=0.25mm] (N-9) -- (N-2);
\draw[midnight, line width=0.25mm] (N-2) -- (N-10);
\draw[midnight, line width=0.25mm] (N-10) -- (N-3);
\draw[midnight, line width=0.25mm] (N-3) -- (N-11);
\draw[midnight, line width=0.25mm] (N-11) -- (N-4);
\draw[midnight, line width=0.25mm] (N-4) -- (N-12);
\draw[midnight, line width=0.25mm] (N-12) -- (N-5);
\draw[midnight, line width=0.25mm] (N-5) -- (N-1);

\node[left] at (N-5) {$0$};
\node[left] at (N-12) {$1$};
\node[yshift=0.11cm, xshift=-0.31cm] at (N-4) {$0$};
\node[yshift=0.21cm, xshift=-0.21cm] at (N-11) {$a_1$};
\node[above] at (N-3) {$0$};
\node[yshift=0.21cm, xshift=0.21cm] at (N-10) {$0$};
\node[yshift=0.15cm, xshift=0.41cm] at (N-2) {$a_1a_2$};
\node[right] at (N-9) {$0$};
\node[right] at (N-1) {$a_1a_2a_3$};

\draw[midnight, line width=0.25mm] (N-12) -- (N-3);
\draw[midnight, line width=0.25mm] (N-10) -- (N-1);
\end{tikzpicture}
\end{center}
\caption{Term corresponding to $(0,1,4,5,8)$ in $\Delta^{\H\F}[a_1,1|a_2,2|a_3,1]$  is equal to 
 $\left( \textup{I}^{\H}(1;0,a_1;0)\textup{I}^{\H}(0;a_1a_2,0;a_1a_2a_3)\right)\otimes [a_1a_2a_3,2].$  
 }
\label{FigureCoaction}
\end{figure}


To illustrate the formula (\ref{FormulaCoaction}) consider a polygon inscribed in a semi-circle with diameter $x_0 \ x_{n+1}$ and vertices $x_0,\dots,x_{n+1}$ arranged clockwise. Each term in $\Delta^{\H\F}$ corresponds to a choice of marked points $x_{i_0},\dots, x_{i_{r+1}}$ on it with $i_0=0, i_1=1, i_{r+1}=n+1 $. These points form a convex polygon $I$, which defines the term $x_I.$ The ``complement'' of this polygon in the semi-circle defines a collection of polygons. The first one has vertices  $x_{i_1},x_{i_1+1},\dots,x_{i_2},$ the second one has vertices   $x_{i_2},x_{i_2+1},\dots,x_{i_3}$ and so on.
The term 
\[
\prod_{p=1}^{r} \textup{I}^{\H}(x_{i_p};x_{i_p+1},\dots,x_{i_{p+1}-1};x_{i_{p+1}})
\]
is a product of iterated integrals corresponding to these polygons (see Figure \ref{FigureCoaction}). The only difference between formula (\ref{FormulaCoaction}) for the coaction and formula  (\ref{FormulaCoproduct}) for the coproduct of Hodge iterated integrals is the requirement that $i_1=1$ for the subsequences involved in the summation. 

So, formula (\ref{FormulaCoaction}) is related to the formula for the coproduct of multiple polylogarithm 
\[
\Li^{\H}(x)=\Li^{\H}_{n_1,n_2,\dots,n_k}(\varphi_1,\varphi_2,\dots,\varphi_k).
\]
More precisely, if we substitute in (\ref{FormulaCoaction}) each $x_I$ with $\Li^{\H}(x_I)$ we will get the terms of the coproduct $\Delta^{\H\H}\Li^{\H}(x),$ which correspond to sequences  $0=i_0<i_1<\dots<i_r<i_{r+1}=n+1$ with $i_1=1.$ In \S \ref{SectionSSR} we show that in order to get the whole coproduct $\Delta^{\H\H}\Li^{\H}(x)$ we need to apply the formula (\ref{FormulaCoproductSmash}) for the coproduct in $\H_S\times \F_S.$ The sign $(-1)^{l(x)-l(x_I)}$ is related to the sign $(-1)^k$ in (\ref{FormulaMPviaII}).

\begin{example} We take $S=\mathbb{A}^2.$ In weight $2$ we have
\[
\begin{split}
&\Delta^{\mathcal{HF}}[a_1,1|a_2,1]=1\otimes[a_1,1|a_2,1] -\textup{I}^{\mathcal{\H}}(1;a_1;a_1a_2)\otimes [a_1a_2,1];\\
&\Delta^{\mathcal{HF}}[a,2]=1\otimes [a,2]+\log^{\H}(a)\otimes [a,1].\\
\end{split}
\]
Here is a more complicated example in weight $3$ and depth $2$:
\begin{align*}
\Delta^{\mathcal{HF}}[a_1,1|a_2,2]&=1\otimes [a_1,1|a_2,2]-\textup{I}^{\mathcal{\H}}(1;a_1,0;a_1a_2) \otimes [a_1a_2,1]\\
&\quad +\log^{\H}(a_2)\otimes [a_1,1|a_2,1]-\log^{\H}\left(\frac{a_1}{1-a_1}\right)\otimes [a_1a_2,2].
\end{align*}
\end{example}

\begin{lemma}\label{LemmaCoassociativity}
	The morphism $\Delta^{\mathcal{HF}}\colon \F_S \lra \H_S \otimes \F_S$ defines a coaction of $\H_S$ on $\F_S.$
\end{lemma}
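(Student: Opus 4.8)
The plan is to verify the coaction axioms: counitality, $(\varepsilon_{\H}\otimes 1)\Delta^{\mathcal{HF}}=1$, and coassociativity, $(\Delta^{\H\H}\otimes 1)\Delta^{\mathcal{HF}}=(1\otimes \Delta^{\mathcal{HF}})\Delta^{\mathcal{HF}}$. Counitality is immediate from formula (\ref{FormulaCoaction}): the only term surviving application of $\varepsilon_{\H}$ is the one where the product of Hodge iterated integrals is empty, which forces $r$ to be as small as possible, i.e. $I=(0,1,n+1)$ with no intermediate cuts of length $\geq 1$ — wait, more precisely the term with $x_I=x$ itself, giving $1\otimes x$. So the real content is coassociativity.

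For coassociativity I would argue as follows. Both sides of the identity are sums indexed by nested combinatorial data on the sequence $v_{\varphi_0}(x)=(x_0,\dots,x_{n+1})$. On the left, we first choose a subsequence $I$ (with $i_1=1$) producing $x_I$, then apply $\Delta^{\H\H}$ to each factor $\textup{I}^{\H}(x_{i_p};\dots;x_{i_{p+1}})$ using Goncharov's formula (\ref{FormulaCoproduct}); this amounts to choosing, inside each ``gap'' $[i_p,i_{p+1}]$, a further subsequence. On the right, we first choose a subsequence $J$ (with $j_1=1$), apply $\Delta^{\mathcal{HF}}$ to $x_J$ — which chooses a sub-subsequence $I\subseteq J$ with $i_1=1$ — and expand the $\H$-factors of the inner coaction as Hodge iterated integrals on the gaps of $I$ inside $J$, while the outer $\H$-factors are Hodge iterated integrals on the gaps of $J$. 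Each side is thus a sum over pairs of nested subsequences $I\subseteq$ (something) $\subseteq v_{\varphi_0}(x)$, all containing the marked index $1$, with the combinatorial pieces matching up. Concretely: identify the term on the left indexed by ``$I$ together with a refinement $K\supseteq I$'' with the term on the right indexed by ``$J:=$ the subsequence obtained from $K$ by remembering which points of $K$ land in the final tensor factor.'' The key point is that for Hodge iterated integrals the analogous identity holds — this is exactly coassociativity of $\Delta^{\H\H}$, already known since $\H_S$ is a Hopf algebra — so the verification reduces to checking that the constraint ``$i_1=1$'' is propagated correctly and that the signs $(-1)^{l(x)-l(x_I)}$ multiply correctly, i.e. $(-1)^{l(x)-l(x_K)}\cdot(\text{sign inside})=(-1)^{l(x)-l(x_J)}(-1)^{l(x_J)-l(x_I)}$ with all intermediate lengths telescoping.

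The main obstacle — and the step deserving genuine care rather than hand-waving — is bookkeeping the marked-index condition $i_1=1$ through the double sum, and in particular checking that the ``$\varphi_0$-independence'' (established via (\ref{FormulaHomogenuityDivergent})) is respected on both sides so that the inner and outer coactions can be computed with possibly different choices of $\varphi_0$ without affecting the result. A clean way to handle this is to choose $\varphi_0=1$ once and for all, treat $(x_0,\dots,x_{n+1})$ as a fixed sequence, and phrase the whole computation as an equality of sums over pairs of subsequences; then coassociativity of $\Delta^{\H\H}$ on Hodge iterated integrals (applied gap-by-gap) does all the work, and the only thing to check by hand is that the bijection between index sets on the two sides preserves the constraint on the first marked point and the sign. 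I expect this to be a short but slightly fiddly argument, entirely parallel to Goncharov's proof that $\mathbb{I}_\bullet(\mathrm{F})$ is a Hopf algebra (\cite[Proposition 2.2]{Gon05}).
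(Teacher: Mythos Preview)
Your proposal is correct and follows essentially the same approach as the paper: both sides of the coassociativity identity are identified with sums over pairs of nested subsequences $I\subseteq I'$ of $(x_0,\dots,x_{n+1})$, each containing $\{x_0,x_1,x_{n+1}\}$, with the $\H\otimes\H\otimes\F$ term being the product of iterated integrals on the gaps between $I'$ and the full sequence, tensor the product on the gaps between $I$ and $I'$, tensor $x_I$. The paper's proof is in fact terser than yours---it states this bijection of index sets in two sentences and does not dwell on the $\varphi_0$-independence or the sign telescoping you flag; those checks are routine (fix $\varphi_0=1$ throughout, and observe the sign depends only on $l(x)-l(x_I)$, which is the same on both sides).
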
	
\begin{proof}
The counit axiom is immediate. It remains to check coassociativity, namely the equality
\be \label{FormulaCoassociativity}
(1\otimes \Delta^{\H\F})\Delta^{\H\F} =(\Delta^{\mathcal{HH}}\otimes 1)\Delta^{\H\F}.
\ee
Each term in $\H_S\otimes \H_S\otimes \F_S$ on both sides of equation (\ref{FormulaCoassociativity}) corresponds to a choice of a pair of nested polygons 
\[
\{x_0,x_1,x_{n+1}\}\subseteq I\subseteq I' \subseteq \{x_0,x_1,\dots,x_{n+1}\}.
\]
and is equal to the product $(-1)^{l(x)-l(x_I)}\prod I_{\alpha}^\H\otimes \prod I_{\beta}^\H \otimes x_{I},$ where iterated integrals $I_\alpha^\H$ correspond to the polygons between $I'$ and  $\{x_0,\dots,x_{n+1}\}$ and iterated integrals  $I_{\beta}^\H$  correspond to the polygons between $I$ and  $I'.$
\end{proof}

Next, we see that coaction (\ref{FormulaCoaction}) is compatible with the coalgebra structure on $\F_S.$ 

\begin{lemma}\label{LemmaCoproductCompatibility}
The coproduct $\Delta^{\mathcal{FF}}$ is a map of $\H_S$-comodules. In other words, the following diagram is commutative:	
\[
\begin{gathered}
    \xymatrix{
    &\F_S  \ar[d]^{\Delta^{\mathcal{FF}}}\ \ \  \ar[r]^{\Delta^{\mathcal{HF}}}&\ \ \ \  \H_S\otimes \F_S  \ar[d]^{1\otimes\Delta^{\mathcal{FF}}} \\
 & \F_S\otimes \F_S  \ar[r]^{\Delta^{\mathcal{HFF}}}& \H_S\otimes \F_S\otimes \F_S
 }
\end{gathered}.
\]
\end{lemma}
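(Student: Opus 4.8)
The plan is to verify commutativity directly on a basis word $x=[\varphi_1,n_1|\dots|\varphi_k,n_k]\in\F_S$ by expanding both composites and matching terms. Set $\varphi_0=1$ and write $v(x)=(x_0,\dots,x_{n+1})$ with $n=n_1+\dots+n_k$; its nonzero entries sit exactly at the positions $p_0=1<p_1<\dots<p_k=n+1$, where $p_j=1+n_1+\dots+n_j$ and $x_{p_j}=\varphi_1\cdots\varphi_j$. The deconcatenation coproduct cuts the word at a block boundary, $\Delta^{\mathcal{FF}}(x)=\sum_{j=0}^{k}x_{\le j}\otimes x_{> j}$, and the relevant point sequences are $v(x_{\le j})=(x_0,\dots,x_{p_j})$ and, using that the coaction does not depend on the basepoint (formula (\ref{FormulaHomogenuityDivergent}), applied with $\varphi_0$ replaced by $x_{p_j}$), $v(x_{> j})=(x_0,x_{p_j},x_{p_j+1},\dots,x_{n+1})$ — that is, $v(x)$ with the entries $x_1,\dots,x_{p_j-1}$ deleted. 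Consequently the point sequence of every subword occurring below is canonically a subsequence of $v(x)$.

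Expanding the right-hand composite by (\ref{FormulaCoaction}) and then applying $1\otimes\Delta^{\mathcal{FF}}$, a term of $(1\otimes\Delta^{\mathcal{FF}})\Delta^{\mathcal{HF}}(x)$ is indexed by a subsequence $I=(0=i_0<1=i_1<\dots<i_{r+1}=n+1)$ and a deconcatenation position $s\in\{0,\dots,r\}$ of $x_I$; since deconcatenating a word corresponds to cutting its point sequence at a nonzero entry, the datum $s$ is the same as a choice of index $m\in I$ with $x_m\neq 0$. The term equals $(-1)^{k-r}\,P_I\otimes(x_I)_{\le s}\otimes(x_I)_{> s}$, where $P_I=\prod_{p=1}^{r}\textup{I}^{\H}(x_{i_p};x_{i_p+1},\dots,x_{i_{p+1}-1};x_{i_{p+1}})$ is the product over the gaps of $I$. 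On the other side, a term of $\Delta^{\mathcal{HFF}}\Delta^{\mathcal{FF}}(x)$ is indexed by $j\in\{0,\dots,k\}$ and by coaction subsequences $I'$ of $v(x_{\le j})$ and $I''$ of $v(x_{> j})$ (each beginning $0,1$ and ending at the last index); unwinding the definition of $\Delta^{\mathcal{HFF}}$ — the twist followed by the product $m_\H$ — it equals $(-1)^{(j-r')+((k-j)-r'')}\bigl(P_{I'}\cdot P_{I''}\bigr)\otimes(x_{\le j})_{I'}\otimes(x_{> j})_{I''}$, with $r'=l((x_{\le j})_{I'})$ and $r''=l((x_{> j})_{I''})$.

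The heart of the argument is the bijection $(j,I',I'')\longleftrightarrow(I,s)$ defined by $m:=x_{p_j}$ — so $j$ is recovered from the cut value, because the nonzero entries of $v(x)$ are precisely $x_{p_0},\dots,x_{p_k}$ — together with $I:=I'\cup(\text{image of }I''\text{ in the indexing of }v(x))$, and inversely $I'=\{a\in I:a\le p_j\}$ and $I''=\text{preimage of }\{0\}\cup\{a\in I:a\ge p_j\}$. One checks the endpoint and ``$i_1=1$'' conditions are automatic: $0,1\in I'$, the index $p_j$ is the last entry of $v(x_{\le j})$, and the translation carries $p_j$ to position $1$ of $v(x_{> j})$. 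Under this bijection the three ingredients match. The $\F$-factors agree because $(x_I)_{\le s}$ and $(x_{\le j})_{I'}$ have the same point sequence $(x_a)_{a\in I,\;a\le p_j}$, and likewise for the suffixes. The $\H$-factors agree because the gaps of $I$ are exactly the gaps of $I'$ (those weakly left of $p_j$) together with the gaps of $I''$ (those weakly right of $p_j$), so the product $m_\H$ reassembles $P_{I'}\cdot P_{I''}$ into $P_I$. The signs agree because $r=r'+r''$. The boundary cases $j=0$ and $j=k$ collapse one factor to the empty word and are covered by $\Delta^{\mathcal{HF}}(1)=1\otimes 1$.

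I expect the only real difficulty to be the combinatorial bookkeeping: matching the algebraic datum ``deconcatenation position of the word $x_I$'' with the geometric datum ``cutting vertex $x_{p_j}$ of the semicircle polygon'', and keeping the three point-sequence identifications $v(x_{\le j})$, $v(x_{> j})$, $v(x_I)$ mutually compatible — which is precisely where basepoint-independence of $\Delta^{\mathcal{HF}}$ enters. This is the same mechanism as in the proof of Lemma \ref{LemmaCoassociativity}, now decorated with one extra marked cutting vertex; once the bijection is written out carefully, equality of matched terms is immediate.
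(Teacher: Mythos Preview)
Your argument is correct and is essentially the paper's own proof, carried out in more detail: both identify the terms on each side with pairs consisting of a subpolygon $I\ni\{0,1,n+1\}$ and a chord from $x_0$ to a nonzero vertex $x_{i_c}=x_{p_j}$, and then match terms under the resulting bijection. One small notational slip: in your RHS sign $(-1)^{k-r}$, the exponent should be $k-l(x_I)$ as in formula~(\ref{FormulaCoaction}) (the subsequence length $r$ and the word length $l(x_I)$ differ when $I$ picks up zero entries), but since you define $r',r''$ as word lengths and the bijection gives $l(x_I)=r'+r''$, the sign comparison goes through unchanged.
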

\begin{proof}
We need to show that
\be \label{FormulaCoproductCompatibility}
\Delta^{\H\F\F}\Delta^{\F\F}(x)=(1\otimes \Delta^{\F\F}) \Delta^{\H\F}(x)
\ee
for $x=[\varphi_1,n_1|\varphi_2,n_2|\dots|\varphi_k,n_k].$ After applying formulas  (\ref{FormulaCoproductQS}) and  (\ref{FormulaCoaction}), both sides of (\ref{FormulaCoproductCompatibility}) are  the sums of the terms of the type
\be\label{FormulaGeneralTerm}
(-1)^{l(x)-l(x_{I})-l(x_{I}')}\left (\prod \textup{I}^\H\right ) \otimes x_{I} \otimes  x_{I}'.
\ee
Each term corresponds to a pair consisting of a subpolygon of 
$x_0,\dots,x_{n+1}$ with vertices  $x_{i_0},\dots, x_{i_{r+1}}$ such that $i_0=0, i_1=1, i_{r+1}=n+1$ and a chord from $x_0$ to $x_{i_c}\neq 0$, see Figure \ref{FigureCoaction2}.
Assume that  $x_{i_c}=\varphi_1\ldots \varphi_j.$  Then $I$ is a sequence $(i_0, i_1,\ldots, i_{c},i_{n+1})$ and $x_I$ is the unique element in $\F_S$ with 
\[
v_1(x_I)=(x_{i_0},x_{i_1},\ldots, x_{i_{c}}).
\]
Next, $x_{I}'$ is the unique element of $\F_S$ with
\[
v_{x_{i_c}}(x_I')=(0,x_{i_{c}},x_{i_{c+1}}, x_{i_{c+2}},\ldots, x_{i_{r+1}}).
\]
Finally, the iterated integrals in (\ref{FormulaGeneralTerm}) correspond to the connected components of the complement  of the subpolygon with vertices $x_{i_0},\dots, x_{i_{r+1}}$.

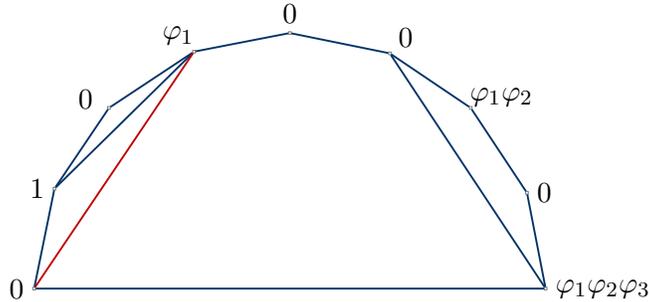
\begin{figure}
\begin{center}
\begin{tikzpicture}[transform shape]
  \foreach \number in {1,...,5}{
        \mycount=\number
        \advance\mycount by -1
  \multiply\mycount by 45
        \advance\mycount by 0
      \node[draw, very thin, color=gray,inner sep=0.001cm] (N-\number) at (\the\mycount:3.4cm) {};
    }
  \foreach \number in {9,10,11,12}{
        \mycount=\number
        \advance\mycount by -1
  \multiply\mycount by 45
        \advance\mycount by 22.5
      \node[draw, very thin, color=gray,inner sep=0.001cm](N-\number) at (\the\mycount:3.4cm) {};
    }
\draw[midnight, line width=0.25mm] (N-1) -- (N-9);
\draw[midnight, line width=0.25mm] (N-9) -- (N-2);
\draw[midnight, line width=0.25mm] (N-2) -- (N-10);
\draw[midnight, line width=0.25mm] (N-10) -- (N-3);
\draw[midnight, line width=0.25mm] (N-3) -- (N-11);
\draw[midnight, line width=0.25mm] (N-11) -- (N-4);
\draw[midnight, line width=0.25mm] (N-4) -- (N-12);
\draw[midnight, line width=0.25mm] (N-12) -- (N-5);
\draw[midnight, line width=0.25mm] (N-5) -- (N-1);

\node[left] at (N-5) {$0$};
\node[left] at (N-12) {$1$};
\node[yshift=0.11cm, xshift=-0.31cm] at (N-4) {$0$};
\node[yshift=0.21cm, xshift=-0.21cm] at (N-11) {$\varphi_1$};
\node[above] at (N-3) {$0$};
\node[yshift=0.21cm, xshift=0.21cm] at (N-10) {$0$};
\node[yshift=0.15cm, xshift=0.41cm] at (N-2) {$\varphi_1\varphi_2$};
\node[right] at (N-9) {$0$};
\node[right] at (N-1) {$\varphi_1\varphi_2\varphi_3$};

\draw[midnight, line width=0.25mm] (N-12) -- (N-11);
\draw[red, line width=0.25mm] (N-11) -- (N-5);
\draw[midnight, line width=0.25mm] (N-10) -- (N-1);
\end{tikzpicture}
\end{center}
\caption{The pair of the subpolygon $(0,1,3,4,5,8)$ and the chord $(0,3)$ corresponding to $\left( \textup{I}^{\H}(1;0;\varphi_1)\textup{I}^{\H}(0;\varphi_1 \varphi_2,0;\varphi_1\varphi_2\varphi_3)\right)\otimes [\varphi_1,1]\otimes [\varphi_2\varphi_3,3].$  
 }
\label{FigureCoaction2}
\end{figure}
Since the terms in both sides of (\ref{FormulaCoproductCompatibility}) are the same, the statement follows.

\end{proof}

To construct the smash coproduct Hopf algebra $\H_S\times \F_S$, we need to check that the quasi-shuffle multiplication ${\F_S\otimes \F_S \stackrel{\star}{\lra} \F_S}$ is a morphism of $\H_S$-comodules. We will prove it in \S \ref{SectionFormalSSR}; we need to do some preparation first.

\subsection{Principal coefficient}\label{SectionProjectionToPrimitiveElements}
The elements $[\varphi|n]\in \F_S$ play a particular role as these elements are primitives in $\F_S:$
\[
\Delta^{\F\F}[\varphi|n]=1\otimes[\varphi|n]+[\varphi|n]\otimes 1.
\]
First, we compute the coaction on these elements.

\begin{lemma} \label{LemmaCoactionClassical}
We have the following  formula for the coaction on primitive elements of $\mathcal{F}_S:$
\[
\Delta^{\mathcal{HF}}[\varphi|n]=\sum_{i=0}^{n-1} \frac{\left (\log^\H(\varphi)\right )^{i}}{i!}\otimes [\varphi|n-i].
\]
\end{lemma}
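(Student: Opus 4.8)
\emph{Proof proposal.} The plan is to unwind the definition of the coaction (\ref{FormulaCoaction}) on the primitive generator $x=[\varphi|n]$ and show that all but finitely many terms in the sum vanish. Taking $\varphi_0=1$, the associated sequence is
\[
v_1\big([\varphi|n]\big)=(x_0,\dots,x_{n+1})=\big(0,\,1,\,\underbrace{0,\dots,0}_{n-1},\,\varphi\big),
\]
so $x_0=0$, $x_1=1$, $x_2=\dots=x_n=0$, $x_{n+1}=\varphi$; note $l(x)=1$, and every $x_I$ occurring below will also have length $1$, so the prefactor $(-1)^{l(x)-l(x_I)}$ is always $+1$.

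The first and essentially only nontrivial step is the auxiliary vanishing
\[
\textup{I}^{\H}(1;\underbrace{0,\dots,0}_{m};0)=0\qquad\text{for }m\geq 1.
\]
I would deduce this from the standard relations for Hodge iterated integrals (the path-composition relation and the vanishing $\textup{I}^{\H}(x_0;\dots;x_0)=0$, cf.\ relations (2),(3) of \S\ref{SectionWorkingWithoutMHS}): apply path composition to $\textup{I}^{\H}(0;0,\dots,0;0)$ inserting the intermediate point $1$, obtaining
\[
\textup{I}^{\H}(0;\underbrace{0,\dots,0}_{m};0)=\sum_{r=0}^{m}\textup{I}^{\H}(0;\underbrace{0,\dots,0}_{r};1)\,\textup{I}^{\H}(1;\underbrace{0,\dots,0}_{m-r};0).
\]
The left-hand side is $0$ since the two outer points coincide; by (\ref{FormulaLogPower}) one has $\textup{I}^{\H}(0;0^{r};1)=\big(\log^{\H}(1)\big)^{r}/r!$, which vanishes for $r\geq 1$ because $\log^{\H}(1)=0$ in $\H_1=\C^{\times}_{\Q}$; hence only the $r=0$ term $\textup{I}^{\H}(0;1)\cdot\textup{I}^{\H}(1;0^{m};0)=\textup{I}^{\H}(1;0^{m};0)$ remains, forcing it to be $0$.

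Now I would run through the terms of (\ref{FormulaCoaction}). A term is indexed by $I=(0=i_0,\,1=i_1,\,i_2<\dots<i_r,\,n+1=i_{r+1})$ with $i_2,\dots,i_r\in\{2,\dots,n\}$, with iterated-integral factor $\prod_{p=1}^{r}\textup{I}^{\H}(x_{i_p};x_{i_p+1},\dots,x_{i_{p+1}-1};x_{i_{p+1}})$. For $2\leq p\leq r-1$ this factor is $\textup{I}^{\H}(0;0,\dots,0;0)$, which is $0$ unless $i_{p+1}=i_p+1$; for $p=1$ (assuming $r\geq 2$) it is $\textup{I}^{\H}(1;0,\dots,0;0)$, which by the auxiliary vanishing is $0$ unless $i_2=2$. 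Hence the only surviving index sets are $I_r:=(0,1,2,3,\dots,r,n+1)$ for $1\leq r\leq n$. For $I_r$ one checks $x_{I_r}=[\varphi|r]$ (its subsequence is $(0,1,0^{\,r-1},\varphi)=v_1([\varphi|r])$), all factors with $p<r$ equal $\textup{I}^{\H}(\cdot;\cdot)=1$, and the last factor is $\textup{I}^{\H}(x_r;x_{r+1},\dots,x_n;x_{n+1})=\textup{I}^{\H}(0;\underbrace{0,\dots,0}_{n-r};\varphi)=\big(\log^{\H}(\varphi)\big)^{n-r}/(n-r)!$ by (\ref{FormulaLogPower}); for $r=1$ this last evaluation requires one more application of path composition at $0$, which splits off factors $\textup{I}^{\H}(1;0^{j};0)=0$. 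Summing over $r=1,\dots,n$ and setting $i=n-r$ yields $\Delta^{\mathcal{HF}}[\varphi|n]=\sum_{i=0}^{n-1}\frac{(\log^{\H}(\varphi))^{i}}{i!}\otimes[\varphi|n-i]$, as claimed.

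The main (and essentially only) obstacle is the handling of the a priori divergent iterated integrals with base point $1$ and interior entries $0$, such as $\textup{I}^{\H}(1;0,\dots,0;0)$ and $\textup{I}^{\H}(1;0^{\,n-1};\varphi)$; I expect these to be dispatched cleanly by the path-composition relation together with the normalizations $\log^{\H}(1)=0$ and $\textup{I}^{\H}(0;0,\dots,0;0)=0$ already in use in the paper, so that no explicit regularization computation is needed. (Alternatively one could invoke that the coaction is independent of $\varphi_0$, but the route above seems most economical.)
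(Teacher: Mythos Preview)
Your proof is correct and follows essentially the same approach as the paper's, but with considerably more detail. The paper's own proof is a two-line sketch: it asserts that applying (\ref{FormulaCoaction}) yields $\sum_{i=0}^{n-1}\textup{I}^\H(0;0,\dots,0;\varphi)\otimes[\varphi|n-i]$ ``because all the other terms vanish,'' then invokes (\ref{FormulaLogPower}). You have unpacked what ``all the other terms vanish'' actually means---identifying the surviving index sets $I_r=(0,1,2,\dots,r,n+1)$ and establishing the auxiliary vanishing $\textup{I}^\H(1;0,\dots,0;0)=0$ via path composition---and you also handle the $r=1$ case explicitly, where the raw coaction term is $\textup{I}^\H(1;0^{n-1};\varphi)$ rather than $\textup{I}^\H(0;0^{n-1};\varphi)$, again resolved by path composition. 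So your argument is a faithful expansion of the paper's terse claim, not a different route.
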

\begin{proof}
By (\ref{FormulaCoaction}) we have 
\[
\Delta^{\mathcal{HF}}[\varphi|n]=\sum_{i=0}^{n-1}\textup{I}^\H(0;\underbrace{0,\dots,0}_{i};\varphi)\otimes [\varphi|n-i],
\]
because all the other terms vanish. The statement of the lemma follows from (\ref{FormulaLogPower}).

\end{proof}

Consider a linear map $\pi_d \colon \F_S \lra \H_S$ for $d \geq 0$
which sends $x=[\varphi_1,n_1|\varphi_2,n_2|\dots|\varphi_k,n_k]$ to the coefficient  in $\H_S$ at the term
\[
\left[\prod_{i=1}^k \varphi_i,n_1+n_2+\ldots+n_k-d \right]
\]
of the coaction. For the empty word we put 
\[
\pi_d(1)=
\begin{cases}
1 & \text{ if }d=0, \\
0 & \text{ if }d>0. \\	
\end{cases}
\]
  It is easy to see that
\[
\pi_0([\varphi_1,n_1|\varphi_2,n_2|\dots|\varphi_k,n_k])=
\begin{cases}
	0 & \text{\ for  \ } k \geq 1,\\
	1 & \text{\ for  \ } k=1.\\
\end{cases}
\]
From here it follows that 
\be\label{FormulaPi0Product}
\pi_0(x\star y)=\pi_0(x)\pi_0(y).
\ee
The map $\pi_1$ lands in the weight $1$ component of $\H_S,$ which is isomorphic to $\mathbb{C}(S)^{\times}_\Q.$  We call the map $\pi_1\colon \F_S\lra \mathbb{C}(S)^{\times}_\Q$ the principal coefficient map. This map plays a fundamental role in the proof of the formal quasi-shuffle relation (Proposition \ref{TheoremFormalSSR}) and  the proof of the formal quadrangulation formula.

\begin{lemma} \label{LemmaMapPi} The following equalities hold for $n\geq 3$
\be\label{FormulaMapPi}
\begin{split}
	&\pi_1([\varphi_1,2])=\log^{\H}\left(\varphi_1\right),\\
	&\pi_1([\varphi_1,n])=\log^{\H}\left(\varphi_1\right),\\
	&\pi_1([\varphi_1,1|\varphi_2,1])=\log^{\H}\left(\frac{\ \: 1-\varphi_1^{-1}}{1-\varphi_2}\right),\\
	&\pi_1([\varphi_1,n-1|\varphi_2,1])=-\log^{\H}\left(1-\varphi_2\right),\\
	&\pi_1([\varphi_1,1|\varphi_2,n-1])=\log^{\H}\left(1-\varphi_1^{-1}\right).\\
	\end{split}
\ee
For all other words in $\F_S$  we have $\pi_1([\varphi_1,n_1|\varphi_2,n_2|\dots|\varphi_k,n_k])=0.$
\end{lemma}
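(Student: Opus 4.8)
The plan is to compute $\pi_1$ directly from its definition as the coefficient of the term $[\prod_i\varphi_i,\, n_1+\dots+n_k-1]$ in the coaction $\Delta^{\mathcal{HF}}$. By the definition of $\Delta^{\mathcal{HF}}$ in (\ref{FormulaCoaction}), the $\F_S$-factors $x_I$ that can appear correspond to increasing sequences $I=(i_0,\dots,i_{r+1})$ with $i_0=0$, $i_1=1$, $i_{r+1}=n+1$; and we need those $x_I$ whose total weight is exactly one less than that of $x$, i.e.\ the ``complement'' polygons account for a single $\frac{dt}{t-x_i}$-form in total. Concretely, $x_I$ must be obtained from $x=[\varphi_1,n_1|\dots|\varphi_k,n_k]$ by dropping exactly one of the points in the sequence $v_{1}(x)=(0,1,\dots)$, with all remaining points forming a subpolygon whose dual word still has arguments $\prod_i\varphi_i$ — so the only admissible ``cut'' is one that removes a single letter and produces $[\,\varphi_1\cdots\varphi_k,\; n-1\,]$, contributing a weight-one Hodge iterated integral $\textup{I}^{\H}(x_{i_p};\dots;x_{i_{p+1}})$, which by Example \ref{ExampleLog} is a $\log^\H$ of a ratio of differences of the $x_j$'s.

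The key steps, in order: First I would enumerate, for a general word $[\varphi_1,n_1|\dots|\varphi_k,n_k]$, exactly which single-point deletions from $v_1(x)$ yield a subpolygon whose associated element of $\F_S$ equals $[\prod_i\varphi_i,\,n-1]$. A deletion in the ``interior'' of a block of $0$'s just lowers one $n_j$ by one and gives a nonzero contribution only when $k=1$ (that handles the first two lines), or when $k=2$ and the deleted point sits so that the two surviving cross-ratio arguments multiply to $\varphi_1\varphi_2$; one checks this forces either $n_1=1$ or $n_2=1$. Second, for each admissible deletion I read off the complementary weight-one iterated integral from (\ref{FormulaCoaction}) and evaluate it via $\textup{I}^\H(x_0;x_1;x_2)=\log^\H(x_2-x_1)-\log^\H(x_1-x_0)$, together with the homogeneity (\ref{FormulaHomogenuity}), which lets me normalize $\varphi_0=1$ and turn differences of the form $\varphi_1\varphi_2\cdots-\varphi_1\cdots$ into the stated ratios such as $\frac{1-\varphi_1^{-1}}{1-\varphi_2}$, $1-\varphi_2$, and $1-\varphi_1^{-1}$. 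Third, I collect the contributions: for $[\varphi_1,n]$ with $n\ge 2$, the unique deletion of a $0$ among the first $n-1$ after the letter $1$ gives the telescoping sum $\sum \textup{I}^\H(0;0,\dots,0;\varphi_1)$ whose weight-one piece is $\log^\H(\varphi_1)$ (compare Lemma \ref{LemmaCoactionClassical}); for the depth-two cases the single surviving term is the claimed logarithm; and for all other words — depth $\ge 3$, or depth $2$ with both $n_1,n_2\ge 2$ — no single deletion can produce the required argument $\prod_i\varphi_i$ at weight $n-1$, so $\pi_1$ vanishes.

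The main obstacle will be the bookkeeping in the depth-two case: one has to be careful about which point is deleted (the point carrying $\varphi_1$, the point carrying $\varphi_1\varphi_2$, or a $0$), about the sign $(-1)^{l(x)-l(x_I)}$ in (\ref{FormulaCoaction}), and about the regularization when $n_0>0$-type divergences appear (here we only need the image in $\L$, cf.\ the remark after (\ref{FormulaMPviaII})). A clean way to organize this is to note that $\pi_1$ only sees the ``principal'' term, so it suffices to track the single $\textup{I}^\H$-factor of weight one and discard everything of weight $\ge 2$; the computation then reduces to the three elementary evaluations above. Once the depth-$\le 2$ cases are settled, the vanishing for all longer words is immediate from the weight count, since any subpolygon of $v_1(x)$ that omits two or more points has a dual word of depth $\ge 2$ and hence cannot equal $[\prod\varphi_i,\,n-1]$.
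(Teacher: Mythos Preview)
Your approach is the same as the paper's: identify which subsequences $I$ of $v_1(x)$ give $x_I=[\prod_i\varphi_i,\,n-1]$, observe this forces $k\le 2$, and then evaluate the weight-one $\textup{I}^\H$ factor in the surviving cases. The overall structure is fine, but your explanation of the vanishing for $k=2$ with $n_1,n_2\ge 2$ contains a genuine slip.

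For $k=2$ the sequence is $(0,1,0^{n_1-1},\varphi_1,0^{n_2-1},\varphi_1\varphi_2)$. There \emph{is} exactly one single deletion producing $x_I=[\varphi_1\varphi_2,\,n-1]$: delete the entry $\varphi_1$. This works for \emph{every} $n_1,n_2\ge 1$, contrary to what you write. The reason $\pi_1$ vanishes when both $n_1,n_2\ge 2$ is not that ``no single deletion can produce the required argument,'' but that the complementary weight-one integral is
\[
\textup{I}^\H(0;\varphi_1;0)=\log^\H(0-\varphi_1)-\log^\H(\varphi_1-0)=0.
\]
When $n_1=1$ the left neighbour becomes $1$, and when $n_2=1$ the right neighbour becomes $\varphi_1\varphi_2$; only then is the integral nonzero, yielding the stated logarithms (up to the sign $(-1)^{l(x)-l(x_I)}=-1$, and recalling that $\log^\H(-a)=\log^\H(a)$ in $\H_1=\C^\times_\Q$). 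So the mechanism is vanishing of the $\H$-coefficient, not absence of the subsequence.

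Two smaller points. Your last sentence conflates weight and depth: for $k\ge 3$, deleting a single point keeps weight $n-1$ but leaves a word of length $\ge 2$, and \emph{that} is the obstruction (equivalently, the paper's phrasing: the original sequence has only $n-k<n-2$ zeros between $1$ and $\varphi_1\cdots\varphi_k$). And there is no regularization or $n_0>0$ issue here; $\pi_1$ lands in $\H_1$, not $\L$, and all the integrals involved are ordinary.
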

\begin{proof} 
For $x=[\varphi_1,n_1|\varphi_2,n_2|\dots|\varphi_k,n_k]$ consider a sequence 
\[
(x_0,\dots,x_{n+1})=(0,1,\underbrace{0,\dots,0,\varphi_1}_{n_1},\underbrace{0,\dots,0,\varphi_1\varphi_2}_{n_2},\dots,\underbrace{0,\dots,0,\varphi_1\varphi_2\dots \varphi_{k}}_{n_k}).
\]  
Then $\pi_1(x)$ is equal to zero unless the sequence $(x_0,\dots,x_{n+1})$ has a subsequence 
\[
(0,1,\underbrace{0,\dots,0}_{n-2},\varphi_1\varphi_2\dots \varphi_{k}),
\]
where $n=n_1+n_2+\ldots+n_k.$ This cannot happen for $k>2.$  For $k=2$ it is only possible if $n_1$ or $n_2$ is equal to $1;$ these cases can be verified by hand. Finally, for $k=1$ the statement follows from Lemma \ref{LemmaCoactionClassical}. 
\end{proof}

The following statement is a very special instance of the fact that the coaction commutes with the quasi-shuffle product.
\begin{corollary}
The following equality holds for $x,y\in \F_S$:
\be \label{FormulaPiProduct}
\pi_1(x\star y)=\pi_0(x)\pi_1(y)+\pi_1(x)\pi_0(y)\in \mathbb{C}(S)^{\times}_\Q.
\ee	
\end{corollary}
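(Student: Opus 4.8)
The plan is to reduce \eqref{FormulaPiProduct} to a finite computation by exploiting that $\pi_1$ is supported on words of length at most two. Write $x=[\varphi_1,n_1|\dots|\varphi_k,n_k]$ and $y=[\psi_1,m_1|\dots|\psi_l,m_l]$. Every word occurring in the quasi-shuffle expansion of $x\star y$ has the same weight $N=\sum n_i+\sum m_j$ and the same product of first components $\Phi=\prod\varphi_i\prod\psi_j$, so extracting the coefficient at the basis element $[\Phi,N-1]$ of the coaction is linear in the word, and $\pi_1(x\star y)=\sum_w\pi_1(w)$, the sum running over the words $w$ of $x\star y$ with multiplicity. By Lemma~\ref{LemmaMapPi}, $\pi_1$ vanishes on every word of length $\geq 3$ and on $[\varphi,1]$. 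On the other hand, an immediate induction on the recursion defining $\star$ shows that every word of $x\star y$ has length at least $\max(k,l)$. Hence if $k\geq 3$ or $l\geq 3$, both sides of \eqref{FormulaPiProduct} vanish: the left side because all words of $x\star y$ are then too long, and the right side because $\pi_1$ vanishes on words of length $\geq 3$ while $\pi_0$ already vanishes on words of length $\geq 2$.

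It remains to treat $x,y$ of length $\leq 2$. If $x=1$ then $x\star y=y$ and $\pi_1(1\star y)=\pi_1(y)=\pi_0(1)\pi_1(y)+\pi_1(1)\pi_0(y)$, using $\pi_0(1)=1$, $\pi_1(1)=0$; symmetrically for $y=1$. Since both sides of \eqref{FormulaPiProduct} are symmetric in $x$ and $y$, the remaining cases reduce to the three shapes
\[
([\varphi_1,n_1],\,[\psi_1,m_1]),\qquad
([\varphi_1,n_1],\,[\psi_1,m_1|\psi_2,m_2]),\qquad
([\varphi_1,n_1|\varphi_2,n_2],\,[\psi_1,m_1|\psi_2,m_2]).
\]
In each, one writes out $x\star y$, discards the words of length $\geq 3$, and evaluates $\pi_1$ on the remaining words of length $\leq 2$ by Lemma~\ref{LemmaMapPi}, while the right-hand side is read off from the same lemma together with $\pi_0(1)=\pi_0([\varphi,n])=1$ and $\pi_0=0$ on longer words. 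For instance, for $x=[\varphi_1,n_1]$, $y=[\psi_1,m_1]$ one has $x\star y=[\varphi_1,n_1|\psi_1,m_1]+[\psi_1,m_1|\varphi_1,n_1]+[\varphi_1\psi_1,n_1+m_1]$, and each of the subcases $n_1,m_1\geq 2$; exactly one of $n_1,m_1$ equal to $1$; $n_1=m_1=1$ matches $\pi_0(x)\pi_1(y)+\pi_1(x)\pi_0(y)$ after simplification. The two length-two shapes are handled the same way; in the last shape the only word of length $\leq 2$ is $[\varphi_1\psi_1,n_1+m_1|\varphi_2\psi_2,n_2+m_2]$, on which $\pi_1$ vanishes since both weights are $\geq 2$, and the right-hand side is $0$ as well.

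I do not expect a genuine obstacle: the only external input beyond Lemma~\ref{LemmaMapPi} is the identity $\log^{\H}(-1)=0$ in $\H_1=\C^{\times}_{\Q}$ (valid because $-1$ is torsion in $\C^{\times}$), used to rewrite $\log^{\H}(1-\varphi)$ as $\log^{\H}(\varphi-1)$ when matching the two sides. The only point requiring care is the distinction in Lemma~\ref{LemmaMapPi} between $\pi_1([\varphi_1,1|\psi_1,1])$ and $\pi_1([\varphi_1,n-1|\psi_1,1])$ for $n\geq 3$, so one must keep track of whether a merged first component of a word in $x\star y$ carries weight $1$ or weight $\geq 2$ before applying the lemma.
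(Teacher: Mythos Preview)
Your proposal is correct and follows essentially the same approach as the paper's own proof: reduce to words of length at most two using Lemma~\ref{LemmaMapPi}, then check the finitely many remaining shapes by hand. The paper states this in two sentences (``If the length of one of the words $x$ or $y$ is at least three, \ldots\ The remaining cases can be easily verified''), whereas you spell out the case analysis and correctly flag the one nontrivial input needed in the verification, namely $\log^{\H}(-1)=0$ in $\H_1\cong\C^\times_\Q$.
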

\begin{proof}
If the length of one of the words $x$ or $y$ is at least three, formula (\ref{FormulaPiProduct}) holds because all terms involved vanish by Lemma \ref{LemmaMapPi}. The remaining cases can be easily verified.
\end{proof}

\subsection{Formal quasi-shuffle relation}\label{SectionFormalSSR}
Our main result is the following theorem. 
 \begin{proposition}[Formal Quasi-Shuffle Relation]\label{TheoremFormalSSR}
The quasi-shuffle product $\star\colon \F_S\otimes \F_S\lra \F_S$ is a map of $\H_S$-comodules. In other words, the following diagram is commutative:	
\[ 
\begin{gathered}
    \xymatrix{
    &\F_S\otimes \F_S  \ar[d]^{\star} \  \ar[r]^{\Delta^{\mathcal{HFF}}}&  \H_S\otimes \F_S \otimes \F_S \ar[d]^{1\otimes\star} \\
 & \F_S \ar[r]^{\Delta^{\mathcal{HF}}}& \H_S\otimes \F_S }
\end{gathered}.
\]
\end{proposition}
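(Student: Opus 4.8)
The plan is to prove the identity $\Delta^{\mathcal{HF}}(x\star y)=(1\otimes\star)\Delta^{\mathcal{HFF}}(x\otimes y)$ by induction on the total weight $n=\mathrm{wt}(x)+\mathrm{wt}(y)$, writing $A=\Delta^{\mathcal{HF}}(x\star y)$ and $B=(1\otimes\star)\Delta^{\mathcal{HFF}}(x\otimes y)$ in $\H_S\otimes\F_S$. If $x=1$ or $y=1$ the identity is immediate from $\Delta^{\mathcal{HF}}(1)=1\otimes1$ and $1\star z=z$, which also serves as the base of the induction; so assume $x,y$ have positive weight. Since $\Delta^{\mathcal{HF}}$ of a positive-weight word produces only terms whose $\F_S$-component is a nonempty word (the subpolygon $x_I$ in formula (\ref{FormulaCoaction}) always contains its three marked vertices), and the unique term with trivial $\H_S$-component is $1\otimes(x\star y)$ on both sides, the element $A-B$ already lies in $(\H_S)_{>0}\otimes(\F_S)_{>0}$.

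The first step is to show that $A-B$ is concentrated on \emph{primitive} words $[\varphi,m]$ of $\F_S$. By Lemma \ref{LemmaCoproductCompatibility} the coaction commutes with $\Delta^{\mathcal{FF}}$, and since $(\F_S,\star,\Delta^{\mathcal{FF}})$ is a bialgebra we have $\Delta^{\mathcal{FF}}(x\star y)=\Delta^{\mathcal{FF}}(x)\star\Delta^{\mathcal{FF}}(y)$. Expanding $(1\otimes\Delta^{\mathcal{FF}})A$ and $(1\otimes\Delta^{\mathcal{FF}})B$ in this way and comparing, every term in which both $\F_S$-tensor factors have positive weight is, because the reduced deconcatenation coproduct $\widetilde{\Delta}^{\F\F}$ strictly lowers weight in each slot, an instance of the statement we are proving at total weight $<n$, hence vanishes by the inductive hypothesis. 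As $\widetilde{\Delta}^{\F\F}$ lands in $(\F_S)_{>0}\otimes(\F_S)_{>0}$, this says $(1\otimes\widetilde{\Delta}^{\F\F})(A-B)=0$. Because $\F_S$ with the deconcatenation coproduct is a tensor coalgebra — an element of its augmentation ideal annihilated by $\widetilde{\Delta}^{\F\F}$ is a linear combination of letters $[\varphi,m]$ — it follows that $A-B=\sum_{m\ge1}h_m\otimes[\psi_m,m]$ with $h_m\in(\H_S)_{>0}$.

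The second step identifies these coefficients via the principal coefficient maps. Inspecting (\ref{FormulaCoaction}), a term of $\Delta^{\mathcal{HF}}(z)$ whose $\F_S$-factor is a single letter is forced to have that letter equal to $[\varphi_1\cdots\varphi_k,\,m]$, where $\varphi_1,\dots,\varphi_k$ are the functions of $z$; and by the very definition of $\pi_d$ the coefficient of this letter in $\Delta^{\mathcal{HF}}(z)$ is $\pi_{\mathrm{wt}(z)-m}(z)$. The same bookkeeping applied to $B=\sum_{i,j}\alpha_i\beta_j\otimes(u_i\star v_j)$ — using that $u_i\star v_j$ contains a letter only when $u_i$ and $v_j$ are themselves letters, in which case its letter part is the single product letter — shows the coefficient of $[\varphi_1\cdots\varphi_k\psi_1\cdots\psi_l,\,n-d]$ in $B$ is $\sum_{d_1+d_2=d}\pi_{d_1}(x)\pi_{d_2}(y)$. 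Hence $A-B=\sum_{d\ge1}e_d\otimes[\varphi_1\cdots\psi_l,\,n-d]$ with $e_d=\pi_d(x\star y)-\sum_{d_1+d_2=d}\pi_{d_1}(x)\pi_{d_2}(y)$, and $e_0=e_1=0$ by (\ref{FormulaPi0Product}) and (\ref{FormulaPiProduct}) (these being exactly the low-weight cases computed in Lemma \ref{LemmaMapPi}).

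Finally, to conclude $e_d=0$ for all $d$ one plays off coassociativity. Writing the coassociativity identity of Lemma \ref{LemmaCoassociativity} for $A$ and for $B$, subtracting, and cancelling the weight-$n$ contributions by the inductive hypothesis just as before, one gets $(1\otimes\widetilde{\Delta}^{\H\F})(A-B)=(\widetilde{\Delta}^{\H\H}\otimes1)(A-B)$. Substituting $A-B=\sum e_d\otimes[\varphi_1\cdots\psi_l,\,n-d]$ and computing the left-hand side with the explicit coaction on letters (Lemma \ref{LemmaCoactionClassical}), comparison of $\F_S$-components yields the recursion $\widetilde{\Delta}^{\H\H}(e_d)=\sum_{1\le k<d}e_k\otimes\tfrac{1}{(d-k)!}\bigl(\log^{\H}(\varphi_1\cdots\psi_l)\bigr)^{d-k}$. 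Since $e_0=e_1=0$, induction on $d$ gives $\widetilde{\Delta}^{\H\H}(e_d)=0$, so $e_d$ is a primitive element of $\H_S$, hence (by the Rigidity Lemma of \S\ref{SectionRigidity}) a constant variation; evaluating at the degenerate configuration $\varphi_i=\psi_j=1$, where every iterated integral with coinciding endpoints and every $\log^{\H}$ vanishes, forces $e_d=0$, and therefore $A=B$. The main obstacle will be the combinatorial bookkeeping in the two reduction steps — tracking the signs $(-1)^{l(x)-l(x_I)}$ and exactly which subpolygon and quasi-shuffle terms survive, and in particular the case analysis showing that a single letter appearing in $\Delta^{\mathcal{HF}}(z)$ must be $[\varphi_1\cdots\varphi_k,m]$ with coefficient $\pi_{\mathrm{wt}(z)-m}(z)$; this is the mechanism that will reappear, in a more elaborate form, in the proof of the formal quadrangulation formula (Theorem \ref{TheoremFormalQF}).
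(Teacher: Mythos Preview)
Your argument follows the paper's proof almost verbatim: induct on total weight, use Lemma~\ref{LemmaCoproductCompatibility} and the induction hypothesis to force $A-B$ into $\H_S\otimes\mathrm{Prim}(\F_S)$, use coassociativity (Lemma~\ref{LemmaCoassociativity}) plus Lemma~\ref{LemmaCoactionClassical} to get the recursion $\widetilde{\Delta}^{\H\H}(e_d)=\sum_{k<d}e_k\otimes(\log^\H)^{d-k}/(d-k)!$, kill $e_0,e_1$ via the principal-coefficient identities, and finish by rigidity plus specialization. All of this is correct and matches the paper.

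The gap is in your final specialization. You set $\varphi_i=\psi_j=1$ and assert that ``every iterated integral with coinciding endpoints\ldots vanishes, forces $e_d=0$''. But at $\varphi_i=1$ the sequence $v_1(z)$ becomes $(0,1,0,\ldots,0,1,0,\ldots,1)$, and the iterated integrals $I^\H(x_{i_p};\ldots;x_{i_{p+1}})$ appearing in the coaction do \emph{not} all have coinciding endpoints: they become Hodge multiple zeta values like $I^\H(0;1,0,\ldots;1)$, which are not zero in $\H$ in general. So your justification fails. (There is also a smaller omission: you never reduce to the universal case $S=\mathrm{Spec}\,\C[a_1,\ldots,a_k,a_1',\ldots,a_{k'}']$, without which ``specializing the $\varphi_i$'' has no meaning for arbitrary $S$.)

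The paper's fix is to specialize instead to $a_i=a_j'=0$. Then every non-unit argument of the iterated integrals in $h_r$ is a monomial of positive degree in the $a$'s and hence goes to $0$; in particular the last argument of each $I^\H(0;1,\ldots;m_1\cdots m_s)$ collapses to $0$, matching the first, and the integral vanishes. With this single change your proof is complete and coincides with the paper's.
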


\begin{proof}
It is easy to see that it suffices to prove that for  
$S=\textup{Spec}\left(\C[a_1,\dots,a_k,a_1',\dots,a_{k}']\right)$ and
\be \label{FormulaXY}
\begin{split}
	&x=[a_1,n_1|\dots|a_{k},n_k]\in \mathcal{F}_S, \\
	&y=[a'_1,n'_1|\dots|a_{k}',n'_{k'}]\in \mathcal{F}_S\\
\end{split}
\ee
we have $\Delta^{\mathcal{HF}}(x\star y)=\Delta^{\mathcal{HF}}(x)\Delta^{\mathcal{HF}}(y).$
We prove this statement by induction on $n+n',$ where $n=n_1+\dots+n_k$ and $n'=n_1'+\dots+n_{k'}'.$ The base  case of $n=n'=0$ is obvious. 
 
Denote the difference by 
\be\label{FormulaDifference}
D=\bigl(\Delta^{\mathcal{HF}}(x\star y)-\Delta^{\mathcal{HF}}(x)\Delta^{\mathcal{HF}}(y)\bigr)\in \H_S \otimes \F_S.
\ee 

First, we show that the difference $D$ satisfies 
\be \label{FormulaFirstEquality}
(1\otimes\widetilde{\Delta}^{\F\F})D=0.
\ee
In more  concrete terms, we need to show that 
\be \label{FormulaPrimitive}
(1\otimes \Delta^{\F\F})D=D \otimes 1+(T\otimes 1)(1\otimes D)\in \H_S\otimes \F_S\otimes\F_S
\ee
where $T(a\otimes b)=b \otimes a.$ 
Indeed, assume that 
\[
\begin{split}
&\Delta^{\F\F}x=\sum x_i^{(1)}\otimes x_i^{(2)}\in \F_S\otimes\F_S,\\
&\Delta^{\F\F}y=\sum y_j^{(1)}\otimes y_j^{(2)}\in \F_S\otimes\F_S.
\end{split}
\]
Then 
\[
\Delta^{\F\F}(x\star y)=\sum (x_i^{(1)}\star y_j^{(1)})\otimes (x_i^{(2)}\star y_j^{(2)})\in \F_S\otimes\F_S.
\]
By induction, we know that
\[
\Delta^\mathcal{HF}\bigl(x_i^{(1)}\star y_j^{(1)}\bigr)=\Delta^\mathcal{HF}\bigl(x_i^{(1)}\bigr)\Delta^\mathcal{HF}(y_j^{(1)})
\]
unless $x^{(1)}=x$ and $y^{(1)}=y;$ similar for $\Delta^\mathcal{HF}\bigl(x_i^{(2)}\star y_j^{(2)}\bigr).$

Thus the following equality holds in $\H_S \otimes \F_S\otimes \H_S \otimes \F_S:$

\begin{align*}
(\Delta^\mathcal{HF}\otimes \Delta^{\mathcal{HF}}) \Delta^{\mathcal{FF}}(x\star y)&=\sum \Delta^\mathcal{HF}(x_i^{(1)}\star y_j^{(1)})\otimes \Delta^\mathcal{HF}(x_i^{(2)}\star y_j^{(2)})\\
&=\Delta^\mathcal{HF}(x\star y)\otimes 1\otimes 1+1\otimes 1\otimes  \Delta^\mathcal{HF}(x\star y)\\
&\quad-\left(\Delta^\mathcal{HF}(x)\Delta^\mathcal{HF}(y)\right)\otimes 1\otimes 1-1\otimes 1\otimes \left(\Delta^\mathcal{HF}(x)\Delta^\mathcal{HF}(y)\right)\\
&\quad+\sum \left(\Delta^\mathcal{HF}\bigl(x_i^{(1)}\bigr)\Delta^\mathcal{HF}\bigl(y_j^{(1)}\bigr)\right)\otimes \left(\Delta^\mathcal{HF}\bigl(x_i^{(2)}\bigr)\Delta^\mathcal{HF}\bigl(y_j^{(2)}\bigr)\right),
\end{align*}
which implies that
\begin{align*}
(\Delta^\mathcal{HF}\otimes \Delta^{\mathcal{HF}}) \Delta^{\mathcal{FF}}(x\star y)&=D\otimes1 \otimes 1+1\otimes 1\otimes D\\
&\quad+\left((\Delta^\mathcal{HF}\otimes \Delta^{\mathcal{HF}}) \Delta^{\mathcal{FF}}(x)\right)\left((\Delta^\mathcal{HF}\otimes \Delta^{\mathcal{HF}}) \Delta^{\mathcal{FF}}(y)\right).
\end{align*}

Using the definition of $\Delta^{\H\F\F}$, we can rewrite it as follows:
\[ 
\Delta^{\H\F\F}(\Delta^{\F\F}(x\star y))=D \otimes 1+(T\otimes 1)(1\otimes D)+(\Delta^{\H\F\F}(\Delta^{\F\F}(x)))(\Delta^{\H\F\F}(\Delta^{\F\F}(y))).
\]
 On the other hand, by Lemma \ref{LemmaCoproductCompatibility} and (\ref{FormulaDifference}) this implies that
 \[
\left(1\otimes \Delta^{\F\F}\right)(D)=D \otimes 1+(T\otimes 1)(1\otimes D),
\]
which proves (\ref{FormulaPrimitive}).

Next, we show that the difference $D$ satisfies 
\be \label{FormulaSecondEquality}
(\widetilde{\Delta}^{\mathcal{HH}}\otimes 1)D=(1\otimes \widetilde{\Delta}^{\H\F})D.
\ee
Equivalently, we need to show that
\be\label{FormulaSecondResult}
(\Delta^{\mathcal{HH}}\otimes 1)D-(1\otimes \Delta^{\H\F})D=1\otimes D.
\ee
By Lemma \ref{LemmaCoassociativity} we have
\be \label{FormulaSecondA}
(\Delta^{\mathcal{HH}}\otimes 1)\Delta^{\H\F}(x\star y)-(1\otimes \Delta^{\H\F})\Delta^{\H\F}(x\star y)=0.
\ee
Assume that 
\[
\begin{split}
&\Delta^{\H\F}(x)=\sum f_i^{(1)}\otimes x_i^{(2)}\in \H\otimes \F,\\
&\Delta^{\H\F}(y)=\sum g_j^{(1)}\otimes y_j^{(2)}\in \H\otimes \F.
\end{split}
\]
Then
\begin{align*}
(\Delta^{\H\H}\otimes 1)(\Delta^{\H\F}(x)\Delta^{\H\F}(y))=&\sum \Delta^{\H\H}\bigl(f_i^{(1)}g_j^{(1)}\bigr) \otimes \bigl(x_i^{(2)}\star y_j^{(2)}\bigr)\\
=&\sum \Delta^{\H\H}\bigl(f_i^{(1)}\bigr) \Delta^{\H\H}\bigl(  g_j^{(1)}\bigr) \otimes \bigl ( x_i^{(2)}\star y_j^{(2)}\bigr) \\
=&\sum \left ( \Delta^{\H\H}\bigl(f_i^{(1)}\bigr) \otimes   x_i^{(2)} \right) \left ( \Delta^{\H\H}\bigl(  g_j^{(1)}\bigr) \otimes y_j^{(2)}\right) \\
=&\sum \left ( f_i^{(1)} \otimes   \Delta^{\H\F}\bigl(x_i^{(2)}\bigr) \right) \left (  g_j^{(1)} \otimes \Delta^{\H\F}\bigl( y_j^{(2)}\bigr)\right)\\
=&\sum f_i^{(1)}g_j^{(1)} \otimes \left(\Delta^{\H\F}\bigl(x_i^{(2)}\bigr)\Delta^{\H\F}\bigl(y_j^{(2)}\bigr)\right)\\
\end{align*}

By induction we know that $\Delta^{\H\F}(x_i^{(2)}y_j^{(2)})=\Delta^{\H\F}(x_i^{(2)})\Delta^{\H\F}(y_j^{(2)})$ unless $x_i^{(1)}=x$ and $y_j^{(2)}=y.$ It follows that
\begin{align*}
&(\Delta^{\H\H}\otimes 1)(\Delta^{\H\F}(x)\Delta^{\H\F}(y))
=\sum f_i^{(1)}g_j^{(1)} \otimes \left(\Delta^{\H\F}\bigl(x_i^{(2)}\bigr)\Delta^{\H\F}\bigl(y_j^{(2)}\bigr)\right)\\
=&\sum f_i^{(1)}g_j^{(1)} \otimes \left(\Delta^{\H\F}\bigl(x_i^{(2)}\star y_j^{(2)}\bigr)\right)-1\otimes \Delta^{\H\F}(x\star y)+1\otimes \Delta^{\H\F}(x)\Delta^{\H\F}(y)\\
=&(1\otimes \Delta^{\H\F})\bigl(\Delta^{\H\F}(x)\Delta^{\H\F}(y)\bigr)-1\otimes D.
\end{align*}
So, we have
\be \label{FormulaSecondB}
(\Delta^{\mathcal{HH}}\otimes 1)\left( \Delta^{\H\F}(x) \Delta^{\H\F}(y)\right)-(1\otimes \Delta^{\H\F})\left(\Delta^{\H\F}(x) \Delta^{\H\F}(y)\right)=-1\otimes D.
\ee
From (\ref{FormulaSecondA}) and (\ref{FormulaSecondB}) we get (\ref{FormulaSecondResult}).

Now we are ready to finish the proof of the lemma. Denote the space of primitive elements of $\F_S$ by $\textup{Prim}(\F_S);$ this space is generated by words of length $1$. Let  $a=\prod_{i=1}^k a_i$ and $a'=\prod_{i=1}^{k'} a'_i.$
From (\ref{FormulaFirstEquality}) it follows that the  element $D\in \H_S\otimes \F_S$ lies in $\H_S\otimes \textup{Prim}(\F_S)$, and it is easy to see that only words in $\textup{Prim}(\F_S)$ which may appear in $D$ have the form
\[
\left[a a'  , n+n'-d \right].
\]
for $0\leq d\leq n+n'-1.$ This follows from the observation that all the sequences $v_1(\omega)$ corresponding to the words $\omega$ appearing in $x\star y$ have $a a'$ as the last term. So, we have 
\[
D=\sum_{i=0}^{n+n'-1}  h_i \otimes \left[a a' , n+n'-i\right]
\]
for certain $ h_0,\dots, h_{n+n'-1}\in \H_S.$ We need to show that $h_0=\dots=h_{n+n'-1}=0.$ Assume that it is not the case; let $r$ be the smallest index such that $h_r\neq 0.$

We have $h_0=\pi_0(x\star y)-\pi_0(x)\pi_0(y)=0$ by (\ref{FormulaPi0Product}), so $r\neq 0.$
Also, by (\ref{FormulaPiProduct}) we have 
$h_1=\pi_1(x\star y)-\pi_0(x)\pi_1(y)-\pi_1(x)\pi_0(y)=0,$
so $r>1.$

From (\ref{FormulaSecondEquality}) we have 
\[
\sum_{i=0}^{n+n'-1} \left ( \widetilde{\Delta}^{\mathcal{HH}}(h_i)\otimes \left[aa', n+n'-i\right]\right)=\sum_{i=0}^{n+n'-1} \left( h_i\otimes \widetilde{\Delta}^{\mathcal{HF}}\left[aa', n+n'-i\right]\right).
\]

Comparing coefficients in front of $[aa'|n+n'-r]$
we get that
\[
\widetilde{\Delta}^{\mathcal{HH}}(h_r)=\sum_{i=1}^r h_{r-i}\otimes \frac{(\log^{\H}(aa'))^i}{i!} ,
\]
and since $h_0=\dots=h_{r-1}=0$ we have $\widetilde{\Delta}^{\mathcal{HH}}(h_r)=0.$ Consider specialization to the point $a_1=\dots=a_n=a_1'=\dots=a_{n'}'=0.$ From (\ref{FormulaCoaction}) it is clear that $h_r$ is a linear combination of iterated integrals 
\[
\textup{I}^{\H}(0;1,\underbrace{0,\dots,0,m_1}_{d_1},\dots,\underbrace{0,\dots,0,m_1m_2\dots m_{s-1}}_{d_{s-1}},\underbrace{0,\dots,0;m_1m_2\dots m_{s}}_{d_s}),
\]
where $m_i$ are monomials in $a_1,\dots, a_k, a_1',\dots, a_{k'}'$ of degree at least $1,$ so $h_r$ vanishes after the specialization. By the rigidity argument (\S \ref{SectionRigidity}) $h_r$ vanishes, which contradicts our assumption. This finishes the proof of the theorem. 
\end{proof}

\subsection{Quasi-shuffle relation for multiple polylogarithms}\label{SectionSSR}
Using the results of \S\S\ref{SectionSmashCoproduct}-\ref{SectionFormalSSR} we  construct the smash coproduct Hopf algebra $\H_S\times \F_S$. Consider a map 
${\Li^{\H}\colon \F_S \lra \H_S}$ defined by the formula
\[
\Li^{\H}[\varphi_1,n_1|\varphi_2,n_2|\dots|\varphi_k,n_k]=
\Li^{\H}_{n_1,\dots,n_k}(\varphi_1,\varphi_2,\dots,\varphi_k).
\]
This map can be  extended to an $\H_S$-linear map
${\Li^{\H}\colon \H_S\times \F_S \lra \H_S}.$ 

\begin{proposition}\label{PropositionMapToPolylogarithms} The map
	${\Li^{\H}\colon \H_S \times \F_S \lra \H_S}$  is a homomorphism of  Hopf algebras. In particular, the quasi-shuffle relation for multiple polylogarithms
	\[
	\Li^{\H}(x \star y)=\Li^{\H}(x)\Li^{\H}( y)
	\]
	 holds for $x, y \in \F_S.$
\end{proposition}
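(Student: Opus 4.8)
The plan is to read Proposition~\ref{PropositionMapToPolylogarithms} off from the smash coproduct machinery. First I would note that the three inputs are now in place: Proposition~\ref{TheoremFormalSSR} says $\Delta^{\mathcal{HF}}$ commutes with the quasi-shuffle product, Lemma~\ref{LemmaCoproductCompatibility} says it commutes with $\Delta^{\mathcal{FF}}$, compatibility with unit and counit is immediate, and Lemma~\ref{LemmaCoassociativity} says it is a coaction. By \cite[Theorem (2.14)]{Mol77} (recalled in \S\ref{SectionSmashCoproduct}) these are exactly the hypotheses guaranteeing that $\H_S\times\F_S$ is a commutative Hopf algebra, with coproduct given by~(\ref{FormulaCoproductSmash}). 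Since $\Li^{\H}$ is $\H_S$-linear and sends the unit to the unit, and since $\H_S\times\F_S$ and $\H_S$ are connected graded (so the antipode is automatic), it remains only to check that $\Li^{\H}$ respects products and coproducts.

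For the coalgebra compatibility $\Delta^{\H\H}\circ\Li^{\H}=(\Li^{\H}\otimes\Li^{\H})\circ\Delta^{\H\times\F}$ I would use $\H_S$-linearity to reduce to checking it on elements $x\in\F_S$, unfold the right-hand side via~(\ref{FormulaCoproductSmash}) into the deconcatenation $\Delta^{\mathcal{FF}}$ followed by the coaction $\Delta^{\mathcal{HF}}$, and match the result term by term against Goncharov's formula~(\ref{FormulaCoproduct}) for $\Delta^{\H\H}\textup{I}^{\H}$. As explained in \S\ref{SectionCoaction}, replacing the marked subpolygon $x_I$ in~(\ref{FormulaCoaction}) by $\Li^{\H}(x_I)$ reproduces precisely those terms of $\Delta^{\H\H}\Li^{\H}(x)$ coming from subsequences with $i_1=1$; the remaining terms, in which the point $1$ is cut off, are recovered from the deconcatenation part together with the homogeneity relation~(\ref{FormulaHomogenuity}), which is what is needed to rewrite the iterated integral on the ``main subpolygon'' as a genuine multiple polylogarithm, after keeping careful track of the sign $(-1)^{l(x)-l(x_I)}$. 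For the algebra compatibility I would observe that, because the product on $\H_S\times\F_S$ is the tensor product algebra structure on $\H_S\otimes\F_S$ and $\Li^{\H}$ is $\H_S$-linear, multiplicativity of $\Li^{\H}$ is literally the quasi-shuffle relation $\Li^{\H}(x\star y)=\Li^{\H}(x)\Li^{\H}(y)$ for $x,y\in\F_S$; I would then prove this by the rigidity argument of \S\ref{SectionRigidity}, mirroring the proof of Proposition~\ref{TheoremFormalSSR}: writing $D=\Delta^{\H\H}\bigl(\Li^{\H}(x\star y)-\Li^{\H}(x)\Li^{\H}(y)\bigr)$, using that $\Delta^{\H\times\F}$ is multiplicative together with an induction on total weight (so that $\Li^{\H}$ is already known to be multiplicative on the lower-weight constituents occurring in the reduced coproducts) to conclude $\widetilde\Delta^{\H\H}$ of the difference vanishes, and finally specializing all arguments to $0$, where both sides are trivially equal.

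The hard part is the coproduct comparison in the second paragraph: matching the rather opaque smash-coproduct formula~(\ref{FormulaCoproductSmash}) with the iterated-integral coproduct~(\ref{FormulaCoproduct}) term by term, reconciling the combinatorics of marked subpolygons with subsequences, and handling the signs and rescalings that convert a ``main subpolygon'' integral back into a multiple polylogarithm. This is precisely where the construction of $\Delta^{\mathcal{HF}}$ and the principal coefficient technology of \S\ref{SectionProjectionToPrimitiveElements} are doing their work, and since the argument runs in close parallel to the proof of Proposition~\ref{TheoremFormalSSR}, the remaining verifications are routine.
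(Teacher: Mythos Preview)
Your approach is essentially the paper's own, and the overall outline is correct. Two small corrections are worth making. First, the coproduct matching is cleaner than you suggest: once you note that in Goncharov's formula~(\ref{FormulaCoproduct}) every term with $x_{i_1}=0$ vanishes (since $x_0=0$), the surviving terms have $i_1\in\{1,1+n_1,1+n_1+n_2,\ldots\}$ and factor exactly as $\Li^\H$ of a deconcatenation prefix times $(1\otimes\Li^\H)\Delta^{\H\F}$ of the suffix, which is literally $(\Li^\H\otimes\Li^\H)$ applied to~(\ref{FormulaCoproductSmash}) on $1\otimes x$; there is no need to treat ``$i_1=1$'' separately from ``the point $1$ cut off'', and the homogeneity relation enters only implicitly through the independence of $\Delta^{\H\F}$ on the base point $\varphi_0$. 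Second, the principal coefficient map of \S\ref{SectionProjectionToPrimitiveElements} plays no role in this proof --- it is used only inside the proof of Proposition~\ref{TheoremFormalSSR}, which you are here taking as input. Your explicit induction on total weight in the product-compatibility step is the correct way to close that argument; the paper writes this step more tersely.
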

\begin{proof}
It is easy to see that the map $\Li^{\H}$ commutes with the unit and the counit. We need to check that it commutes with the product and the coproduct. For elements $h\otimes 1\in \H_S \otimes \F_S$ this is obvious. It suffices to check these statements on elements of type $1\otimes x \in \H_S\otimes \F_S.$

First, we check compatibility with the coproduct, namely the equality
\[
( \Li^\H\otimes \Li^\H)\Delta^{\H\times \F}(1\otimes x)=\Delta^{\H\H}\Li^\H(1\otimes x)
\] 	
for $x=[\varphi_1,n_1|\varphi_2,n_2|\dots|\varphi_k,n_k].$ From (\ref{FormulaCoproductSmash}) it is easy to see that we have
\[
\Delta^{\H\times \F}(1\otimes x)=1\otimes \bigl((1\otimes \Delta^{\H\F})\Delta^{\F\F}x\bigr ) \in  \H_S\otimes \F_S\otimes \H_S\otimes \F_S.
\]
Recall the presentation of $\Li^\H(x)$ as an iterated integral 
\[
\begin{split}
	&\Li^{\H}_{n_1,\dots,n_k}(\varphi_1,\varphi_2,\dots,\varphi_k)\\
&=(-1)^{k}\textup{I}^{\H}(0;1,\underbrace{0,\dots,0,\varphi_1}_{n_1},\underbrace{0,\dots,0,\varphi_1\varphi_2}_{n_2},\dots,\underbrace{0,\dots,0;\varphi_1\varphi_2\dots \varphi_{k}}_{n_k}).
\end{split}
\] 
Let 
\[
(x_0,\dots,x_{n+1})=(0,1,\underbrace{0,\dots,0,\varphi_1}_{n_1},\underbrace{0,\dots,0,\varphi_1\varphi_2}_{n_2},\dots,\underbrace{0,\dots,0,\varphi_1\varphi_2\dots \varphi_{k}}_{n_k})
\]
be the sequence of  arguments of the corresponding iterated integral. From here, we see that
\[
\begin{split}
	&\Delta^{\H\H}\Li^{\H}(x)= (-1)^k\Delta^{\H\H} \textup{I}^{\H}(x_0;x_1,\dots,x_n;x_{n+1})\\
	&=(-1)^k\sum_{(i_0,\dots,i_{r+1})\in \mathcal{I}}\left (\prod_{p=0}^{r} \textup{I}^{\H}(x_{i_p};x_{i_p+1},\dots,x_{i_{p+1}-1};x_{i_{p+1}})\right )\otimes  \textup{I}^{\H}(x_{i_0};x_{i_1},\dots,x_{i_r}; x_{i_{r+1}})
\end{split}
\]
where the summation goes over the  set $\mathcal{I}$ of  sequences 
\[
0=i_0<i_1<\dots<i_r<i_{r+1}=n+1.
\]
We break the set $\mathcal{I}$ into subsets $\mathcal{I}_j$ with $i_1=j$. Since $x_0=0$, the iterated integral $\textup{I}^{\H}(x_{i_0};x_{i_0+1},\dots,x_{i_{1}-1};x_{i_{1}})$ vanishes if $x_{i_1}=0,$ so all the terms in the formula above corresponding to sequences in $\mathcal{I}_j$ vanish unless
\[
j \in \{1,1+n_1,1+n_1+n_2,\dots ,1+n_1+n_2+\dots+n_k\}.
\]
For $j=1+n_1+\dots+n_s$ where $0\leq s \leq k$ we have
\[
(-1)^s\textup{I}^{\H}(x_{i_0};x_{i_0+1},\dots,x_{i_1-1};x_{i_{1}})=\Li^{\H}[\varphi_1,n_1|\varphi_2,n_2|\dots|\varphi_s,n_s]
\]
and 
\[
\begin{split}
&(-1)^{k-s}\sum_{(i_0,\dots,i_{r+1})\in \mathcal{I}_{j}}\left (\prod_{p=1}^{r} \textup{I}^{\H}(x_{i_p};x_{i_p+1},\dots,x_{i_{p+1}-1};x_{i_{p+1}})\right )\otimes  \textup{I}^{\H}(x_{i_0};x_{i_1},\dots,x_{i_r}; x_{i_{r+1}})\\
&=(1\otimes \Li^\H)\Delta^{\H\F}[\varphi_{s+1},n_{s+1}|\varphi_{s+2},n_{s+2}|\dots|\varphi_k,n_k].
\end{split}
\]
It follows that 
\be \label{FormulaCoproductMultiplePolylogarithm}
\begin{split}
&\Delta^{\H\H}\Li^\H(x)\\
&=\sum_{s=0}^k \left (\Li^{\H}[\varphi_1,n_1|\dots|\varphi_s,n_s]\otimes 1) \left ((1\otimes \Li^\H)\Delta^{\H\F}[\varphi_{s+1},n_{s+1}|\dots|\varphi_k,n_k]\right )\right).
\end{split}
\ee
Compatibility of the map ${\Li^{\H}\colon \H_S \times \F_S\lra \H_S}$ with the coproduct follows from here.

To check compatibility with the product, it is sufficient to prove that
for $x,y$ defined by (\ref{FormulaXY})  we have
\[
\Li^\H(x\star y)=\Li^\H(x)\Li^\H(y)\in \H_S,
\]
namely the quasi-shuffle relation for multiple polylogarithms. We apply the rigidity argument from \S \ref{SectionRigidity}. By Proposition \ref{TheoremFormalSSR} we have 
\begin{align*}
\Delta^{\H\H} \left(\Li^\H(x\star y)-\Li^{\H}(x)\Li^{\H}(y)\right)&=\Delta^{\H\H} (\Li(x\star y))-\Delta^{\H\H} (\Li(x))\Delta^{\H\H} (\Li(y))\\
&=(\Li^\H\otimes \Li^\H)\left(\Delta^{\H\times \F} (x\star y)-\Delta^{\H\times \F} (x)\Delta^{\H\times\F} (y)\right)\\
&=0.
\end{align*}
Specializing $\left (\Li^{\H}(x\star y)-\Li^{\H}(x)\Li^{\H}(y)\right)$ to the point $a_1=\dots=a_k=a_1'=\dots=a_{k'}'=0$  we conclude that $\Li^{\H}(x\star y)-\Li^{\H}(x)\Li^{\H}(y)=0.$
 This finishes the proof of the proposition.
\end{proof}

Proposition \ref{PropositionMapToPolylogarithms} can be extended to generalized multiple polylogarithms in the following way. To simplify the exposition, we work in the Lie coalgebra $\L$ instead of $\H.$ Consider a map 
${\Li_\bullet^{\L}\colon \F_S \lra \L_S}$ defined by the formula
\begin{align*}
\Li_\bullet^{\L}[\varphi_1,n_1|\varphi_2,n_2|\dots|\varphi_k,n_k]&=
\Li^{\L}_{\bullet;n_1,\dots,n_k}(\varphi_1,\varphi_2,\dots,\varphi_k)\\
&=\sum_{n_0=0}^{\infty} \Li^{\L}_{n_0;n_1,\dots,n_k}(\varphi_1,\varphi_2,\dots,\varphi_k).
\end{align*}

\begin{proposition}\label{PropositionMapToGeneralizedPolylogarithms}
	The generalized quasi-shuffle relation $\Li_\bullet^{\L}(x  \star  y)=0$
	 holds for homogeneous elements $x, y\in \F_S$ of positive degrees.
\end{proposition}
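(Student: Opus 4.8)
The plan is to repeat the rigidity argument (\S\ref{SectionRigidity}) used in the proof of Proposition~\ref{PropositionMapToPolylogarithms}, now carried out in the Lie coalgebra $\L$ (cf.\ \cite[Lemma 2.7]{GR18}); working in $\L$ is what makes the statement tractable, since the expression $\Li_\bullet^{\H}(x)\Li_\bullet^{\H}(y)$ one would expect in $\H$ is a product of positive-weight classes and hence dies in $\L$. By naturality it suffices to treat the universal case $S=\textup{Spec}\bigl(\C[a_1,\dots,a_k,a_1',\dots,a_{k'}']\bigr)$ with
\[
x=[a_1,n_1|\dots|a_k,n_k],\qquad y=[a_1',n_1'|\dots|a_{k'}',n_{k'}'],
\]
and $n=n_1+\dots+n_k>0$, $n'=n_1'+\dots+n_{k'}'>0$; we induct on $n+n'$, the base case $n=n'=1$ being checked by hand. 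Rigidity reduces the claim $\Li_\bullet^{\L}(x\star y)=0$ to two statements: (i) $\Delta^{\L}\bigl(\Li_\bullet^{\L}(x\star y)\bigr)=0$, and (ii) $\Li_\bullet^{\L}(x\star y)$ vanishes under the specialization $a_i=a_j'=0$. Statement (ii) is immediate: every word $[\psi_1,m_1|\dots|\psi_l,m_l]$ occurring in $x\star y$ has $\psi_1\cdots\psi_l=(a_1\cdots a_k)(a_1'\cdots a_{k'}')$, which specializes to $0$, and $\Li_{\bullet;m_1,\dots,m_l}^{\H}(\psi_1,\dots,\psi_l)=(-1)^l\,\textup{I}^{\H}_\bullet(0;1,\dots;\psi_1\cdots\psi_l)$ vanishes as soon as its right endpoint coincides with the basepoint $0$.

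For (i) I would first record the cobracket of $\Li_\bullet^{\L}$. Starting from Goncharov's coproduct (\ref{FormulaCoproduct}), passing to $\L$, and summing over the number $n_0$ of zeros prepended before the leading $1$, the cuts whose inner iterated integral lies entirely in the block of prepended zeros, or ends at the leading $1$, contribute nothing: they produce a factor $\textup{I}^{\L}(0;0,\dots,0;0)=0$ or $\textup{I}^{\L}(0;0,\dots,0;1)=\tfrac{1}{j!}(\log^{\L}1)^{j}=0$, using (\ref{FormulaLogPower}) and $\log^{\L}(1)=0$. For the remaining cuts one checks, using homogeneity (\ref{FormulaHomogenuityDivergent}) to renormalise leading arguments and splitting the $n_0$ prepended zeros between the two factors via a sum $\sum_{a=0}^{n_0}$, that the prepended zeros \emph{distribute cleanly}: a ``right cut'' keeps all of them on the outer factor, turning it into a $\Li_\bullet^{\L}$, while a ``left cut'' distributes them between the two factors, turning \emph{both} legs into $\Li_\bullet^{\L}$'s. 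The upshot is a formula for $\Delta^{\L}\Li_\bullet^{\L}(x)$ of exactly the shape underlying (\ref{FormulaCoproductMultiplePolylogarithm}): a ``left part'' governed by $\widetilde{\Delta}^{\F\F}$ with $\Li_\bullet^{\L}$ on both legs, plus a ``right part'' governed by $\widetilde{\Delta}^{\H\F}$ with $\Li_\bullet^{\L}$ on the second leg --- the smash-coproduct pattern with $\Li^{\L}$ everywhere replaced by $\Li_\bullet^{\L}$.

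Granting this formula, (i) follows as in Proposition~\ref{PropositionMapToPolylogarithms}. Plugging $x\star y$ into the formula: in the ``left part'' one uses that $\Delta^{\F\F}$ is an algebra map together with the inductive hypothesis to kill every term except $\Li_\bullet^{\L}(x)\wedge\Li_\bullet^{\L}(y)+\Li_\bullet^{\L}(y)\wedge\Li_\bullet^{\L}(x)=0$; in the ``right part'' one uses the formal quasi-shuffle relation (Proposition~\ref{TheoremFormalSSR}, i.e.\ $\Delta^{\H\F}(x\star y)=\Delta^{\H\F}(x)\Delta^{\H\F}(y)$) together with the inductive hypothesis to kill every term whose $\F_S$-leg is a nontrivial quasi-shuffle of positive-degree words, and what survives cancels against the remainder of the left part --- verbatim the cancellation of the element $D$ in the proof of Proposition~\ref{TheoremFormalSSR}. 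Together with (ii) and rigidity this yields $\Li_\bullet^{\L}(x\star y)=0$. The main obstacle is the second paragraph: bookkeeping the prepended zeros in the cobracket of $\Li_\bullet^{\L}$ so as to confirm that, after discarding the vanishing cuts, $\Delta^{\L}\Li_\bullet^{\L}$ really does fit the same smash-coproduct template as the ordinary multiple polylogarithm; once that formula is in hand, the reduction, the specialization, and the inductive cancellation are routine adaptations of the arguments already given.
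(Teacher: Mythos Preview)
Your cobracket formula for $\Li_\bullet^{\L}$ is missing a term, and this breaks step~(i). The correct formula is exactly (\ref{FormulaCoproductPolylogWithZeros}) in the paper:
\[
\Delta^\L\Li_\bullet^\L(x)=\Li_\bullet^\L(x)\wedge\log^\L\!\Bigl(\textstyle\prod_i\varphi_i\Bigr)
+(\Li_\bullet^\L\wedge\Li_\bullet^\L)\Delta^{\F\F}(x)
+(1\wedge\Li_\bullet^\L)\Delta^{\H\F}(x).
\]
The extra first term does not fit your ``left part / right part'' template. It comes from the cut you did not dispose of: for each $n_0\ge1$ the pair $(i,j)=(1,n+1)$ has outer piece $\textup{I}^\L(0;0;\varphi_1\cdots\varphi_k)=\log^\L(\varphi_1\cdots\varphi_k)$ (one of the prepended zeros survives as the sole marked point of the outer integral) and inner piece $\Li_{n_0-1;n_1,\dots,n_k}^\L(\varphi_1,\dots,\varphi_k)$; summing over $n_0$ gives $\Li_\bullet^\L(x)\wedge\log^\L(\prod\varphi_i)$. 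Your paragraph~2 only kills cuts whose \emph{inner} integral sits in the prepended block, not this one.

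With the correct formula, after your induction (or the paper's induction on $k+k'$) and Proposition~\ref{TheoremFormalSSR} kill the second and third pieces, what remains is
\[
\Delta^\L\Li_\bullet^\L(x\star y)=\Li_\bullet^\L(x\star y)\wedge\log^\L(ab),
\]
which is not zero a priori. Graded by weight this reads $\Delta^\L\Li_{n_0+1}^\L(x\star y)=\Li_{n_0}^\L(x\star y)\wedge\log^\L(ab)$, so one needs a \emph{second} induction on $n_0$: the base case $n_0=0$ is the ordinary quasi-shuffle relation (Proposition~\ref{PropositionMapToPolylogarithms}), and the inductive step is rigidity plus your specialization. Your single induction on $n+n'$ cannot close this loop, because the problematic term lives at the same $(n,n')$ you are trying to prove.
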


\begin{proof}

Similarly to (\ref{FormulaCoproductMultiplePolylogarithm}), we have
\begin{align}\label{FormulaCoproductPolylogWithZeros}
\begin{split}
&\Delta^\L\Li^{\L}_{\bullet;n_1,\dots,n_k}(\varphi_1,\dots, \varphi_k)\\
&=\Li^{\L}_{\bullet;n_1,\dots,n_k}(\varphi_1,\dots, \varphi_k)\wedge \log^\L(\varphi_1\dots \varphi_k)\\
&\quad+\sum_{s=0}^k\Li^{\L}_{\bullet;n_1,\dots,n_s}(\varphi_1,\dots, \varphi_s)\wedge \Li^{\L}_{\bullet;n_{s+1},\dots,n_k}(\varphi_{s+1},\dots, \varphi_k)\\
&\quad+(1\wedge \Li_\bullet^\L)\Delta^{\H\F}[\varphi_{1},n_{1}|\dots|\varphi_k,n_k].\\
\end{split}
\end{align}
Here is the brief explanation.
Let 
\[
(x_0,\dots,x_{n+1})=(0,\underbrace{0,\dots,0}_{n_0},1,\underbrace{0,\dots,0,\varphi_1}_{n_1},\underbrace{0,\dots,0,\varphi_1\varphi_2}_{n_2},\dots,\underbrace{0,\dots,0,\varphi_1\varphi_2\dots \varphi_{k}}_{n_k})
\]
be the sequence of  arguments of the iterated integral corresponding to the generalized multiple polylogarithm $\Li^{\L}_{n_0;n_1,\dots,n_k}$. 
The terms of the coproduct $\Delta^\L\Li^{\L}_{n_0;n_1,\dots,n_k}$ correspond to pairs $0\leq i<j\leq n+1.$ Indeed, in (\ref{FormulaCoproduct}) only the terms with $r=2$ survive in $\L\wedge\L.$ 

The term $\Li^{\L}_{\bullet;n_1,\dots,n_k}(\varphi_1,\dots, \varphi_k)\wedge \log^\L(\varphi_1\dots \varphi_k)$ in (\ref{FormulaCoproductPolylogWithZeros}) comes from the pair $i=1,j=n+1$.  The term 
\[\sum_{s=0}^k\Li^{\L}_{\bullet;n_1,\dots,n_s}(\varphi_1,\dots, \varphi_s)\wedge \Li^{\L}_{\bullet;n_{s+1},\dots,n_k}(\varphi_{s+1},\dots, \varphi_k)
\]
comes from pairs with  $1\leq i\leq n_0< j\leq n$ The term 
\[
(1\wedge \Li_\bullet^\L)\Delta^{\H\F}[\varphi_{1},n_{1}|\dots|\varphi_k,n_k]
\]
comes from pairs with  $n_0 <i< j\leq n.$ The remaining terms vanish in $\Lambda^2 \L.$

Now, we finish the proof of the generalized quasi-shuffle relation. For
\[
\begin{split}
	&x=[a_1,n_1|\dots|a_{k},n_k]\in \mathcal{F}_S, \\
	&y=[a'_1,n'_1|\dots|a_{k}',n'_{k'}]\in \mathcal{F}_S\\
\end{split}
\]
let $a= \prod_{i=1}^{k} a_i$ and  $b=\prod_{j=1}^{k'} b_j.$
We argue by induction on $k+k'.$ The base case is trivial.
Then 
\begin{align*}
&\Delta^\L\Li_\bullet^{\L}(x  \star  y)\\
&=\Li_\bullet^{\L}(x  \star  y)\wedge \log^\L\left(ab\right)+(\Li_\bullet^{\L}\wedge \Li_\bullet^{\L})(\Delta^{\F\F}(x\star y))+(1\wedge \Li_\bullet^{\L})\Delta^{\H\F}(x\star y))\\
&=\Li_\bullet^{\L}(x  \star  y)\wedge \log^\L\left(a b\right)+(\Li_\bullet^{\L}\wedge \Li_\bullet^{\L})(\Delta^{\F\F}(x)\star \Delta^{\F\F}(y))+(1\wedge \Li_\bullet^{\L})(\Delta^{\H\F}(x) \Delta^{\H\F}(y)).
\end{align*}
The terms $(\Li_\bullet^{\L}\wedge \Li_\bullet^{\L})(\Delta^{\F\F}(x)\star \Delta^{\F\F}(y))$ and $(1\wedge \Li_\bullet^{\L})(\Delta^{\H\F}(x) \Delta^{\H\F}(y))$ vanish by induction. We get 
\[
\Delta^\L\Li_\bullet^{\L}(x  \star  y)=\Li_\bullet^{\L}(x  \star  y)\wedge \log^\L\left(ab\right),
\]
or, equivalently,
\[
\Delta^\L\Li_{n_0+1}^{\L}(x  \star  y)=\Li_{n_0}^{\L}(x  \star  y)\wedge \log^\L\left(ab\right) \text{ for }n_0\geq 0.
\]
The statement  follows by induction on $n_0$ and the rigidity argument.
\end{proof}

\section{Formal polylogarithms on the configuration space}\label{SectionFormalOnConfiguration}

\subsection{Alternating polygons}\label{SectionAlternatingPolygons} 
An alternating polygon $\P=(p_0,p_1,\dots,p_{2n+1})$ is an increasing sequence of positive integers such that  $p_{i+1}-p_i$ is odd for $0\leq i \leq 2n.$  It is convenient to draw terms $p_0, p_1, \dots, p_{2n+1}$ in the vertices of a convex $(2n+2)$-gon, which we will denote by the same letter $\P.$ Note that $p_0-p_{2n+1}$ is also odd, so the condition of being alternating is ``cyclically invariant''. Alternating polygons are objects of  a category $\underline{\textup{Alt}}$ with morphisms being parity- and order-preserving injective maps of the corresponding sequences. There are two classes of isomorphisms of objects in $\underline{\textup{Alt}}$ for every $n,$ represented by sequences $(0,1,\dots,2n+1)$ and $(1,2,\dots,2n+2).$ The polygon $\P$ is called even if $p_0$ is even and odd if $p_0$ is odd. 

An alternating subpolygon of $\P$ is a subsequence  $(p_{i_0},p_{i_1},\dots,p_{i_{2k+1}})$ such that indices $p_{i_r}$ have alternating parity for $0\leq r\leq 2k+1$. Two subpolygons of $\P$ are called disjoint if their interiors do not intersect. Every diagonal of $\P$ with ends of different parity decomposes $\P$ into a pair of alternating polygons $\P_1$  and $\P_2.$  We say that $\P$ is a disjoint union of  $\P_1$  and $\P_2$ and denote it by $\P=\P_1 \sqcup  \P_2.$ More generally, every collection of such diagonals which do not intersect pairwise decomposes $\P$ into disjoint alternating polygons.  Denote by $\mathcal{D}(\P)$ the set of all such decompositions of $\P.$

 The set $\mathcal{D}(\P)$  is partially ordered: $D_1\leq D_2$ if every polygon in the decomposition $D_2$ is  contained in some polygon in the decomposition $D_1.$ The poset $\mathcal{D}(\P)$ has $\{\P\}$ as its smallest element. Maximal elements of $\mathcal{D}(\P)$ are called {\it quadrangulations}. In other words, a  quadrangulation $Q$ of  $\P$ is a decomposition of $\P$ into disjoint quadrangles, which will necessarily be alternating subpolygons of $\P$ (see Figure \ref{FigureHexadecagonQuadrangulation}). We denote by $\mathcal{Q}(\P)\subseteq \mathcal{D}(\P)$  the set of quadrangulations of $\P.$ The number of quadrangulations  of a $(2n+2)$-gon is equal to 
\[
|\mathcal{Q}(\P)|=\frac{(3n)!}{n!(2n+1)!}.
\]

 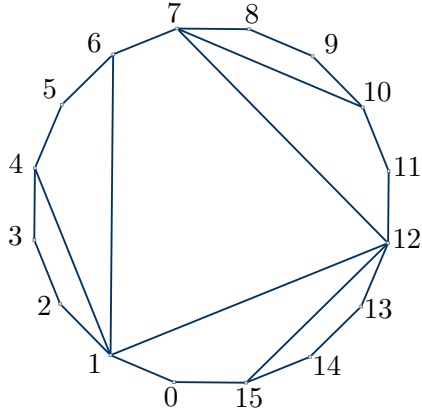
\begin{figure}
 \begin{center}
\begin{tikzpicture}[transform shape]
  \foreach \number in {1,...,8}{
        \mycount=\number
        \advance\mycount by -1
  \multiply\mycount by 45
        \advance\mycount by 11.25
      \node[draw, very thin, color=gray,inner sep=0.001cm] (N-\number) at (\the\mycount:2.4cm) {};
    }
  \foreach \number in {9,...,16}{
        \mycount=\number
        \advance\mycount by -1
  \multiply\mycount by 45
        \advance\mycount by 33.75
      \node[draw, very thin, color=gray,inner sep=0.001cm](N-\number) at (\the\mycount:2.4cm) {};
    }
\draw[midnight, line width=0.25mm] (N-1) -- (N-9);
\draw[midnight, line width=0.25mm] (N-2) -- (N-10);
\draw[midnight, line width=0.25mm] (N-3) -- (N-11);
\draw[midnight, line width=0.25mm] (N-4) -- (N-12);
\draw[midnight, line width=0.25mm] (N-5) -- (N-13);
\draw[midnight, line width=0.25mm] (N-6) -- (N-14);
\draw[midnight, line width=0.25mm] (N-7) -- (N-15);
\draw[midnight, line width=0.25mm] (N-8) -- (N-16);
\draw[midnight, line width=0.25mm] (N-1) -- (N-16);
\draw[midnight, line width=0.25mm] (N-2) -- (N-9);
\draw[midnight, line width=0.25mm] (N-3) -- (N-10);
\draw[midnight, line width=0.25mm] (N-4) -- (N-11);
\draw[midnight, line width=0.25mm] (N-5) -- (N-12);
\draw[midnight, line width=0.25mm] (N-6) -- (N-13);
\draw[midnight, line width=0.25mm] (N-7) -- (N-14);
\draw[midnight, line width=0.25mm] (N-8) -- (N-15);

\draw[midnight, line width=0.25mm] (N-7) -- (N-16);
\draw[midnight, line width=0.25mm] (N-6) -- (N-16);
\draw[midnight, line width=0.25mm] (N-6) -- (N-11);
\draw[midnight, line width=0.25mm] (N-3) -- (N-16);
\draw[midnight, line width=0.25mm] (N-12) -- (N-6);
\draw[midnight, line width=0.25mm] (N-3) -- (N-9);

\node[yshift=-0.2cm, xshift=-0.04cm] at (N-14) {$0$};
\node[yshift=-0.11cm, xshift=-0.21cm] at (N-6) {$1$};
\node[yshift=-0.03cm, xshift=-0.21cm] at (N-13) {$2$};
\node[yshift=0.06cm, xshift=-0.25cm] at (N-5) {$3$};
\node[yshift=0.06cm, xshift=-0.25cm] at (N-12) {$4$};
\node[yshift=0.21cm, xshift=-0.16cm] at (N-4) {$5$};
\node[yshift=0.16cm, xshift=-0.25cm] at (N-11) {$6$};
\node[yshift=0.21cm, xshift=-0.04cm] at (N-3) {$7$};
\node[yshift=0.21cm, xshift=0.04cm] at (N-10) {$8$};
\node[yshift=0.16cm, xshift=0.25cm] at (N-2) {$9$};
\node[yshift=0.21cm, xshift=0.19cm] at (N-9) {$10$};
\node[yshift=0.06cm, xshift=0.25cm] at (N-1) {$11$};
\node[yshift=0.06cm, xshift=0.25cm] at (N-16) {$12$};
\node[yshift=-0.03cm, xshift=0.23cm] at (N-8) {$13$};
\node[yshift=-0.11cm, xshift=0.23cm] at (N-15) {$14$};
\node[yshift=-0.2cm, xshift=0.04cm]  at (N-7) {$15$};
\end{tikzpicture}
\end{center}
\caption{A decomposition of a hexadecagon into five odd and two even quadrangles.}
\label{FigureHexadecagonQuadrangulation}
 \end{figure}

For an alternating sequence $\P$ consider the moduli space $\mathfrak{M}_{\P}$ of configurations of $2n+2$ points ${x_{p_0},x_{p_1},\dots, x_{p_{2n+1}}}$ 
in $\mathbb{P}^1.$ We get a contravariant functor $\P\mapsto \mathfrak{M}_\P$ from $\underline{\textup{Alt}}$ to the category of algebraic varieties: morphisms in $\underline{\textup{Alt}}$ are sent to  the corresponding forgetful maps between configuration spaces. We define a regular function on $\mathfrak{M}_\P,$ which we call the cross-ratio:
\be \label{FormulaDefinitionCrossRatio}
\textup{cr}(\P)= \begin{cases} 
      \prod_{i=1}^{n} [x_{p_0},x_{p_{2i-1}},x_{p_{2i}},x_{p_{2i+1}}] & \text{\ if \ } \P \text{\ is even,}  \\
     \prod_{i=1}^{n} [x_{p_0},x_{p_{2i-1}},x_{p_{2i}},x_{p_{2i+1}}]^{-1}& \text{\ if \ } \P \text{\ is odd}.  \\
   \end{cases}
\ee
Abusing the notation, we use the same symbol  $\textup{cr}(\P)$ to denote the pullback of the cross-ratio function to $\mathfrak{M}_{\mathrm{P'}}$ for an arbitrary alternating polygon $\mathrm{P'}$ containing $\P.$  For a decomposition $\{\P_1,\dots,\P_k\}\in \mathcal{D}(\P)$ we have
\be\label{FormulaCrossRatioDecomposition}
\textup{cr}(\P)=\prod_{i=1}^k \textup{cr}(\P_i).
\ee 
To make Definition \ref{FormulaDefinitionCrossRatio} more explicit, assume that $\mathrm{P}=(0,1,\ldots, 2n+1)$ and $x_i\in \C \subseteq \mathbb{P}^1.$ Then 
\[
\textup{cr}(\P)=\prod_{i=1}^{n} [x_{0},x_{2i-1},x_{2i},x_{2i+1}]=(-1)^{n-1}\frac{(x_0-x_1)(x_2-x_3)\dots (x_{2n}-x_{2n+1})}{(x_1-x_2)(x_3-x_4)\dots (x_{2n+1}-x_{0})\: \: }.
\]
From this equality one can easily deduce (\ref{FormulaCrossRatioDecomposition}).

For an alternating polygon $\P$ we denote  by $\F_{\P}$ and $\H_\P$ the  Hopf algebras $\F_{S}$ and $\H_{S}$ defined in  \S \ref{SectionQuasiShuffle}, where $S=\overline{\mathfrak{M}}_\P$ is the Deligne-Mumford compactification of $\mathfrak{M}_\P.$ Similarly, we denote $\L_S$ by $\L_\P.$

\begin{definition} Let $\P$ be an alternating polygon. The algebra of coinvariants 
\[
\F^\H_\P=\textup{Ker}(\widetilde{\Delta}^{\H\F} \colon \F_\P\lra \H_\P\otimes \F_\P)
\] 
is called the algebra of formal quadrangular polylogarithms.
\end{definition}

A priori it is not clear how to construct elements in $\F_\P^\H$ of weight greater than $1.$ In \S \ref{SectionFormalCluster} we  construct an element $ \textup{T}_\P$ in $\F^\H_{\P}$ of weight $n$ for an arbitrary alternating $(2n+2)$-gon~$\P.$  
\begin{example} For $\P=(0,1,2,3,4,5)$  the element
\begin{align*}
\textup{T}_\P&=[\textup{cr}(0, 3, 4, 5),1|\textup{cr}(0, 1, 2, 3),1]\\
&\quad-[\textup{cr}(0, 1, 4, 5),1|\textup{cr}(1, 2, 3, 4),1]+[\textup{cr}(0, 1, 2, 5),1|\textup{cr}(2, 3, 4, 5),1]
\end{align*}
lies in  $\F^\H_\P.$ Indeed, we have
\begin{align*}
\widetilde{\Delta}^{\H\F}\textup{T}_\P &=\widetilde{\Delta}^{\H\F}[\textup{cr}(0, 3, 4, 5),1|\textup{cr}(0, 1, 2, 3),1]-\\
&\quad \widetilde{\Delta}^{\H\F}[\textup{cr}(0, 1, 4, 5),1|\textup{cr}(1, 2, 3, 4),1]+\widetilde{\Delta}^{\H\F}[\textup{cr}(0, 1, 2, 5),1|\textup{cr}(2, 3, 4, 5),1]\\
&=(-\textup{I}^{\H}(1;\textup{cr}(0, 3, 4, 5);\textup{cr}(0,1,2, 3, 4, 5)))\otimes [\textup{cr}(0,1,2, 3, 4, 5),1]\\
&\quad-(-\textup{I}^{\H}(1;\textup{cr}(0, 1, 4, 5);\textup{cr}(0,1,2, 3, 4, 5)))\otimes [\textup{cr}(0,1,2, 3, 4, 5),1]\\
&\quad+(-\textup{I}^{\H}(1;\textup{cr}(0, 1, 2, 5);\textup{cr}(0,1,2, 3, 4, 5)))\otimes [\textup{cr}(0,1,2, 3, 4, 5),1]\\
&=\log^\H \left (\frac{(1-[x_0,x_3,x_4,x_5]^{-1})(1-[x_1,x_2,x_3,x_4]^{-1})(1-[x_0,x_1,x_2,x_5]^{-1})}{(1-[x_0,x_1,x_2,x_3])(1-[x_0,x_1,x_4,x_5]^{-1})(1-[x_2,x_3,x_4,x_5])}\right ) \\
&\quad \otimes [\textup{cr}(0,1,2, 3, 4, 5),1]\\
&=0.
\end{align*}
\end{example}

\subsection{Arborification map}\label{Arborification}

Recall the definition of the Hopf algebra of rooted trees, introduced by Connes and Kreimer in \cite{CK99} in order to clarify the renormalization procedure in quantum field theory. 
  A rooted tree is a finite connected graph without cycles with a special vertex, called the root. A decorated rooted tree is a rooted tree with vertices labeled by elements of some set $\mathcal{A}.$ Consider  a free commutative unitary $\Q$-algebra $\mathcal{T}     ^{\mathcal{A}}$  generated by isomorphism classes of decorated rooted trees. A $\Q$-basis of this algebra is given by decorated rooted forests. The product in $\mathcal{T}^{\mathcal{A}}$ is given by the concatenation of rooted forests; the unit $1$ is represented by the empty forest. The weight of a forest is the number of vertices in it; $\mathcal{T}^{\mathcal{A}}$ is graded by weight. We define the bialgebra structure on $\mathcal{T}^{\mathcal{A}}$ in the following way. A  cut $c$ of  a decorated rooted  tree $\mathrm{t}$ is a nonempty subset of the set of edges of  $\mathrm{t}.$ A cut is called admissible if any shortest  path in the tree from a vertex to the root meets at most one edge in the cut. Denote by $\textup{Adm}(\mathrm{t})$ the set  of admissible cuts. After cutting all edges of $\mathrm{t}$ in $C$ we obtain a rooted forest. Its connected component containing the root of $\mathrm{t}$ is denoted $R^c(\mathrm{t}).$ The product of the remaining connected components is denoted $P^c(\mathrm{t}).$ Then the coproduct 
\[
\Delta^{\mathcal{T}}\colon \mathcal{T}^{\mathcal{A}} \lra \mathcal{T}^{\mathcal{A}}\otimes \mathcal{T}^{\mathcal{A}}
\] 
is given by the formula
\[
\Delta^{\mathcal{T}} (\mathrm{t})=1\otimes \mathrm{t}+\mathrm{t} \otimes 1 +\sum_{c \: \in \textup{Adm}(\mathrm{t})} R^c(\mathrm{t})\otimes P^c(\mathrm{t}).
\]

For $a\in \mathcal{A}$ we define  the ``grafting'' operator 
\[
B_{a}^{+}\colon \mathcal{T}^{\mathcal{A}}\lra \mathcal{T}^{\mathcal{A}}
\] 
associating to every labeled rooted forest $\mathrm{t}_1 \dots \mathrm{t}_n$ a tree obtained by grafting the roots of  $\mathrm{t}_1,\dots,\mathrm{t}_n$ on the common new root labeled by $a \in \mathcal{A}.$  This operator satisfies the following equation: for $\mathrm{t}\in \mathcal{T}^{\mathcal{A}}$ we have
\[
\Delta^{\mathcal{T}} \circ B_{a}^{+}(\mathrm{t})=(B_{a}^{+}\otimes \textup{id} )\circ \Delta^{\mathcal{T}} (\mathrm{t}) +1 \otimes B_{a}^{+}(\mathrm{t}).
\]
In other words, $B_a^+$ is a family of Hochschild $1$-cocycles on the Hopf algebra $\mathcal{T}^{\mathcal{A}}.$

The Connes-Kreimer Hopf algebra has the following universal property. Let $(H,\Delta, m)$ be a Hopf algebra with a collection of Hochschild $1$-cocycles $L_{a}$ for  $a\in \mathcal{A},$  namely linear maps $L_{a}\colon H\lra H$ such that
\[
\Delta L_{a}  = ( L_a\otimes  \textup{id}) \Delta +  1 \otimes L_a\text{\ \ for \ \ } a\in \mathcal{A}.
\]
Then there exists a unique Hopf  algebra homomorphism $\phi \colon \mathcal{T}^\mathcal{A} \lra H$ such that 
\[
\phi \circ B^+_a=L_a \circ \phi.
\]

Let $\textup{QSh}_\mathcal{A}$ be the quasi-shuffle on an alphabet $\mathcal{A},$ defined in \S \ref{SectionQuasiShuffle}. Recall that the alphabet $\mathcal{A}$ has the structure of a commutative semigroup. One can easily check that operators $L_a(\omega)=-a\omega$ and $L_a(\omega)=a\omega+a\cdot \omega$ are Hochschild $1$-cocycles on $\textup{QSh}_\mathcal{A}.$  

Now, assume that a ``parity'' map $p\colon \mathcal{A}\lra \{0,1\}$ is given. We call vertices labeled by $a\in \mathcal{A}$ {\it even} if $p(a)=0$ and {\it odd} if $p(a)=1.$ From the universal property of the Connes-Kreimer Hopf algebra we have the following.

\begin{proposition}\label{PropositionArborification} There exists a unique Hopf algebra homomorphism
$
\textup{Arb}\colon \mathcal{T}^\mathcal{A}\lra \textup{QSh}_\mathcal{A}
$	
such that 
\be \label{FormulaArborification}
\textup{Arb}(B_{a}^+(\mathrm{t}))=
\begin{cases}
-a\textup{Arb}(\mathrm{t})& \text{\ if \ } p(a)=0,\\
\ \: \,  a\textup{Arb}(\mathrm{t})+a\cdot\textup{Arb}(\mathrm{t})& \text{\ if \ } p(a)=1.\\
\end{cases}
\ee
\begin{example}
Let $\mathcal{A}=\{a_1, a_2, a_3\}$ be an alphabet with $p(a_1)=1, \ p(a_2)=p(a_3)=0.$ Consider a tree $\mathrm{t}$ with a root labelled by $a_1$ and two leaves labelled by $a_2$ and $a_3.$  Then 
\[
\mathrm{t}=B_{a_1}^+\left(B_{a_2}^+(1)B_{a_3}^+(1)\right).
\]
By (\ref{FormulaArborification}) we have 
$\textup{Arb}\left(B_{a_2}^+(1)\right)=-[a_2]$ and $\textup{Arb}\left(B_{a_3}^+(1)\right)=-[a_3],$
so 
\[
\textup{Arb}\left(B_{a_2}^+(1)B_{a_3}^+(1)\right)=(-[a_2])\star (-[a_3])=[a_2|a_3]+[a_3|a_2]+[a_2\cdot a_3].
\]
\end{example}
It follows that
\[
\textup{Arb}(\mathrm{t})=[a_1|a_2|a_3]+[a_1|a_3|a_2]+[a_1|a_2\cdot a_3]+[a_1\cdot a_2|a_3]+[a_1\cdot a_3|a_2]+[a_1\cdot a_2\cdot a_3].
\]

\end{proposition}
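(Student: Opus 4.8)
The plan is to deduce Proposition~\ref{PropositionArborification} directly from the universal property of the Connes--Kreimer Hopf algebra recalled just above, applied to the Hopf algebra $H=\textup{QSh}_\mathcal{A}$ equipped with the family of operators
\[
L_a(\omega)=-a\omega \ \ \text{if } p(a)=0, \qquad L_a(\omega)=a\omega+a\cdot\omega \ \ \text{if } p(a)=1,
\]
where $a\omega$ denotes left concatenation by the letter $a$ and $a\cdot\omega=(a\cdot a_1)a_2\cdots a_n$ for $\omega=a_1\cdots a_n$, with the convention $a\cdot 1=0$. Granting that each $L_a$ is a Hochschild $1$-cocycle on $\textup{QSh}_\mathcal{A}$, the universal property yields a \emph{unique} Hopf algebra homomorphism $\textup{Arb}\colon\mathcal{T}^\mathcal{A}\to\textup{QSh}_\mathcal{A}$ with $\textup{Arb}\circ B_a^{+}=L_a\circ\textup{Arb}$ for all $a\in\mathcal{A}$, and this is precisely the content of formula~(\ref{FormulaArborification}); uniqueness is part of the same statement. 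So the whole proposition reduces to one elementary verification.

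The computation to carry out is the cocycle identity $\Delta L_a=(L_a\otimes\textup{id})\Delta+1\otimes L_a$, where $\Delta$ is the deconcatenation coproduct on $\textup{QSh}_\mathcal{A}$. I would record the two identities
\[
\Delta(a\omega)=1\otimes a\omega+(m_a\otimes\textup{id})\Delta(\omega),\qquad \Delta(a\cdot\omega)=1\otimes(a\cdot\omega)+\bigl((a\cdot)\otimes\textup{id}\bigr)\Delta(\omega),
\]
valid for every word $\omega$, where $m_a$ is left concatenation by $a$ and $(a\cdot)$ is the operator above. Both are obtained by inspecting how a deconcatenation splitting of $a\omega$, respectively of $a\cdot\omega$, restricts to $\omega$; in the second identity the empty-prefix term contributes nothing precisely because $a\cdot 1=0$, and the base case $\omega=1$ is consistent for the same reason. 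Taking the $\mathbb{Q}$-linear combination of these two identities dictated by $p(a)$ (a single copy of the first with a minus sign when $p(a)=0$, the sum of both when $p(a)=1$) gives the cocycle identity for $L_a$ at once. This is the ``one can easily check'' remark in the text, and it is the only nontrivial step.

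I do not expect a genuine obstacle: once the two-line cocycle check is in place, the statement is a formal consequence of the universal property. The only points worth a word are that $\textup{QSh}_\mathcal{A}$ is graded with degree-zero part $\mathbb{Q}$, hence connected and automatically a Hopf algebra as the universal property requires, and that $\mathcal{T}^\mathcal{A}$ is generated as a commutative algebra by the trees $B_a^{+}(\mathrm{t})$, so that $\textup{Arb}$ is already pinned down on all of $\mathcal{T}^\mathcal{A}$ by multiplicativity together with~(\ref{FormulaArborification}) --- which is where uniqueness comes from. Finally, the example displayed in the statement needs no separate argument: it is just the unwinding of~(\ref{FormulaArborification}) using that $\textup{Arb}$ is an algebra map and the recursive definition of $\star$.
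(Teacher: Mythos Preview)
Your proposal is correct and follows exactly the approach the paper intends: the paper states the universal property of $\mathcal{T}^{\mathcal{A}}$, remarks that ``one can easily check'' that the two operators $L_a(\omega)=-a\omega$ and $L_a(\omega)=a\omega+a\cdot\omega$ are Hochschild $1$-cocycles on $\textup{QSh}_{\mathcal{A}}$, and then records the proposition as an immediate consequence. You have supplied the short cocycle verification that the paper leaves to the reader, and otherwise the argument is identical.
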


\subsection{Formal quadrangular polylogarithms}\label{SectionFormalCluster}

For an alternating polygon 
\[
\P=(p_0,p_1,\dots,p_{2n+1})
\]
consider the Hopf algebra $\mathcal{T}^{\P}$ of rooted trees labeled by elements 
\[
[\textup{cr}(p_{i_0},p_{i_1},p_{i_2},p_{i_3}),1]\in \F_\P.
\] 
To a quadrangulation $Q=\{\mathrm{Q}_1,\dots,\mathrm{Q}_n\}\in \mathcal{Q}(\P)$ we associate a tree $\mathrm{t}_{Q}\in \mathcal{T}^{\P}$ in the following way. The tree is the dual tree of the quadrangulation (see Figure \ref{FigureHexadecagonTree}); the quadrangle adjacent to the side $p_0, p_{2n+1}$ is the root. Each vertex of the tree is labeled by a pair $[\textup{cr}(\mathrm{Q}_i),1]\in \F_\P,$ where  $\mathrm{Q}_i$ is the corresponding quadrangle of the quadrangulation. A parity of the vertex $[\textup{cr}(\mathrm{Q}_i),1]$ is defined as the parity of the quadrangle $\mathrm{Q}_i.$ We define an element $\mathrm{t}_{\P}$ of $\mathcal{T}^{\P}$ as the sum of trees corresponding to quadrangulations of $\P$:
 \be \label{FormulaSumTrees}
  \mathrm{t}_{\P}=\sum_{Q\in\mathcal{Q}(\P)} \mathrm{t}_{Q}.
 \ee
 
 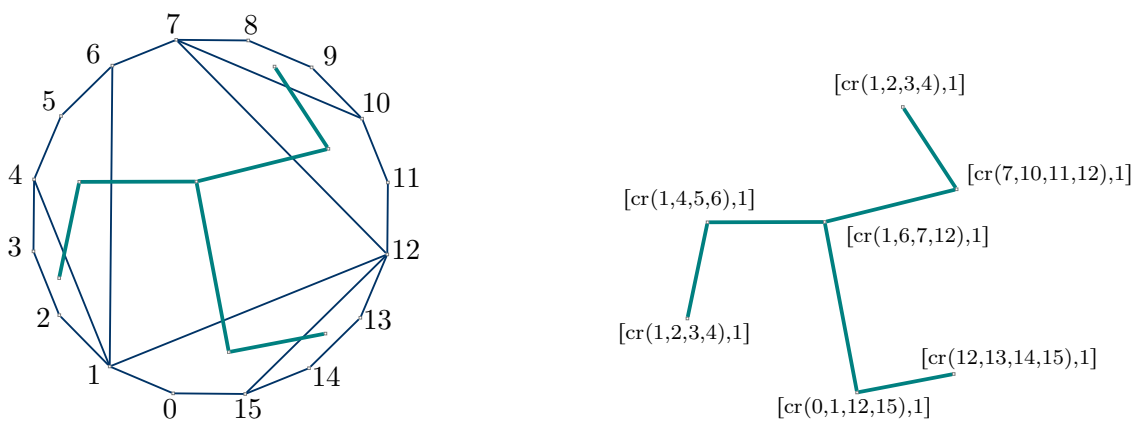
\begin{figure}
  \centering
\begin{subfigure}[b]{0.45\textwidth}
\begin{tikzpicture}[transform shape]
  \foreach \number in {1,...,8}{
        \mycount=\number
        \advance\mycount by -1
  \multiply\mycount by 45
        \advance\mycount by 11.25
      \node[draw, very thin, color=gray,inner sep=0.001cm] (N-\number) at (\the\mycount:2.4cm) {};
    }
  \foreach \number in {9,...,16}{
        \mycount=\number
        \advance\mycount by -1
  \multiply\mycount by 45
        \advance\mycount by 33.75
      \node[draw, very thin, color=gray,inner sep=0.001cm](N-\number) at (\the\mycount:2.4cm) {};
    }
\draw[midnight, line width=0.25mm] (N-1) -- (N-9);
\draw[midnight, line width=0.25mm] (N-2) -- (N-10);
\draw[midnight, line width=0.25mm] (N-3) -- (N-11);
\draw[midnight, line width=0.25mm] (N-4) -- (N-12);
\draw[midnight, line width=0.25mm] (N-5) -- (N-13);
\draw[midnight, line width=0.25mm] (N-6) -- (N-14);
\draw[midnight, line width=0.25mm] (N-7) -- (N-15);
\draw[midnight, line width=0.25mm] (N-8) -- (N-16);
\draw[midnight, line width=0.25mm] (N-1) -- (N-16);
\draw[midnight, line width=0.25mm] (N-2) -- (N-9);
\draw[midnight, line width=0.25mm] (N-3) -- (N-10);
\draw[midnight, line width=0.25mm] (N-4) -- (N-11);
\draw[midnight, line width=0.25mm] (N-5) -- (N-12);
\draw[midnight, line width=0.25mm] (N-6) -- (N-13);
\draw[midnight, line width=0.25mm] (N-7) -- (N-14);
\draw[midnight, line width=0.25mm] (N-8) -- (N-15);

\draw[midnight, line width=0.25mm] (N-7) -- (N-16);
\draw[midnight, line width=0.25mm] (N-6) -- (N-16);
\draw[midnight, line width=0.25mm] (N-6) -- (N-11);
\draw[midnight, line width=0.25mm] (N-3) -- (N-16);
\draw[midnight, line width=0.25mm] (N-12) -- (N-6);
\draw[midnight, line width=0.25mm] (N-3) -- (N-9);

\node[yshift=-0.2cm, xshift=-0.04cm] at (N-14) {$0$};
\node[yshift=-0.11cm, xshift=-0.21cm] at (N-6) {$1$};
\node[yshift=-0.03cm, xshift=-0.21cm] at (N-13) {$2$};
\node[yshift=0.06cm, xshift=-0.25cm] at (N-5) {$3$};
\node[yshift=0.06cm, xshift=-0.25cm] at (N-12) {$4$};
\node[yshift=0.21cm, xshift=-0.16cm] at (N-4) {$5$};
\node[yshift=0.16cm, xshift=-0.25cm] at (N-11) {$6$};
\node[yshift=0.21cm, xshift=-0.04cm] at (N-3) {$7$};
\node[yshift=0.21cm, xshift=0.04cm] at (N-10) {$8$};
\node[yshift=0.16cm, xshift=0.25cm] at (N-2) {$9$};
\node[yshift=0.21cm, xshift=0.19cm] at (N-9) {$10$};
\node[yshift=0.06cm, xshift=0.25cm] at (N-1) {$11$};
\node[yshift=0.06cm, xshift=0.25cm] at (N-16) {$12$};
\node[yshift=-0.03cm, xshift=0.23cm] at (N-8) {$13$};
\node[yshift=-0.11cm, xshift=0.23cm] at (N-15) {$14$};
\node[yshift=-0.2cm, xshift=0.04cm]  at (N-7) {$15$};

\node[draw, very thin, color=gray,inner sep=0.001cm] (N-x1) at (barycentric cs:N-6=1,N-13=1,N-5=1,N-12=1) {};
\node[draw, very thin, color=gray,inner sep=0.001cm] (N-x2) at (barycentric cs:N-6=1,N-11=1,N-12=1,N-4=1) {};
\node[draw, very thin, color=gray,inner sep=0.001cm] (N-x3) at (barycentric cs:N-11=1,N-3=1,N-16=1,N-6=1) {};
\node[draw, very thin, color=gray,inner sep=0.001cm] (N-x4) at (barycentric cs:N-3=1,N-9=1,N-1=1,N-16=1) {};
\node[draw, very thin, color=gray,inner sep=0.001cm] (N-x5) at (barycentric cs:N-3=1,N-10=1,N-2=1,N-9=1) {};
\node[draw, very thin, color=gray,inner sep=0.001cm] (N-x6) at (barycentric cs:N-6=1,N-16=1,N-7=1,N-14=1) {};
\node[draw, very thin, color=gray,inner sep=0.001cm] (N-x7) at (barycentric cs:N-16=1,N-8=1,N-15=1,N-7=1) {};
\draw[color=teal,line width=0.5mm] (N-x1) -- (N-x2);
\draw[color=teal,line width=0.5mm] (N-x2) -- (N-x3);
\draw[color=teal,line width=0.5mm] (N-x3) -- (N-x4);
\draw[color=teal,line width=0.5mm] (N-x4) -- (N-x5);
\draw[color=teal,line width=0.5mm] (N-x3) -- (N-x6);
\draw[color=teal,line width=0.5mm] (N-x6) -- (N-x7);
\end{tikzpicture}
\end{subfigure}
\hfill
\begin{subfigure}[b]{0.45\textwidth}
\begin{tikzpicture}[transform shape]
  \foreach \number in {1,...,8}{
        \mycount=\number
        \advance\mycount by -1
  \multiply\mycount by 45
        \advance\mycount by 11.25
      \coordinate (N-\number) at (\the\mycount:2.4cm) {};
    }
  \foreach \number in {9,...,16}{
        \mycount=\number
        \advance\mycount by -1
  \multiply\mycount by 45
        \advance\mycount by 33.75
      \coordinate (N-\number) at (\the\mycount:2.4cm) {};
    }

\node[draw, very thin, color=gray,inner sep=0.001cm] (N-x1) at (barycentric cs:N-6=1,N-13=1,N-5=1,N-12=1) {};
\node[draw, very thin, color=gray,inner sep=0.001cm] (N-x2) at (barycentric cs:N-6=1,N-11=1,N-12=1,N-4=1) {};
\node[draw, very thin, color=gray,inner sep=0.001cm] (N-x3) at (barycentric cs:N-11=1,N-3=1,N-16=1,N-6=1) {};
\node[draw, very thin, color=gray,inner sep=0.001cm] (N-x4) at (barycentric cs:N-3=1,N-9=1,N-1=1,N-16=1) {};
\node[draw, very thin, color=gray,inner sep=0.001cm] (N-x5) at (barycentric cs:N-3=1,N-10=1,N-2=1,N-9=1) {};
\node[draw, very thin, color=gray,inner sep=0.001cm] (N-x6) at (barycentric cs:N-6=1,N-16=1,N-7=1,N-14=1) {};
\node[draw, very thin, color=gray,inner sep=0.001cm] (N-x7) at (barycentric cs:N-16=1,N-8=1,N-15=1,N-7=1) {};
\draw[color=teal,line width=0.5mm] (N-x1) -- (N-x2);
\draw[color=teal,line width=0.5mm] (N-x2) -- (N-x3);
\draw[color=teal,line width=0.5mm] (N-x3) -- (N-x4);
\draw[color=teal,line width=0.5mm] (N-x4) -- (N-x5);
\draw[color=teal,line width=0.5mm] (N-x3) -- (N-x6);
\draw[color=teal,line width=0.5mm] (N-x6) -- (N-x7);

\node[yshift=-0.2cm, xshift=-0.04cm] at (N-x1) {${\scriptstyle [\textup{cr}(1,2,3,4),1]}$};
\node[yshift=0.3cm, xshift=-0.24cm] at (N-x2) {${\scriptstyle [\textup{cr}(1,4,5,6),1]}$};
\node[yshift=-0.2cm, xshift=1.24cm] at (N-x3) {${\scriptstyle [\textup{cr}(1,6,7,12),1]}$};
\node[yshift=0.2cm, xshift=1.24cm] at (N-x4) {${\scriptstyle [\textup{cr}(7,10,11,12),1]}$};
\node[yshift=0.3cm, xshift=-0.04cm] at (N-x5) {${\scriptstyle [\textup{cr}(1,2,3,4),1]}$};
\node[yshift=-0.2cm, xshift=-0.04cm] at (N-x6) {${\scriptstyle [\textup{cr}(0,1,12,15),1]}$};
\node[yshift=0.2cm, xshift=0.74cm] at (N-x7) {${\scriptstyle [\textup{cr}(12,13,14,15),1]}$};
\end{tikzpicture}

 \end{subfigure}

\caption{Quadrangulation of a hexadecagon and its dual tree. Vertex $[\textup{cr}(0,1,12,15),1]$ is the root.}
\label{FigureHexadecagonTree}
 \end{figure}

 \begin{definition}
Let $\P$ be  an alternating polygon.  The formal quadrangular polylogarithm $\textup{T}_\P$ is defined by the formula
\[
 \textup{T}_\P=\textup{Arb}(\mathrm{t}_{\P})\in \F_\P.
\]	
 \end{definition}

In \S \ref{SectionQuadrangulationFormula} we show that $\textup{T}_\P$ is an element of the  algebra of formal  quadrangular polylogarithms $\F_\P^\H.$

\begin{example}
We have the following equalities for $\P=(p_0,p_1,p_2,p_3):$	
\be \label{FormulaFormalPolylogWeight1}
 \textup{T}_\P=
\begin{cases}
-[\textup{cr}(p_0,p_1,p_2,p_3),1] 	&\text{\ if \ } \P \text{\ is even,} \\
 \ \: \,  [\textup{cr}(p_0,p_1,p_2,p_3),1] &\text{\ if \ }  \P \text{\ is odd.} \\
\end{cases}
\ee
\end{example}

 Consider the set $\textup{alt}(\P)$ of alternating subpolygons $(p_{i_0},p_{i_1},\dots, p_{i_{2r+1}})$ of $\P$ with $i_0=0$ and $i_{2r+1}=2n+1$. For a polygon  $\mathrm{S}\in \textup{alt}(\P)$ denote by $\mathrm{S}^0,\dots, \mathrm{S}^{2r}$ the subpolygons, which are obtained by taking closures of connected components of the set $\P\fgebackslash \mathrm{S}.$ It is easy to see that 
 \[
 \P=\mathrm{S}\sqcup \left ( \bigsqcup_{i=0}^{2r} \mathrm{S}^i \right ).
 \]
   
 \begin{proposition}\label{PropositionCoproduct} For an alternating polygon $\P$ the following equality holds
 	\be \label{FormulaCoproductFormalPolylogarithm}
 	 \Delta^{\mathcal{FF}}\textup{T}_{\P}= \sum_{\mathrm{S}\in\textup{alt}(\P)} \textup{T}_{\mathrm{S}}\otimes \left (\prod_{i=0}^{2r}\textup{T}_{\mathrm{S}^i} \right).
 	\ee
 \end{proposition}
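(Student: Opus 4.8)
The plan is to deduce the identity from the corresponding one in the Connes--Kreimer Hopf algebra. Since $\textup{Arb}\colon\mathcal{T}^\P\to\F_\P$ is a homomorphism of Hopf algebras (Proposition~\ref{PropositionArborification}), it is in particular a morphism of coalgebras, so
\[
\Delta^{\mathcal{FF}}\textup{T}_\P=\Delta^{\mathcal{FF}}\textup{Arb}(\mathrm{t}_\P)=(\textup{Arb}\otimes\textup{Arb})\,\Delta^{\mathcal{T}}(\mathrm{t}_\P).
\]
Using that $\textup{Arb}$ is also multiplicative and that $\textup{Arb}(\mathrm{t}_{\mathrm{S}})=\textup{T}_{\mathrm{S}}$ for every alternating polygon $\mathrm{S}$ (identifying $\mathcal{T}^{\mathrm{S}}$ with a subalgebra of $\mathcal{T}^{\P}$ via pullback of cross-ratios along the forgetful map $\mathfrak{M}_\P\to\mathfrak{M}_{\mathrm{S}}$), it then suffices to prove the purely combinatorial identity
\[
\Delta^{\mathcal{T}}(\mathrm{t}_\P)=\sum_{\mathrm{S}\in\textup{alt}(\P)}\mathrm{t}_{\mathrm{S}}\otimes\Bigl(\prod_{i=0}^{2r}\mathrm{t}_{\mathrm{S}^i}\Bigr)
\]
inside $\mathcal{T}^{\P}$, a statement about trees dual to quadrangulations of a polygon.

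Next I would expand the left side as $\mathrm{t}_\P=\sum_{Q\in\mathcal{Q}(\P)}\mathrm{t}_Q$ and compute $\Delta^{\mathcal{T}}(\mathrm{t}_Q)$ for a single quadrangulation $Q$. An admissible cut $c$ of the rooted tree $\mathrm{t}_Q$ is the same datum as the subtree $R=R^c(\mathrm{t}_Q)$ containing the root (the cut being exactly the set of edges joining $R$ to its complement), and conversely every root-containing subtree arises this way; under this correspondence $P^c(\mathrm{t}_Q)$ is the product of the connected components of $\mathrm{t}_Q\setminus R$. Allowing $R=\mathrm{t}_Q$ (the case $c=\varnothing$, with $P^c=1$), this rewrites $\Delta^{\mathcal{T}}(\mathrm{t}_Q)=1\otimes\mathrm{t}_Q+\sum_{R}R\otimes(\text{product of components of }\mathrm{t}_Q\setminus R)$, the sum running over all subtrees $R$ of $\mathrm{t}_Q$ containing the root.

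The heart of the argument will be a geometric dictionary. A root-containing subtree $R$ of $\mathrm{t}_Q$ is the dual tree of a sub-collection of the quadrangles of $Q$ whose union $U$ is a topological disk, and I claim its boundary is a convex subpolygon $\mathrm{S}\in\textup{alt}(\P)$. Indeed each side of $\partial U$ is a side of one of these quadrangles, hence a pair of vertices of $\P$ of opposite parity, so consecutive vertices of $\partial U$ have opposite parity, forcing an even number of them; and since the root quadrangle contains the side $p_0p_{2n+1}$ of $\P$, that side lies on $\partial U$, which normalizes $\mathrm{S}$ so that $i_0=0$ and $i_{2r+1}=2n+1$. Moreover the sides of $\mathrm{S}$ that are proper diagonals of $\P$ are exactly the edges of the cut $c$; each bounds, on its far side, one of the complementary polygons $\mathrm{S}^0,\dots,\mathrm{S}^{2r}$, and the corresponding component of $\mathrm{t}_Q\setminus R$ is precisely $\mathrm{t}_{Q|_{\mathrm{S}^i}}$, while the remaining $\mathrm{S}^i$ are $2$-gons, for which $\mathrm{t}_{\mathrm{S}^i}=1$. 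Conversely $\{\mathrm{S},\mathrm{S}^0,\dots,\mathrm{S}^{2r}\}$ is an element of $\mathcal{D}(\P)$, so refining it to a quadrangulation of $\P$ is the same as choosing a quadrangulation of each of these pairwise disjoint polygons independently; hence $(Q,R)\mapsto(\mathrm{S};\,Q|_{\mathrm{S}},\,Q|_{\mathrm{S}^0},\dots,Q|_{\mathrm{S}^{2r}})$ is a bijection from the set of pairs onto $\bigsqcup_{\mathrm{S}\in\textup{alt}(\P)}\bigl(\mathcal{Q}(\mathrm{S})\times\prod_i\mathcal{Q}(\mathrm{S}^i)\bigr)$, with $R=\mathrm{t}_Q$ corresponding to $\mathrm{S}=\P$. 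The terms $1\otimes\mathrm{t}_Q$ are absorbed by the degenerate element $\mathrm{S}=(p_0,p_{2n+1})\in\textup{alt}(\P)$, with the convention that $\mathrm{t}_{(p,p')}=1$ and that the single complementary region of this $2$-gon is all of $\P$. Regrouping the sum by $\mathrm{S}$ and applying $\sum_{Q'\in\mathcal{Q}(\mathrm{S})}\mathrm{t}_{Q'}=\mathrm{t}_{\mathrm{S}}$ (and likewise for each $\mathrm{S}^i$) yields the identity in $\mathcal{T}^{\P}$; applying $\textup{Arb}\otimes\textup{Arb}$ finishes the proof.

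I expect the main obstacle to be precisely this geometric dictionary: checking that a root-containing subtree of $\mathrm{t}_Q$ corresponds to a sub-disk of $\P$ whose boundary lies in $\textup{alt}(\P)$, that its complementary pieces are exactly the polygons $\mathrm{S}^i$ carrying the hanging subtrees, and that the bijection is compatible with restriction of quadrangulations, together with the bookkeeping for the degenerate $2$-gon cases ($\mathrm{S}=\P$ and $\mathrm{S}=(p_0,p_{2n+1})$). An alternative route that bypasses the planar combinatorics is available: splitting off the quadrangle on the side $p_0p_{2n+1}$ gives the recursion $\mathrm{t}_\P=\sum B^{+}_{[\textup{cr}(p_0,p_a,p_b,p_{2n+1}),1]}\bigl(\mathrm{t}_{\P_1}\mathrm{t}_{\P_2}\mathrm{t}_{\P_3}\bigr)$ over the admissible quadrangles $(p_0,p_a,p_b,p_{2n+1})$, and the formula then follows by induction on $n$ from the $1$-cocycle identity $\Delta^{\mathcal{T}}\circ B^{+}_a=(B^{+}_a\otimes\textup{id})\circ\Delta^{\mathcal{T}}+1\otimes B^{+}_a$; I would use whichever proves cleaner once the details are written out.
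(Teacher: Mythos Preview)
Your proposal is correct and follows essentially the same approach as the paper: reduce to the tree-level identity via the Hopf algebra homomorphism $\textup{Arb}$, then establish the bijection between pairs (quadrangulation $Q$, root-containing subtree) and data $(\mathrm{S};\,Q|_{\mathrm{S}},\,Q|_{\mathrm{S}^0},\dots,Q|_{\mathrm{S}^{2r}})$ indexed by $\mathrm{S}\in\textup{alt}(\P)$. The paper compresses this into the single observation that the coproduct of the sum of $\mathrm{t}_Q$ over quadrangulations refining $\{\mathrm{S},\mathrm{S}^0,\dots,\mathrm{S}^{2r}\}$ equals $\mathrm{t}_{\mathrm{S}}\otimes\prod_i\mathrm{t}_{\mathrm{S}^i}$, whereas you spell out the geometric dictionary in detail; your alternative inductive route via the $B^+$ cocycle identity is not used in the paper but would also work.
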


 \begin{proof}
The arborification map commutes with the product and the coproduct by Proposition \ref{PropositionArborification}, so it suffices to show that the following equality holds:
\[
 	  \Delta^{\mathcal{T}}\mathrm{t}_{\P}= \sum_{\mathrm{S}\in\textup{alt}(\P)}\mathrm{t}_{\mathrm{S}}\otimes \left (\prod_{i=0}^{2r}\mathrm{t}_{\mathrm{S}^i}\right).
\]	  
It is easy to see that the coproduct of the sum of trees $\mathrm{t}_{Q}$ corresponding to quadrangulations $Q\in \mathcal{Q}(\P)$ such that 
\[
\{\mathrm{S},\mathrm{S}^0,\dots,S^{2r}\}\leq Q
\]
is equal to  $\mathrm{t}_{\mathrm{S}}\otimes \left ( \mathrm{t}_{\mathrm{S}^0}\cdot \ldots \cdot \mathrm{t}_{\mathrm{S}^{2r}} \right).$ The conclusion follows from this.
 \end{proof}

Next we prove a simple inductive formula for $\textup{T}_{\P}.$ It suffices to compute $\textup{T}_{\P}$ for $\P=(0,1,\dots,2n+1)$ and $\P=(1,2,\dots,2n+2).$ 

\begin{lemma}\label{LemmaRecursiveTP}
For an even polygon $\P=(0,1,\dots,2n+1)$ we have
\be \label{FormulaRecursiveEven}
\begin{split}
&\textup{T}_{\P}=\sum_{\substack{0< i< j<2n+1\\ i \text{ is odd}\\ j \text{ is even}}} \textup{T}_{(0,i,j,2n+1)}\Bigl(\textup{T}_{(0,\dots,i)}\star\textup{T}_{(i,\dots,j)}\star\textup{T}_{(j,\dots,2n+1)} \Bigl).\\
\end{split}
\ee	
For an  odd polygon  $\P=(1,2,\dots,2n+2)$  we have
\begin{align}\label{FormulaRecursiveOdd}
\begin{split}
\textup{T}_{\P}&=\sum_{\substack{1< i< j<2n+2\\ i \text{ is even}\\ j \text{ is odd}}}
 \Bigl(\textup{T}_{(1,i,j,2n+2)}
\left(\textup{T}_{(1,\dots,i)}\star  \textup{T}_{(i,\dots,j)} \star \textup{T}_{( j,\dots,2n+2)} \right)\\
&\quad+\textup{T}_{(1,i,j,2n+2)}\cdot
\left(\textup{T}_{(1,\dots,i)}\star  \textup{T}_{(i,\dots,j)} \star \textup{T}_{( j,\dots,2n+2)} \right) \Bigr).\\
\end{split}
\end{align}	
\end{lemma}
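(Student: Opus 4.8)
The plan is to unwind the definition $\textup{T}_\P=\textup{Arb}(\mathrm{t}_\P)$ and to reduce the claimed identities to the standard recursive description of quadrangulations by their root face, followed by a formal application of $\textup{Arb}$. First I would recall that $\mathrm{t}_\P=\sum_{Q\in\mathcal{Q}(\P)}\mathrm{t}_Q$, where $\mathrm{t}_Q$ is the dual tree of $Q$ rooted at the quadrangle adjacent to the distinguished side $p_0p_{2n+1}$. Fix $\P=(0,1,\dots,2n+1)$. The root quadrangle of any $Q\in\mathcal{Q}(\P)$ has the form $(0,i,j,2n+1)$ with $0<i<j<2n+1$, and since $p_0=0$ is even the alternating condition forces $i$ odd and $j$ even. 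Deleting this quadrangle splits $\P$ into the three alternating sub-polygons $\P_1=(0,\dots,i)$, $\P_2=(i,\dots,j)$, $\P_3=(j,\dots,2n+1)$, each with an even number of vertices (a degenerate $2$-gon $\P_\ell$ being allowed, with $\mathrm{t}_{\P_\ell}=1$ and $\textup{T}_{\P_\ell}=1$); $Q$ restricts to a quadrangulation $Q_\ell$ of each $\P_\ell$, and conversely a choice of root quadrangle together with quadrangulations of $\P_1,\P_2,\P_3$ reassembles uniquely into a quadrangulation of $\P$. This is the usual recursive (Fuss--Catalan) description of quadrangulations.

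At the level of dual trees this bijection says precisely that $\mathrm{t}_Q=B_a^+\bigl(\mathrm{t}_{Q_1}\mathrm{t}_{Q_2}\mathrm{t}_{Q_3}\bigr)$ with $a=[\textup{cr}(0,i,j,2n+1),1]$, the product being concatenation of forests in $\mathcal{T}^\P$. The only point requiring care is the bookkeeping of roots: the diagonal of $\P$ separating $\P_\ell$ from the root quadrangle is exactly the distinguished side of $\P_\ell$, so the subtree of $\mathrm{t}_Q$ hanging below the root that corresponds to $\P_\ell$ is the dual tree $\mathrm{t}_{Q_\ell}$ rooted as prescribed in the definition of $\mathrm{t}_{\P_\ell}$. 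Summing over $Q$ and grouping by the root quadrangle gives
\[
\mathrm{t}_\P=\sum_{\substack{0<i<j<2n+1\\ i\text{ odd},\ j\text{ even}}}B^+_{[\textup{cr}(0,i,j,2n+1),1]}\bigl(\mathrm{t}_{(0,\dots,i)}\,\mathrm{t}_{(i,\dots,j)}\,\mathrm{t}_{(j,\dots,2n+1)}\bigr).
\]

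Now I would apply $\textup{Arb}$. Since $\textup{Arb}$ is an algebra homomorphism, $\textup{Arb}\bigl(\mathrm{t}_{(0,\dots,i)}\mathrm{t}_{(i,\dots,j)}\mathrm{t}_{(j,\dots,2n+1)}\bigr)=\textup{T}_{(0,\dots,i)}\star\textup{T}_{(i,\dots,j)}\star\textup{T}_{(j,\dots,2n+1)}$. The quadrangle $(0,i,j,2n+1)$ is even, so (\ref{FormulaArborification}) gives $\textup{Arb}(B_a^+(\mathrm{t}))=-a\,\textup{Arb}(\mathrm{t})$; since by (\ref{FormulaFormalPolylogWeight1}) we have $\textup{T}_{(0,i,j,2n+1)}=-[\textup{cr}(0,i,j,2n+1),1]=-a$, the right-hand side is exactly the concatenation $\textup{T}_{(0,i,j,2n+1)}\bigl(\textup{T}_{(0,\dots,i)}\star\textup{T}_{(i,\dots,j)}\star\textup{T}_{(j,\dots,2n+1)}\bigr)$, which proves (\ref{FormulaRecursiveEven}). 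For $\P=(1,\dots,2n+2)$ the argument is identical except that $p_0=1$ is odd, so the root quadrangle $(1,i,j,2n+2)$ forces $i$ even and $j$ odd, it is an \emph{odd} quadrangle with $\textup{T}_{(1,i,j,2n+2)}=[\textup{cr}(1,i,j,2n+2),1]=a$, and the odd branch of (\ref{FormulaArborification}) contributes $a\,\textup{Arb}(\mathrm{t})+a\cdot\textup{Arb}(\mathrm{t})$, yielding the two summands in (\ref{FormulaRecursiveOdd}).

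The only genuinely delicate points are notational rather than mathematical: one must read the juxtaposition $\textup{T}_{(\cdots)}(\cdots)$ in (\ref{FormulaRecursiveEven})--(\ref{FormulaRecursiveOdd}) as concatenation (prepending the single-letter word $\textup{T}_{(\cdots)}$, extended linearly), not as the quasi-shuffle product, and "$\cdot$" as the twisted concatenation $a\cdot\omega$ of \S\ref{SectionQuasiShuffle}; with these conventions everything matches on the nose, the sign in the even case being absorbed into $\textup{T}_{(0,i,j,2n+1)}$. I expect the main (still routine) obstacle to be stating the root-quadrangle bijection precisely and checking that it is compatible with the grafting structure of dual trees, including the degenerate $2$-gon cases; after that the passage to $\textup{T}_\P$ is purely formal, using only that $\textup{Arb}$ is a Hopf-algebra homomorphism satisfying (\ref{FormulaArborification}).
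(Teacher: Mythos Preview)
Your proof is correct and follows essentially the same approach as the paper: group quadrangulations by their root quadrangle $(0,i,j,2n+1)$, use the resulting identity $\mathrm{t}_\P=\sum B_a^+(\mathrm{t}_{\P_1}\mathrm{t}_{\P_2}\mathrm{t}_{\P_3})$ in the Connes--Kreimer Hopf algebra, and apply $\textup{Arb}$ via (\ref{FormulaArborification}). Your write-up is in fact more detailed than the paper's, which simply states the decomposition of $\mathrm{t}_\P$ and says ``this implies (\ref{FormulaRecursiveEven})''; your explicit check that the sign in the even case is absorbed into $\textup{T}_{(0,i,j,2n+1)}=-a$ and your remarks on the concatenation-versus-$\star$ convention are useful clarifications the paper leaves implicit.
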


\begin{proof}
We consider the even case; the odd case is similar. Let $\left ( \mathrm{t}_\mathrm{P}\right )_{ij}$ be the sum of trees $\mathrm{t}_Q$ corresponding to quadrangulations $Q$ with the root $(0,i,j,2n+1).$  Then
\[
\mathrm{t}_\mathrm{P}=\sum_{\substack{0< i< j<2n+1\\ i \text{ is odd}\\ j \text{ is even}}}\left ( \mathrm{t}_\mathrm{P}\right )_{ij}.
\]
Let $a=[(0,i,j,2n+1),1].$ Then we have
\[
\left ( \mathrm{t}_\mathrm{P}\right )_{ij}=B^+_{a}(\mathrm{t}_{(0,\dots,i)}\mathrm{t}_{(i,\dots,j)}\mathrm{t}_{(j,\dots,2n+1)})
\]
This implies (\ref{FormulaRecursiveEven}).
\end{proof}

\begin{example}For  an even polygon $\P=(0,1,2,3,4,5)$  we  have
\begin{align*}
\textup{T}_\P&=[\textup{cr}(0,1,2,5),1|\textup{cr}(2,3,4,5),1]\\
&\quad-[\textup{cr}(0,1,4,5),1|\textup{cr}(1,2,3,4),1]+[\textup{cr}(0,3,4,5),1|\textup{cr}(0,1,2,3),1].\\
\end{align*}
For  an odd polygon $\P=(1,2,3,4,5,6)$  we  have
\begin{align*}
\textup{T}_\P&=[\textup{cr}(1,2,3,6),1|\textup{cr}(3,4,5,6),1]-[\textup{cr}(1,2,5,6),1|\textup{cr}(2,3,4,5),1]\\
&\quad +[\textup{cr}(1,4,5,6),1|\textup{cr}(1,2,3,4),1]+[\textup{cr}(1,2,3,4,5,6),2].\\
\end{align*}
\end{example}

\subsection{Principal coefficient of a formal quadrangular polylogarithm}
Our goal is to show that formal quadrangular polylogarithms lie in the algebra of coinvariants $\F_\P^\H.$ In this section, we prove that the principal coefficient $\pi_1(\textup{T}_\P)\in\H_\P$ defined in \S \ref{SectionProjectionToPrimitiveElements} is equal to zero.

 For an element $x\in \F_\P$ denote by  $\textup{pr}_n(x)$ the projection of $x$ to the subspace of $\F_\P$ spanned by elements $[*|n].$ Next, let $\textup{pr}_{n-1,1}$ be the projection from  $\F_\P$ to its subspace spanned by elements 
 \be\label{FormulaWordsType1}
 [*,n-1|*,1]
 \ee
  and let $\textup{pr}_{1,n-1}$ be the projection from  $\F$  to its subspace spanned by elements  
 \be\label{FormulaWordsType2}
 [*,1|*,n-1].
 \ee 
To compute $\pi_1(\textup{T}_\P)$, we first compute  the values of these projections on formal quadrangular polylogarithms.

\begin{lemma}\label{LemmaMapPiZero}  For a  $(2n+2)$-alternating  polygon $\P$ with $n\geq 2$  we have 
\[
\textup{pr}_n(\textup{T}_\P)=
\begin{cases}
0 & \text{if\ } 	\P\text{\ is even,}\\
[\textup{cr}(\P),n] & \text{if\ } 	\P \text{\ is odd.}
\end{cases}
\]
\end{lemma}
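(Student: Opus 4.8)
The plan is to prove the statement by induction on $n$, proving the even case directly from the recursion and the odd case inductively (invoking the even case and the odd case in lower weights), using the recursive formulas for $\textup{T}_\P$ from Lemma \ref{LemmaRecursiveTP}, the weight-one values \eqref{FormulaFormalPolylogWeight1}, and the multiplicativity \eqref{FormulaCrossRatioDecomposition} of the cross-ratio. Since $\textup{T}_\P$ is homogeneous of weight $n$, the operator $\textup{pr}_n$ simply records the words of length one. The elementary fact driving everything is that a length-one word can occur in a quasi-shuffle product $\omega_1\star\cdots\star\omega_k$ of elements of $\F_\P$ only through the length-one parts of the factors, all of which are then completely merged into a single letter: a quasi-shuffle of words of lengths $\ell_1,\ell_2$ has all its terms of length at least $\max(\ell_1,\ell_2)$, while $[u]\star[v]$ has length-one part $[u\cdot v]$.

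For an even polygon $\P=(0,1,\dots,2n+1)$ I would argue directly. Formula \eqref{FormulaRecursiveEven} expresses $\textup{T}_\P$ as a sum of terms, each obtained from a quasi-shuffle product $\textup{T}_{(0,\dots,i)}\star\textup{T}_{(i,\dots,j)}\star\textup{T}_{(j,\dots,2n+1)}$ by prepending the single letter $[\textup{cr}(0,i,j,2n+1),1]$. The three factors have weights summing to $n-1\geq 1$, so at least one is a nonempty word; hence every word in the product has length $\geq 1$, and after prepending every word has length $\geq 2$. Therefore $\textup{pr}_n(\textup{T}_\P)=0$, and this is exactly the place where the hypothesis $n\geq 2$ is used.

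For an odd polygon $\P=(1,2,\dots,2n+2)$ I would use \eqref{FormulaRecursiveOdd}. The prepending terms $\textup{T}_{(1,i,j,2n+2)}(\,\cdots)$ again give only words of length $\geq 2$. A merging term $\textup{T}_{(1,i,j,2n+2)}\cdot\bigl(\textup{T}_{(1,\dots,i)}\star\textup{T}_{(i,\dots,j)}\star\textup{T}_{(j,\dots,2n+2)}\bigr)$ does not change word lengths, so it contributes to $\textup{pr}_n$ only through the length-one part of the quasi-shuffle product. The three subpolygons $\mathrm{S}_1=(1,\dots,i)$, $\mathrm{S}_2=(i,\dots,j)$, $\mathrm{S}_3=(j,\dots,2n+2)$ are odd, even, odd respectively, of weights $n_1,n_2,n_3$ with $n_1+n_2+n_3=n-1$. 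By \eqref{FormulaFormalPolylogWeight1} and the inductive hypothesis, the length-one parts of $\textup{T}_{\mathrm{S}_1}$ and $\textup{T}_{\mathrm{S}_3}$ are $+[\textup{cr}(\mathrm{S}_1),n_1]$ and $+[\textup{cr}(\mathrm{S}_3),n_3]$ when these polygons are nondegenerate (and these factors are trivial, with $\textup{cr}=1$, when they are $2$-gons), while the length-one part of $\textup{T}_{\mathrm{S}_2}$ vanishes unless $n_2\leq 1$ — by the already-proven even case — equaling $1$ when $n_2=0$ and $-[\textup{cr}(\mathrm{S}_2),1]$ when $n_2=1$. Combining these via the quasi-shuffle principle and \eqref{FormulaCrossRatioDecomposition} for the decomposition $\P=(1,i,j,2n+2)\sqcup\mathrm{S}_1\sqcup\mathrm{S}_2\sqcup\mathrm{S}_3$, the $(i,j)$-term contributes $+[\textup{cr}(\P),n]$ if $j=i+1$, contributes $-[\textup{cr}(\P),n]$ if $j=i+3$, and $0$ otherwise.

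It then remains to count: the admissible pairs with $j=i+1$ are $i\in\{2,4,\dots,2n\}$ ($n$ of them) and those with $j=i+3$ are $i\in\{2,4,\dots,2n-2\}$ ($n-1$ of them), so $\textup{pr}_n(\textup{T}_\P)=\bigl(n-(n-1)\bigr)[\textup{cr}(\P),n]=[\textup{cr}(\P),n]$, closing the induction (with base case $n=1$ given by \eqref{FormulaFormalPolylogWeight1}). The main obstacle is the bookkeeping in the odd case: correctly tracking the parities of $\mathrm{S}_1,\mathrm{S}_2,\mathrm{S}_3$, the signs coming from \eqref{FormulaArborification} and \eqref{FormulaFormalPolylogWeight1}, and the exact ranges of summation, so that the signed count of the surviving terms is precisely $1$; the length-one analysis of quasi-shuffle products and the even case are routine.
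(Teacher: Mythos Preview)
Your proposal is correct and follows essentially the same approach as the paper's proof: both use the recursive formulas of Lemma~\ref{LemmaRecursiveTP}, observe that in the even case every word is a prepend and hence has length $\geq 2$, and in the odd case reduce to the merging terms, which by induction and the even case contribute only when $j-i\in\{1,3\}$, yielding the signed count $n-(n-1)=1$. Your write-up is in fact more explicit than the paper's about why only the length-one parts of the factors matter in the quasi-shuffle product and about the parities of the three subpolygons.
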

\begin{proof}
First, we consider the case of an even $\P$. From  (\ref{FormulaRecursiveEven}) it follows that each word in $\textup{T}_\P$ is obtained by concatenation of a word of length one and a word of length at least one. Thus the  concatenation has length at least two, so $\textup{pr}_n(\textup{T}_\P)=0.$

For an odd polygon $\P=(1,\dots,2n+2)$ we proceed by induction. Only terms in the second sum in formula (\ref{FormulaRecursiveOdd}) may contribute to $\textup{pr}_n(\textup{T}_\P)$ by the same reason as in the even case. Terms with $j=i+1$ give a contribution $[\textup{cr}(\P),n]$ and terms with $j=i+3$ give a contribution $-[\textup{cr}(\P),n].$ It is easy to see that words coming from other terms have length at least two, so do not contribute to the projection $
 \textup{pr}_n(\textup{T}_\P)$. Thus
\[
 \textup{pr}_n(\textup{T}_\P)=n[\textup{cr}(\P),n]+(n-1)(-[\textup{cr}(\P),n])=[\textup{cr}(\P),n].
\]
\end{proof}

Next, notice that for all words $[\textup{cr}_1,n_1|\dots|\textup{cr}_k,n_k]$ in $\textup{T}_\P$  we have $\prod_{i=1}^k \textup{cr}_i=\textup{cr}(\P)$ and $\sum_{i=1}^k n_i=n,$ so in words of type (\ref{FormulaWordsType1}) and (\ref{FormulaWordsType2}) each side determines the other.

\begin{lemma} \label{LemmaMapPiEven} The following equalities hold for an even polygon 
$\P=(0,1,\dots, 2n+1)$ for $n\geq 3:$
\begin{align*}
\textup{pr}_{1,n-1}(\textup{T}_\P)&=[\textup{cr}(0,1,2n-2,2n+1),1|*]-[\textup{cr}(0,1,2n,2n+1),1|*]\\
&\quad-[\textup{cr}(0,3,2n-2,2n+1),1|*]+[\textup{cr}(0,3,2n,2n+1),1|*],\\
\textup{pr}_{n-1,1}(\textup{T}_\P)&=0.\\
\end{align*}
\end{lemma}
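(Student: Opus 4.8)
\emph{Proof plan.} The strategy is to read off both equalities directly from the recursive description of $\textup{T}_{\P}$ in Lemma \ref{LemmaRecursiveTP}. Recall from (\ref{FormulaRecursiveEven}) that for the even polygon $\P=(0,1,\dots,2n+1)$ one has
\[
\textup{T}_{\P}=\sum_{\substack{0< i< j<2n+1\\ i \text{ odd},\ j \text{ even}}} \textup{T}_{(0,i,j,2n+1)}\Bigl(\textup{T}_{(0,\dots,i)}\star\textup{T}_{(i,\dots,j)}\star\textup{T}_{(j,\dots,2n+1)}\Bigr),
\]
where by (\ref{FormulaFormalPolylogWeight1}) the root factor is the single letter $\textup{T}_{(0,i,j,2n+1)}=-[\textup{cr}(0,i,j,2n+1),1]$, and multiplication by it means concatenation of that letter in front of the word. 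So the first thing I would observe is that \emph{every} word occurring in $\textup{T}_{\P}$ begins with a letter of weight $1$. Since $n\geq 3$ forces $n-1>1$, no length-two word of $\textup{T}_{\P}$ can begin with a letter of weight $n-1$, and this already gives $\textup{pr}_{n-1,1}(\textup{T}_{\P})=0$.

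For $\textup{pr}_{1,n-1}$ I would next reduce to a statement about the length-one part of a quasi-shuffle product. A word $[u_1|u_2]$ contributing to $\textup{pr}_{1,n-1}(\textup{T}_{\P})$ has $u_1$ the root letter $[\textup{cr}(0,i,j,2n+1),1]$ and $u_2$ a word of length one and weight $n-1$ appearing in $W_{ij}:=\textup{T}_{(0,\dots,i)}\star\textup{T}_{(i,\dots,j)}\star\textup{T}_{(j,\dots,2n+1)}$. Since in a quasi-shuffle product a term built from basis words $w_1,w_2,w_3$ has length at least $\max_r l(w_r)$, the length-one part of $W_{ij}$ only receives contributions in which each factor is replaced by its component of length at most one; and the length-one part of a quasi-shuffle product of letters $[x_1],[x_2],[x_3]$ (any of which may be the empty word) is the single word $[x_1\cdot x_2\cdot x_3,\,n-1]$. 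Hence the length-one part of $W_{ij}$ is the product of the coefficients of the length-$\le 1$ components of the three factors times one length-one word, whose letter is forced by (\ref{FormulaCrossRatioDecomposition}) to be $\textup{cr}(\P)$ divided by $\textup{cr}(0,i,j,2n+1)$. This is exactly the ``each side determines the other'' phenomenon noted before the statement, so it remains only to find which pairs $(i,j)$ contribute, and with which sign.

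The corner polygons $(0,\dots,i)$ and $(j,\dots,2n+1)$ are even and $(i,\dots,j)$ is odd, so I would invoke Lemma \ref{LemmaMapPiZero} together with (\ref{FormulaFormalPolylogWeight1}): an even alternating polygon has nonzero component of length $\le 1$ only if it is a $2$-gon (empty word, coefficient $+1$) or a $4$-gon (component $-[\textup{cr},1]$, coefficient $-1$), while an odd alternating polygon always has such a component, with coefficient $+1$. Thus $(i,j)$ contributes exactly when $(0,\dots,i)$ and $(j,\dots,2n+1)$ are $2$- or $4$-gons, i.e. $i\in\{1,3\}$ and $j\in\{2n-2,2n\}$; for $n\geq 3$ each such pair indeed satisfies $0<i<j<2n+1$ with $i$ odd, $j$ even. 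The sign of the resulting word is $(-1)$ (root factor) times $\varepsilon_i\varepsilon_j$, where $\varepsilon_1=\varepsilon_{2n}=+1$, $\varepsilon_3=\varepsilon_{2n-2}=-1$; running over the four pairs yields
\begin{align*}
\textup{pr}_{1,n-1}(\textup{T}_\P)&=[\textup{cr}(0,1,2n-2,2n+1),1|*]-[\textup{cr}(0,1,2n,2n+1),1|*]\\
&\quad-[\textup{cr}(0,3,2n-2,2n+1),1|*]+[\textup{cr}(0,3,2n,2n+1),1|*].
\end{align*}

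The routine part is the bookkeeping; the only genuinely delicate points I expect are the treatment of the degenerate $2$-gon factors that occur when $i=1$ or $j=2n$ (one must use the convention $\textup{T}_{(p_0,p_1)}=1$ and note it contributes the empty word to the quasi-shuffle) and keeping the three independent signs straight, both handled above. The substantive input — and the place where something could go wrong if mishandled — is Lemma \ref{LemmaMapPiZero}, which is precisely what collapses the two even corner polygons down to size $\le 4$; everything else is formal manipulation of the recursion and of the quasi-shuffle product.
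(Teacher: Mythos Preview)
Your proof is correct and follows essentially the same route as the paper, which simply states that the lemma follows from (\ref{FormulaRecursiveEven}) and Lemma \ref{LemmaMapPiZero}. You have supplied exactly the details the paper leaves implicit: the root letter forces every word to start with weight $1$ (giving $\textup{pr}_{n-1,1}=0$ immediately), and the length-one part of the quasi-shuffle of the three corner pieces survives only when both even corners are $2$- or $4$-gons, yielding the four terms with the stated signs.
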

\begin{proof}
	The statement follows from (\ref{FormulaRecursiveEven}) and Lemma \ref{LemmaMapPiZero}.
\end{proof}

\begin{lemma} \label{LemmaMapPiOdd} The following equalities hold for an odd polygon $\P=(1,2,\dots,2n+2)$ for $n\geq 3$:
\[
\begin{split}
&\textup{pr}_{n-1,1}(\textup{T}_\P)=\sum_{i=0}^{2n-2}(-1)^i[*|\textup{cr}(i+1,i+2,i+3,i+4),1],\\
&\textup{pr}_{1,n-1}(\textup{T}_\P)=\sum_{i=1}^{n}[\textup{cr}(1,2i,2i+1,2n+2),1|*]-\sum_{i=1}^{n-1}[\textup{cr}(1,2i,2i+3,2n+2),1|*].
\end{split}
\]
\end{lemma}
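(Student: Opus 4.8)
The plan is to read off both projections directly from the recursion (\ref{FormulaRecursiveOdd}) for $\textup{T}_\P$ with $\P=(1,2,\dots,2n+2)$. Write $a_{ij}=[\textup{cr}(1,i,j,2n+2),1]$ for the weight‑one letter of the root quadrangle and $P_{ij}=\textup{T}_{(1,\dots,i)}\star\textup{T}_{(i,\dots,j)}\star\textup{T}_{(j,\dots,2n+2)}$, so that each summand of (\ref{FormulaRecursiveOdd}) is either $a_{ij}P_{ij}$ (concatenation) or $a_{ij}\cdot P_{ij}$ (the letter $a_{ij}$ absorbed into the first letter). The three sub‑polygons are odd, even, odd, of weights $w_A=(i-2)/2$, $w_B=(j-i-1)/2$, $w_C=(2n+1-j)/2$ summing to $n-1$, and $P_{ij}$ is weight‑homogeneous of weight $n-1$; since $n\geq 3$, every term of $P_{ij}$ is nonempty, so the first letter of an absorbed term $a_{ij}\cdot\omega$ has weight at least $2$.

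For $\textup{pr}_{1,n-1}$ the absorbed terms therefore cannot contribute, and a concatenation term $a_{ij}\omega$ is of the form $[*,1|*,n-1]$ exactly when $\omega$ is a single letter of weight $n-1$; hence only the single‑letter component of $P_{ij}$ matters. A single letter arises in a quasi‑shuffle product only by merging length‑$\leq 1$ terms of the factors, and since the middle factor is an even polygon, Lemma \ref{LemmaMapPiZero} forces $w_B\leq 1$. Splitting into $w_B=0$ (so $j=i+1$) and $w_B=1$ (so $j=i+3$ and $\textup{T}_{(i,\dots,j)}=-[\textup{cr}(i,i+1,i+2,i+3),1]$ by (\ref{FormulaFormalPolylogWeight1})), evaluating the single‑letter parts of the two odd outer factors by Lemma \ref{LemmaMapPiZero} and (\ref{FormulaFormalPolylogWeight1}), and collapsing cross‑ratios via (\ref{FormulaCrossRatioDecomposition}), the $w_B=0$ summands give $\sum_{i=1}^{n}[\textup{cr}(1,2i,2i+1,2n+2),1|*]$ and the $w_B=1$ summands give $-\sum_{i=1}^{n-1}[\textup{cr}(1,2i,2i+3,2n+2),1|*]$. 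No induction is needed here.

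For $\textup{pr}_{n-1,1}$ the concatenation terms cannot contribute (their first letter has weight $1<n-1$), so only the absorbed terms contribute, and they do so through $\textup{pr}_{n-2,1}(P_{ij})$. In a length‑two term $[u|v]$ of the triple quasi‑shuffle with $\textup{wt}(v)=1$, the letter $v$ must be the un‑merged last letter of a single one of the three factors while everything else collapses into the letter $u$; this reduces $\textup{pr}_{n-2,1}(P_{ij})$, case by case, to: (i) single‑letter parts of the factors (Lemma \ref{LemmaMapPiZero}); (ii) the $\textup{pr}_{*,1}$‑components of the even middle factor (Lemma \ref{LemmaMapPiEven}); and (iii) the $\textup{pr}_{*,1}$‑components of the two odd outer factors, which are the present statement in smaller weight. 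We therefore prove the $\textup{pr}_{n-1,1}$ identity by induction on $n$, the base case being handled by the explicit low‑weight formulas for $\textup{T}_\P$ recorded above. In every contributing term the trailing weight‑one letter is the cross‑ratio of four consecutive vertices; the even middle factor produces the even‑labelled consecutive quadrangles, the inductive outer‑factor contributions the remaining ones, and after a telescoping cancellation of all other terms one is left with $\sum_{i=0}^{2n-2}(-1)^i[*|\textup{cr}(i+1,i+2,i+3,i+4),1]$.

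The main obstacle is precisely this last case analysis: there are several overlapping ways the triple quasi‑shuffle $P_{ij}$ can produce a trailing weight‑one letter (from the even factor as a weight‑one quadrangle, or from either odd factor via its own $\textup{pr}_{*,1}$, with the other two factors empty or single‑letter), each carrying a sign dictated by (\ref{FormulaFormalPolylogWeight1}) and Lemma \ref{LemmaMapPiZero}, and one must check that everything except the boundary‑consecutive quadrangles telescopes away. Keeping the multiplicativity (\ref{FormulaCrossRatioDecomposition}) of cross‑ratios aligned with the combinatorics of which non‑crossing diagonals are being cut is the delicate bookkeeping that makes the clean alternating sum emerge.
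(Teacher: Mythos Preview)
Your approach is correct and is exactly what the paper's one-line proof (``follows from (\ref{FormulaRecursiveOdd}) and Lemma \ref{LemmaMapPiZero}'') is abbreviating. The $\textup{pr}_{1,n-1}$ computation is direct from the recursion as you describe, and the $\textup{pr}_{n-1,1}$ computation does require induction on $n$ together with the vanishing $\textup{pr}_{n-1,1}=0$ from Lemma \ref{LemmaMapPiEven} for the even middle factor---ingredients the paper does not cite explicitly but which are the natural way to fill in its terse claim.
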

\begin{proof}
	The statement follows from  (\ref{FormulaRecursiveOdd}) and Lemma \ref{LemmaMapPiZero}.
\end{proof}

\begin{corollary}\label{CorollaryVanishingPrimitive} For an alternating polygon $\P$ we have
$\pi_1(\textup{T}_\P)=0.$
\end{corollary}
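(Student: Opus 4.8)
The plan is to compute $\pi_1(\textup{T}_\P)$ explicitly, term by term, and to check that the resulting $\Q$-linear combination of Hodge logarithms vanishes. Since $\F_\P$ depends on $\P$ only through its isomorphism class in $\underline{\textup{Alt}}$, it suffices to treat the two representatives $\P=(0,1,\dots,2n+1)$ and $\P=(1,2,\dots,2n+2)$. For $n=1$ the element $\textup{T}_\P$ has weight one, so $\pi_1(\textup{T}_\P)=0$ trivially. For $n=2$ one substitutes the explicit formulas for $\textup{T}_\P$ recorded after Lemma \ref{LemmaRecursiveTP} into the cases $\pi_1([\varphi,2])=\log^\H(\varphi)$ and $\pi_1([\varphi_1,1|\varphi_2,1])=\log^\H\!\bigl(\tfrac{1-\varphi_1^{-1}}{1-\varphi_2}\bigr)$ of Lemma \ref{LemmaMapPi}, collects everything into a single $\log^\H$, and verifies its argument is $1$ using $1-[x_a,x_b,x_c,x_d]^{\mp1}$ as a product of linear forms. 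So assume $n\geq 3$ from now on.

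By Lemma \ref{LemmaMapPi}, among words of weight $n\geq 3$ the map $\pi_1$ is nonzero only on the single letters $[\ast,n]$ and on the length-two words of shape $[\ast,1|\ast,n-1]$ and $[\ast,n-1|\ast,1]$, so
\[
\pi_1(\textup{T}_\P)=\pi_1\bigl(\textup{pr}_n(\textup{T}_\P)\bigr)+\pi_1\bigl(\textup{pr}_{1,n-1}(\textup{T}_\P)\bigr)+\pi_1\bigl(\textup{pr}_{n-1,1}(\textup{T}_\P)\bigr).
\]
The three projections are computed in Lemmas \ref{LemmaMapPiZero}, \ref{LemmaMapPiEven} and \ref{LemmaMapPiOdd}, and to each resulting word we apply the matching formula $\pi_1([\varphi,n])=\log^\H(\varphi)$, $\pi_1([\varphi_1,1|\varphi_2,n-1])=\log^\H(1-\varphi_1^{-1})$, or $\pi_1([\varphi_1,n-1|\varphi_2,1])=-\log^\H(1-\varphi_2)$ from Lemma \ref{LemmaMapPi}. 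For the even polygon $\textup{pr}_n(\textup{T}_\P)=0$ and $\textup{pr}_{n-1,1}(\textup{T}_\P)=0$, so only four terms survive and $\pi_1(\textup{T}_\P)$ is $\log^\H$ of
\[
\frac{\bigl(1-[x_0,x_1,x_{2n-2},x_{2n+1}]^{-1}\bigr)\bigl(1-[x_0,x_3,x_{2n},x_{2n+1}]^{-1}\bigr)}{\bigl(1-[x_0,x_1,x_{2n},x_{2n+1}]^{-1}\bigr)\bigl(1-[x_0,x_3,x_{2n-2},x_{2n+1}]^{-1}\bigr)}.
\]
Expanding via $1-[x_a,x_b,x_c,x_d]^{-1}=\dfrac{(x_a-x_c)(x_b-x_d)}{(x_a-x_b)(x_c-x_d)}$ one sees the numerator and denominator are literally the same product of linear forms, so this ratio is $1$ and $\pi_1(\textup{T}_\P)=0$.

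For the odd polygon all three projections contribute. Writing $[ab]$ for $x_a-x_b$ and using that each such factor is determined only up to sign in $\H_{\P,1}\cong \C(\mathfrak{M}_\P)^\times\otimes\Q$ (so roots of unity are trivial there), one expands every occurring $1-\textup{cr}(\cdot)^{\pm1}$ and the cross-ratio $\textup{cr}(\P)$ into linear forms. A direct computation then gives $\pi_1(\textup{pr}_{n-1,1}(\textup{T}_\P))=\log^\H\!\bigl(\prod_{k=1}^{n+1}[2k-1,2k]\,\big/\,W\bigr)$ and $\pi_1(\textup{pr}_{1,n-1}(\textup{T}_\P))=\log^\H\!\bigl(W\,\big/\,[1,2n+2]\prod_{i=1}^{n}[2i,2i+1]\bigr)$ for one and the same ``middle'' product $W=[1,3]\,[2n,2n+2]\prod_{i=1}^{n-1}[2i,2i+3]$; hence $W$ cancels between the two contributions, and adding $\pi_1(\textup{pr}_n(\textup{T}_\P))=\log^\H(\textup{cr}(\P))$ and substituting the explicit formula for $\textup{cr}(\P)$ leaves $\log^\H$ of $(x_{2n+2}-x_1)/(x_1-x_{2n+2})$ times a sign, i.e.\ $\log^\H$ of a root of unity, which is $0$. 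Thus $\pi_1(\textup{T}_\P)=0$ in every case.

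The main obstacle is precisely this bookkeeping in the odd case: one must track which linear forms occur in $\textup{cr}(\P)$ and in the two families $\{1-\textup{cr}(1,2i,2i+1,2n+2)^{-1}\}$, $\{1-\textup{cr}(1,2i,2i+3,2n+2)^{-1}\}$ and in the consecutive-quadrangle factors $\{1-\textup{cr}(i+1,i+2,i+3,i+4)\}$ coming from Lemma \ref{LemmaMapPiOdd}, and see that they telescope down to the common factor $W$ on one side and to a single root of unity after combining with $\textup{cr}(\P)$. The even case and the evaluation of the individual $\pi_1$-values are, by contrast, immediate from Lemma \ref{LemmaMapPi}.
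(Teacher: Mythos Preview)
Your proof is correct and follows essentially the same approach as the paper: handle $n=1,2$ directly, then for $n\geq 3$ use Lemma~\ref{LemmaMapPi} to restrict attention to the projections $\textup{pr}_n$, $\textup{pr}_{1,n-1}$, $\textup{pr}_{n-1,1}$ computed in Lemmas~\ref{LemmaMapPiZero}--\ref{LemmaMapPiOdd}, and verify the resulting cross-ratio identity. The paper writes out the even case exactly as you do (rewriting the four $1-\textup{cr}^{-1}$ terms as a single cross-ratio identity) and dismisses the odd case as ``similar though more tedious''; your sketch of the telescoping via the common product $W$ is precisely that tedious computation made explicit.
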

\begin{proof}
Cases $n=1,2$ can be easily checked by hand. For an even  $\P=(0,1,\dots,2n+1)$ and $n\geq 3$ by Lemma \ref{LemmaMapPi} and Lemma \ref{LemmaMapPiEven}  we need to check that 
\begin{align*}
&\log^\H\left(1-\textup{cr}(0,1,2n-2,2n+1)^{-1}\right)-\log^\H\left(1-\textup{cr}(0,1,2n,2n+1)^{-1}\right)\\
-&\log^\H\left(1-\textup{cr}(0,3,2n-2,2n+1)^{-1}\right)+
\log^\H\left(1-\textup{cr}(0,3,2n,2n+1])^{-1}\right)=0,\\
\end{align*}
which follows from an equality 
\[
[x_{0},x_{2n-2},x_{2n+1},x_{1}][x_{0},x_{2n},x_{2n+1},x_{1}]^{-1}[x_{0},x_{2n-2},x_{2n+1},x_{3}]^{-1}[x_{0},x_{2n},x_{2n+1},x_{3}]=1.
\]
The proof for an odd polygon is similar though more tedious.
\end{proof}

\subsection{Quadrangulation formula}\label{SectionQuadrangulationFormula}

\begin{theorem}[Quadrangulation Formula]\label{TheoremFormalQF}
For an alternating polygon $\P$ the formal quadrangular polylogarithm $\textup{T}_\P$ lies in $\F^\H_\P.$ 
\end{theorem}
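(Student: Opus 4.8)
\emph{Proof sketch.} The plan is to induct on $n$, where $\P=(p_0,p_1,\dots,p_{2n+1})$. For $n=1$ the element $\textup{T}_\P=\pm[\textup{cr}(\P),1]$ is primitive in $\F_\P$ and $\Delta^{\H\F}[\varphi,1]=1\otimes[\varphi,1]$ by Lemma~\ref{LemmaCoactionClassical}, so the claim holds. Fix $n\ge 2$, assume the theorem for all alternating polygons with fewer vertices, and set $D=\widetilde{\Delta}^{\H\F}\textup{T}_\P\in\H_\P\otimes\F_\P$; the goal is $D=0$. Since every function and iterated integral occurring in the coaction formula~(\ref{FormulaCoaction}) for a word in $\F_\mathrm{S}$ is pulled back along the forgetful map $\mathfrak{M}_\P\to\mathfrak{M}_\mathrm{S}$, the coaction $\widetilde{\Delta}^{\H\F}$ restricts on $\F_\mathrm{S}\subseteq\F_\P$ to the coaction of $\F_\mathrm{S}$; hence the inductive hypothesis gives $\textup{T}_\mathrm{S}\in\F^\H_\P$ for every proper alternating subpolygon $\mathrm{S}\subsetneq\P$, and as $\F^\H_\P$ is a subalgebra (by Proposition~\ref{TheoremFormalSSR}) also $\prod_i\textup{T}_{\mathrm{S}^i}\in\F^\H_\P$.

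The first step is to show $D\in\H_\P\otimes\textup{Prim}(\F_\P)$. By Proposition~\ref{PropositionCoproduct},
\[
\Delta^{\F\F}\textup{T}_\P=\textup{T}_\P\otimes 1+1\otimes\textup{T}_\P+\sum_{\mathrm{S}}\textup{T}_\mathrm{S}\otimes\prod_{i=0}^{2r}\textup{T}_{\mathrm{S}^i},
\]
where $\mathrm{S}$ runs over the alternating subpolygons with $i_0=0$, $i_{2r+1}=2n+1$ other than $\P$ and the edge $(p_0,p_{2n+1})$; by the previous paragraph $\textup{T}_\mathrm{S}$ and $\prod_i\textup{T}_{\mathrm{S}^i}$ are coinvariants, so $\Delta^{\H\F\F}(\textup{T}_\mathrm{S}\otimes\prod_i\textup{T}_{\mathrm{S}^i})=1\otimes\textup{T}_\mathrm{S}\otimes\prod_i\textup{T}_{\mathrm{S}^i}$. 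Comparing $\Delta^{\H\F\F}\Delta^{\F\F}\textup{T}_\P$ with $(1\otimes\Delta^{\F\F})\Delta^{\H\F}\textup{T}_\P$ through Lemma~\ref{LemmaCoproductCompatibility}, and writing $D=\sum_\beta h_\beta\otimes f_\beta$ with the $h_\beta$ linearly independent, all the ``product'' terms cancel and what remains is $\sum_\beta h_\beta\otimes\widetilde{\Delta}^{\F\F}(f_\beta)=0$, so every $f_\beta$ is primitive. Because the second tensor factor of every term of $\Delta^{\H\F}\textup{T}_\P$ has cross-ratio product equal to $\textup{cr}(\P)$ (the last entry of the sequence $v_1$ is unchanged), the only primitives that can occur are the one-letter words $[\textup{cr}(\P),m]$; together with the weight grading (the $\H$-leg of $\widetilde{\Delta}^{\H\F}$ has positive weight) this gives $D=\sum_{d=1}^{n-1}h_d\otimes[\textup{cr}(\P),n-d]$ with $h_d\in(\H_\P)_d$.

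Next I would run a second induction, on $d$. First, $h_1=\pi_1(\textup{T}_\P)=0$ by Corollary~\ref{CorollaryVanishingPrimitive}. Applying coassociativity of the coaction (Lemma~\ref{LemmaCoassociativity}) to $\Delta^{\H\F}\textup{T}_\P=1\otimes\textup{T}_\P+D$ and using $\widetilde{\Delta}^{\H\F}[\varphi,m]=\sum_{i\ge1}\tfrac{(\log^\H\varphi)^i}{i!}\otimes[\varphi,m-i]$ from Lemma~\ref{LemmaCoactionClassical}, one obtains $(\widetilde{\Delta}^{\H\H}\otimes 1)D=(1\otimes\widetilde{\Delta}^{\H\F})D$; comparing coefficients of $[\textup{cr}(\P),n-r]$ yields
\[
\widetilde{\Delta}^{\H\H}(h_r)=\sum_{i=1}^{r-1}h_{r-i}\otimes\frac{(\log^\H\textup{cr}(\P))^i}{i!}.
\]
If $h_1=\dots=h_{r-1}=0$ then $\widetilde{\Delta}^{\H\H}(h_r)=0$, so by the Rigidity Lemma of \S\ref{SectionRigidity} the element $h_r$ is a constant framed variation on $\overline{\mathfrak{M}}_\P$; specializing the configuration to a degenerate point, exactly as at the end of the proof of Proposition~\ref{TheoremFormalSSR}, turns every iterated integral occurring in $h_r$ into one of the form $\textup{I}^\H(0;1,0,\dots,0;0)$, which vanishes, so $h_r=0$. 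Hence $D=0$, i.e.\ $\textup{T}_\P\in\F^\H_\P$.

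The main obstacle is the first step, the primitivity of the $\F$-legs of $D$: it is the exact analogue of the primitivity argument in the proof of Proposition~\ref{TheoremFormalSSR}, with the role of ``a difference of two products'' now played by the single element $\widetilde{\Delta}^{\H\F}\textup{T}_\P$. This makes the coassociativity step painless (no auxiliary induction is needed there), but it forces the identification of the occurring primitives and the vanishing $h_1=0$ onto the explicit combinatorial computations of \S\ref{SectionProjectionToPrimitiveElements} (Lemmas~\ref{LemmaMapPi}, \ref{LemmaMapPiEven}, \ref{LemmaMapPiOdd} and Corollary~\ref{CorollaryVanishingPrimitive}). A subsidiary point of care is the compatibility of $\widetilde{\Delta}^{\H\F}$ with restriction to subpolygons (so that the inductive hypothesis can be fed into Proposition~\ref{PropositionCoproduct}), together with the choice of the degenerate specialization used in the rigidity step.
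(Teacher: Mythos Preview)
Your proposal is correct and follows essentially the same route as the paper's proof: induction on $n$, primitivity of the $\F$-leg of $D$ via Lemma~\ref{LemmaCoproductCompatibility} combined with the inductive hypothesis fed through Proposition~\ref{PropositionCoproduct}, identification of the surviving primitives as $[\textup{cr}(\P),n-d]$, the base $h_1=0$ from Corollary~\ref{CorollaryVanishingPrimitive}, the coassociativity recursion $(\widetilde{\Delta}^{\H\H}\otimes 1)D=(1\otimes\widetilde{\Delta}^{\H\F})D$, and finally rigidity plus specialization. The only point the paper makes explicit that you leave implicit is the choice of the degenerate point: one specializes to the boundary stratum of $\overline{\mathfrak{M}}_\P$ where $x_{p_{2i}}$ and $x_{p_{2i+1}}$ collide for every $i$, so that each $\textup{cr}(\mathrm{Q})$ vanishes and the iterated integrals reduce, as you say, to $\textup{I}^\H(0;1,0,\dots,0;0)=0$.
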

\begin{proof}
We need to show that $\widetilde{\Delta}^{\H\F}\textup{T}_\P=0.$ We proceed by induction on $n.$ If $n=1$ then $\P$ is a $4$-gon and the statement is obvious. By induction and Proposition \ref{PropositionCoproduct}  element  $\widetilde{\Delta}^{\F\F}\textup{T}_\P$ lies in $\F_\P^\H\otimes \F_\P^\H,$ so we have
\[
\Delta^{\H\mathcal{FF}}\Delta^{\F\F}(\textup{T}_\P)=
(T\otimes 1)(1\otimes \Delta^{\H\F}(\textup{T}_\P))+\Delta^{\H\F}(\textup{T}_\P)\otimes 1+1\otimes \widetilde{\Delta}^{\F\F}(\textup{T}_\P).
\]
On the other hand, we have
\[
\begin{split}
&(1\otimes \Delta^{\F\F})\Delta^{\H\F}(\textup{T}_\P)\\
=&(1\otimes \widetilde{\Delta}^{\F\F})\Delta^{\H\F}(\textup{T}_\P)+(T\otimes 1)(1\otimes \Delta^{\H\F}(\textup{T}_\P))+\Delta^{\H\F}(\textup{T}_\P)\otimes 1\\
=&(1\otimes \widetilde{\Delta}^{\F\F})\widetilde{\Delta}^{\H\F}(\textup{T}_\P)+(T\otimes 1)(1\otimes \Delta^{\H\F}(\textup{T}_\P))+\Delta^{\H\F}(\textup{T}_\P)\otimes 1+1\otimes \widetilde{\Delta}^{\F\F}(\textup{T}_\P).
\end{split}
\]

Lemma \ref{LemmaCoproductCompatibility} implies that
\[
\Delta^{\H\mathcal{FF}}\Delta^{\F\F}\left( \textup{T}_\P\right)=(1\otimes \Delta^{\F\F})\Delta^{\H\F}\left(\textup{T}_\P\right),
\]
thus
\[
(1\otimes \widetilde{\Delta}^{\F\F})\widetilde{\Delta}^{\H\F}\textup{T}_\P=0.
\] 
In other words, $\widetilde{\Delta}^{\H\F}\textup{T}_\P$ lies in $\H_\P\otimes \textup{Prim}(\F_\P).$

For all words $[\textup{cr}_1,n_1|\dots|\textup{cr}_k,n_k]$
appearing in $\textup{T}_\P$ we have  $\prod_{i=1}^k \textup{cr}_i=\textup{cr}(\P)$ and $\sum_{i=1}^k n_i=n.$ Thus  a primitive word appearing in $\widetilde{\Delta}^{\H\F}\textup{T}_\P$  equals  $[\textup{cr}(\P),i]$ for $1\leq i\leq n-1.$ It follows that there exist elements $h_1,\dots, h_{n-1}\in \H$ such that
\[
\widetilde{\Delta}^{\H\F}\textup{T}_\P=\sum_{i=1}^{n-1} h_{i}\otimes [\textup{cr}(\P),n-i].
\]
Let $r$ be the smallest index such that $h_r\neq 0.$ From Corollary \ref{CorollaryVanishingPrimitive} we know that  $\pi_1(\textup{T}_\P)=h_{1}=0,$ so $r>1.$

By Lemma \ref{LemmaCoassociativity} we have 
\[
(\widetilde{\Delta}^{\mathcal{HH}}\otimes 1)\widetilde{\Delta}^{\H\F}\textup{T}_\P=(1\otimes \widetilde{\Delta}^{\H\F})\widetilde{\Delta}^{\H\F}\textup{T}_\P,
\] 
so
\[
\sum_{i=1}^{n-1} \left ( (\widetilde{\Delta}^{\mathcal{HH}}h_{i})\otimes \left[\textup{cr}(\P), n-i \right]\right)=\sum_{i=1}^{n-1} \left( h_{i}\otimes \widetilde{\Delta}^{\mathcal{HF}}\left[\textup{cr}(\P), n-i\right]\right).
\]
Comparing coefficients in front of $[\textup{cr}(\P),n-r]$
we get that
\[
\widetilde{\Delta}^{\mathcal{HH}}(h_r)=\sum_{i=1}^{r-1} h_{r-i}\otimes \frac{(\log^{\H}(\textup{cr}(\P))^i}{i!} ,
\]
so since $h_1=\dots=h_{r-1}=0$ we have $\widetilde{\Delta}^{\mathcal{HH}}(h_r)=0.$ Consider the specialization to a point $p$ in the Deligne-Mumford compactification $\overline{\mathfrak{M}}_\P$ where points $x_{2i}$ and $x_{2i+1}$  collide for $0\leq i \leq n$.  For an alternating subpolygon  $\mathrm{Q}$ of $\P$ the function $\textup{cr}(\mathrm{Q})$ vanishes at $p.$  From (\ref{FormulaCoaction}) it is clear that $h_r$ is a linear combination of Hodge iterated integrals 
\[
\textup{I}^{\H}(0;1,\underbrace{0,\dots,0,m_1}_{d_1},\dots,\underbrace{0,\dots,0,m_1m_2\dots m_{s-1}}_{d_{s-1}},\underbrace{0,\dots,0;m_1m_2\dots m_{s}}_{d_s}),
\]
where $m_i$ are monomials in variables $\textup{cr}(\mathrm{Q})$ of degree at least $1$ for alternating subpolygons $\mathrm{Q}$ of $\P$. All such integrals specialize to zero at $p,$ so by the rigidity argument (\S \ref{SectionRigidity}) $h_r=0,$ which contradicts our assumption. This finishes the proof of the theorem. 
\end{proof}

In \S \ref{SectionFormalCluster} we defined a collection of alternating subpolygons $\textup{alt}(\P)$ of an alternating polygon $\P.$ By Theorem \ref{TheoremFormalQF} we can view the element $\textup{T}_\P$ as an element in $\F_\P^{\H}\subseteq\H \times \F_\P.$ 
\begin{corollary}
The following equality holds: 
\be 
 	  \Delta^{\H\times \F}\textup{T}_\P=\sum_{\mathrm{S}\in\textup{alt}(\P)} \textup{T}_{\mathrm{S}}\otimes \left (\prod_{i=0}^{2r}\textup{T}_{\mathrm{S}^i} \right) .
\ee
\end{corollary}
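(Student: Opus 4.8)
The plan is to deduce this identity directly from formula (\ref{FormulaCoinvariantsEmbedding}) of \S\ref{SectionSmashCoproduct}. Recall from there that, under the embedding $i\colon\F_\P\hra\H_\P\times\F_\P$, one has $\Delta^{\H\times \F}(i(x))=(i\otimes i)\Delta^{\F\F}(x)$ whenever $x\in\F_\P^\H$ and $\Delta^{\F\F}(x)\in\F_\P^\H\otimes\F_\P^\H$. So the whole argument reduces to verifying these two hypotheses for $x=\textup{T}_\P$, after which one substitutes formula (\ref{FormulaCoproductFormalPolylogarithm}) of Proposition \ref{PropositionCoproduct} into the right-hand side and suppresses $i$ from the notation, as agreed in \S\ref{SectionSmashCoproduct}.

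The first hypothesis, $\textup{T}_\P\in\F_\P^\H$, is exactly Theorem \ref{TheoremFormalQF}. For the second, I would combine formula (\ref{FormulaCoproductFormalPolylogarithm}) with Theorem \ref{TheoremFormalQF} applied to the smaller alternating polygons occurring in it: every tensor factor on the right-hand side of (\ref{FormulaCoproductFormalPolylogarithm}) is the formal quadrangular polylogarithm $\textup{T}_{\mathrm{S}}$ of some $\mathrm{S}\in\textup{alt}(\P)$ or of one of the complementary polygons $\mathrm{S}^i$, viewed in $\F_\P$ via pullback along the relevant forgetful map. Each such $\textup{T}_{\mathrm{S}}$ lies in $\F_{\mathrm{S}}^\H$ by Theorem \ref{TheoremFormalQF}; since the cross-ratio functions and the Hodge iterated integrals entering the definition (\ref{FormulaCoaction}) of the coaction are compatible with the forgetful maps between configuration spaces, the pullback of a coinvariant is again a coinvariant, so $\textup{T}_{\mathrm{S}}\in\F_\P^\H$. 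As $\F_\P^\H$ is a subalgebra of $\F_\P$, the products $\prod_{i=0}^{2r}\textup{T}_{\mathrm{S}^i}$ lie in $\F_\P^\H$ as well, and hence $\Delta^{\F\F}\textup{T}_\P\in\F_\P^\H\otimes\F_\P^\H$. (This membership was in fact already used in the proof of Theorem \ref{TheoremFormalQF}.)

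With both hypotheses established, (\ref{FormulaCoinvariantsEmbedding}) gives $\Delta^{\H\times \F}\textup{T}_\P=(i\otimes i)\Delta^{\F\F}\textup{T}_\P$, and feeding in (\ref{FormulaCoproductFormalPolylogarithm}) yields the asserted equality. Apart from the bookkeeping in the smash-coproduct formalism, the only substantive point is the naturality of $\Delta^{\H\F}$ under the forgetful maps, which is what allows the subpolygon polylogarithms $\textup{T}_{\mathrm{S}}$ to be regarded as coinvariants inside $\F_\P^\H$ rather than only inside $\F_{\mathrm{S}}^\H$; I expect this to be the one place where a short argument is genuinely needed.
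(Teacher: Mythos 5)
Your proof is correct and follows the same route as the paper, whose own argument is just the one-line observation that the claim follows from Theorem \ref{TheoremFormalQF} and (\ref{FormulaCoinvariantsEmbedding}); you supply exactly the details that one-liner suppresses, namely Proposition \ref{PropositionCoproduct} for the explicit form of $\Delta^{\F\F}\textup{T}_\P$ and the fact that its tensor factors are coinvariants. Your remark on naturality of $\Delta^{\H\F}$ under the forgetful maps is a legitimate (and correctly resolved) point that the paper leaves implicit via its pullback conventions for cross-ratios.
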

\begin{proof}
	The statement follows from Theorem \ref{TheoremFormalQF} and (\ref{FormulaCoinvariantsEmbedding}).
\end{proof}

\section{Quadrangular polylogarithms}\label{SectionQuadrangulaPolylogs}

\subsection{The formula for quadrangular polylogarithms via Hodge correlators}\label{SectionQuadrangularCorr}
For $n\geq 0$ consider a collection of points $x_0,\dots,x_{2n+1}\in \C.$ For any $k\geq 0$ we define the quadrangular polylogarithm $\textup{QLi}_{n,k}$ as a certain element of the Lie coalgebra of framed mixed Hodge-Tate structures of weight $n+k$
\[
\textup{QLi}_{n,k}(x_0,\dots, x_{2n+1})\in \mathcal{L}_{n+k}.
\]
Quadrangular polylogarithm  is defined by an explicit formula, see Definition \ref{DefClusterPolylog}. This definition looks very ad hoc; more naturally, quadrangular polylogarithm can be defined inductively as the unique element with the coproduct given by formula (\ref{ClusterPolylogarithmCoproduct}).

Consider a set  $\mathcal{C}_{n,k}$ of all nondecreasing sequences $\bar{s}=(i_0,\dots,i_{n+k})$ of indices 
\[
0\leq i_0\leq i_1\leq \dots \leq i_{n+k}\leq 2n+1
\]
such that every even number $1\leq s\leq 2n+1$ appears in the sequence $s$ at most once and the sequence $s$ contains at least one element in each pair $\{2i,2i+1\}$ for $0\leq i\leq n.$ For a sequence $\bar{s}\in \mathcal{C}_{n,k}$ we define 
\[
\textup{sign}(\bar{s})=\begin{cases}
-1 & \text{\ if\ } \bar{s} \text{\ contains an odd number of even elements,}\\
\ \ 1 & \text{\ if\ }  \bar{s} \text{\ contains an even number of even elements.}\\
\end{cases}
\] 

\begin{example}
We have 	
\[
\mathcal{C}_{1,0}=\{(0,2),\ (0,3),\ (1,2),\ (1,3)\}
\]
and 
\[
\mathcal{C}_{1,1}=\{(0,1,2),\ (0,2,3),\ (0,1,3),\ (0,3,3),\ (1,2,3),\ (1,1,2),\ (1,1,3),\ (1,3,3)\}.
\]
\end{example}

\begin{definition}
\label{DefClusterPolylog}
For  $x_0, \dots, x_{2n+1}\in\C$ we define the quadrangular polylogarithm of weight $n+k$ by the formula 
\be\label{FormulaClusterViaCorrelators}
\begin{split}
&\textup{QLi}_{n,k}(x_0,\dots, x_{2n+1})=(-1)^{n+1}\sum_{\bar{s}\in\mathcal{C}_{n,k}} \textup{sign}(\bar{s})\textup{Cor}(x_{i_0},\dots,x_{i_{n+k}})\in \mathcal{L}_{n+k}.\\
\end{split}
\ee
\end{definition}

\begin{example}
	 In weight $1$ we have 
\[
\textup{QLi}_{0,1}(x_0,x_1)=\textup{Cor}(x_0,x_1)=\log^{\mathcal{L}}(x_0-x_1)\\
\]
and
\begin{align*}
\textup{QLi}_{1,0}(x_0,x_1,x_2, x_{3})&=\textup{Cor}(x_0,x_2)-\textup{Cor}(x_0,x_3)-\textup{Cor}(x_1,x_2)+\textup{Cor}(x_1,x_3)\\
	&=\log^{\mathcal{L}}(1-[x_0,x_1,x_2, x_{3}])=-\Li_1([x_0,x_1,x_2,x_3]).\\
\end{align*}
\end{example}

Here is a more interesting example; the computation below uses the five-term relation~(\ref{FormulaFiveTerm}).

\begin{example}
For $n=1,k=1$ we recover the  dilogarithm:
\[
\begin{split}
	&\textup{QLi}_{1,1}(x_0,x_1,x_2, x_{3})=\textup{Cor}(x_0,x_1,x_2)+\textup{Cor}(x_0,x_2,x_3)-\textup{Cor}(x_0,x_1,x_3)\\ 
	&\quad-\textup{Cor}(x_0,x_3,x_3)-\textup{Cor}(x_1,x_2,x_3)-\textup{Cor}(x_1,x_1,x_2)+\textup{Cor}(x_1,x_1,x_3)+\textup{Cor}(x_1,x_3,x_3)\\
	&=\Li^{\mathcal{L}}_{2}([\infty,x_0,x_1,x_2])+\Li^{\mathcal{L}}_{2}([\infty,x_0,x_2, x_{3}])-\Li^{\mathcal{L}}_{2}([\infty,x_0,x_1, x_{3}])-\Li^{\mathcal{L}}_{2}([\infty,x_1,x_2, x_{3}])\\
	&=-\Li^{\mathcal{L}}_{2}([x_0,x_1,x_2, x_{3}])=\Li^{\mathcal{L}}_{1;1}([x_0,x_1,x_2, x_{3}]).\\
	\end{split}
\]
\end{example}

The coproduct of  a quadrangular polylogarithm can be computed inductively. We assemble quadrangular polylogarithms with fixed $n$ and different $k$ into a series 
\[
\textup{QLi}_{n}(x_0,\dots, x_{2n+1})=\sum_{k\geq 0} \textup{QLi}_{n,k}(x_0,\dots, x_{2n+1})\in \L.
\]
We introduce the following notation:
\[
\textup{QLi}_{n}^{(-)^s}(x_0,x_1,\dots, x_{2n},x_{2n+1})=
\begin{cases}
\textup{QLi}_{n}(x_0,x_1,\dots, x_{2n},x_{2n+1})	& \text{\ if \ } s  \text{\ is even, \ }\\ 
-\textup{QLi}_{n}(x_1,x_2,\dots, x_{2n+1},x_0)	& \text{\ if \ } s  \text{\ is odd. \ }\\
\end{cases}
\]

\begin{theorem}\label{TheoremClusterPolylogarithmCoproduct} We have the following formula for the coproduct of quadrangular polylogarithms:
\be \label{ClusterPolylogarithmCoproduct}
\begin{split}
	&\Delta^{\mathcal{L}}\textup{QLi}_{n}(x_0,\dots, x_{2n+1})=\\
	&\sum_{\substack{0\leq i<j\leq2n+1\\j-i=2s+1}} \textup{QLi}_{n-s}(x_0,\dots,x_{i},x_{j},\dots,x_{2n+1})\wedge \textup{QLi}_{s}^{(-)^{i}}(x_{i},\dots,x_{j}).\\
\end{split}
\ee
\end{theorem}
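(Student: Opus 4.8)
The plan is to reduce the identity to the expansion~(\ref{FormulaClusterViaCorrelators}) and the coproduct formula~(\ref{FormulaCoproductCorrelators}) for Hodge correlators, and then to match terms combinatorially. First I would substitute~(\ref{FormulaClusterViaCorrelators}) into the left-hand side, obtaining
\[
\Delta^{\mathcal L}\textup{QLi}_{n}(x_0,\dots,x_{2n+1})=(-1)^{n+1}\sum_{k\geq 0}\sum_{\bar s\in \mathcal C_{n,k}}\textup{sign}(\bar s)\,\Delta^{\mathcal L}\textup{Cor}(x_{i_0},\dots,x_{i_{n+k}}),
\]
and apply~(\ref{FormulaCoproductCorrelators}) to each correlator: every term is a cut of the cyclic word $(x_{i_0},\dots,x_{i_{n+k}})$, splitting it into two cyclic subwords. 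The core of the argument is to organize this double sum (over $\bar s$ and over cuts of the associated cyclic word) according to the \emph{pair of positions in the ambient sequence $(x_0,\dots,x_{2n+1})$} where the cut lands. A cut that begins ``just after'' $x_i$ and ends ``just before'' $x_j$ in the ambient polygon produces a first factor supported on indices $\leq i$ or $\geq j$ (cyclically: the ``outer'' arc $x_j,\dots,x_{2n+1},x_0,\dots,x_i$) and a second factor supported on the ``inner'' arc $x_i,\dots,x_j$. I would show that, after summing over all $\bar s\in\mathcal C_{n,k}$ (all $k$) compatible with a given cut position, the outer factors reassemble exactly into $\textup{QLi}_{n-s}(x_0,\dots,x_i,x_j,\dots,x_{2n+1})$ and the inner factors into $\pm\textup{QLi}_{s}(x_i,\dots,x_j)$, where $j-i=2s+1$.

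The parity constraint $j-i=2s+1$ is forced by the structure of $\mathcal C_{n,k}$: a sequence in $\mathcal C_{n,k}$ hits at least one index in each pair $\{2\ell,2\ell+1\}$ and each even index at most once, so a subword of a correlator that is itself to be a quadrangular-polylogarithm correlator on a sub-arc must span a sub-arc of odd length; the odd endpoints $x_i,x_j$ are exactly the ones that may be repeated, which is why they appear in \emph{both} factors of the cut. The $\textup{sign}(\bar s)$ bookkeeping factors as a product of a sign depending on the even indices in the outer arc and one depending on those in the inner arc, matching the product $\textup{sign}(\bar s_{\mathrm{out}})\textup{sign}(\bar s_{\mathrm{in}})$ after the split; combined with the overall $(-1)^{n+1}=(-1)^{(n-s)+1}\cdot(-1)^{s}\cdot(-1)^{s}$-type factorization and the cyclic-shift sign in the definition of $\textup{QLi}_n^{(-)^i}$, this should yield precisely the signs in~(\ref{ClusterPolylogarithmCoproduct}). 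The cyclic symmetry~(\ref{FormulaCorrelatorsSymmetry}) of correlators and the base-point change~(\ref{CorrelatorChangeBasepoint}) (relevant when the cut arc wraps past $x_0$, forcing the ``$\infty$'' base point to be re-expressed) are the tools that make the reassembly on the outer arc work.

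Alternatively — and this is the cleaner route I would actually write up — one proves~(\ref{ClusterPolylogarithmCoproduct}) as the \emph{definition-compatibility} statement: define $\widetilde{\textup{QLi}}_n$ inductively by~(\ref{ClusterPolylogarithmCoproduct}) together with the weight-one base cases, use the coassociativity/rigidity machinery of \S\ref{SectionRigidity} to see this is well-defined, and then check $\Delta^{\mathcal L}\widetilde{\textup{QLi}}_n=\Delta^{\mathcal L}\textup{QLi}_n$ by induction on weight using~(\ref{FormulaClusterViaCorrelators}) and~(\ref{FormulaCoproductCorrelators}), concluding $\widetilde{\textup{QLi}}_n=\textup{QLi}_n$ by specialization. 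I expect the main obstacle to be purely combinatorial: verifying that the many cut-types of correlators indexed by $\mathcal C_{n,k}$ collapse, after summation and sign cancellation, to the single clean sum over pairs $(i,j)$ with $j-i$ odd — in particular tracking which cuts contribute a ``degenerate'' factor (a correlator with repeated arguments, which vanishes by $\textup{Cor}(x_0,x_1,\dots,x_1)=0$ for length $\geq 2$, cf.\ the Example after~(\ref{FormulaIIviaCor})) versus a genuine lower-weight quadrangular polylogarithm, and confirming the degenerate ones cancel in pairs. Handling the boundary cuts that wrap around $x_0$ and invoke~(\ref{CorrelatorChangeBasepoint}) is the fussiest part of that bookkeeping.
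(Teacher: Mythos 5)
Your first route is essentially the paper's own proof: substitute (\ref{FormulaClusterViaCorrelators}) into both sides of (\ref{ClusterPolylogarithmCoproduct}), expand every correlator with (\ref{FormulaCoproductCorrelators}), and compare the resulting terms $\pm\,\textup{Cor}(\bar s_1)\wedge\textup{Cor}(\bar s_2)$; the paper organizes this comparison as a case analysis on the intersection $\bar s_1\cap\bar s_2$ (empty; two even indices; one even index, possibly together with an odd one; a single odd index; two odd indices), and the only input beyond bookkeeping is the cyclic symmetry (\ref{FormulaCorrelatorsSymmetry}). Two of the guiding remarks you attach to this route are off and would send the write-up astray. First, the base-point change (\ref{CorrelatorChangeBasepoint}) is never needed: each cut in (\ref{FormulaCoproductCorrelators}) produces two correlators that are again based at $\infty$, and cuts that wrap past $x_0$ are handled by cyclic symmetry alone. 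Second, the required cancellations are not produced by ``degenerate'' correlators vanishing --- correlators with repeated odd arguments are genuine nonzero contributions, since only $\textup{Cor}(x_0,x_1,\dots,x_1)$ vanishes. In the paper the right-hand-side terms whose two factors share no index cancel pairwise between different summands $(i,j)$ (shifted or contracted by one), while the left-hand-side cut terms whose two shared indices are both odd cancel pairwise between two different admissible sequences via the antisymmetry of $\wedge$; also your schematic sign factorization does not close as written, so the $\textup{sign}(\bar s)$ bookkeeping has to be done carefully. This matching-and-cancellation analysis is exactly what your sketch defers, and it is the entire content of the proof.

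The ``cleaner route'' you say you would actually write up does not gain anything and has a gap of its own: an element of $\L$ is not determined by its cobracket, so ``defining'' $\widetilde{\textup{QLi}}_n$ by (\ref{ClusterPolylogarithmCoproduct}) needs an additional normalization, and the statement being proved is itself an identity in $\Lambda^2\L$, so the rigidity argument of \S\ref{SectionRigidity} is not available for it (in the paper rigidity is used only afterwards, for identities in $\L$ such as Theorem \ref{TheoremQuadrangulationClusterPolylog}, once the coproduct formula is in hand). Moreover the inductive step ``check $\Delta^{\L}\widetilde{\textup{QLi}}_n=\Delta^{\L}\textup{QLi}_n$'' is verbatim the correlator-cut combinatorics of the direct route, so the alternative collapses back onto it; you should simply carry out the direct case analysis.
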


\begin{proof} 
For a sequence $\bar{s}=(i_0,\dots,i_{n+k})$ we put
\[
\textup{Cor}(\bar{s})=\textup{Cor}(x_{i_0},\ldots,x_{i_{n+k}})\in \L_{n+k}.
\]
After replacing the quadrangular polylogarithms in (\ref{ClusterPolylogarithmCoproduct}) with  corresponding sums of correlators and applying (\ref{FormulaCoproductCorrelators}) both sides of  (\ref{ClusterPolylogarithmCoproduct}) contain only terms of the form
\[
\pm \textup{Cor}(\bar{s}_1)\wedge \textup{Cor}(\bar{s}_2)
\]
for certain nondecreasing admissible sequences $\bar{s}_1, \bar{s}_2\in \mathcal{C}_{n,\bullet}.$  Below we show that after all cancellations the terms in the left-hand side (LHS) and in the right-hand side (RHS) of (\ref{ClusterPolylogarithmCoproduct}) are the same. For that we will use only (\ref{FormulaCorrelatorsSymmetry}).

For each term $\textup{Cor}(\bar{s}_1)\wedge \textup{Cor}(\bar{s}_2)$ appearing in (\ref{ClusterPolylogarithmCoproduct}) both sequences $\bar{s}_1$ and  $\bar{s}_2$ contain each even index from $0$ to $2n$ at most once. It is easy to see that the set-theoretic intersection $\bar{s}_1\cap \bar{s}_2$ contains at most two indices. We look at all possibilities for the cardinality of the set $\bar{s}_1\cap \bar{s}_2$ and show that in each case the corresponding terms in the LHS and the RHS coincide after all the cancellations are performed.
\begin{enumerate}
\item[{\bf Case 1}] {\it The intersection $\bar{s}_1\cap \bar{s}_2$ is empty.} 
 In the LHS such terms do not appear since in the coproduct of a correlator $\textup{Cor}(x_{i_0},\dots,x_{i_{n+k}})$ the sequences $\bar{s}_1$ and $\bar{s}_2$ contain at least one index in common. Thus we need to show that in the RHS such terms cancel out. Consider a term coming from 
\[
\textup{QLi}_{n-s}(x_0,\dots,x_{i},x_{j},\dots,x_{2n+1})\wedge \textup{QLi}_{s}^{(-)^{i}}(x_{i},\dots,x_{j})
\]
for $0\leq i<j\leq 2n+1, \ j-i=2s+1.$ This term has a form $\textup{Cor}(\bar{s}_1)\wedge \textup{Cor}(\bar{s}_2)$ for a subsequence $s_1$ of  $0, 1,\dots, i,j,\dots, 2n+1$ and a  subsequence $s_2$ of  $i, \dots, j. $ Indices $i$ and $j$ have different parity. Assume that $i$ is even and $j$ is odd; the other case is similar. By the second condition in the definition of $\mathcal{C}_{n,k},$ sequence $s_1$ contains $i$ or $j$.

\begin{enumerate} 
\item {\bf Case 1a: $i \in \bar{s}_1$ and $j \notin \bar{s}_1$.} In this case the same term appears in exactly one other place in the RHS coming from 
\[
\textup{QLi}_{n-s}(x_0,\dots,x_{i+1},x_{j+1},\dots,x_{2n+1})\wedge \textup{QLi}_{s}^{-}(x_{i+1},\dots,x_{j+1})
\] 
with the opposite sign.

\item {\bf Case 1b: $i \notin \bar{s}_1$ and $j \in \bar{s}_1$.} In this case the same term appears in 
\[
\textup{QLi}_{n-s}(x_0,\dots,x_{i-1},x_{j-1},\dots,x_{2n+1})\wedge \textup{QLi}_{s}^{-}(x_{i-1},\dots,x_{j-1})
\] 
with the opposite sign.

\item {\bf Case 1c: $i,j \in \bar{s}_1$.} In this case the same term appears in 
\[
\textup{QLi}_{n-s+1}(x_0,\dots,x_{i+1},x_{j-1},\dots,x_{2n+1})\wedge \textup{QLi}_{s-1}^{-}(x_{i+1},\dots,x_{j-1})
\]
with the opposite sign.
\end{enumerate}

\item[{\bf Case 2}] {\it The intersection $\bar{s}_1\cap \bar{s}_2$ contains two even indices.}
Such terms do not appear. Indeed, in the LHS, all the terms come from cutting a sequence  $\bar{s}\in \mathcal{C}_{n,k}$. One even index in $\bar{s}_1\cap \bar{s}_2$  has to come from the beginning point of the cut. Any even index appears in $\bar{s}$ at most once, so the second index in the intersection must be odd.  Next, in in a term in the RHS coming from 
\[ 
\textup{QLi}_{n-s}(x_0,\dots,x_{i},x_{j},\dots,x_{2n+1})\wedge \textup{QLi}_{s}^{(-)^{i}}(x_{i},\dots,x_{j})
\]
the indices in the intersection $\bar{s}_1\cap \bar{s}_2$  have to be equal to $i$ or $j$ where $j-i=2s+1.$ Thus, one of them has to be odd.

\item[{\bf Case 3}] {\it The intersection $\bar{s}_1\cap \bar{s}_2$ consists of an even index $i$ or a pair of an even index $i$ and an odd index $j.$} Consider a term $\textup{Cor}(\bar{s}_1)\wedge \textup{Cor}(\bar{s}_2)$ in the LHS. Let $j$ be the  largest odd number such that $\bar{s}_1$ contains $j-1$ or $j.$ The same term (with the same sign) comes from 
\[ 
\textup{QLi}_{n-s}(x_0,\dots,x_{i},x_{j},\dots,x_{2n+1})\wedge \textup{QLi}_{s}^{(-)^{i}}(x_{i},\dots,x_{j})
\]
in the RHS.

\item[{\bf Case 4}] {\it The intersection $\bar{s}_1\cap \bar{s}_2$ consists of an index $i$, which is odd.}
 Consider a term $\textup{Cor}(\bar{s}_1)\wedge \textup{Cor}(\bar{s}_2)$ in the LHS coming from a cut beginning with $i.$ Since $\bar{s}_1\cap \bar{s}_2=\{i\},$ the end point of the cut is located between two distinct indices, which we call $j_1$ and $j_2.$ Such term comes from
\[
\textup{QLi}_{n-s}(x_0,\dots,x_{i},x_{j},\dots,x_{2n+1})\wedge \textup{QLi}_{s}^{(-)^{i}}(x_{i},\dots,x_{j}),
\]
where $j$ is the largest even number less than $j_2.$

\item[{\bf Case 5}] {\it The intersection $\bar{s}_1\cap \bar{s}_2$ consists of two odd indices $i$ and $j$.}
Such terms do not appear in the RHS by the same reason as in the Case 2. We need to show that in the LHS such terms cancel out. Consider a term $\textup{Cor}(\bar{s}_1)\wedge \textup{Cor}(\bar{s}_2)$ of this type obtained from a cut  of a sequence $\bar{s}\in \mathcal{C}_{n,k}$ beginning at $i$ and ending between $j$'s. The term $ \textup{Cor}(\bar{s}_2)\wedge \textup{Cor}(\bar{s}_1)$ appears in the coproduct of another admissible sequence $\bar{s}'$ obtained from $s$  by adding an extra $i$ and deleting $j.$ For this, one should take a cut beginning at $j$ and ending between $i$'s. Since 
\[
\textup{Cor}(\bar{s}_1)\wedge \textup{Cor}(\bar{s}_2)+\textup{Cor}(\bar{s}_2)\wedge \textup{Cor}(\bar{s}_1)=0,
\]
the statement follows.
\end{enumerate}
This finishes the proof of  Theorem  \ref{TheoremClusterPolylogarithmCoproduct}.	
\end{proof}

\subsection{Some properties of quadrangular polylogarithms}\label{SectionQuadrangularProp}

It is easy to see that 
\be \label{FormulaQuadrangularTwo}
\textup{QLi}_{0}(x_0, x_{1})= \log^{\mathcal{L}}(x_0-x_1),
\ee 
because
\[
\textup{Cor}(x_0,\underbrace{x_1,x_1,\dots,x_1}_{m})=0 \text{\ for \ } m\geq 2.
\] 
Here is a more interesting computation.

\begin{proposition} \label{FormulaClassicalPolylog} 
We have 
\begin{align*}
\textup{QLi}_{1,k}(x_0, x_{1},x_2,x_3)&=(-1)^{k}\Li^{\mathcal{L}}_{k+1}([x_0,x_1,x_2,x_3])\\
&=-\Li^{\mathcal{L}}_{k;1}([x_0,x_1,x_2,x_3]).
\end{align*}
\end{proposition}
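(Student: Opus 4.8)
The plan is to establish the formula $\textup{QLi}_{1,k}(x_0,x_1,x_2,x_3) = (-1)^k \Li^{\mathcal{L}}_{k+1}([x_0,x_1,x_2,x_3])$ by induction on $k$, using Theorem \ref{TheoremClusterPolylogarithmCoproduct} to reduce to a coproduct identity and then the rigidity argument of \S\ref{SectionRigidity} to finish. The base case $k=0$ is already recorded in the example preceding the statement: $\textup{QLi}_{1,0}(x_0,x_1,x_2,x_3) = \log^{\mathcal{L}}(1-[x_0,x_1,x_2,x_3]) = -\Li_1^{\mathcal{L}}([x_0,x_1,x_2,x_3])$, and $\textup{QLi}_{0}(x_0,x_1) = \log^{\mathcal{L}}(x_0-x_1)$ by (\ref{FormulaQuadrangularTwo}). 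These two facts will also be what I feed into the inductive step.

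First I would specialize the coproduct formula (\ref{ClusterPolylogarithmCoproduct}) to the case $n=1$. Writing $z = [x_0,x_1,x_2,x_3]$, the only admissible cuts with $j-i = 2s+1$ are: $s=0$ cuts (four of them, indexed by pairs $(i,j)$ with $j-i=1$, cyclically $(0,1),(1,2),(2,3),(3,0)$), which contribute terms $\textup{QLi}_{1}(\dots)\wedge \textup{QLi}_0^{(-)^i}(x_i,x_{i+1})$; and the single $s=1$ cut $(i,j)=(0,3)$, which contributes $\textup{QLi}_0(x_0,x_3)\wedge \textup{QLi}_1^{+}(x_0,x_1,x_2,x_3)$. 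The $s=0$ terms reduce, via (\ref{FormulaQuadrangularTwo}), to $\textup{QLi}_1(\text{three-point collapse})\wedge \log^{\mathcal{L}}(\pm(x_i-x_{i+1}))$, and collapsing a hexagon-type argument to four points one identifies these $\textup{QLi}_1$ factors with $\textup{QLi}_{1}$ in the full weight grading. Collecting the logarithmic second factors, the alternating sum of $\log^{\mathcal{L}}(x_i-x_{i+1})$ over the cyclic pairs reassembles (up to sign) into $\log^{\mathcal{L}}(z)$ or $\log^{\mathcal{L}}(1-z)$ depending on how the cross-ratio is arranged; this is the routine bookkeeping I would not grind through here. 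The upshot should be
\[
\Delta^{\mathcal{L}}\textup{QLi}_{1}(x_0,x_1,x_2,x_3) = \textup{QLi}_{1}(x_0,x_1,x_2,x_3)\wedge \log^{\mathcal{L}}(1-z) + \log^{\mathcal{L}}(z)\wedge \textup{QLi}_{1}(x_0,x_1,x_2,x_3)\cdot(\text{lower }k\text{ shift}),
\]
which, grading by weight, gives for each $k\geq 1$
\[
\Delta^{\mathcal{L}}\textup{QLi}_{1,k}(x_0,x_1,x_2,x_3) = \textup{QLi}_{1,k-1}(x_0,x_1,x_2,x_3)\wedge \log^{\mathcal{L}}(z)
\]
(after absorbing a sign into the wedge order and using $\log^{\mathcal{L}}(1-z)$ versus $\log^{\mathcal{L}}(z)$ consistently with the depth-one polylog coproduct). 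On the other hand, from the coproduct of multiple polylogarithms (cf.\ Example \ref{ExampleCoproduct} and the definition via (\ref{FormulaMPviaII})), $\Delta^{\mathcal{L}}\Li^{\mathcal{L}}_{k+1}(z) = \Li^{\mathcal{L}}_{k}(z)\wedge \log^{\mathcal{L}}(z)$. So if $\textup{QLi}_{1,k-1}(x_0,x_1,x_2,x_3) = (-1)^{k-1}\Li^{\mathcal{L}}_{k}(z)$ by the inductive hypothesis, the two coproducts agree up to the sign $(-1)^k$.

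Having matched coproducts, I would invoke the Rigidity Lemma as in \S\ref{SectionRigidity}: $\textup{QLi}_{1,k}(x_0,x_1,x_2,x_3) - (-1)^k\Li^{\mathcal{L}}_{k+1}([x_0,x_1,x_2,x_3])$ has vanishing cobracket $\Delta^{\mathcal{L}}$, hence is a constant framed variation on $\mathfrak{M}_{0,4}$, which being weight $\geq 2$ must vanish once we check one specialization. Taking the limit where the cross-ratio $z\to 0$ (e.g.\ $x_1\to x_0$), both sides tend to $0$ — $\Li_{k+1}^{\mathcal{L}}(0)=0$, and every correlator in the defining sum (\ref{FormulaClusterViaCorrelators}) degenerates to zero or to products of logs that die in $\mathcal{L}$ — so the constant is $0$. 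The identification $(-1)^k\Li^{\mathcal{L}}_{k+1}(z) = -\Li^{\mathcal{L}}_{k;1}(z)$ is then just the definition of the generalized polylogarithm: unwinding (\ref{FormulaMPviaII}) and the generalized version, $\Li^{\mathcal{L}}_{k;1}(a) = -\textup{I}^{\mathcal{L}}(0;\underbrace{0,\dots,0}_{k},1;a)$, and shuffle-regularization together with $\textup{I}^\mathcal{L}(0;1,0,\dots,0;a) = (-1)^k\textup{I}^\mathcal{L}(0;0,\dots,0,1;a)$ modulo products gives the sign. \textbf{The main obstacle} I anticipate is pinning down all the signs in the $n=1$ specialization of (\ref{ClusterPolylogarithmCoproduct}) — in particular the alternating contributions of the four $s=0$ cuts and the interplay of the $(-)^i$ twist with the cyclic symmetry (\ref{FormulaCorrelatorsSymmetry}) of correlators — so that the reassembled logarithm comes out with exactly the sign needed to produce $(-1)^k$ rather than its negative; everything else is either the cited rigidity machinery or a clean degeneration computation.
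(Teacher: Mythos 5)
Your overall strategy is the same as the paper's: specialize the coproduct formula (\ref{ClusterPolylogarithmCoproduct}) to $n=1$, extract the recursion $\Delta^{\mathcal{L}}\textup{QLi}_{1,k}=\textup{QLi}_{1,k-1}\wedge\log^{\mathcal{L}}(z)$ with $z=[x_0,x_1,x_2,x_3]$, compare with the coproduct of $\Li^{\mathcal{L}}_{k+1}$, and close the induction with rigidity plus a degeneration. But the part you defer as ``routine bookkeeping'' is exactly the content of the proposition, and as sketched it is set up incorrectly. The sum in (\ref{ClusterPolylogarithmCoproduct}) for $n=1$ runs over $0\leq i<j\leq 3$: there are three $s=0$ terms, $(0,1),(1,2),(2,3)$, not four (there is no $(3,0)$ term), and $(0,3)$ is the unique $s=1$ term; moreover for $j=i+1$ no points are omitted, so the first factor in each $s=0$ term is $\textup{QLi}_{1}(x_0,x_1,x_2,x_3)$ itself --- there is no ``three-point collapse'' to identify. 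Your displayed ``upshot'' containing a $\log^{\mathcal{L}}(1-z)$ term is not an identity; the correct two-line computation (using $\textup{QLi}_0^{-}(x_1,x_2)=-\log^{\mathcal{L}}(x_2-x_1)$ and moving the $s=1$ term across the wedge) gives $\Delta^{\mathcal{L}}\textup{QLi}_{1}=\textup{QLi}_{1}\wedge\log^{\mathcal{L}}\bigl(\tfrac{(x_0-x_1)(x_2-x_3)}{(x_1-x_2)(x_3-x_0)}\bigr)$. Second, you write $\Delta^{\mathcal{L}}\Li^{\mathcal{L}}_{k+1}(z)=\Li^{\mathcal{L}}_{k}(z)\wedge\log^{\mathcal{L}}(z)$, whereas antisymmetrizing the term $\log^{\H}(a)\otimes\Li^{\H}_1(a)$ in Example \ref{ExampleCoproduct} gives the opposite wedge order, $\log^{\mathcal{L}}(a)\wedge\Li^{\mathcal{L}}_{k}(a)$, as in the paper's proof. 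That order is precisely what decides between $(-1)^k$ and $(-1)^{k+1}$ in the final statement, so it cannot be absorbed into an unspecified convention; since you explicitly leave this unresolved, the inductive step is not actually carried out.

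Two further points. For the second equality the paper runs the same coproduct induction, using $\Delta^{\mathcal{L}}\Li^{\mathcal{L}}_{k;1}(a)=\Li^{\mathcal{L}}_{k-1;1}(a)\wedge\log^{\mathcal{L}}(a)$, rather than your path-reversal/shuffle identity $\textup{I}^{\mathcal{L}}(0;1,0,\dots,0;a)=(-1)^k\,\textup{I}^{\mathcal{L}}(0;0,\dots,0,1;a)$ modulo products. Your route is a legitimate alternative, but it rests on the regularized shuffle relation for words with leading zeros, which is not among the tools the paper establishes (Proposition \ref{PropositionMapToPolylogarithms} is the quasi-shuffle relation), so you would need to supply that input; you should also recheck the resulting sign against the $k=1$ example $\textup{QLi}_{1,1}=-\Li^{\mathcal{L}}_2=\Li^{\mathcal{L}}_{1;1}$, since with your identity the two right-hand sides of the proposition do not come out with the sign pattern you assert. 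Finally, at the specialization $x_1\to x_0$ the individual correlators in (\ref{FormulaClusterViaCorrelators}) do not ``degenerate to zero or to products of logs'' --- e.g.\ $\textup{Cor}(x_0,x_2,x_3)$ survives; the vanishing of $\textup{QLi}_{1,k}$ there comes from pairwise cancellation of terms in the sum (as one checks already for $k=1$), or one can instead specialize to $x_0=x_1,\ x_2=x_3$ as the paper does in the proof of Theorem \ref{TheoremQuadrangulationClusterPolylog}.
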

\begin{proof}
	 We have 
\[
\begin{split}
&\Delta^{\mathcal{L}}\textup{QLi}_{1}(x_0, x_{1},x_2,x_3)\\
&=\textup{QLi}_{1}(x_0, x_1,x_2,x_3)\wedge \left (\textup{QLi}_{0}(x_0, x_1)+\textup{QLi}^{-}_{0}(x_1, x_2)+\textup{QLi}_{0}(x_2, x_3)	\right )\\
&\quad+\textup{QLi}_{0}(x_0, x_3)\wedge \textup{QLi}_1(x_0, x_1,x_2,x_3)\\
&=\textup{QLi}_{1}(x_0, x_1,x_2,x_3) \wedge\log^{\mathcal{L}}\left(\frac{(x_0-x_1)(x_2-x_3)}{(x_1-x_2)(x_3-x_0)}\right ),\\
\end{split}
\]
so for $k\geq 1$ we have 
\[
\begin{split}
&\Delta^{\mathcal{L}}\textup{QLi}_{1,k}(x_0, x_{1},x_2,x_3)=\textup{QLi}_{1,k-1}(x_0, x_{1},x_2,x_3)\wedge \log^{\mathcal{L}}[x_0,x_1,x_2,x_3].
\end{split}
\]
It is easy to see that for $k\geq 1$ we have
\[ 
\begin{split}
&\Delta^\L\Li^{\mathcal{L}}_{k+1}(a)= \log^\L(a)\wedge \Li^{\mathcal{L}}_{k}(a),\\
&\Delta^\L\Li^{\mathcal{L}}_{k;1}(a)= \Li^{\mathcal{L}}_{k-1;1}(a)\wedge \log^\L(a).
\end{split}
\]
From here and the fact that $\textup{QLi}_{1,0}(x_0, x_{1},x_2,x_3)=\log^\L(1-[x_0,x_1,x_2,x_3])$ both the equality 
\[
\textup{QLi}_{1,k}(x_0, x_{1},x_2,x_3)=(-1)^{k}\Li^{\mathcal{L}}_{k+1}([x_0,x_1,x_2,x_3])
\] 
and the equality
\[
\textup{QLi}_{1,k}(x_0, x_{1},x_2,x_3)=-\Li^{\mathcal{L}}_{k;1}([x_0,x_1,x_2,x_3])
\] follow by induction.
\end{proof}

\begin{proposition}
Quadrangular polylogarithms $\textup{QLi}_{n,k}$ for $n\geq 1$ are invariant  under projective transformations, i.e., for $\psi\in \textup{PGL}_2(\C)$ we have
\be \label{FormulaProjective}
\textup{QLi}_{n,k}(\psi(x_0),\ldots,\psi(x_{2n+1}))=
\textup{QLi}_{n,k}(x_0,\ldots,x_{2n+1}).
\ee
\end{proposition}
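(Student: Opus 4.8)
The plan is to prove the identity by a double induction combined with the rigidity argument of \S\ref{SectionRigidity}: an outer induction on $n$, and, for fixed $n$, an inner induction on $k$ (equivalently on the weight $n+k$). The base case $n=1$ is immediate from Proposition \ref{FormulaClassicalPolylog}, which identifies $\textup{QLi}_{1,k}(x_0,x_1,x_2,x_3)$ with $(-1)^k\Li^{\mathcal{L}}_{k+1}([x_0,x_1,x_2,x_3])$, and the cross-ratio is manifestly invariant under $\textup{PGL}_2(\C)$. So fix $n\geq 2$ and $\psi\in\textup{PGL}_2(\C)$, write $\psi(x)=\frac{ax+b}{cx+d}$, and assume that $\textup{QLi}_{m,k'}$ is $\textup{PGL}_2(\C)$-invariant for all $1\leq m<n$ and all $k'\geq 0$, and that $\textup{QLi}_{n,k'}$ is $\textup{PGL}_2(\C)$-invariant for all $k'<k$. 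Working over a dense open subset of the space of configurations $(x_0,\dots,x_{2n+1})$ (on which the correlators are honest elements of $\L$), set $D=\textup{QLi}_{n,k}(\psi(x_0),\dots,\psi(x_{2n+1}))-\textup{QLi}_{n,k}(x_0,\dots,x_{2n+1})\in\L_{n+k}$; the goal is to show $D=0$.

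The first step is to compute $\Delta^{\mathcal{L}}D$ from Theorem \ref{TheoremClusterPolylogarithmCoproduct}. In the weight-$(n+k)$ part of the coproduct of $\textup{QLi}_n$, the terms coming from pairs $i<j$ with $j-i=2s+1$ and $1\leq s\leq n-1$ are wedges $\textup{QLi}_{n-s,k_1}(\cdots)\wedge\textup{QLi}_{s,k_2}^{(-)^i}(\cdots)$ with $k_1+k_2=k$; since $1\leq s\leq n-1$ and $1\leq n-s\leq n-1$, both factors are $\textup{PGL}_2(\C)$-invariant by the outer induction hypothesis, so these ``interior'' terms agree for the arguments $x$ and $\psi(x)$. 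The remaining terms come from $s=0$ (pairs $(i,i+1)$, contributing $\textup{QLi}_n(x_0,\dots,x_{2n+1})\wedge\textup{QLi}_0^{(-)^i}(x_i,x_{i+1})$) and from $s=n$ (the single pair $(0,2n+1)$, contributing $\textup{QLi}_0(x_0,x_{2n+1})\wedge\textup{QLi}_n(x_0,\dots,x_{2n+1})$). Using $\textup{QLi}_0^{(-)^i}(x_i,x_{i+1})=(-1)^i\log^{\mathcal{L}}(x_i-x_{i+1})$ in $\L_1$ (because $\log^{\mathcal{L}}(-1)=0$) and $\textup{QLi}_0(x_0,x_{2n+1})=\log^{\mathcal{L}}(x_0-x_{2n+1})$, the total weight-$(n+k)$ contribution of these ``boundary'' terms is $\textup{QLi}_{n,k-1}(x_0,\dots,x_{2n+1})\wedge\big(A(x)-B(x)\big)$, where $A(x)=\sum_{i=0}^{2n}(-1)^i\log^{\mathcal{L}}(x_i-x_{i+1})$, $B(x)=\log^{\mathcal{L}}(x_0-x_{2n+1})$, and by convention $\textup{QLi}_{n,-1}:=0$ (so the case $k=0$ has no boundary contribution).

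The crucial elementary observation is that, although $A(x)$ and $B(x)$ are not individually $\textup{PGL}_2(\C)$-invariant, their difference $A(x)-B(x)$ is. Indeed, from $\psi(x_i)-\psi(x_j)=\frac{(ad-bc)(x_i-x_j)}{(cx_i+d)(cx_j+d)}$ one finds $A(\psi(x))=A(x)+\log^{\mathcal{L}}(ad-bc)-\log^{\mathcal{L}}(cx_0+d)-\log^{\mathcal{L}}(cx_{2n+1}+d)$, since in the alternating sum the interior factors $\log^{\mathcal{L}}(cx_v+d)$ for $1\leq v\leq 2n$ cancel in pairs; and $B(\psi(x))-B(x)$ equals the same correction term. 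Hence $A(\psi(x))-B(\psi(x))=A(x)-B(x)$. Combining this with the inner induction hypothesis $\textup{QLi}_{n,k-1}(\psi(x))=\textup{QLi}_{n,k-1}(x)$, the boundary contribution is also unchanged under $x\mapsto\psi(x)$. This is precisely where the apparent circularity is resolved: the $s=0$ and $s=n$ terms involve only $\textup{QLi}_{n,k-1}$, handled by the inner induction, not $\textup{QLi}_{n,k}$ itself. Therefore $\Delta^{\mathcal{L}}\textup{QLi}_{n,k}(\psi(x))=\Delta^{\mathcal{L}}\textup{QLi}_{n,k}(x)$, i.e. $\Delta^{\mathcal{L}}D=0$.

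The last step is the rigidity argument of \S\ref{SectionRigidity}: regarding $D$ as an element of $\L$ over the connected variety $\textup{PGL}_2(\C)\times\{\text{configurations}\}$, the vanishing $\Delta^{\mathcal{L}}D=0$ forces $D$ to be a constant framed variation, so $D$ is determined by any one specialization; taking $\psi=\mathrm{id}$ gives $D=0$ identically. This closes the inner induction and hence the whole argument. I expect the main difficulty to be the bookkeeping of the coproduct formula — correctly separating the interior terms (controlled by the outer induction) from the $s=0$ and $s=n$ boundary terms, evaluating the latter as $\textup{QLi}_{n,k-1}\wedge(A-B)$, and recognizing that it is the invariance of $A-B$ (and not of $A$ or $B$ separately) that makes the induction close; the homogeneity property \eqref{FormulaHomogenuity} gives the invariance under translations and scalings directly, but inversion genuinely needs the argument above.
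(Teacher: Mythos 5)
Your proof is correct and follows essentially the same route as the paper: rewrite the coproduct from Theorem \ref{TheoremClusterPolylogarithmCoproduct} so that the $s=0$ and $s=n$ boundary terms combine into $\textup{QLi}_{n,k-1}$ wedged with an invariant logarithm, handle the interior terms by induction, and conclude by the rigidity argument with specialization at $\psi=\mathrm{Id}$. The only cosmetic difference is that you check invariance of $A(x)-B(x)$ by a direct M\"obius computation, whereas the paper notes that this combination is $\log^{\mathcal{L}}\bigl(\prod_{i=1}^{n}[x_0,x_{2i-1},x_{2i},x_{2i+1}]\bigr)$, i.e.\ manifestly a product of cross-ratios (its formula (\ref{ClusterPolylogarithmCoproductVersion2})), and you make explicit the double induction on $n$ and $k$ that the paper leaves implicit.
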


\begin{proof}
From (\ref{FormulaQuadrangularTwo}) we see that for $n\geq 1$ the formula (\ref{ClusterPolylogarithmCoproduct}) can be rewritten in the following way:
\be \label{ClusterPolylogarithmCoproductVersion2}
\begin{split}
	&\Delta^{\mathcal{L}}\textup{QLi}_{n}(x_0,\dots, x_{2n+1})=\textup{QLi}_{n}(x_0,\dots, x_{2n+1})\wedge \log^\L  \left ( \prod_{i=1}^{n} [x_{0},x_{2i-1},x_{2i},x_{2i+1}]\right )\\
	&\quad+\sum_{\substack{0\leq i<j\leq2n+1\\j-i=2s+1\\s\geq 1}} \textup{QLi}_{n-s}(x_0,\dots,x_{i},x_{j},\dots,x_{2n+1})\wedge \textup{QLi}_{s}^{(-)^{i}}(x_{i},\dots,x_{j}).\\
\end{split}
\ee
To show the projective invariance, we proceed by induction.  For $n=1$ the statement follows from Proposition \ref{FormulaClassicalPolylog}. From (\ref{ClusterPolylogarithmCoproductVersion2}) we see that the coproduct $\Delta^\L \textup{QLi}_{n,k}\in \Lambda^2 \L$ is invariant under projective transformations. Now the statement follows by the rigidity argument (\S  \ref{SectionRigidity}), because (\ref{FormulaProjective}) is true  for $\psi=\textup{Id}\in \textup{PGL}_2(\C)$. 
\end{proof}

\subsection{The formula for quadrangular polylogarithms via multiple polylogarithms}

Let $\P=(p_0,\dots,p_{2n+1})$ be an alternating polygon defined in \S \ref{SectionAlternatingPolygons}. Consider the following element of the Lie coalgebra $\L_\P$ of framed unipotent variations of mixed Hodge-Tate structures on $\mathfrak{M}_{\P}:$
\[
\textup{QLi}_{k}(\P)=
\begin{cases}
\textup{QLi}_{n,k}(x_{p_0},\dots,x_{p_{2n+1}}) & \text{\ if \ }	\P \text{\ is even,}\\
\textup{QLi}_{n,k}^{-}(x_{p_0},\dots,x_{p_{2n+1}}) & \text{\ if \ }	\P \text{\ is odd.}
\end{cases}
\]
We define 
\[
\textup{QLi}(\P)=\sum_{k\geq 0} \textup{QLi}_{k}(\P)\in \L_\P.
\]

Next, consider a map $\Li_{\bullet}^\L\colon \F_\P \lra \L_\P$ defined by formula
\[
\Li_{\bullet}^\L[\varphi_1,n_1|\dots|\varphi_k,n_k]=
\Li^{\L}_{\bullet;n_1,\dots,n_k}(\varphi_1,\dots,\varphi_k)
\]
for  functions $\varphi_1,\dots,\varphi_k\in \C(\mathfrak{M}_\P)^\times$  and $n_1,\dots, n_k \geq 1.$ Also, in \S \ref{SectionFormalCluster} we constructed an element $
 \textup{T}_\P\in \F_\P$ called formal quadrangular polylogarithm.

\begin{theorem} \label{TheoremQuadrangulationClusterPolylog}
The following equality holds for an alternating $(2n+2)-$gon $\P$ and $k\geq 0$:
\be \label{FormulaQuadrangulationClusterPolylog}
\textup{QLi}_{k}(\P)=\Li_{k}^\L(\textup{T}_\P).
\ee
\end{theorem}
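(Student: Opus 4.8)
The plan is to prove Theorem \ref{TheoremQuadrangulationClusterPolylog} by the standard rigidity argument of \S \ref{SectionRigidity}: show that both sides of (\ref{FormulaQuadrangulationClusterPolylog}) have the same coproduct in $\L_\P \wedge \L_\P$, and then check equality at a single degenerate point of $\overline{\mathfrak{M}}_\P$. Since $\textup{QLi}_k(\P)$ satisfies the inductive coproduct formula (\ref{ClusterPolylogarithmCoproduct}) (rephrased in the invariant form (\ref{ClusterPolylogarithmCoproductVersion2})), the key is to compute $\Delta^\L \Li_k^\L(\textup{T}_\P)$ and match it term-by-term against this formula, proceeding by induction on $n$.

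The main input is the Quadrangulation Formula (Theorem \ref{TheoremFormalQF}) together with its corollary: $\textup{T}_\P$ lies in $\F_\P^\H \subseteq \H_\P \times \F_\P$ and
\[
\Delta^{\H\times\F}\textup{T}_\P=\sum_{\mathrm{S}\in\textup{alt}(\P)} \textup{T}_{\mathrm{S}}\otimes \Bigl(\prod_{i=0}^{2r}\textup{T}_{\mathrm{S}^i}\Bigr).
\]
First I would describe how the maps $\Li_k^\L$ interact with this structure. Because $\widetilde{\Delta}^{\H\F}\textup{T}_\P = 0$, the coaction contributes nothing ``complicated'' and the coproduct of $\Li_\bullet^\L(\textup{T}_\P)$ reduces — via the coproduct formula for generalized multiple polylogarithms (\ref{FormulaCoproductPolylogWithZeros}) and the generalized quasi-shuffle relation (Proposition \ref{PropositionMapToGeneralizedPolylogarithms}) — to a sum over the decompositions recorded by $\textup{alt}(\P)$. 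Concretely, applying $\Li_\bullet^\L$ to the $\Delta^{\F\F}$-formula (\ref{FormulaCoproductFormalPolylogarithm}) and using that $\Li_\bullet^\L$ kills quasi-shuffle products of positive-degree elements, one gets that $\Delta^\L \Li_\bullet^\L(\textup{T}_\P)$ is a sum of wedges $\Li_\bullet^\L(\textup{T}_\mathrm{S}) \wedge \log^\L(\textup{cr}(\mathrm{S}^i))$ and cross terms $\Li_\bullet^\L(\textup{T}_\mathrm{S})\wedge \Li_\bullet^\L(\textup{T}_{\mathrm{S}^i})$ over subpolygons $\mathrm{S}\in\textup{alt}(\P)$ and connected complements $\mathrm{S}^i$. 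Here one uses (\ref{FormulaCrossRatioDecomposition}) and the formulas in Lemma \ref{LemmaMapPi} / the principal coefficient computations to identify the ``$\log$'' terms with the arguments appearing in (\ref{ClusterPolylogarithmCoproductVersion2}).

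The core combinatorial step — and the main obstacle — is matching this to (\ref{ClusterPolylogarithmCoproductVersion2}) after the change of variables $\varphi_i = \textup{cr}(\mathrm{Q}_i)$. The subtlety is that the terms $\textup{QLi}_{n-s}(x_0,\dots,x_i,x_j,\dots,x_{2n+1})\wedge \textup{QLi}_s^{(-)^i}(x_i,\dots,x_j)$ are indexed by diagonals $(i,j)$ with $j-i$ odd, i.e.\ exactly by alternating subpolygons of $\P$ of the form $(0,\dots,i,j,\dots,2n+1)$ together with the ``bridge'' polygon $(i,\dots,j)$. So one needs: (a) the decomposition index sets of Theorem \ref{TheoremFormalQF} (pairs $\mathrm{S}, \{\mathrm{S}^i\}$) map bijectively onto the diagonal-index set of (\ref{ClusterPolylogarithmCoproductVersion2}) after collapsing contiguous runs; and (b) under the substitution $\varphi_i=\textup{cr}(\mathrm{Q}_i)$, the generalized polylogarithm $\Li_\bullet^\L(\textup{T}_\mathrm{S})$ attached to a subpolygon equals $\textup{QLi}(\mathrm{S})$ — which is exactly the inductive hypothesis applied at lower $n$ — and the sign/orientation bookkeeping $(-1)^i$ versus even/odd parity of $\mathrm{S}^i$ agrees with the definition of $\textup{QLi}(\P)$ for odd polygons and with $\textup{QLi}_n^{(-)^s}$. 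Establishing (b) requires care: the bridge polygon $(x_i,\dots,x_j)$ has $x_i$ as its ``basepoint'' rather than $x_0$, which is precisely why generalized polylogarithms $\Li_\bullet^\L$ (allowing leading zeros $n_0$) rather than ordinary $\Li^\L$ enter, and why the recursive formulas (\ref{FormulaRecursiveEven})–(\ref{FormulaRecursiveOdd}) for $\textup{T}_\P$ — with their quasi-shuffle products of the three sub-trees and the extra $\textup{T}_{(\cdots)}\cdot(\cdots)$ term in the odd case — are exactly engineered to match the structure of (\ref{ClusterPolylogarithmCoproduct}).

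Once the coproducts agree, I would finish by the rigidity argument: both $\textup{QLi}_k(\P)$ and $\Li_k^\L(\textup{T}_\P)$ are variations of mixed Hodge-Tate type on $\mathfrak{M}_\P$ of weight $n+k$, so $\widetilde{\Delta}^\L$ of their difference vanishing implies the difference is constant, and then I specialize to the boundary point $p$ of $\overline{\mathfrak{M}}_\P$ where $x_{2i}$ collides with $x_{2i+1}$ for all $i$ (the same point used in the proof of Theorem \ref{TheoremFormalQF}). At $p$ every cross-ratio $\textup{cr}(\mathrm{Q})\to 0$, so every term of $\textup{T}_\P$ specializes to an iterated integral with all nonzero arguments being monomials vanishing at $p$; such integrals vanish, and correspondingly every correlator in the defining formula (\ref{FormulaClusterViaCorrelators}) for $\textup{QLi}_{n,k}$ degenerates — so the constant is zero. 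For the base case $n=1$ this is exactly Proposition \ref{FormulaClassicalPolylog} combined with (\ref{FormulaFormalPolylogWeight1}). I would also note that weight $k=0$ and the $n=1$ case should be checked directly to anchor the induction, and that the sign $(-1)^{l(x)-l(x_I)}$ built into the coaction (\ref{FormulaCoaction}) together with the $(-1)^{n+1}$ in Definition \ref{DefClusterPolylog} is what makes the normalization come out right.
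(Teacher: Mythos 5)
Your proposal follows essentially the same route as the paper's proof: induction on $n$ with base case Proposition \ref{FormulaClassicalPolylog} and (\ref{FormulaFormalPolylogWeight1}), computation of $\Delta^\L\Li_\bullet^\L(\textup{T}_\P)$ using Theorem \ref{TheoremFormalQF} together with (\ref{FormulaCoproductPolylogWithZeros}) and the generalized quasi-shuffle relation so that only the connected-complement (diagonal) terms survive, comparison with (\ref{ClusterPolylogarithmCoproductVersion2}), and rigidity plus specialization at the boundary point $x_{p_0}=x_{p_1},\dots,x_{p_{2n}}=x_{p_{2n+1}}$. One small imprecision: at that point it is the framed variation $\textup{QLi}_{n,k}$ that vanishes, not the individual correlators in (\ref{FormulaClusterViaCorrelators}) (e.g.\ $\textup{Cor}(x_0,x_2)=\log^{\mathcal{L}}(x_0-x_2)$ does not degenerate), which matches the level of detail the paper itself gives for this step.
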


\begin{proof}To show that  $\textup{QLi}(\P)=\Li_{\bullet}^\L(\textup{T}_\P)$ we proceed by induction on $n$. For $n=1$ the statement follows from (\ref{FormulaFormalPolylogWeight1}) and Proposition \ref{FormulaClassicalPolylog}. 

Assume that $n>1.$ Recall that $\textup{alt}(\P)$ is the set of all alternating subpolygons  $(p_{i_0},p_{i_1},\dots, p_{i_{2r+1}})$ of $\P$ with $i_0=0$ and $i_{2r+1}=2n+1$. Let $\textup{alt}_0(\P)$ be the subset of $\textup{alt}(\P)$ consisting of alternating subpolygons 
$S=(p_{0},\dots,p_{i},p_{j},\dots, p_{2n+1}).$  Then for $S'=(p_{i},\dots,p_{j})$ we have a decomposition $\P=\mathrm{S}\sqcup\mathrm{S}'$.  

For an even $\P$ formula (\ref{ClusterPolylogarithmCoproductVersion2}) is equivalent to the following statement:
\be \label{ClusterPolylogarithmCoproductVersion3}
\begin{split}
	&\Delta^{\mathcal{L}}\textup{QLi}(\P)=\textup{QLi}(\P)\wedge \log^\L(\textup{cr}(\P))+\sum_{\mathrm{S}\in \textup{alt}_0(\P)} \textup{QLi}(\mathrm{S})\wedge \textup{QLi}(\mathrm{S}').\\
\end{split}
\ee
Both parts of the formula (\ref{ClusterPolylogarithmCoproductVersion3}) change sign after a cyclic shift, so the same formula holds for an odd polygon $\P$ as well.

On the other hand, recall that in the proof of Proposition \ref{PropositionMapToGeneralizedPolylogarithms} we found that
\begin{align*}
\begin{split}
&\Delta^\L\Li^{\L}_{\bullet;n_1,\dots,n_k}(\varphi_1,\dots, \varphi_k)\\
&=\Li^{\L}_{\bullet;n_1,\dots,n_k}(\varphi_1,\dots, \varphi_k)\wedge \log^\L(\varphi_1\dots \varphi_k)\\
&\quad+\sum_{s=0}^k\Li^{\L}_{\bullet;n_1,\dots,n_s}(\varphi_1,\dots, \varphi_s)\wedge \Li^{\L}_{\bullet;n_{s+1},\dots,n_k}(\varphi_{s+1},\dots, \varphi_k)\\
&\quad+(1\wedge \Li_\bullet^\L)\Delta^{\H\F}[\varphi_{1},n_{1}|\dots|\varphi_k,n_k].\\
\end{split}
\end{align*}
It follows that 
\[
\begin{split}
&\Delta^\L\Li_{\bullet}^\L(\textup{T}_\P)=\Li_{\bullet}^\L(\textup{T}_\P)\wedge \log^\L(\textup{cr}(\P))+(\Li_\bullet^\L\wedge \Li_\bullet^\L)\Delta^{\mathcal{FF}}\textup{T}_\P+(1\wedge \Li_\bullet^\L)\Delta^{\H\F}\textup{T}_\P.\\
\end{split}
\]
By Proposition \ref{TheoremFormalSSR} we have $\Delta^{\H\F}\textup{T}_\P=0,$ so
\be \label{FormulaCoproductLi0}
\begin{split}
&\Delta^\L\Li_{\bullet}^\L(\textup{T}_\P)=\Li_{\bullet}^\L(\textup{T}_\P)\wedge \log^\L(\textup{cr}(\P))+(\Li_\bullet^\L\wedge \Li_\bullet^\L)\Delta^{\mathcal{FF}}\textup{T}_\P.\\
\end{split}
\ee
By formula $(\ref{FormulaCoproductFormalPolylogarithm})$ we have
\[
(\Li_\bullet^\L\wedge \Li_\bullet^\L)\Delta^{\mathcal{FF}}\textup{T}_\P= \sum_{S\in\textup{alt}(\P)} \Li_\bullet(\textup{T}_{\mathrm{S}})\wedge \Li_\bullet\left (\prod_{i=0}^{2r}\textup{T}_{\mathrm{S}^i} \right)=
\sum_{S\in\textup{alt}_0(\P)}\Li_\bullet(\textup{T}_{\mathrm{S}})\wedge \Li_\bullet(\textup{T}_{\mathrm{S}'}).
\]
The second equality holds because for polygons $S$ in $\textup{alt}(\P)$ but not in  $\textup{alt}_0(\P)$ the term 
$\Li_\bullet\left (\prod_{i=0}^{2r}\textup{T}_{\mathrm{S}^i} \right)$
vanishes in  $\L_P$ by the generalized quasi-shuffle relation (Proposition \ref{PropositionMapToGeneralizedPolylogarithms}). Now, from (\ref{FormulaCoproductLi0}) we have
\be\label{FormulaCoproductInProof}
\begin{split}
&\Delta^\L\Li_{\bullet}^\L(\textup{T}_\P)=\Li_{\bullet}^\L(\textup{T}_\P)\wedge \log^\L(\textup{cr}(\P))+\sum_{S\in\textup{alt}_0(\P)}\Li_\bullet(\textup{T}_{\mathrm{S}})\wedge \Li_\bullet(\textup{T}_{\mathrm{S}'}).\\
\end{split}
\ee
Comparing formulas (\ref{ClusterPolylogarithmCoproductVersion3}) and (\ref{FormulaCoproductInProof}) we conclude by induction that 
\[
\Delta^{\mathcal{L}}(\textup{QLi}(\P))=\Delta^{\mathcal{L}}(\Li_{\bullet}^\L(\textup{T}_\P)).
\]
It is easy to see that for both framed variations, the specialization to a point with 
\[
x_{p_0}=x_{p_1}, x_{p_2}=x_{p_3}, \dots, x_{p_{2n}}=x_{p_{2n+1}}
\] 
vanishes, so the conclusion follows by the rigidity argument (\S \ref{SectionRigidity}).
\end{proof}

Identities between variations of framed mixed Hodge-Tate structures imply identities between the corresponding multivalued functions, see \S \ref{SectionRigidity}. In view of that, Theorem \ref{TheoremQuadrangulationClusterPolylog} implies Theorem \ref{MainTheoremQuadrangulationCluster}.

\begin{corollary}\label{CorollaryDepthOfClusterPolylogarithm}
	The depth of the quadrangular polylogarithm $\textup{QLi}_{n,k}$ is less than or equal to $n.$
\end{corollary}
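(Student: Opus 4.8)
The plan is to deduce the corollary directly from Theorem~\ref{TheoremQuadrangulationClusterPolylog}, which expresses $\textup{QLi}_{n,k}(\P)$ as $\Li_k^\L(\textup{T}_\P)$, combined with an analysis of the depths of the multiple polylogarithms that actually occur in $\Li_k^\L(\textup{T}_\P)$. First I would recall that $\textup{T}_\P=\textup{Arb}(\mathrm{t}_\P)$ and that $\mathrm{t}_\P=\sum_{Q\in\mathcal{Q}(\P)}\mathrm{t}_Q$, where each $\mathrm{t}_Q$ is a rooted tree on $n$ vertices, each vertex being labelled by a pair $[\textup{cr}(\mathrm{Q}_i),1]\in\F_\P$. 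Applying the arborification map $\textup{Arb}\colon\mathcal{T}^\P\to\textup{QSh}_\P$ to a tree with $n$ vertices produces, by the recursive formula~(\ref{FormulaArborification}), a $\Q$-linear combination of words $[\psi_1,m_1|\dots|\psi_\ell,m_\ell]$ with $\ell\leq n$ (each application of a grafting operator $B_a^+$ either prepends the letter $a$ or merges it into the first letter, so the word length never exceeds the number of vertices) and $\sum m_j = n$ when one takes $k=0$; for general $k$ the map $\Li_k^\L$ shifts weight by $k$ without changing the length $\ell$. Hence every word contributing to $\textup{T}_\P$ has length at most $n$.

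Next I would invoke the definition of $\Li_k^\L\colon\F_\P\to\L_\P$ from \S\ref{SectionQuadrangularCorr} (and its relation to $\Li_\bullet^\L$): it sends a word $[\psi_1,m_1|\dots|\psi_\ell,m_\ell]$ to (a sum of) generalized multiple polylogarithms $\Li^\L_{n_0;n_1,\dots,n_\ell}(\psi_1,\dots,\psi_\ell)$, whose depth is exactly $\ell$, the length of the word. Combining the two observations: $\textup{QLi}_{n,k}(x_0,\dots,x_{2n+1})=\textup{QLi}_k(\P)=\Li_k^\L(\textup{T}_\P)$ is a $\Q$-linear combination of multiple polylogarithms of depth $\ell\leq n$, evaluated at products of cross-ratios of the $x_i$. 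That is precisely the assertion that the depth of $\textup{QLi}_{n,k}$ is at most $n$.

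The only point that requires a little care — and the one I would expect to be the main (minor) obstacle — is the bookkeeping on word length under arborification: one must check that the recursion in~(\ref{FormulaArborification}) genuinely cannot increase length, i.e.\ that $\star$-products of words of lengths $a$ and $b$ appearing inside $B_a^+$ are handled correctly so that grafting $B_a^+$ onto a forest with $p$ total vertices yields words of length $\leq p$. This follows because $\textup{Arb}$ is a coalgebra map into $\textup{QSh}_\P$ and one can argue by induction on the number of vertices, using that $\textup{Arb}(\mathrm{t}_1\cdots\mathrm{t}_m)=\textup{Arb}(\mathrm{t}_1)\star\cdots\star\textup{Arb}(\mathrm{t}_m)$ and that the quasi-shuffle product of words of lengths $a_1,\dots,a_m$ is a combination of words of length $\leq a_1+\cdots+a_m$, after which $B_a^+$ contributes at most one more letter. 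I would state this length bound as a short lemma (or an inline observation) and then the corollary is immediate. No rigidity argument or Hodge-theoretic input is needed here; it is purely a combinatorial consequence of Theorem~\ref{TheoremQuadrangulationClusterPolylog} together with the structure of $\textup{Arb}$.
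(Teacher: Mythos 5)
Your proposal is correct and follows essentially the same route as the paper: both deduce the bound from Theorem \ref{TheoremQuadrangulationClusterPolylog} together with the observation that $\textup{T}_\P$ is a sum of words of length at most $n$ and that $\Li_{k}^\L$ sends a word of length $r$ to a multiple polylogarithm of depth $r$. The only cosmetic difference is that you obtain the length bound directly from the recursion (\ref{FormulaArborification}) defining $\textup{Arb}$, whereas the paper quotes the recursive formula of Lemma \ref{LemmaRecursiveTP}; these amount to the same bookkeeping.
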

\begin{proof}
We need to show that for an alternating $(2n+2)-$gon $\P$ the depth of $\Li_{k}^\L(\textup{T}_\P)$ is less than or equal to $n.$ From Lemma \ref{LemmaRecursiveTP} we conclude that $\textup{T}_\P$ is the sum of words of length less than or equal to $n.$ Since the map $\Li_{k}^\L$ sends a word of length $r$ to a multiple polylogarithm of depth $r,$ the statement follows.
\end{proof}

\subsection{Universality of quadrangular polylogarithms}
\label{SectionQuadrangularUniversality}

\begin{proposition}\label{PropositionUniversality}
	The following formula holds:
	\be	\label{FormulaCorToClustereven}
	\textup{Cor}_a(x_0,\dots,x_{2n})=\sum_{s=0}^{2n+2} \sum_{0\leq i_1<\dots<i_s\leq 2n}(-1)^s\textup{QLi}_{n,n}(x_0,\dots,a,\dots,a,\dots,x_{2n},a).
	\ee
	Here the $s$-th term is $(-1)^s$ times a sum over  $0\leq i_1<\dots<i_s\leq 2n$ of quadrangular polylogarithms obtained from the quadrangular polylogarithm $\textup{QLi}_{n,n}(x_0,\dots,x_{2n},a)$ by substituting the point $a$  instead of points $x_{i_1},\dots,x_{i_s}.$
	Similarly,
	
	\be	\label{FormulaCorToClusterodd}
	\textup{Cor}_a(x_0,\dots,x_{2n+1})=\sum_{s=0}^{2n+2} \sum_{0\leq i_1<\dots<i_s\leq 2n+1}(-1)^s\textup{QLi}_{n,n+1}(x_0,\dots,a,\dots,a,\dots,x_{2n+1}).
	\ee
	Here the $s$-th term is $(-1)^s$ times a sum over  $0\leq i_1<\dots<i_s\leq 2n+1$ of quadrangular polylogarithms obtained from the quadrangular polylogarithm $\textup{QLi}_{n,n+1}(x_0,\dots,x_{2n+1})$ by substituting the point $a$  instead of points $x_{i_1},\dots,x_{i_s}.$
\end{proposition}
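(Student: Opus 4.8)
The plan is to deduce the two formulas from the coproduct formula for quadrangular polylogarithms (Theorem \ref{TheoremClusterPolylogarithmCoproduct}) together with the change-of-basepoint formula for Hodge correlators (\ref{CorrelatorChangeBasepoint}) and the rigidity argument of \S\ref{SectionRigidity}. I will treat the even case (\ref{FormulaCorToClustereven}) in detail; the odd case (\ref{FormulaCorToClusterodd}) is entirely analogous, with $2n$ replaced by $2n+1$ and $\textup{QLi}_{n,n}$ replaced by $\textup{QLi}_{n,n+1}$, so I will only indicate the differences. The key observation is that by Proposition \ref{FormulaClassicalPolylog} and the definition of $\textup{QLi}_{n,k}$ via Hodge correlators (Definition \ref{DefClusterPolylog}), both sides of (\ref{FormulaCorToClustereven}) are elements of $\mathcal{L}_{2n}$ depending rationally on $x_0,\dots,x_{2n}$ (with $a$ fixed generically), so it suffices to check equality of coproducts and then specialize to a single point.

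First I would rewrite the right-hand side of (\ref{FormulaCorToClustereven}) using Definition \ref{DefClusterPolylog}: each $\textup{QLi}_{n,n}(\dots)$ is an alternating sum of correlators $\textup{Cor}(x_{i_0},\dots,x_{i_{2n}})$ over sequences in $\mathcal{C}_{n,n}$. After substituting the point $a$ in place of $x_{i_1},\dots,x_{i_s}$ and summing with signs $(-1)^s$ over all subsets $\{i_1<\dots<i_s\}\subseteq\{0,\dots,2n\}$, I claim the result collapses to the right-hand side of the change-of-basepoint formula (\ref{CorrelatorChangeBasepoint}) for $\textup{Cor}_a(x_0,\dots,x_{2n})$. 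Concretely, the alternating sum over insertions of $a$ is precisely the combinatorial operation appearing in (\ref{CorrelatorChangeBasepoint}); the only subtlety is to match the indexing set $\mathcal{C}_{n,n}$ of the correlator-sequences defining $\textup{QLi}_{n,n}$ against the sequences of length $2n+1$ (after inserting $a$ as the last argument $x_{2n+1}=a$) that appear when one expands $\textup{Cor}(x_0,\dots,x_{2n})$ via its own defining sum of correlators. I expect that, using cyclic symmetry of correlators (\ref{FormulaCorrelatorsSymmetry}) and the constraint in the definition of $\mathcal{C}_{n,k}$ that each pair $\{2i,2i+1\}$ is hit, the map is a clean bijection preserving signs once one fixes $a$ in the position $x_{2n+1}$.

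The cleanest route, to avoid a direct combinatorial identification of both alternating sums, is instead to use rigidity: compute $\Delta^{\mathcal{L}}$ of both sides. The left-hand side, $\Delta^{\mathcal{L}}\textup{Cor}_a(x_0,\dots,x_{2n})$, is given by (\ref{FormulaCoproductCorrelators}); the right-hand side's coproduct is obtained by applying (\ref{ClusterPolylogarithmCoproduct}) term by term and then performing the same alternating sum over insertions of $a$. The induction hypothesis (on $n$) lets me replace each lower-weight factor $\textup{QLi}_{n-s}$ and $\textup{QLi}_s$ appearing in (\ref{ClusterPolylogarithmCoproduct}), after the $a$-insertion alternating sum, by the corresponding $\textup{Cor}_a$; one then checks that the resulting expression matches (\ref{FormulaCoproductCorrelators}) — this is the step parallel to the five-term-relation manipulation in the weight-two base case. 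Finally, specialize all $x_i$ to a common point (or to $a$), where both sides manifestly vanish by $\textup{Cor}(x_0,\dots,x_0)=0$ and by (\ref{FormulaQuadrangularTwo})-type vanishing, and conclude by the Rigidity Lemma.

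\textbf{Main obstacle.} The hard part will be the bookkeeping in matching the alternating $a$-insertion sum of the coproduct (\ref{ClusterPolylogarithmCoproduct}) against the correlator coproduct (\ref{FormulaCoproductCorrelators}): one must check that the five-term-relation-type cancellations that make $\textup{QLi}_{n,k}$ well-defined interact correctly with the telescoping in (\ref{CorrelatorChangeBasepoint}), and in particular that no spurious terms survive when several of the inserted $a$'s land in positions that break the alternating-pair condition defining $\mathcal{C}_{n,k}$. Handling the boundary terms $s=0$ and $s=2n+1$ (where the sequence degenerates) and keeping track of the overall sign $(-1)^{n+1}$ from Definition \ref{DefClusterPolylog} will require care, but these are routine once the bijection of indexing sets is set up correctly.
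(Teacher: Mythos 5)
There is a genuine gap, and it is exactly at the point you flag as the ``main obstacle.'' The paper's proof needs no coproduct computation, no induction, and no rigidity: let $\textup{T}$ be the alternating $a$-insertion operator that appears on the right-hand side, $\textup{T}(f)=\sum_{s}(-1)^s\sum_{i_1<\dots<i_s}f(x_0,\dots,a,\dots,a,\dots)$. The key observation, which you never state, is that $\textup{T}$ annihilates every function that does not depend on at least one of the variables $x_i$ (pair each insertion pattern with the one obtained by toggling the insertion at that position; the values agree and the signs cancel). Now every sequence $\bar s\in\mathcal{C}_{n,n+1}$ in Definition \ref{DefClusterPolylog} has exactly $2n+2$ entries, so any $\bar s$ other than $(0,1,\dots,2n+1)$ repeats an entry and hence omits some index; the corresponding correlator is independent of that $x_i$ and dies under $\textup{T}$. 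Since the surviving sequence has $n+1$ even entries, its sign cancels the prefactor $(-1)^{n+1}$, so $\textup{QLi}_{n,n+1}(x_0,\dots,x_{2n+1})=\textup{Cor}(x_0,\dots,x_{2n+1})+(\text{terms killed by }\textup{T})$, whence $\textup{T}(\textup{QLi}_{n,n+1})=\textup{T}(\textup{Cor}(x_0,\dots,x_{2n+1}))=\textup{Cor}_a(x_0,\dots,x_{2n+1})$ by (\ref{CorrelatorChangeBasepoint}); the even case is identical with $x_{2n+1}=a$ fixed. Your first paragraph circles around this but then looks for a sign-preserving bijection between $\mathcal{C}_{n,n}$ and an ``expansion'' of $\textup{Cor}(x_0,\dots,x_{2n})$ -- no such expansion is involved (only $\textup{Cor}_a$ is expanded, via (\ref{CorrelatorChangeBasepoint})), and once the kernel observation is made no bijection is needed at all.

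The route you actually commit to (compare coproducts by induction, then rigidity plus specialization) is not carried out and, as sketched, would not go through easily. The insertion operator does not factor through the wedge terms of (\ref{ClusterPolylogarithmCoproduct}): the two factors share the boundary indices $i,j$, so the alternating sum over insertions in all $2n+2$ arguments is not the product of the insertion sums over the arguments of each factor, which is what your induction hypothesis would require; you would also have to extract the single weight components $\textup{QLi}_{n,n}$, $\textup{QLi}_{n,n+1}$ from the series $\textup{QLi}_n$ governed by (\ref{ClusterPolylogarithmCoproduct}). The endpoint of the rigidity argument is also unjustified: $\textup{Cor}_a$ is defined for arguments in $\C\fgebackslash\{a\}$, so specializing the $x_i$ to $a$ is not available, and at a common point $x_i=c$ the left-hand side $\textup{Cor}_a(c,\dots,c)$ is an alternating sum of correlators in $a$'s and $c$'s that does not ``manifestly vanish.'' The repair is not more machinery but the elementary kernel observation above, which makes the proposition immediate.
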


\begin{proof}
We prove (\ref{FormulaCorToClusterodd}); the proof of (\ref{FormulaCorToClustereven}) is similar.  Consider a vector space of functions depending on variables $(x_0,\dots,x_{2n+1}).$ Consider  an endomorphism of this space given by the formula 
\[
\textup{T}(f)(x_0,\dots,x_{2n+1})=
\sum_{s=0}^{2n+2} \sum_{0\leq i_1<\dots<i_s\leq 2n+1}(-1)^s f(x_0,\dots,a,\dots,a,\dots,x_{2n+1}).
\]
Any function which does not depend on at least one variable $x_i$ lies in the kernel of $\textup{T}.$ According to Definition \ref{DefClusterPolylog} 
\[
\textup{QLi}_{n,n+1}(x_0,\dots,x_{2n+1})=\textup{Cor}(x_0,\dots,x_{2n+1})+(\text{correlators with repeating argument}),
\]
so 
\[
\textup{T}(\textup{QLi}_{n,n+1}(x_0,\dots,x_{2n+1}))=\textup{T}(\textup{Cor}(x_0,\dots,x_{2n+1})).
\]
On the other hand, by (\ref{CorrelatorChangeBasepoint}) we have
\[
\textup{T}(\textup{Cor}(x_0,\dots,x_{2n+1}))=\textup{Cor}_a(x_0,\dots,x_{2n+1}).
\]
\end{proof}

\begin{corollary}\label{CorollaryUniverality}
Iterated integrals $\textup{I}^{\L}$ and multiple polylogarithms $\Li^{\L}$ 	can be expressed as linear combinations of quadrangular polylogarithms $\textup{QLi}_{n,n}$ and $\textup{QLi}_{n,n+1}$ for $n\geq 0.$ 
\end{corollary}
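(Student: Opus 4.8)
The plan is to assemble the statement from three reductions, all of them already available in the excerpt, and the point is that the corollary is essentially immediate once Proposition~\ref{PropositionUniversality} is in hand. First I would reduce multiple polylogarithms to iterated integrals: by the $\L$-version of the defining formula~(\ref{FormulaMPviaII}), every $\Li^{\L}_{n_1,\dots,n_k}(a_1,\dots,a_k)$—and likewise every generalized multiple polylogarithm $\Li^{\L}_{n_0;n_1,\dots,n_k}$—equals, up to sign, a single iterated integral $\textup{I}^{\L}$, so it suffices to treat iterated integrals. Next I would reduce iterated integrals to Hodge correlators: formula~(\ref{FormulaIIviaCor}) writes $\textup{I}^{\L}(x_0;x_1,\dots,x_N;x_{N+1})=(-1)^{N+1}\bigl(\textup{Cor}(x_1,\dots,x_{N+1})-\textup{Cor}(x_0,\dots,x_N)\bigr)$ for $N\geq 1$ (weight-zero integrals vanish in $\L$), both terms being Hodge correlators with basepoint $\infty$. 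Hence the corollary follows once every correlator $\textup{Cor}(y_0,\dots,y_M)$ with $M\geq 1$ is exhibited as a $\Q$-linear combination of the $\textup{QLi}_{n,n}$ and $\textup{QLi}_{n,n+1}$.

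For $M=1$ this is immediate, since $\textup{Cor}(y_0,y_1)=\log^{\L}(y_0-y_1)=\textup{QLi}_{0,1}(y_0,y_1)$. For $M\geq 2$ I would apply Proposition~\ref{PropositionUniversality} with basepoint $a=\infty$: because $\textup{Cor}=\textup{Cor}_{\infty}$ by convention, formula~(\ref{FormulaCorToClustereven}) when $M=2n$, respectively~(\ref{FormulaCorToClusterodd}) when $M=2n+1$, expresses $\textup{Cor}(y_0,\dots,y_M)$ as an explicit signed sum of quadrangular polylogarithms $\textup{QLi}_{n,n}$, respectively $\textup{QLi}_{n,n+1}$, in which the value $\infty$ has been substituted into some of the argument slots. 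Concatenating with the previous paragraph then yields the asserted presentation of $\textup{I}^{\L}$ and $\Li^{\L}$ as linear combinations of $\textup{QLi}_{n,n}$ and $\textup{QLi}_{n,n+1}$ for $n\geq 0$.

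The one point that needs care—and which I regard as \emph{the} main obstacle—is making sense of these specialized quadrangular polylogarithms and checking that they remain (linear combinations of) genuine quadrangular polylogarithms of the stated bidegrees. For $n\geq 1$ this is supplied by the $\textup{PGL}_2(\C)$-invariance established in \S\ref{SectionQuadrangularProp}, which lets us view $\textup{QLi}_{n,k}$ as a function on configurations of points of $\mathbb{P}^1$ rather than of $\C$; substituting $\infty$ into a single slot then produces a projectively equivalent all-finite configuration, while substituting $\infty$ into several slots produces a boundary configuration with colliding points, whose value is unambiguous either directly from the defining formula~(\ref{FormulaClusterViaCorrelators}) (Hodge correlators $\textup{Cor}=\textup{Cor}_{\infty}$ are defined for arbitrary, possibly repeated, arguments) or, if one prefers, by the rigidity argument of \S\ref{SectionRigidity} applied to the corresponding family of framed variations over the appropriate Deligne--Mumford compactification. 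The bidegree $n=0$ was handled separately above, so no $\infty$ ever enters a $\textup{QLi}_{0,1}$. Once this bookkeeping is in place, the three reductions combine formally and the corollary follows.
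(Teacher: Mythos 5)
Your proposal is correct and follows essentially the same route as the paper, whose proof of this corollary is the one-line observation that it follows from (\ref{FormulaMPviaII}) (multiple polylogarithms as iterated integrals), (\ref{FormulaIIviaCor}) (iterated integrals as differences of correlators $\textup{Cor}=\textup{Cor}_\infty$), and Proposition \ref{PropositionUniversality} applied with $a=\infty$. The additional bookkeeping you supply — handling the weight-one case directly and using the projective invariance of $\textup{QLi}_{n,k}$ (for $n\geq 1$) to make sense of the slots in which $\infty$ is substituted — is more care than the paper records, but it is consistent with the paper's argument and fills in exactly the point the one-line proof leaves implicit.
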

\begin{proof}
The statement follows from \ref{FormulaIIviaCor} and \ref{FormulaMPviaII}.	
\end{proof}

Corollaries \ref{CorollaryDepthOfClusterPolylogarithm} and \ref{CorollaryUniverality}  imply Theorem \ref{MainTheoremDepth}, see \S\ref{SectionRigidity}.

\section{Volume of non-Euclidean orthoschemes}\label{SecOrt}

\subsection{Projective simplices and scissors congruence}\label{SectionProjectiveSimplex}
Let $E$ be a vector space over $\C$ of dimension $m$ and consider a quadratic form $q\in \mathbb{S}^2E^{\vee}.$   The set of zeros of $q$ in the projectivization $\mathbb{P}(E)$ is a quadric, which we denote by $Q$. The quadric $Q$ is smooth if and only if the quadratic form $q$ is nondegenerate. A smooth quadric defines a duality between subspaces of $\mathbb{P}(E),$ known as the polar duality. For a subspace $\pi\subseteq \mathbb{P}(E)$ the polar subspace $\pi^{\perp}$ has dimension $m-2-\textup{dim}(\pi).$ Two hyperplanes $H_1$ and $H_2$ in $\mathbb{P}(E)$ are called orthogonal if $H_1^{\perp}\in H_2$ (equivalently, $ H_2^{\perp}\in H_1.$) 

A smooth quadric $Q$ contains projective subspaces of dimension $\left \lfloor \dfrac{m}{2} \right \rfloor-1$ parametrized by a certain Fano variety. This variety is irreducible if $\textup{dim}(Q)$ is odd and  has two connected components if $\textup{dim}(Q)$ is even. We call the choice of an irreducible component of this variety an {\it orientation} of  the quadric. 

\begin{definition}
 A projective simplex $S=(Q;\H)$ in $\mathbb{P}(E)$ is a configuration, consisting of a quadric $Q$ and an ordered set of hyperplanes $\H=\{H_1, H_2, \dots, H_{m}\}$ in $\mathbb{P}(E).$
\end{definition}

Two projective simplices $S_1=(Q_1; \H_1)$ and $S_2=(Q_2; \H_2)$  are called isometric if there exists a projective transformation sending  $Q_1$ to $Q_2$ and hyperplanes in $\H_1$ to the corresponding hyperplanes in $\H_2.$ Denote by $h_i$ hyperplanes in $E$ with projectivization $H_i$ and for any subset $I\subseteq \{1,2,\dots, m\}$  define 
$h_I=\bigcap_{i\in I} h_i\subseteq E$ and $H_I=\bigcap_{i\in I}H_i\subseteq \mathbb{P}(E).$ A projective simplex $S$ is called {\it nondegenerate} if intersections $Q_I=H_I\cap Q$ are smooth for every subset $I\subseteq \{1,\dots,m\}$ (in particular, $Q$ itself is smooth). An orientation of $S$ is an orientation of $Q_I$ for every $I\subseteq \{1,\dots,m\}$. A nondegenerate projective simplex has $2^{(2^{m-1}-1)}$ orientations.

 Let $S$ be a nondegenerate simplex. For every subset $I\subseteq \{1,\dots, m\}$ we define the projective simplex $S_I$, called the $I$-face of $S.$  Simplex $S_I$ is a configuration of the quadric $Q\cap H_I$ in the projective space $H_I$  and hyperplanes $H_{I}\cap H_j$ for $j\notin I.$ Next, we define projective simplex $S^I$, called $I$-angle of $S.$ It is a configuration of the quadric $Q\cap (H_I)^{\perp}$ in the projective space $(H_I)^{\perp}$ and a collection of hyperplanes $H_{I\fgebackslash \{i\}} \cap (H_I)^{\perp}$ for $i\in I.$ It is easy to see that for an odd-dimensional simplex $S$ an orientation of $S$ induces an orientation of all its faces and angles.
 
 Let $S_1$ and $S_2$ be a pair of projective simplices in spaces $\mathbb{P}(E)$ and $\mathbb{P}(E')$ defined by quadratic forms $q$ and $q'$ and hyperplanes $h_1,\dots, h_m$ and  $h_1',\dots, h_{m'}'.$  Their join $S_1\cdot S_2$ is a projective simplex in $\mathbb{P}(E\oplus E')$ defined by hyperplanes $h_i\oplus E', 1\leq i \leq m$  and $E\oplus h_j', 1\leq j \leq m'.$ If $S_1$ and $S_2$ are nondegenerate, then $S_1\cdot S_2$ is nondegenerate. If $S_1$ and $S_2$  are oriented, then $S_1\cdot S_2$ has a canonical orientation.

Next, we recall a construction of the Hopf algebra of polytopes modulo scissors congruence, following Goncharov \cite[\S 3.4]{Gon99}. We define a commutative graded Hopf algebra 
 $\mathcal{G}$  over $\mathbb{Q}$ by generators and relations in the following way. We have $\mathcal{G}_0=\Q$ and for $n\geq 1$ the component $\mathcal{G}_{n}$  is generated  by isometry classes $[S]$ of oriented nondegenerate $(2n-1)$-dimensional projective simplices, subject to the following relations:

\begin{enumerate}
\item	For the class $[S]$ of a simplex $S=(Q;H_1,\dots, H_{2n})$ in $\mathbb{P}^{2n-1}$ and $\sigma \in \mathbb{S}_{2n}$ we have 
\[
[(Q;H_{\sigma(1)},\dots, H_{\sigma(2n)})]=(-1)^\sigma[(Q;H_1,\dots, H_{2n})].
\]
The class $[S]$ does not depend on orientations of the quadrics $Q_I$ for $I\neq \{1,\ldots,m\}.$ If $\bar{S}$ is the same simplex with the opposite orientation of   $Q$ then $[\bar{S}]=-[S].$
If $Q$ hyperplanes $H_i$ are not in  general position,  we put $[S]=0.$

\item Consider a collection of hyperplanes $H_0, H_1, H_2 \dots, H_{2n}$ and an oriented quadric $Q$ in $\mathbb{P}^{2n-1}.$   Then
\[
\sum_{i=0}^{2n} (-1)^{i}[(Q;H_0,\dots \widehat{H_i}, \dots,H_{2n})]=0.
\]
\end{enumerate}

The product in $\mathcal{G}$ is defined on generators as the join: 
\[
[S_1][S_2]=[S_1\cdot S_2].
\]
Clearly, it is commutative. The coproduct is defined by the formula
\be \label{FormulaDehn}
\Delta^{\mathcal{G}}[S]=\sum_{\substack{I\subseteq\{1,\dots,2n\}\\ |I| \text{ is even}}}[S_I] \otimes \left[S^I \right].
\ee
The coproduct $\Delta^\mathcal{G}$ is a projective counterpart of the Dehn invariant, used by Dehn to show that a regular tetrahedron is not scissors congruent to a cube of the same volume. 

In \cite[\S 3]{Gon99} Goncharov constructed the Hodge realization map  
\[
h\colon\mathcal{G} \lra \H,
\]
assigning a framed mixed Hodge structure to a projective tetrahedron. For an oriented projective tetrahedron $S$, the framed mixed Hodge-Tate structure $h(S)$ is equal to 
\[
H^{2n-1}\left(\mathbb{P}^{2n-1}\fgebackslash Q, \left(\bigcup_{i=1}^{2n} H_i \right)\fgebackslash Q \right)
\]
with a certain framing. Goncharov proved  \cite[Theorem 3.11]{Gon99} that $h$ is a homomorphism of commutative Hopf algebras.

\begin{example}\label{ExampleOneDimensionalSimplex} In weight one we have $\H_1\cong \mathbb{C}^{\times}_\Q.$ For a $1$-dimensional simplex $S=(Q;H_1,H_2)$ a quadric $Q$ is a pair of points $Q_1,Q_2\in \mathbb{P}^1.$ An orientation of the quadric is the choice of an ordering of points $Q_1$ and $Q_2.$ We have
	\[
h(S)=-\frac{1}{2}\log^\H\left([Q_1, H_1,Q_2, H_2] \right)\in \H_1 .
\]
\end{example}

For every hyperbolic simplex $\mathcal{S}\subseteq \mathbb{H}^{2n-1}$ of dimension $2n-1$ one can construct the corresponding projective simplex $S$, see  \cite[\S 1.5]{Gon99}.  In \cite[\S 4]{Gon99} Goncharov defined a real period map 
\be \label{FormulaRealPeriod}
\mathrm{per}_{\mathbb{R}}\colon \H_n \lra \mathbb{R}(n-1)
\ee
where $\mathbb{R}(n-1)=\mathbb{R}(2\pi i)^{n-1}\subseteq \C.$  Goncharov proved in \cite[Theorem 4.7]{Gon99} that the following equality holds:  
\[
\textup{Vol}(\mathcal{S})=\mathrm{per}_{\mathbb{R}}(h(S))\in \mathbb{R}(n-1).
\]
Notice that our normalization of the hyperbolic volume is different from the standard one by a factor of $(2\pi i)^ni/(2n-2)!.$


\subsection{Orthoschemes and Maslov index} \label{SecOrt1}

\begin{definition}\label{DefOrt}
A projective simplex $S=(Q;H_1,\dots,H_m)$ in $\mathbb{P}^{m-1}$ is called an orthoscheme if $Q$ is smooth, the hyperplanes $H_1,\dots, H_m$ are in general position and the hyperplanes $H_i$ and $H_j$ are orthogonal for $|i-j|>1.$ 
\end{definition}

From the point of view of the theory of scissors congruence, orthoschemes are important because 
they  generate the scissors congruence Hopf algebra $\mathcal{G}$ as a vector space. One can easily see that by considering an orthogonal version of the barycentric subdivision. Every simplex in $\mathbb{P}^{m-1}$ is scissors congruent to a sum of $m!$ orthoschemes. A related question of presenting a simplex as a union of disjoint orthoschemes is much harder and is related to a conjecture of Hadwiger, see \cite{Had56}.

For a projective orthoscheme $S=(Q;H_1,\dots,H_m)$ in $\mathbb{P}^{m-1}$ we define two more hyperplanes in $\mathbb{P}^{m-1}$ 
\be \label{FormulaCorners}
H_0=\left ( \bigcap_{i=2}^{m}H_i\right )^\perp \text{ \ and \ }
H_{m+1}=\left ( \bigcap_{i=1}^{m-1}H_i\right )^\perp.
\ee
We label hyperplanes $H_0,\dots, H_{m+1}$ by an index $i\in \mathbb{Z}/(m+2)\mathbb{Z}$ and denote by $h_0,\dots,h_{m+1}$ the corresponding hyperplanes in $E.$ Distinct indices $i,j \in \mathbb{Z}/(m+2)\mathbb{Z}$ are called adjacent if $(i-j)\equiv \pm 1 \pmod{m+2}.$  Clearly, the hyperplanes $H_i$ and $H_j$ are orthogonal for any pair of nonadjacent indices.
\begin{definition}
An orthoscheme $S$ is called generic if hyperplanes  $H_0,\dots, H_{m+1}$ are in general position (the intersection of any $m$ of them is empty).
\end{definition}

We will show that a generic orthoscheme is a nondegenerate projective simplex, see Corollary \ref{CorollaryGenericOrthoscheme}. If $S$ is a generic orthoscheme, then the formula (\ref{FormulaCorners}) generalizes to
\[
H_r=\left ( \bigcap_{i\not \in \{r-1,r,r+1\}} H_i\right )^\perp \text{ \ for \ } r\in \mathbb{Z}/(m+2)\mathbb{Z}.
\]

\begin{remark}
If $S$ is a generic orthoscheme, then simplices 
\[
S^{[i]}=(Q;H_{i+1},H_{i+2}, \dots , H_{i+m}),\ \ i \in \mathbb{Z}/(m+2)\mathbb{Z},
\]
are also generic orthoschemes. This sequence of orthoschemes is classically known as the {\it Napier cycle}. 
\end{remark}

Our first result is a bijection between isometry classes of generic orthoschemes and points of $\mathfrak{M}_{0,m+2}.$ This result is inspired by  Coxeter's spectacular work \cite{Cox36}, but our construction seems to be new. It is based on an algebraic approach to the Maslov index from an unpublished work by Kashiwara, see also \cite{LV80}. 

Consider a configuration $(x_0,\dots,x_{m+1})\in \mathfrak{M}_{0,m+2}$ of points in $\mathbb{P}^1=\mathbb{P}(V)$ and let $l_0,\dots, l_{m+1}$ be lines in $V$ corresponding to points  $x_0,\dots,x_{m+1}\in \mathbb{P}^1.$ Let $\omega\in \Lambda^2 V^{\vee}$ be a symplectic form. The vector space 
\be\label{FormulaMaslov}
E=\textup{Ker}\left (\bigoplus_{i=0}^{m+1}l_i\stackrel{\sum}{\lra} V \right)
\ee
has dimension $m$ and carries a non-degenerate quadratic form defined on a vector \[v=(v_0,\dots,v_{m+1})\in E\] by the formula
\[
q(v)=\sum_{0\leq i<j\leq m+1}\omega(v_i,v_{j}).
\]
We denote by $q(v_1,v_2)$ the symmetric bilinear form, associated to $q.$

\begin{example}\label{ExampleQuadricTriangle} Consider the case $m=1.$ Fix nonzero vectors $e_0\in l_0, e_1\in l_1, e_2\in l_2.$ The vector space $E$ is spanned by the vector 
\[
v=\left(\omega(e_1,e_2)e_0, \omega(e_2,e_0)e_1, \omega(e_0,e_1)e_2\right).
\]
Then 
\be \label{FormulaQuad3}
q(v,v)=\omega(e_1,e_2)\omega(e_2,e_0)\omega(e_0,e_1).
\ee	
\end{example}

For any subset $I=\{i_0,i_1\dots,i_{r+1}\}\subseteq \{0,1,\dots,m+1\}$ we define a vector space 
\[
E_I=\textup{Ker}\left (\bigoplus_{j=0}^{r+1}l_{i_j}\stackrel{\sum}{\lra} V \right),
\]
which is a subspace of $E.$ As a quadratic space it is isometric to the space obtained from the configuration $(x_{i_0},\dots,x_{i_{r+1}})\in \mathfrak{M}_{0,m+2}$ by the same construction as above. 

\begin{lemma}\label{LemmaOrtSum}
For $0\leq i<j\leq m+1$ consider subsets $I=\{0,\dots,i,j,\dots, m+1\}$ and $I'=\{i, i+1,\dots, j\}$ of $\{0,\dots,m+1\}.$ Then we have an orthogonal decomposition $E=E_I\oplus E_{I'}.$ 
\end{lemma}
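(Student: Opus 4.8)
The statement is a purely linear-algebraic fact about the quadratic space $E$ attached to a configuration of $m+2$ lines in the symplectic plane $V$. The plan is to prove two things: first, that $E = E_I + E_{I'}$ as vector spaces, and second, that this sum is orthogonal with respect to the form $q$ (from which $E = E_I\oplus E_{I'}$ follows, since both summands are nondegenerate, or more directly by a dimension count). Throughout I write a vector of $E$ as $v=(v_0,\dots,v_{m+1})$ with $v_k\in l_k$ and $\sum_k v_k=0$.

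First I would identify $E_I$ and $E_{I'}$ as subspaces of $E$: an element of $E_I$ is naturally the tuple supported on the indices in $I=\{0,\dots,i,j,\dots,m+1\}$, extended by zero on $\{i+1,\dots,j-1\}$; similarly $E_{I'}$, $I'=\{i,i+1,\dots,j\}$, is the set of tuples supported on those indices with vanishing sum. One checks these are indeed contained in $E$ (the defining relations are compatible). For the direct-sum decomposition at the level of vector spaces, note $\dim E_I = |I|-2 = (m+2-(j-i-1))-2$ and $\dim E_{I'} = |I'|-2 = (j-i+1)-2$, so $\dim E_I + \dim E_{I'} = m = \dim E$; hence it suffices to show $E_I\cap E_{I'}=0$ and that $E_I+E_{I'}=E$. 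The intersection is supported on the single common index set $\{i,j\}$ with vanishing sum, but $v_i\in l_i$, $v_j\in l_j$ with $v_i+v_j=0$ forces $v_i=v_j=0$ since $l_i\neq l_j$ (the $x_k$ are distinct). For the span, given $v=(v_0,\dots,v_{m+1})\in E$, set $w=v_{i+1}+\dots+v_{j-1}\in V$; then $w + (v_i + v_j) \neq 0$ need not vanish, but one can write $v$ as a sum of the tuple $(v_0,\dots,v_i, 0,\dots,0, v_j + w', v_{j+1},\dots)$ and a correction in $E_{I'}$ once $w$ is distributed appropriately — more cleanly, I will exhibit the projection $E\to E_{I'}$ explicitly and verify its kernel is $E_I$, using that the "collapsing" map replacing $(v_i,\dots,v_j)$ by its truncation lands in $E_I$.

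The orthogonality is the heart of the matter. Take $u\in E_I$, $v\in E_{I'}$; I must show $q(u,v)=0$. Writing out the bilinear form $q(u,v)=\sum_{0\le a<b\le m+1}\tfrac12\big(\omega(u_a,v_b)+\omega(u_b,v_a)\big)$ (or directly from the polarization of the given $q$), and using that $u$ is supported on $I$ while $v$ is supported on $I'$, only pairs $(a,b)$ with $a\in I$, $b\in I'$ or vice versa contribute. Since $I\cap I' = \{i,j\}$ and $I' = \{i,\dots,j\}$, the nonzero terms involve $\omega(u_i, v_b)$ and $\omega(u_j, v_b)$ for $b\in\{i+1,\dots,j-1\}$ (the components of $v$ off the overlap). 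Now I would use the defining relation $\sum_{k} v_k = 0$ for $v\in E_{I'}$, i.e. $v_i + v_{i+1}+\dots+v_j = 0$, which gives $v_{i+1}+\dots+v_{j-1} = -(v_i+v_j)$; substituting collapses $\sum_{b=i+1}^{j-1}\omega(u_i,v_b) = \omega(u_i, -(v_i+v_j)) = -\omega(u_i,v_j)$ and similarly for $u_j$. Collecting all surviving terms, everything reduces to combinations of $\omega(u_i,v_j)$, $\omega(u_j,v_i)$, $\omega(u_i,v_i)=0$, $\omega(u_j,v_j)=0$, and I claim these too cancel in pairs because the coefficient of $u_i$ against $v_j$ arising from "$a<b$" matches the one from "$b<a$" with opposite sign by antisymmetry of $\omega$. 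Carrying out this bookkeeping carefully and confirming the signs is the one genuinely delicate step; I expect it to be the main obstacle, though it is a finite and mechanical verification once the index ranges are laid out.

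Once orthogonality is established, $E=E_I\oplus E_{I'}$ as an orthogonal direct sum of quadratic spaces follows: the sum is direct by the dimension count above (or by $E_I\cap E_{I'}=0$), and it is $q$-orthogonal by the computation just sketched; nondegeneracy of the restrictions to $E_I$ and $E_{I'}$ then also follows from nondegeneracy of $q$ on $E$. This completes the proof. I would remark that the special cases $i=0$ or $j=m+1$ (so that one of $I$, $I'$ degenerates) are consistent with the conventions, and that for $j=i+1$ the subspace $E_{I'}$ is zero-dimensional, recovering the trivial decomposition.
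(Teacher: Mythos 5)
Your treatment of the direct-sum part is exactly the paper's: the dimension count $\dim E_I+\dim E_{I'}=(|I|-2)+(|I'|-2)=m$ together with $E_I\cap E_{I'}=0$ (forced because an element of the intersection is supported on $\{i,j\}$ with $v_i+v_j=0$ and $l_i\neq l_j$) already yields $E=E_I\oplus E_{I'}$; the detour where you try to exhibit the spanning map or an explicit projection is unnecessary and, as written, muddled, but it is harmless.

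The gap is in the orthogonality computation, which is the heart of the lemma and which you leave as an ``expected'' bookkeeping exercise. Two concrete problems: first, your enumeration of the surviving cross terms is incomplete --- besides $\omega(u_i,v_b)$ and $\omega(u_j,v_b)$ with $i<b<j$, the polarized form also contains all terms $\omega(u_a,v_b)$ with $a<i$ and $i\le b\le j$, and all terms pairing $v_a$, $i\le a\le j$, with $u_b$, $b\ge j$; second, the final mechanism you invoke (pairwise cancellation of $\omega(u_i,v_j)$ against $\omega(u_j,v_i)$ ``by antisymmetry'') is not what happens and would not hold term by term, since these are unrelated numbers. What actually happens, and what the paper does in one line, is that each half of the polarization collapses blockwise using only the relation $\sum_{b=i}^{j}v_b=0$ for $v\in E_{I'}$ and the vanishing of $\omega$ on each line: for instance
\[
\sum_{a<b}\omega(u_a,v_b)\;=\;\sum_{a<i}\omega\Bigl(u_a,\sum_{b=i}^{j}v_b\Bigr)+\omega\Bigl(u_i,\sum_{b=i+1}^{j}v_b\Bigr)\;=\;0+\omega(u_i,-v_i)\;=\;0,
\]
and symmetrically the other half reduces to $-\omega(v_j,u_j)=0$; no relation on $u$ and no cancellation between the two halves is needed. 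So your overall strategy (support analysis plus the defining relation of $E_{I'}$ plus isotropy of the lines) is the right one and does succeed, but as submitted the key step is neither correctly set up nor carried out, so the proof is incomplete until you replace the claimed pairwise cancellation by the blockwise collapse above.
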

\begin{proof}
We have $\textup{dim}(E)=\textup{dim}(E_I)+\textup{dim}(E_{I'})$ and $E_I\cap E_{I'}=0,$ so $E=E_I\oplus E_{I'}.$ Also, consider  $v\in E_I$ and $v'\in E_{I'}$ such that
$v=(v_0,\dots, v_{m+1}), v'=(v_0',\dots, v_{m+1}')\in E.$ Then we have $v_r=0$ for $i<r<j$ and $v_r'=0$ for $r<i$ and for $r>j.$    We have
\be
\begin{split}
&q(v,v')=\sum_{r_1<r_2}\omega(v_{r_1},v_{r_2}')=\sum_{r_1\leq r_2}\omega(v_{r_1},v_{r_2}')=\sum_{r_1 \leq i\leq r_2\leq j}\omega(v_{r_1},v_{r_2}')\\
&=\omega \left (\sum_{ r_1\leq i} v_{r_1}, \sum_{i\leq r_2 \leq j} v_{r_2}'\right )=\omega \left (\sum_{ r_1\leq i} v_{r_1}, 0\right )=0.
\end{split}
\ee
This implies that $E_I$ and $E_{I'}$ are orthogonal subspaces of $E,$ so the decomposition  $E=E_I\oplus E_{I'}$ is orthogonal.
\end{proof}

\begin{theorem}\label{TheoremOrthoschemesConfigurations}
For a configuration $(x_0,\dots, x_{m+1})$ consider a projective simplex 
\[
\textup{ort}(x_0,\dots, x_{m+1})=(Q;H_1,\dots,H_{m+1})
\]	
in $\mathbb{P}(E),$ where $Q$ is the quadric defined by a quadratic form $q$ and $H_i=\mathbb{P}\left(E_{i-1,i,i+1}^{\perp}\right)$ for $1\leq i \leq m.$ Then $\textup{ort}(x_0,\dots, x_{m+1})$ is a generic projective orthoscheme. This gives a bijection between points of $\mathfrak{M}_{0,m+2}$ and isometry classes of generic projective orthoschemes.
\end{theorem}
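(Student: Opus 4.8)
\textbf{Proof plan for Theorem \ref{TheoremOrthoschemesConfigurations}.}

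The plan is to verify three things in turn: (i) that $\textup{ort}(x_0,\dots,x_{m+1})$ is genuinely a generic projective orthoscheme; (ii) that the assignment $x\mapsto \textup{ort}(x)$ is well-defined on isometry classes, i.e.\ independent of the choice of symplectic form $\omega$ and of projective representatives for the $x_i$; and (iii) that it is a bijection onto isometry classes of generic orthoschemes, which I would do by exhibiting an explicit inverse. For step (i), the orthogonality condition is essentially Lemma \ref{LemmaOrtSum}: taking $I$ and $I'$ to be the subsets corresponding to the corner triangles $(i-1,i,i+1)$ and $(j-1,j,j+1)$, these triangles have disjoint interiors precisely when $|i-j|>1$, and in that case $E_{i-1,i,i+1}$ and $E_{j-1,j,j+1}$ lie in complementary orthogonal summands (iterate Lemma \ref{LemmaOrtSum}, or prove directly the stated criterion that $E_{I}\perp E_{I'}$ iff the triangles have disjoint interiors, which is the ``Maslov index is a quadratic form'' statement advertised in the introduction). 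Hence $H_i\perp H_j$ for $|i-j|>1$. Smoothness of $Q$ follows because $q$ is nondegenerate for a configuration of \emph{distinct} points; this should be a short computation, e.g.\ by induction on $m$ using the orthogonal decomposition, with the base case $m=1$ handled by formula (\ref{FormulaQuad3}), where $q(v,v)=\omega(e_1,e_2)\omega(e_2,e_0)\omega(e_0,e_1)\neq 0$ exactly when the three points are pairwise distinct. General position of $H_1,\dots,H_{m+1}$ and of the extended family $H_0,\dots,H_{m+1}$ should be tracked as an open condition cut out by the vanishing of certain Plücker-type expressions in the $x_i$; genericity of the orthoscheme corresponds exactly to the locus $\mathfrak{M}_{0,m+2}$ of \emph{all distinct} points (together with the general-position constraints), which is what we want.

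For step (ii), I would note that rescaling $\omega$ by $\lambda\in\C^\times$ rescales $q$ by $\lambda$, which does not change the quadric $Q=\{q=0\}$ nor the polar duality, hence does not change $\textup{ort}(x)$; and rescaling a representative $e_i\in l_i$ rescales the corresponding coordinate on $E$, which is a linear automorphism of $E$ carrying the whole configuration $(Q;H_1,\dots,H_{m+1})$ to an isometric one. And applying $\psi\in\textup{PGL}(V)$ to the configuration $x$ induces, via pullback of $\omega$, a linear isomorphism $E(x)\cong E(\psi x)$ intertwining the quadratic forms and the subspaces $E_I$, hence an isometry of orthoschemes. So $\textup{ort}$ descends to a well-defined map on isometry classes.

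The real content is step (iii), the construction of the inverse, and I expect this to be the main obstacle. Given a generic orthoscheme $S=(Q;H_1,\dots,H_m)$ in $\mathbb{P}^{m-1}=\mathbb{P}(E)$, one forms the extended hyperplanes $H_0,\dots,H_{m+1}$ by (\ref{FormulaCorners}); the vertices of the Napier-cyclic configuration are then the points $\mathbb{P}(E_{i-1,i,i+1})$ (equivalently the $m+2$ points $\mathbb{P}\big(\bigcap_{j\notin\{i-1,i,i+1\}}h_j\big)$), and one must produce from this data a $2$-dimensional space $V$ with $m+2$ marked lines $l_i$ and a symplectic form, reconstructing (\ref{FormulaMaslov}). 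The natural candidate for $V$ is a suitable cohomology/Ext space attached to the orthoscheme — this is precisely where the Kashiwara/Beilinson description of the Maslov index in terms of sheaf cohomology (cited as \cite{LV80}, \cite{Tho06}) enters: one realizes $l_i$ as a line in $V$ functorially from $S$, checks that the quadratic form recovered from $\omega$ on $\textup{Ker}(\bigoplus l_i\to V)$ coincides with $q$ up to scalar (using that both are determined by their ``orthogonality pattern'' together with one normalization, or by a direct rank/signature argument), and verifies that $\textup{ort}$ and this reconstruction are mutually inverse. A cleaner route, which I would try first, is dimension-counting plus functoriality: both $\mathfrak{M}_{0,m+2}$ and the space of generic orthoschemes modulo isometry are irreducible of dimension $m-1$ (the orthoscheme has $m$ hyperplanes and a quadric in $\mathbb{P}^{m-1}$, and the $\binom{m}{2}-(m-1)$ orthogonality conditions plus projective equivalence cut the dimension down to $m-1$), the map $\textup{ort}$ is algebraic, and it suffices to show it is injective and dominant; injectivity follows by recovering the cross-ratios $[x_{i_0},x_{i_1},x_{i_2},x_{i_3}]$ of four of the points as cross-ratios of the four hyperplanes $H_{i_0},H_{i_1},H_{i_2},H_{i_3}$ restricted to a generic line, or as ratios of the quadratic-form values $q(v_{I})$ built from the relevant $E_I$'s (cf.\ (\ref{FormulaQuad3})), and a point of $\mathfrak{M}_{0,m+2}$ is determined by enough cross-ratios. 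I would carry out the cross-ratio recovery explicitly in low rank ($m=3$, recovering Gauss's pentagramma mirificum, and $m=4$, matching Figure \ref{FigureOrthoscheme}) to fix signs and normalizations, then push the inductive/functorial argument through in general. The assertion about faces of $\textup{ort}(x)$ being isometric to orthoschemes for sub-configurations $(x_0,\dots,\widehat{x}_i,\dots,x_{m+1})$ — which is the second half of the theorem statement, matching Theorem \ref{MainTheoremOrthoschemes} — then follows by unwinding the definitions: the face $H_i$ of $\textup{ort}(x)$ is $\mathbb{P}(E_{i-1,i,i+1}^\perp)$, and the isometry $E_{\{0,\dots,\widehat{i},\dots,m+1\}}\cong E_{i-1,i,i+1}^\perp$ (an instance of Lemma \ref{LemmaOrtSum} applied to the split $E = E_{i-1,i,i+1}\oplus E_{i-1,i,i+1}^{\perp}$ when those corner-triangle indices are consecutive) identifies the induced projective simplex on that face with $\textup{ort}(x_0,\dots,\widehat{x}_i,\dots,x_{m+1})$.
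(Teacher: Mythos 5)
Your step (i) is essentially the paper's argument: orthogonality comes from Lemma \ref{LemmaOrtSum} via $E_{i-1,i,i+1}^{\perp}=E_{\{0,\dots,i-1,i+1,\dots,m+1\}}$, and genericity the paper gets by an even simpler direct argument (a vector $v$ in $\bigcap_{r\notin\{i,j\}}h_r$ has $v_r=0$ for $r\notin\{i,j\}$, so $v_i+v_j=0$, contradicting distinctness of the lines), rather than tracking Pl\"ucker-type conditions. Step (ii) is fine but is not where the content lies.

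The genuine gap is step (iii). You never actually construct the inverse: your first route (realizing $V$ as an Ext/sheaf-cohomology space \`a la Kashiwara--Beilinson) is left as a placeholder, and your ``cleaner route'' does not prove the theorem. An injective dominant morphism between irreducible varieties of the same dimension in characteristic zero is birational, but it need not be surjective, so injectivity plus dominance only identifies $\mathfrak{M}_{0,m+2}$ with a dense open subset of orthoschemes; the theorem asserts a bijection onto \emph{all} isometry classes of generic orthoschemes, and genericity as defined in the paper (the $m+2$ hyperplanes $H_0,\dots,H_{m+1}$ in general position) is not visibly the same open locus as your image. You would also need to set up ``generic orthoschemes modulo isometry'' as an irreducible variety before the dimension count means anything, and your injectivity sketch is shaky: four hyperplanes of $\mathbb{P}^{m-1}$ in general position do not have a well-defined cross-ratio, and restricting them to a generic line produces a cross-ratio that depends on the line. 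The paper's proof avoids all of this with an explicit, elementary inverse: for a generic orthoscheme $S$ one takes the injective map $p\colon E\lra\bigoplus_{i=0}^{m+1}E/h_i$ (with $h_0,h_{m+1}$ defined by (\ref{FormulaCorners})); its cokernel is $2$-dimensional, the images of the lines $E/h_i$ in $\textup{Coker}(p)$ are distinct, and this configuration is $\textup{conf}(S)\in\mathfrak{M}_{0,m+2}$. One then checks $\textup{conf}(\textup{ort}(x))=x$ using the exact sequence $0\to E\to\bigoplus l_i\to V\to 0$ and $l_i\cong E_{i-1,i,i+1}\cong E/h_i$, and $\textup{ort}(\textup{conf}(S))\cong S$ because the hyperplane configuration is recovered up to projective equivalence and $Q$ is determined by $H_0,\dots,H_{m+1}$. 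Without this (or an equivalent) construction, surjectivity — the real content of the bijection — is not established in your proposal.
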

\begin{proof}
To prove that $S=\textup{ort}(x_0,\dots, x_{m+1})$ is an orthoscheme first notice that by Lemma \ref{LemmaOrtSum} we have 
\[
E_{i-1,i,i+1}^{\perp}=E_{\{0,\dots, i-1,i+1,\dots, m+1\}}.
\] 
From here it is obvious that for $|i-j|>1$ we have $E_{j-1,j,j+1}\subseteq E_{i-1,i,i+1}^{\perp},$ so $S$ is an orthoscheme. By the same argument, $h_0=E_{0,1,m+1}^{\perp}$ and $h_{m+1}=E_{0,m,m+1}^{\perp}.$
It is easy to see that
\be \label{FormulaGeneric}
\bigcap_{r\notin\{i,j\}} h_r=0
\ee
 for any $0 \leq i\neq j\leq m+1.$ Indeed, by Lemma \ref{LemmaOrtSum}  if $v=(v_0,\dots,v_{m+1})$ is a vector in $\bigcap_{r\notin\{i,j\}} h_r$ then we have an equality $v_r=0$ for any $r\notin \{i,j\}.$ Since $\sum_{r=0}^{m+1}v_r=0,$ we conclude that $v_i+v_j=0,$ which contradicts our assumption that the lines $l_0,\dots, l_{m+1}$ are distinct. The formula (\ref{FormulaGeneric}) implies that $S$ is generic.

Next, we give a construction of the map in the other direction. For a generic orthoscheme 
\[
S=(Q;H_1,\dots,H_m)
\] 
in $\mathbb{P}(E)$ we consider the sum of projection maps  
\be \label{FormulaSumOfProjections}
p\colon E\lra\bigoplus_{i=0}^{m+1}(E/h_i).
\ee
 Since $H_0,\dots, H_{m+1}$ are in general position, $p$ is injective and images of the lines $E/h_i$ in the space $\textup{Coker}(p)$ are distinct. We define $\textup{conf}(S)$ to be the configuration of these lines in $\mathbb{P}^1=\mathbb{P}(\textup{Coker}(p)).$

To see that $\textup{conf}(\textup{ort}(x_0,\dots,x_{m+1}))=(x_0,\dots,x_{m+1})$ notice that  for a configuration $(x_0,\dots,x_{m+1})$ we have an exact sequence 
\be
0\lra E\lra \bigoplus_{i=0}^{m+1} l_i \lra V \lra 0.
\ee
By Lemma \ref{LemmaOrtSum} we have a canonical isomorphism ${l_i\cong E_{i-1, i,i+1}\cong E/h_i}$ for $0\leq i\leq m+1.$ Moreover, it is easy to see that the embedding $E\hookrightarrow \oplus_{i=0}^{m+1} l_i$ is identified with the sum of projections (\ref{FormulaSumOfProjections}). 

Finally, we need to show that the generic orthoschemes 
\[
S=(Q;H_1,\dots,H_m)
\]
and 
\[
{\textup{ort}(\textup{conf}(S))=(Q';H_1',\dots,H_m')}
\]  
are isometric. By the same argument as above,  the configurations of hyperplanes $H_0,\dots, H_{m+1}$ and  $H_0',\dots, H_{m+1}'$ are projectively equivalent. The fact that $Q$ is uniquely  determined by the hyperplanes $H_0,\dots, H_{m+1}$ can be easily checked in coordinates and is left to the reader.
\end{proof}

The faces and the angles of a generic orthoscheme are joins of generic orthoschemes.

\begin{proposition}\label{PropositionFacesAngles}
Let $S=\textup{ort}(x_0,\dots,x_{m+1})$ be a generic orthoscheme. For a subset $I=\{i_1,\dots,i_r\}	\subseteq \{1,\dots,m\}$ we define $i_0=0, i_{r+1}=m+1.$ Then we have
\be\label{FormulaFaceOrthoscheme} 
S_I=\textup{ort}(x_{i_0},x_{i_1},\dots,x_{i_{r+1}}),
\ee
and
\be \label{FormulaAngleOrthoscheme} 
S^I=\prod_{p=0}^r\textup{ort}(x_{i_p},x_{i_p+1},\dots,x_{i_{p+1}-1},x_{i_{p+1}}).
\ee

\end{proposition}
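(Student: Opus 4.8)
The plan is to reduce both formulas to the single structural fact established in Lemma \ref{LemmaOrtSum}, namely the orthogonal decomposition $E = E_I \oplus E_{I'}$ associated to a "chord" of the polygon, combined with the explicit description of the hyperplanes of $\textup{ort}(x_0,\dots,x_{m+1})$ as polars of the corner lines $E_{i-1,i,i+1}$. Recall that for a subset $J \subseteq \{0,\dots,m+1\}$ the space $E_J$ is, as a quadratic space, canonically isometric to the space built by the Maslov construction from the subconfiguration $(x_j)_{j\in J}$; this is the observation recorded just after the definition of $E_I$. So the whole argument is a matter of identifying the relevant sub- and quotient quadratic spaces with the $E_J$'s for appropriate $J$.

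For the face formula \eqref{FormulaFaceOrthoscheme}: by definition $S_I$ lives in $H_I = \bigcap_{p=1}^r H_{i_p}$, equipped with the quadric $Q \cap H_I$ and the hyperplanes $H_I \cap H_j$ for $j \notin I$. First I would observe that $H_I = \mathbb{P}\big(\bigcap_{p} E_{i_p-1,i_p,i_p+1}^{\perp}\big) = \mathbb{P}\big((\sum_p E_{i_p-1,i_p,i_p+1})^{\perp}\big)$, and then identify $\sum_p E_{i_p-1,i_p,i_p+1}$ with $E_{I^c}$ where $I^c = \{0,\dots,m+1\}\setminus\{i_1,\dots,i_r\}$ — indeed, iterating Lemma \ref{LemmaOrtSum} shows that removing the interior vertices $i_1,\dots,i_r$ one at a time decomposes $E$ orthogonally, and the span of the corner spaces at those vertices is exactly the complementary summand; hence $H_I = \mathbb{P}(E_{\{i_0,i_1,\dots,i_{r+1}\}})$ (here using that $i_0 = 0$, $i_{r+1}=m+1$ are automatically in $I^c$). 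By the isometry $E_{\{i_0,\dots,i_{r+1}\}} \cong$ the Maslov space of $(x_{i_0},\dots,x_{i_{r+1}})$, the quadric $Q\cap H_I$ is the quadric $q=0$ of that space. It then remains to check that the hyperplanes $H_I \cap H_j$, $j\notin I$, become, under this identification, the corner polars $E_{j-1,j,j+1}^{\perp}$ computed inside the smaller space — and this is precisely the content of the generalized corner formula $H_r = \big(\bigcap_{i\notin\{r-1,r,r+1\}} H_i\big)^{\perp}$ applied with neighbors taken in the cyclic order of the index set $\{i_0,\dots,i_{r+1}\}$ rather than $\{0,\dots,m+1\}$. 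This gives $S_I = \textup{ort}(x_{i_0},\dots,x_{i_{r+1}})$.

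For the angle formula \eqref{FormulaAngleOrthoscheme}: $S^I$ lives in $(H_I)^{\perp}$ with quadric $Q \cap (H_I)^{\perp}$ and hyperplanes $H_{I\setminus\{i\}}\cap(H_I)^{\perp}$ for $i\in I$. By the identification above, $(H_I)^{\perp} = \mathbb{P}\big(E_{I^c}^{\perp}\big) = \mathbb{P}\big(\sum_p E_{i_p-1,i_p,i_p+1}\big)$. The key point is that the $r+1$ "gaps" $(i_p, i_p+1, \dots, i_{p+1})$ for $p = 0,\dots,r$ give, by repeated application of Lemma \ref{LemmaOrtSum}, an orthogonal direct sum decomposition of $E$ into $E_{I_{\mathrm{face}}} \oplus \bigoplus_{p=0}^r E_{\{i_p,\dots,i_{p+1}\}}$, and $E_{I^c}^{\perp}$ — being the orthogonal complement of the first summand — equals $\bigoplus_{p=0}^r E_{\{i_p,\dots,i_{p+1}\}}$. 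Each factor $E_{\{i_p,\dots,i_{p+1}\}}$ is isometric to the Maslov space of $(x_{i_p},\dots,x_{i_{p+1}})$. I would then check that the hyperplanes of $S^I$ split compatibly with this direct sum: the hyperplane $H_{I\setminus\{i_p\}}\cap(H_I)^{\perp}$ cuts out a hyperplane only in the two factors adjacent to the deleted vertex $i_p$, and in fact restricts to a corner polar in each — so that grouping the hyperplanes of $S^I$ by block exactly reproduces the hyperplanes of $\textup{ort}(x_{i_p},x_{i_p+1},\dots,x_{i_{p+1}-1},x_{i_{p+1}})$ in the $p$-th factor (the two "endpoint" corners $i_p$ and $i_{p+1}$ of each block being shared between consecutive blocks, which is consistent with the definition of the join). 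This yields $S^I = \prod_{p=0}^r \textup{ort}(x_{i_p},\dots,x_{i_{p+1}})$.

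The main obstacle I expect is purely bookkeeping: making the iterated application of Lemma \ref{LemmaOrtSum} precise (the lemma is stated for a single chord, and one must argue that successive chords remain "compatible" inside the already-split summands, which follows because a chord of a subpolygon is still a chord of the original polygon and the relevant corner spaces lie inside the appropriate summand), and checking that under the canonical isometries $E_J \cong$ (Maslov space of $(x_j)_{j\in J}$) the corner polars really do correspond — i.e.\ that polarity is respected by orthogonal direct sum decompositions. Both are routine linear algebra, but the cyclic relabeling of neighbors in the subpolygon (so that the "first" and "last" vertices $i_0, i_{r+1}$ play the role of the distinguished side $p_0 p_{2n+1}$) is the place where one must be careful, and it is what makes the face of an $\textup{ort}$ again an $\textup{ort}$ rather than some twisted variant; this matches the statement of Theorem \ref{MainTheoremOrthoschemes}.
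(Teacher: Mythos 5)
You follow the paper's route---iterate Lemma \ref{LemmaOrtSum} to obtain the orthogonal decomposition $E=E_J\oplus\bigoplus_{p=0}^{r}E_{J_p}$ with $J=\{i_0,\dots,i_{r+1}\}$ and $J_p=\{i_p,i_p+1,\dots,i_{p+1}\}$, and then match quadrics and corner polars block by block; that is exactly the content of the paper's (very terse) proof. But your central identification fails on dimensional grounds. From $H_i=\mathbb{P}(E_{i-1,i,i+1}^{\perp})$ and Lemma \ref{LemmaOrtSum} one has $E_{i-1,i,i+1}^{\perp}=E_{\{0,\dots,m+1\}\setminus\{i\}}$, hence $h_I=\bigcap_{p=1}^{r}E_{i_p-1,i_p,i_p+1}^{\perp}=E_{\{0,\dots,m+1\}\setminus I}$, a space of dimension $m-r$. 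Your claim that $\sum_{p}E_{i_p-1,i_p,i_p+1}$ equals $E_{\{0,\dots,m+1\}\setminus I}$ cannot hold (the left side is spanned by $r$ corner lines, each of which has a nonzero coordinate at some $i_p\in I$, while the right side consists of vectors vanishing on $I$ and has dimension $m-r$), and neither can the conclusion $H_I=\mathbb{P}\bigl(E_{\{i_0,\dots,i_{r+1}\}}\bigr)$, since $\dim E_J=r$ whereas $\dim h_I=m-r$. What is true is $H_I=\mathbb{P}\bigl(E_{\{0,\dots,m+1\}\setminus I}\bigr)$ and $(H_I)^{\perp}=\mathbb{P}\bigl(\sum_p E_{i_p-1,i_p,i_p+1}\bigr)$, and the latter splits orthogonally according to the maximal runs of consecutive elements of $I$, not according to the intervals $J_p$.

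Consequently, with the definitions of $S_I$ and $S^I$ from \S\ref{SectionProjectiveSimplex}, the face $S_I$ is isometric to $\textup{ort}\bigl((x_j)_{j\notin I}\bigr)$ and the angle $S^I$ to a join over the maximal runs of $I$; already for $I=\{i\}$ the face lies in the $(m-2)$-dimensional $H_i$ and is $\textup{ort}(x_0,\dots,\widehat{x}_i,\dots,x_{m+1})$, as in Theorem \ref{MainTheoremOrthoschemes}, not the $0$-dimensional $\textup{ort}(x_0,x_i,x_{m+1})$. The right-hand sides of (\ref{FormulaFaceOrthoscheme}) and (\ref{FormulaAngleOrthoscheme}) are in fact $S_{I'}$ and $S^{I'}$ for the complementary subset $I'=\{1,\dots,m\}\setminus I$, for which $h_{I'}=E_J$ and $h_{I'}^{\perp}=\bigoplus_p E_{J_p}$ do hold; so your slip reproduces an index-complementation inconsistency already present in the printed statement, but as written your argument rests on a false equality of subspaces and does not go through. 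If you rerun it for $I'$ (equivalently, prove $S_I\cong\textup{ort}((x_j)_{j\notin I})$ and the run-wise join for $S^I$), the remaining bookkeeping---the canonical isometry of $E_J$ with the Maslov space of the subconfiguration, corner polars restricting to corner polars, and the block decomposition of the angle---is exactly the detail the paper leaves to the reader, and it works; the cyclic relabeling you flagged is not the delicate point, the complementation is.
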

\begin{proof}
	Let $J=I\cup\{0, m+1\}\subseteq \{0,\dots,m+1\}$ and $J_p=\{i_p,i_p+1,\dots,i_{p+1}-1,i_{p+1}\}$ for $0\leq p\leq r.$ Applying Lemma \ref{LemmaOrtSum} consequently we deduce that we have an orthogonal decomposition
\[
E=E_J\oplus \bigoplus_{p=0}^r E_{J_p}. 
\]
	From here, the statement can be deduced easily; we leave the details to the reader.
\end{proof}

\begin{corollary} \label{CorollaryGenericOrthoscheme}
A generic orthoscheme is a nondegenerate projective simplex.	
\end{corollary}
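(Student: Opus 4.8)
The goal is to prove that a generic orthoscheme $S = \textup{ort}(x_0,\dots,x_{m+1})$ is a nondegenerate projective simplex, i.e.\ that the intersections $Q_I = H_I \cap Q$ are smooth quadrics for every subset $I \subseteq \{1,\dots,m\}$, where $H_I = \bigcap_{i \in I} H_i$. The plan is to reduce this to a statement about the restriction of the quadratic form $q$ to the relevant subspaces of $E$, and then to apply the orthogonal decomposition results (Lemma \ref{LemmaOrtSum} and Proposition \ref{PropositionFacesAngles}) that we have already established.

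First I would translate smoothness of $Q_I$ into nondegeneracy of a quadratic form. A quadric $Q \cap H_I$ in the projective subspace $H_I = \mathbb{P}(h_I)$ is smooth precisely when the restriction $q|_{h_I}$ of $q$ to the linear subspace $h_I \subseteq E$ is nondegenerate. So it suffices to show that $q|_{h_I}$ is nondegenerate for every $I \subseteq \{1,\dots,m\}$. Now by Proposition \ref{PropositionFacesAngles}, the face $S_I$ equals $\textup{ort}(x_{i_0},x_{i_1},\dots,x_{i_{r+1}})$ for the augmented index set $\{i_0,\dots,i_{r+1}\} = I \cup \{0,m+1\}$, and the underlying quadratic space of $S_I$ is exactly $h_I$ with the form $q|_{h_I}$; indeed $h_I = \bigcap_{i\in I} h_i = E_{J}$ where $J = \{0,\dots,m+1\} \setminus (\text{the indices } i \text{ adjacent to } I)$... more precisely, iterating Lemma \ref{LemmaOrtSum} identifies $h_I$ with the quadratic space $E_{I\cup\{0,m+1\}}$ associated to the subconfiguration $(x_{i_0},\dots,x_{i_{r+1}})$, and this space is by construction the space $E$ of the smaller configuration. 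Hence the problem reduces to the single case $I = \varnothing$: it is enough to prove that the quadratic form $q$ on $E$ itself (for an arbitrary configuration of distinct points) is nondegenerate.

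The key step is therefore to verify that $q$ is nondegenerate on $E = \textup{Ker}\big(\bigoplus_{i=0}^{m+1} l_i \xrightarrow{\sum} V\big)$. I would do this by exhibiting $E$, together with $q$, as an orthogonal direct sum built out of the triangle spaces $E_{i-1,i,i+1}$: picking a triangulation of the $(m+2)$-gon by the $m-1$ diagonals emanating from vertex $0$, Lemma \ref{LemmaOrtSum} applied repeatedly gives an orthogonal decomposition $E = \bigoplus_{i=1}^{m} E_{0,i,i+1}$ (each summand a line), and on each line $E_{0,i,i+1}$ the form $q$ is computed by Example \ref{ExampleQuadricTriangle}: it equals $\omega(e_i,e_{i+1})\,\omega(e_{i+1},e_0)\,\omega(e_0,e_i)$ on the spanning vector, which is nonzero precisely because $x_0, x_i, x_{i+1}$ are pairwise distinct (distinct lines in $V$ have nonzero symplectic pairing). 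Since $q$ restricts to a nonzero form on each line of an orthogonal basis, $q$ is nondegenerate on $E$. Because genericity of $S$ guarantees that the hyperplanes $H_0,\dots,H_{m+1}$ are in general position — and hence (via Theorem \ref{TheoremOrthoschemesConfigurations}) that $S$ really is $\textup{ort}$ of a configuration of \emph{distinct} points, and that all the faces $S_I$ are again generic orthoschemes of genuine subconfigurations — the reduction in the previous paragraph is legitimate for every $I$.

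The main obstacle I anticipate is purely bookkeeping: making the identification $h_I \cong E_{I \cup \{0,m+1\}}$ precise and checking it is an isometry, i.e.\ confirming that ``the underlying quadratic space of the face $S_I$ is the subspace $h_I$ with its induced form'' — this is exactly what Proposition \ref{PropositionFacesAngles} asserts at the level of projective simplices, but one must make sure the quadric of $S_I$ (defined as $Q \cap H_I$ inside $\mathbb{P}(h_I)$) matches the quadric attached to the subconfiguration $(x_{i_0},\dots,x_{i_{r+1}})$ by the Maslov-index construction. Once that compatibility is in hand, everything collapses to the one-line computation of Example \ref{ExampleQuadricTriangle} together with the elementary fact that the $l_i$ are pairwise transverse. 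So the proof is essentially: nondegeneracy of faces $\Leftrightarrow$ nondegeneracy of $q$ on subconfiguration spaces $\Leftrightarrow$ (by orthogonal decomposition into triangle lines) nonvanishing of products of symplectic pairings of distinct lines, which holds.
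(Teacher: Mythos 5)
Your proof is correct and follows essentially the same route as the paper: the paper's own argument is a one-liner invoking Proposition \ref{PropositionFacesAngles} (faces of a generic orthoscheme are again built from generic orthoschemes of subconfigurations), so that all the quadrics $Q\cap H_I$ are smooth. What you add — and it is a genuine addition, since the paper merely asserts it in \S\ref{SecOrt1} — is the explicit verification that the Maslov form $q$ is nondegenerate on $E$ (and on every $E_K$), via the fan decomposition $E=\bigoplus_{i=1}^{m}E_{0,i,i+1}$ obtained by iterating Lemma \ref{LemmaOrtSum} together with the nonvanishing in Example \ref{ExampleQuadricTriangle}; this is exactly the right way to ground the claim, and it is also what makes the identification $h_i=E_{i-1,i,i+1}^{\perp}$ a genuine orthocomplement.

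One small inaccuracy in your bookkeeping: since $h_i=E_{i-1,i,i+1}^{\perp}=E_{\{0,\dots,m+1\}\setminus\{i\}}$ (as in the proof of Theorem \ref{TheoremOrthoschemesConfigurations}), the subspace $h_I=\bigcap_{i\in I}h_i$ is $E_{\{0,\dots,m+1\}\setminus I}$, i.e.\ the Maslov space of the subconfiguration obtained by \emph{deleting} the points indexed by $I$ (consistent with Theorem \ref{MainTheoremOrthoschemes}), not $E_{I\cup\{0,m+1\}}$, which is rather $(h_I)^{\perp}$ and underlies the $I$-angle. This does not affect your conclusion, because in either case $q|_{h_I}$ is the Maslov form of a configuration of distinct points and hence nondegenerate by your triangle-line decomposition; but the identification should be stated with the complementary index set.
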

\begin{proof}
For a generic orthoscheme $S=(Q,H_1,\dots,H_m)$ the quadric $Q$ is smooth, and every face $S_I$ is a generic orthoscheme, so quadrics $H_I\cap Q$ are also smooth.
\end{proof}

Theorem \ref{TheoremOrthoschemesConfigurations} and Proposition \ref{PropositionFacesAngles} imply Theorem \ref{MainTheoremOrthoschemes}.

\subsection{Hyperbolic orthoschemes}
In this section, we discuss hyperbolic orthoschemes, i.e., projective orthoschemes coming from hyperbolic geometry.

\begin{definition}
The hyperbolic locus $\mathfrak{M}_{0,m+2}^h\subseteq \mathfrak{M}_{0,m+2}$ is the connected component of the set of real points of the variety $\mathfrak{M}_{0,m+2}$ consisting of configurations projectively equivalent to 
\[
x=(x_{0},\dots,x_{m+1})
\]
with $x_{0},\dots, x_{m+1}\in \mathbb{R}$ and
\be \label{FormulaPointsHyperbolicOrthoscheme}
x_{m+1}<x_{1}<x_{2}<\dots<x_{m-1}<x_{m}<x_{0}.
\ee
\end{definition}

\begin{proposition}
For  $x\in \mathfrak{M}_{0,m+2}^h$  the orthoscheme $\textup{ort}(x)$ is the projectivization of a hyperbolic orthoscheme. This gives a bijection between points of $\mathfrak{M}_{0,m+2}^h$ and isometry classes of hyperbolic orthoschemes.
\end{proposition}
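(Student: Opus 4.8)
The plan is to deduce the proposition from Theorem~\ref{TheoremOrthoschemesConfigurations}, which already identifies generic projective orthoschemes with points of $\mathfrak{M}_{0,m+2}$, by checking that the hyperbolic locus $\mathfrak{M}_{0,m+2}^h$ corresponds precisely to those orthoschemes arising from Klein's model of $\mathbb{H}^{m-1}$. First I would recall the standard dictionary from \S\ref{SectionProjectiveSimplex}: a hyperbolic simplex $\mathcal{S}\subseteq \mathbb{H}^{m-1}$ yields a projective simplex $S$ whose quadric $Q$ is the complexification of the absolute of Klein's model and whose hyperplanes $H_1,\dots,H_m$ are the complexified supporting hyperplanes of the faces of $\mathcal{S}$. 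The orthoscheme condition $H_i\perp H_j$ for $|i-j|>1$ is the defining condition of a hyperbolic orthoscheme in the sense of \S\ref{SectionIntroductionVolumes}, so "$S$ is the projectivization of a hyperbolic orthoscheme" is equivalent to: $S$ is a projective orthoscheme, the quadratic form $q$ defining $Q$ has real signature $(m-1,1)$, and the real points of $S$ bound a nonempty region of $\mathbb{H}^{m-1}$ (the interior of $\mathcal{S}$).

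The main step is the signature/reality computation. Given $x=(x_0,\dots,x_{m+1})\in\mathfrak{M}_{0,m+2}^h$, choose the real normalization (\ref{FormulaPointsHyperbolicOrthoscheme}) and a real symplectic form $\omega$ on $V=\mathbb{R}^2$; then the vector space $E$ of (\ref{FormulaMaslov}) and the quadratic form $q(v)=\sum_{i<j}\omega(v_i,v_j)$ of \S\ref{SecOrt1} are defined over $\mathbb{R}$. I would compute the signature of $q$ directly using the ordering (\ref{FormulaPointsHyperbolicOrthoscheme}): the off-diagonal products $\omega(e_i,e_j)$ for lifts $e_i\in l_i$ have signs governed by the cyclic order of $x_0,\dots,x_{m+1}$ on $\mathbb{P}^1(\mathbb{R})$, and by the three-point formula (\ref{FormulaQuad3}) of Example~\ref{ExampleQuadricTriangle} together with the orthogonal decompositions of Lemma~\ref{LemmaOrtSum}, one can diagonalize $q$ on the basis $\{E_{0,i,i+1}\}$ and read off that exactly one diagonal entry is negative, i.e.\ $q$ has signature $(m-1,1)$. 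This makes $Q$ a real hyperquadric, the interior $\{q<0\}$ a model of $\mathbb{H}^{m-1}$, and the hyperplanes $H_1,\dots,H_m$ cut out there a genuine geodesic orthoscheme; conversely, Klein's model of any hyperbolic orthoscheme produces, via the Maslov construction inverse to $\mathrm{ort}$, a configuration whose cross-ratios of consecutive points are real and of the sign forcing (\ref{FormulaPointsHyperbolicOrthoscheme}) after a projective transformation, landing in the distinguished connected component $\mathfrak{M}_{0,m+2}^h$.

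With the signature statement in hand, the bijection is essentially formal. Injectivity and surjectivity onto isometry classes of hyperbolic orthoschemes follow from Theorem~\ref{TheoremOrthoschemesConfigurations} restricted to real configurations: two hyperbolic orthoschemes are isometric (as hyperbolic polytopes) iff the associated projective orthoschemes are projectively equivalent iff the associated configurations coincide in $\mathfrak{M}_{0,m+2}$, and the hyperbolic-isometry classes are exactly the $\mathrm{PGL}_2(\mathbb{R})$-orbits landing in the component $\mathfrak{M}_{0,m+2}^h$. I expect the main obstacle to be the bookkeeping in the signature computation --- tracking which pair $\{2i,2i+1\}$ contributes the negative direction, and confirming that the component singled out by (\ref{FormulaPointsHyperbolicOrthoscheme}) is connected and is the full preimage of the hyperbolic orthoschemes (rather than, say, spherical or degenerate ones) --- but this is a finite-dimensional linear-algebra check that the orderings make completely explicit, and no new machinery beyond Lemma~\ref{LemmaOrtSum} and Theorem~\ref{TheoremOrthoschemesConfigurations} is needed.
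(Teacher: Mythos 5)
Your overall strategy coincides with the paper's: realify $V$ and $E$ over the hyperbolic locus, use the orthogonal decompositions of Lemma \ref{LemmaOrtSum} together with the three-point formula (\ref{FormulaQuad3}) and the ordering (\ref{FormulaPointsHyperbolicOrthoscheme}) to compute that $q$ is Lorentzian, and then invoke Theorem \ref{TheoremOrthoschemesConfigurations} restricted to real configurations to get the bijection. That part is fine.

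The genuine gap is in the step ``this makes \dots the hyperplanes $H_1,\dots,H_m$ cut out there a genuine geodesic orthoscheme.'' Lorentzian signature of $E$ is necessary but not sufficient: a real projective orthoscheme whose quadric has signature $(m-1,1)$ can still have some of its vertices ideal or ultra-ideal, in which case it is not the projectivization of a geodesic simplex in $\mathbb{H}^{m-1}$. To rule this out you must check that the vertices of $\ort(x)$, which by Lemma \ref{LemmaOrtSum} are the lines $E_{0,i,m+1}$ for $1\leq i\leq m$, are definite lines of the correct sign, so that each meets the chosen sheet of the hyperboloid; this is exactly what the paper verifies (all $\textup{Ind}(E_{0,i,m+1})$ equal), and it is an additional application of (\ref{FormulaQuad3}) with the ordering (\ref{FormulaPointsHyperbolicOrthoscheme}), not a consequence of the signature of $E$. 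A related omission occurs in the converse direction: before you can apply ``the Maslov construction inverse to $\ort$'' you must know that the projectivization of a hyperbolic orthoscheme is a \emph{generic} orthoscheme (the paper gets this because the hyperplanes $H_0,H_{m+1}$ of (\ref{FormulaCorners}) do not meet $\mathbb{H}^{m-1}$), and the ordering (\ref{FormulaPointsHyperbolicOrthoscheme}) is then extracted from the same index computations --- $\textup{Ind}(E_{0,i,m+1})$ forcing $x_{m+1}<x_i<x_0$, and $\textup{Ind}(E_{0,i,j})$ via additivity forcing $x_i<x_j$ --- rather than from an unexplained statement about signs of cross-ratios of consecutive points. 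Both repairs use only tools you already have in hand, but as written the proposal does not close these steps.
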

\begin{proof}
For $x\in \mathfrak{M}_{0,m+2}^h$ the vector space $V$ is a complexification of a real vector space. Also, the quadratic space  $E$ defined by (\ref{FormulaMaslov}) is a complexification of a real quadratic space. In both cases, we denote the corresponding real vector spaces by the same letters. We choose the symplectic form $\omega\in \Lambda^2 V^{\vee}$ to be the usual area form. Denote the index of a quadratic space $E$ by $\textup{Ind}(E).$  Since index is additive, Lemma \ref{LemmaOrtSum} implies that  
\[
\textup{Ind}(E)=\sum_{i=1}^{m}\textup{Ind}(E_{0,i,i+1}).
\]
From the condition (\ref{FormulaPointsHyperbolicOrthoscheme}) and the formula (\ref{FormulaQuad3}) we see that 
\[
\textup{Ind}(E_{0,i,i+1})=
\begin{cases}
1 &\text{if\ } i=m,\\	
-1 &\text{if\ } i\neq m,\\	
\end{cases}
\]
so $\textup{Ind}(E)=m-2$ and  $q$ has   signature $(1,m-1).$ The hyperboloid $\{v\in E \mid q(v)=1\}$ has two connected components and we identify one of them with $\mathbb{H}^{m-1}.$ Since 
\[
\textup{Ind}(E_{0,i,m+1})=1 \text{\ for \ } 1\leq i \leq m,
\]
we can define a hyperbolic tetrahedron with vertices 
\[
A_i=E_{0,i,m+1}\cap \mathbb{H}^{m-1}
\]  
for $1\leq i \leq m.$ In this way we get a hyperbolic orthoscheme with projectivization $\textup{ort}(x).$

To prove the implication in the other direction, consider a projective orthoscheme $S$, obtained from a hyperbolic orthoscheme by projectivization. The orthoscheme $S$ is generic, because hyperplanes $H_0,H_{m+1}$  defined by (\ref{FormulaCorners}) do not intersect $\mathbb{H}^{m-1}.$ The map $(\ref{FormulaSumOfProjections})$ is a complexification of the corresponding map of real vector spaces, so the configuration $\textup{conf}(S)$ is equivalent to a configuration $(x_{0},\dots,x_{m+1})$  with $x_{0},\dots, x_{m+1}\in \mathbb{R}.$ Without loss of generality, $x_{m+1}<x_0.$  Since $\textup{ort}(\textup{conf}(S))$ is isometric to $S$ by Theorem~\ref{TheoremOrthoschemesConfigurations}, we know that 
\[
\textup{Ind}(E_{0,i,m+1})=1 \text{\ for \ } 1\leq i\leq m,
\]
so $x_{m+1}<x_i<x_0$ for $1\leq i\leq m.$ On the other hand, for any $1\leq i<j \leq m$ the quadratic space $E_{0,i,j,m+1}$ must have signature $(1,1),$ so 
\[
\textup{Ind}(E_{0,i,j})=\textup{Ind}(E_{0,i,j,m+1})-\textup{Ind}(E_{0,j,m+1})=-1 \text{\ for \ } 1\leq i\leq m,
\]
thus $x_i<x_j$ by (\ref{FormulaQuad3}).
\end{proof}

\subsection{Orientations of orthoschemes}\label{SecOrOrt}

Let 
\[
\P=(p_0,\dots,p_{2n+1})
\]
be an alternating polygon. The variety $\mathfrak{M}_{\P}$ has an  {\'e}tale covering $\mathfrak{M}_\P^{s}.$ This covering can be characterized by the fact that for an alternating subpolygon $\mathrm{P}'$ of $\P$ the square root of the cross-ratio $\sqrt{\textup{cr}(\P')}$  is a regular function on $\mathfrak{M}_\P^{s}.$ 

By Theorem \ref{TheoremOrthoschemesConfigurations} we have a generic orthoscheme $\textup{ort}(x)$ for every $x\in \mathfrak{M}_{\P};$ let $Q_x$ be the corresponding quadric. Since $Q_x$ has two orientations, we have an action of the spherical pure braid group $\pi_1(\mathfrak{M}_\P)$ on the set with two elements. Consider the corresponding cohomology class 
\[
\textup{Or}_\P\in H^1\left(\mathfrak{M}_{\P},\mathbb{Z}/2\mathbb{Z}\right).
\]

For an alternating  subpolygon $\P'$ of $\P$ we have the forgetful morphism $f_{\P,\P'}\colon \mathfrak{M}_{\P}\lra \mathfrak{M}_{\P'}.$ 

\begin{lemma}\label{LemmaOrientation} For every decomposition $\P=\P_1 \sqcup P_2$ the following equality holds:
\[
\textup{Or}_\P=(f_{\P,\P_1})^*\textup{Or}_{\P_1}+(f_{\P,\P_2})^*\textup{Or}_{\P_2}.
\]
\end{lemma}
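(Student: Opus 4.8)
The plan is to reduce the statement to a purely local computation of the monodromy of the orientation (i.e.\ the choice of a connected component of the Fano variety of maximal isotropic subspaces of $Q_x$) as $x$ moves along a loop in $\mathfrak{M}_\P$. Recall from Lemma~\ref{LemmaOrtSum} that a diagonal of $\P$ with ends of different parity yields an orthogonal decomposition of the quadratic space $E$ associated to $x$: if $\P=\P_1\sqcup\P_2$ corresponds to such a diagonal joining $p_i$ to $p_j$, then $E=E_{\P_1}\oplus E_{\P_2}$ with $E_{\P_1}$ and $E_{\P_2}$ the quadratic spaces of the sub-configurations. The key linear-algebra input is that the variety of maximal isotropic subspaces of an orthogonal direct sum $E=E_1\oplus E_2$, and in particular its set of connected components, is built from those of $E_1$ and $E_2$: a maximal isotropic in $E$ is (generically, over the locus where both restrictions are nondegenerate) obtained from maximal isotropics in $E_1$ and $E_2$, and the induced map on $\pi_0$ is the ``sum'' map $\mathbb{Z}/2\times\mathbb{Z}/2\to\mathbb{Z}/2$. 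This is the statement that the Stiefel--Whitney-type class $\textup{Or}$ is additive under orthogonal direct sum of families of quadratic spaces.

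First I would make precise the definition of $\textup{Or}_\P$: the assignment $x\mapsto Q_x$ together with the two orientations of $Q_x$ defines a double cover of $\mathfrak{M}_\P$, hence a homomorphism $\pi_1(\mathfrak{M}_\P)\to\mathbb{Z}/2\mathbb{Z}$, i.e.\ a class in $H^1(\mathfrak{M}_\P,\mathbb{Z}/2\mathbb{Z})$; equivalently it is the first Stiefel--Whitney class of the local system of components of the Fano variety. Next I would set up the family version of the orthogonal decomposition: over $\mathfrak{M}_\P$ we have the vector bundle $\underline{E}$ with its nondegenerate quadratic form, and for the fixed decomposition $\P=\P_1\sqcup\P_2$ we get a global orthogonal splitting $\underline{E}=(f_{\P,\P_1})^*\underline{E}_{\P_1}\oplus(f_{\P,\P_2})^*\underline{E}_{\P_2}$ of quadratic bundles, where $\underline{E}_{\P_i}$ is the analogous bundle on $\mathfrak{M}_{\P_i}$ and $f_{\P,\P_i}$ is the forgetful map. (Here one uses that for $x\in\mathfrak{M}_\P$ the sub-configurations genuinely lie in $\mathfrak{M}_{\P_i}$, so the pullbacks make sense, and that the diagonal of $\P$ being fixed makes the splitting independent of $x$ — this is exactly the content of Lemma~\ref{LemmaOrtSum}.)

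Then the proof reduces to the following general fact, which I would state and prove as a lemma: if $\underline{E}=\underline{E}_1\oplus\underline{E}_2$ is an orthogonal decomposition of a nondegenerate quadratic bundle over a base $B$, then the ``orientation class'' (first Stiefel--Whitney class of the $\pi_0$-local-system of maximal isotropics) satisfies $\textup{Or}(\underline{E})=\textup{Or}(\underline{E}_1)+\textup{Or}(\underline{E}_2)$ in $H^1(B,\mathbb{Z}/2\mathbb{Z})$. Over the locus where both summands are everywhere nondegenerate this follows because the monodromy action on components of the Fano variety of $\underline{E}$ factors through the product of the monodromy actions on those of $\underline{E}_1$ and $\underline{E}_2$ via the sum map $\mathbb{Z}/2\mathbb{Z}\times\mathbb{Z}/2\mathbb{Z}\to\mathbb{Z}/2\mathbb{Z}$; concretely, a maximal isotropic of $\underline{E}$ deforms along a loop with a sign change equal to the product of the sign changes of its two ``halves.'' Applying this lemma to the splitting constructed in the previous paragraph gives $\textup{Or}_\P=(f_{\P,\P_1})^*\textup{Or}_{\P_1}+(f_{\P,\P_2})^*\textup{Or}_{\P_2}$, which is the claim.

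The main obstacle I anticipate is making the additivity lemma rigorous on the level of the Fano variety of maximal isotropics rather than on an informal ``half of a loop'' picture: one has to check that over an orthogonal direct sum the component set of the Fano variety of the sum is a $\mathbb{Z}/2$-torsor-quotient of the product of the component sets of the summands, compatibly in families, and handle the subtlety that in dimension considerations the maximal isotropics of $E_1\oplus E_2$ are not simply direct sums of maximal isotropics of the two pieces (the dimensions need not add up that way when one summand is odd-dimensional). A clean way around this is to work over $\C$ with the discriminant/spinor-norm description: orient a quadric by a square root of its discriminant, use Example~\ref{ExampleQuadricTriangle} and formula~(\ref{FormulaQuad3}) to see that $\textup{disc}(q)$ on $E$ is, up to squares, the product of the discriminants $\textup{disc}(q|_{E_{\P_i}})$ by additivity of discriminant under orthogonal sum, and then identify $\textup{Or}_{\P'}$ with the class of the double cover on which $\sqrt{\textup{disc}(q|_{E_{\P'}})}$ becomes single-valued (which by the definition of $\mathfrak{M}_\P^s$ is tied to $\sqrt{\textup{cr}(\P')}$). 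Additivity of $\textup{Or}$ then becomes the transparent statement that $\sqrt{ab}$ is single-valued once $\sqrt{a}$ and $\sqrt{b}$ are, i.e.\ the Kummer sequence identity $[ab]=[a]+[b]$ in $H^1(B,\mathbb{Z}/2\mathbb{Z})$ — and the real remaining work is just to verify that $\textup{disc}(q)$ (equivalently, the relevant product of cross-ratios) is the correct representative of $\textup{Or}_\P$, which one can do by the explicit computation of $q$ on $E$ from Lemma~\ref{LemmaOrtSum} and Example~\ref{ExampleQuadricTriangle}.
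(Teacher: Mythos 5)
Your proposal is correct and takes essentially the same route as the paper: Lemma~\ref{LemmaOrtSum} gives the fiberwise orthogonal splitting $E=E_{\P_1}\oplus E_{\P_2}$, and a choice of orientations of the two summand quadrics determines an orientation of $Q_x$, so the monodromy classes add in $H^1(\mathfrak{M}_\P,\mathbb{Z}/2\mathbb{Z})$. The obstacle you anticipate does not actually arise, since $\P_1$ and $\P_2$ are alternating polygons and hence $E_{\P_1}$, $E_{\P_2}$ are both even-dimensional, so direct sums of maximal isotropics are maximal isotropics of $E$ and the $\pi_0$-additivity follows from the intersection-parity criterion; your discriminant/Kummer variant is a legitimate (if not needed) alternative way to make this explicit.
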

\begin{proof}
The statement follows immediately from the following observations. By Lemma \ref{LemmaOrtSum} for a point $x\in \mathfrak{M}_\P$  the quadratic space $E_x$ defined by (\ref{FormulaMaslov}) is the orthogonal sum of the corresponding spaces $E_{f_{\P,\P_1}(x)}$ and $E_{f_{\P,\P_2}(x)}.$ It is easy to see that a choice of orientations of quadrics $Q_{f_{\P,\P_1}(x)}$ and $Q_{f_{\P,\P_1}(x)}$ defines a choice of an orientation of the quadric  $Q_x,$ from where the statement can be easily deduced.
\end{proof}

\begin{definition}\label{DefinitionOrientedConfigurations}
For an alternating polygon $\P$ consider a subgroup 
\[
\bigcap_{\P'\subseteq \P}\textup{Ker}(\textup{Or}_{\P'})\subseteq \pi_1(\mathfrak{M}_{\P}).
\]	
We denote by $\mathfrak{M}_{\P}^s$ the corresponding {\'e}tale cover of $\mathfrak{M}_{\P}.$
\end{definition}

\begin{remark}
	The group $\bigcap_{\P'\subseteq \P}\textup{Ker}(\textup{Or}_{\P'})$ contains the level two congruence subgroup of the pure spherical braid group, so the volume of an orthoscheme is a function on the pro-unipotent completion of this group. It would be interesting to interpret this function in the language of \cite{KM19}.
\end{remark}

By  Definition \ref{DefinitionOrientedConfigurations}  in order to define an orientation of $\textup{ort}(x)$ for each point $x^s\in \mathfrak{M}_\P^s$ over $x\in \mathfrak{M}_\P$ it is sufficient to fix a choice of an orientation of $\textup{ort}(x_0)$ for just one ``base point'' $x_0^s\in  \mathfrak{M}_\P^s.$   The hyperbolic locus $\mathfrak{M}_\P^h\subseteq \mathfrak{M}_\P$ is simply connected, and hyperbolic simplices have a canonical orientation, so any point of $\mathfrak{M}_\P^h$ can be taken as the base point. If $\P$ is odd, we define $\textup{ort}_P(x^s)$ as the scissors congruence class of the oriented orthoscheme $\textup{ort}(x^s)$ with the canonical orientation of the hyperbolic orthoscheme. If $\P$ is even, we use the orientation opposite to the canonical. 

Our next goal is to compute the ``Dehn invariant'' of an oriented orthoscheme, namely the coproduct $\Delta^{\mathcal{G}}[\textup{ort}(x^s)]$ in $\mathcal{G}.$ Recall that we have defined a set of alternating subpolygons $\textup{alt}(\P)$ in \S \ref{SectionFormalCluster}.

\begin{proposition}\label{PropositionOrthoschemeDehn}
 For an alternating polygon $\P$ the following equality holds
\[
\Delta^{\mathcal{G}}\textup{ort}_\P=\sum_{\mathrm{S}\in\textup{alt}(\P)} \textup{ort}_{\mathrm{S}} \otimes \left (\prod_{i=0}^{2r}\textup{ort}_{\mathrm{S}^i} \right).
\]
\end{proposition}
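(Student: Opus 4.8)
The plan is to follow the same strategy used in the proof of Proposition \ref{PropositionCoproduct} and Theorem \ref{TheoremFormalQF}: reduce the computation of the coproduct $\Delta^{\mathcal{G}}$ of the scissors congruence class $\textup{ort}_\P$ to a combinatorial statement about the orthoscheme $\textup{ort}(x)$ and its faces and angles. The key input is Proposition \ref{PropositionFacesAngles}, which describes the faces $S_I$ and the angles $S^I$ of a generic orthoscheme $S=\textup{ort}(x_0,\dots,x_{m+1})$ in terms of (joins of) orthoschemes attached to subconfigurations, together with the definition (\ref{FormulaDehn}) of $\Delta^{\mathcal{G}}$ as a sum over even-size subsets $I\subseteq\{1,\dots,m\}$.

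First I would specialize the general coproduct formula (\ref{FormulaDehn}) to the orthoscheme $S=\textup{ort}_\P$ where $\P=(p_0,\dots,p_{2n+1})$, so that $m=2n$ and the hyperplanes are $H_1,\dots,H_{2n}$. For a subset $I=\{i_1<\dots<i_r\}\subseteq\{1,\dots,2n\}$ of even cardinality $r$, set $i_0=0$, $i_{r+1}=2n+1$; by Proposition \ref{PropositionFacesAngles} the face $S_I$ is $\textup{ort}(x_{i_0},x_{i_1},\dots,x_{i_{r+1}})$ and the angle $S^I$ is the join $\prod_{p=0}^{r}\textup{ort}(x_{i_p},x_{i_p+1},\dots,x_{i_{p+1}})$. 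The point is that the sequence $(p_{i_0},p_{i_1},\dots,p_{i_{r+1}})$ is exactly an element $\mathrm{S}$ of $\textup{alt}(\P)$ (an alternating subpolygon with first index $0$ and last index $2n+1$, and since $r$ is even it has $r+2$ vertices, i.e. it is of the form $(p_{i_0},\dots,p_{i_{2k+1}})$), and the complementary pieces $(p_{i_p},\dots,p_{i_{p+1}})$ are precisely the subpolygons $\mathrm{S}^0,\dots,\mathrm{S}^{2r}$ (in the notation of \S\ref{SectionFormalCluster}) cut out by $\mathrm{S}$ from $\P$. Thus the map $I\mapsto \mathrm{S}$ is a bijection between even-cardinality subsets of $\{1,\dots,2n\}$ and the set $\textup{alt}(\P)$, and under this bijection the term $[S_I]\otimes[S^I]$ of (\ref{FormulaDehn}) is identified with $\textup{ort}_{\mathrm{S}}\otimes\left(\prod_{i}\textup{ort}_{\mathrm{S}^i}\right)$.

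The remaining technical points are matters of signs and orientations, and these are where the main obstacle lies. I would check: (i) that the parity/orientation conventions chosen in \S\ref{SecOrOrt} (canonical orientation on odd polygons, opposite-to-canonical on even polygons, so that $\textup{ort}_\P$ is well-defined as an element of $\mathcal{G}$) are compatible with the restriction of orientations to faces and angles — here one uses Lemma \ref{LemmaOrientation} together with the fact that for an odd-dimensional simplex an orientation induces orientations on all faces and angles, and that the join carries a canonical orientation (\S\ref{SectionProjectiveSimplex}); (ii) that the signs $(-1)^\sigma$ coming from reordering hyperplanes in relation (1) of the definition of $\mathcal{G}$, and the implicit sign in passing from $\textup{ort}(x)$ to $\textup{ort}_\P$, exactly match the signs hidden in the convention $\textup{T}_\P$ (cf. (\ref{FormulaFormalPolylogWeight1})); one expects these to cancel since the statement is sign-free, but verifying it requires care. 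Once the bijection of index sets is set up and the orientation bookkeeping is done, the formula $\Delta^{\mathcal{G}}\textup{ort}_\P=\sum_{\mathrm{S}\in\textup{alt}(\P)}\textup{ort}_{\mathrm{S}}\otimes\left(\prod_{i=0}^{2r}\textup{ort}_{\mathrm{S}^i}\right)$ follows term by term from (\ref{FormulaDehn}) and Proposition \ref{PropositionFacesAngles}. I expect the orientation/sign verification, rather than the combinatorics, to be the genuinely delicate part, and I would isolate it as a lemma about how orientations of $\textup{ort}(x)$ behave under the decomposition $E=E_I\oplus\bigoplus_p E_{J_p}$ from the proof of Proposition \ref{PropositionFacesAngles}.
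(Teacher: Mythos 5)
The combinatorial core of your argument has a genuine gap: the map $I\mapsto \mathrm{S}$ you describe is \emph{not} a bijection between even-cardinality subsets of $\{1,\dots,2n\}$ and $\textup{alt}(\P)$. An element of $\textup{alt}(\P)$ is an \emph{alternating} subpolygon: after adjoining $i_0=0$ and $i_{r+1}=2n+1$, the indices must alternate in parity, which is strictly stronger than $|I|$ being even. For example, for $\P=(0,1,2,3,4,5)$ the subset $I=\{1,3\}$ has even cardinality, but the sequence $(0,1,3,5)$ is not alternating, so it indexes a term of (\ref{FormulaDehn}) that does not appear on the right-hand side of the proposition. Hence your ``term by term'' identification does not go through: the sum (\ref{FormulaDehn}) contains extra terms indexed by non-alternating even subsets, and the actual content of the proof is to show that these extra terms vanish in $\mathcal{G}$.

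That vanishing is precisely the paper's argument, and it is the idea missing from your proposal. For $I$ even but not alternating, Proposition \ref{PropositionFacesAngles} exhibits $(\textup{ort}_\P)^I$ as a join of orthoschemes among which at least two are even-dimensional (a parity count: the gaps $i_{p+1}-i_p$ sum to the odd number $2n+1$ and there is an odd number of them, so the number of even gaps is even, hence at least two once it is nonzero), and the product of classes of even-dimensional simplices vanishes in $\mathcal{G}$ by \cite[Lemma 3.10]{Gon99}; therefore $\left[(\textup{ort}_\P)^I\right]=0$ for such $I$. Once these terms are killed, the surviving terms are exactly those indexed by $\textup{alt}(\P)$, and the identification of $[(\textup{ort}_\P)_I]\otimes[(\textup{ort}_\P)^I]$ with $\textup{ort}_{\mathrm{S}}\otimes\prod_{i}\textup{ort}_{\mathrm{S}^i}$ via Proposition \ref{PropositionFacesAngles} is immediate. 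By contrast, the orientation and sign bookkeeping you isolate as the delicate point is not where the difficulty lies: the classes $\textup{ort}_\P$, $\textup{ort}_{\mathrm{S}}$, $\textup{ort}_{\mathrm{S}^i}$ are defined in \S\ref{SecOrOrt} with the orientation conventions already built in, orientations of an odd-dimensional simplex induce orientations of its faces and angles, and joins of oriented simplices carry canonical orientations, so no further sign analysis is needed once the non-alternating terms are shown to vanish.
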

\begin{proof}
By (\ref{FormulaDehn}), we have
\[
\Delta^{\mathcal{G}}\textup{ort}_\P=\sum_{\substack{I\subseteq\{1,\dots,2n\}\\ |I| \text{ is even}}}[(\textup{ort}_\P)_I] \otimes \left[(\textup{ort}_\P)^I \right].
\]
It is easy to see that $\left[(\textup{ort}_\P)^I \right]=0$ if $I$ is not alternating. Indeed,
by Proposition \ref{PropositionFacesAngles}, if $I$ is even and not alternating, then $\left[(\textup{ort}_\P)^I \right]$ is a product of classes of orthoschemes, among which at least two are even-dimensional. The product of classes of even-dimensional simplices vanishes, see \cite[Lemma 3.10]{Gon99}. From here the statement follows.
\end{proof}

\subsection{Alternating polylogarithms} \label{SectionAlternatingPolylogarithms}
For $a_1,\dots,a_n\in \mathbb{C}$ fix the choice of square roots $\sqrt{a_1},\dots, \sqrt{a_n}\in \mathbb{C}.$ An alternating polylogarithm is a framed mixed Hodge structure defined by the following formula:
\[
\begin{split}
&\textup{ALi}^{\H}_{n_1,\dots,n_k}(a_1,\dots,a_k)=\\
&\frac{1}{2^{k}}\sum_{\epsilon_1,\dots, \epsilon_{k}\in \{-1,1\} } \left ( \prod_{i=1}^k \epsilon_i\right ) \Li^{\H}_{n_1,\dots,n_k}(\epsilon_1 \sqrt{a_1},\dots,\epsilon_k  \sqrt{a_k}).
\end{split}
\]
Similar functions appeared in \cite[Definition 3.12]{Cha21} under the name {\it multiple $t$-polylogarithms}.
\begin{example} Here are some examples in weights $1$ and $2:$
\[
\begin{split}
&\textup{ALi}^{\H}_{1}(a_1)=\frac{1}{2}(\Li^{\H}_{1}(\sqrt{a_1})-\Li^{\H}_{1}(-\sqrt{a_1}))=\frac{1}{2}\log^\H\left (\frac{1+\sqrt{a_1}}{1-\sqrt{a_1}}\right);\\
&\textup{ALi}^{\H}_{2}(a_1)=\frac{1}{2}(\Li^{\H}_{2}(\sqrt{a_1})-\Li^{\H}_{2}(-\sqrt{a_1}));\\
&\textup{ALi}^{\H}_{1,1}(a_1,a_2)\\
&=\frac{1}{4}\left (\Li^{\H}_{1,1}(\sqrt{a_1},\sqrt{a_2})-\Li^{\H}_{1,1}(-\sqrt{a_1},\sqrt{a_2})-\Li^{\H}_{1,1}(\sqrt{a_1},-\sqrt{a_2})+\Li^{\H}_{1,1}(-\sqrt{a_1},-\sqrt{a_2})\right ).\\
\end{split}
\]
\end{example}

Assume that $S$ is an irreducible algebraic variety over $\C.$ For $\varphi_1,\dots,\varphi_k\in \C(S)^\times$ consider a covering $\widetilde{S}$ of $S$ such that $\sqrt{\varphi_1},\dots, \sqrt{\varphi_k}$ are regular on $\widetilde{S}.$  We define
\[
\textup{ALi}^{\H}[\varphi_1,n_1|\varphi_2,n_2|\dots|\varphi_k,n_k]=
\textup{ALi}^{\H}_{n_1,\dots,n_k}(\varphi_1,\varphi_2,\dots,\varphi_k) \in \H_{\widetilde{S}}.
\]
\begin{proposition} \label{LemmaAlternatingCoproduct} 
The coproduct of alternating polylogarithms can be computed by the following formula:
\[
\begin{split}
&\Delta^{\H\H}\textup{ALi}^{\H}([\varphi_1,n_1|\dots|\varphi_k,n_k])\\
&=\sum_{s=0}^k \left ((\textup{ALi}^{\H}[\varphi_1,n_1|\dots|\varphi_s,n_s]\otimes 1) \left ((1\otimes \textup{ALi}^{\H})\Delta^{\H\F}[\varphi_{s+1},n_{s+1}|\dots|\varphi_k,n_k]\right )\right).
\end{split}
\]
\end{proposition}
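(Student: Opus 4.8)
The plan is to reduce the coproduct formula for $\textup{ALi}^{\H}$ to the already-established coproduct formula for ordinary multiple polylogarithms, namely~(\ref{FormulaCoproductMultiplePolylogarithm}), by taking the appropriate alternating sum over sign choices of the square roots. First I would fix a covering $\widetilde{S}$ on which $\sqrt{\varphi_1},\dots,\sqrt{\varphi_k}$ are regular and work over it, so that all the functions below live in $\H_{\widetilde S}$ and the rigidity setup of \S\ref{SectionRigidity} applies. Applying~(\ref{FormulaCoproductMultiplePolylogarithm}) to $\Li^{\H}_{n_1,\dots,n_k}(\epsilon_1\sqrt{\varphi_1},\dots,\epsilon_k\sqrt{\varphi_k})$ gives
\[
\Delta^{\H\H}\Li^{\H}(\epsilon\cdot\sqrt{\varphi})=\sum_{s=0}^k\bigl(\Li^{\H}[\epsilon_1\sqrt{\varphi_1},n_1|\dots|\epsilon_s\sqrt{\varphi_s},n_s]\otimes 1\bigr)\bigl((1\otimes\Li^{\H})\Delta^{\H\F}[\epsilon_{s+1}\sqrt{\varphi_{s+1}},n_{s+1}|\dots|\epsilon_k\sqrt{\varphi_k},n_k]\bigr),
\]
where I abbreviate the obvious sign/root substitutions. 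Now I would multiply by $\bigl(\prod_{i=1}^k\epsilon_i\bigr)2^{-k}$ and sum over all $(\epsilon_1,\dots,\epsilon_k)\in\{-1,1\}^k$. The key point is that the sum factors: writing $\prod_{i=1}^k\epsilon_i=\bigl(\prod_{i=1}^s\epsilon_i\bigr)\bigl(\prod_{i=s+1}^k\epsilon_i\bigr)$ and $2^{-k}=2^{-s}\cdot 2^{-(k-s)}$, the sign sum over $\epsilon_1,\dots,\epsilon_s$ acts only on the first tensor factor and produces exactly $\textup{ALi}^{\H}[\varphi_1,n_1|\dots|\varphi_s,n_s]$, while the sign sum over $\epsilon_{s+1},\dots,\epsilon_k$ acts only on the second factor.

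The main step, and the one requiring care, is that the averaging over $\epsilon_{s+1},\dots,\epsilon_k$ of $(1\otimes\Li^{\H})\Delta^{\H\F}[\epsilon_{s+1}\sqrt{\varphi_{s+1}},n_{s+1}|\dots|\epsilon_k\sqrt{\varphi_k},n_k]$ reproduces $(1\otimes\textup{ALi}^{\H})\Delta^{\H\F}[\varphi_{s+1},n_{s+1}|\dots|\varphi_k,n_k]$. Unwinding the definition~(\ref{FormulaCoaction}) of the coaction $\Delta^{\H\F}$, each term is a product of Hodge iterated integrals $\textup{I}^{\H}(x_{i_p};\dots;x_{i_{p+1}})$ tensored with a word $x_I$, where both the arguments of the iterated integrals and the entries of $x_I$ are $0$'s and partial products $\epsilon_{s+1}\cdots\epsilon_j\,\sqrt{\varphi_{s+1}\cdots\varphi_j}$. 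Here I would invoke the identity~(\ref{EqualityRoots}): summing $\textup{I}^{\H}(x_0;x_1,\dots;x_{n+1})$ against the product of signs over all sign choices squares every argument. Applied blockwise — noting that the total sign $\prod_{i=s+1}^k\epsilon_i$ distributes correctly over the blocks because the $0$ entries are sign-insensitive and the product structure of the coaction's arguments respects the grouping — the averaged iterated-integral factors become $\textup{I}^{\H}$ evaluated at the unsquared $\varphi$'s (since $(\epsilon_{s+1}\cdots\epsilon_j\sqrt{\varphi_{s+1}\cdots\varphi_j})^2=\varphi_{s+1}\cdots\varphi_j$), i.e.\ exactly the terms of $\Delta^{\H\F}[\varphi_{s+1},n_{s+1}|\dots]$, and the averaged $\Li^{\H}(x_I)$ factor becomes $\textup{ALi}^{\H}(x_I)$. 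Assembling the $2^{-k}=2^{-s}2^{-(k-s)}$ normalizations with these two averaged factors yields precisely the claimed formula.

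The technical bookkeeping I expect to be the genuine obstacle is the bilinearity of the sign-averaging across the tensor product combined with the combinatorics of~(\ref{FormulaCoaction}): one must check that in the coaction formula the partial products labelling the iterated-integral blocks and the word $x_I$ are consistently indexed so that the single global sign $\prod_{i=s+1}^k\epsilon_i$ splits into the per-block signs needed to apply~(\ref{EqualityRoots}) to each $\textup{I}^{\H}$ factor and to $x_I$ independently. This is where I would be most careful to track indices. Once that is in hand, no rigidity argument is even needed: the identity holds on the nose in $\H_{\widetilde S}$, being a finite linear combination of the corresponding true identities for $\Li^{\H}$.
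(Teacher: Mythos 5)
Your proposal is correct and follows essentially the same route as the paper: reduce to the coproduct formula (\ref{FormulaCoproductMultiplePolylogarithm}) for ordinary multiple polylogarithms evaluated at signed square roots, average over the sign choices so the prefix factor becomes $\textup{ALi}^{\H}$ by definition, and identify the averaged coaction factor with $(1\otimes \textup{ALi}^{\H})\Delta^{\H\F}$ of the unsquared word via (\ref{EqualityRoots}); this last identification is exactly the paper's equation (\ref{EquationAlternatingProp}). The bookkeeping point you flag (splitting the global sign across the blocks of (\ref{FormulaCoaction})) is precisely the step the paper also handles with (\ref{EqualityRoots}), and, as you note, no rigidity argument is needed.
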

\begin{proof}

It is sufficient to show that
\begin{equation}\label{EquationAlternatingProp}
\begin{split}
&(1\otimes \Li^{\H})\left(\frac{1}{2^{k}}\sum_{\epsilon_{1},\dots, \epsilon_{k}\in \{-1,1\} } \left ( \prod_{i=1}^k \epsilon_i\right ) \Delta^{\H\F}[\epsilon_{1} \sqrt{\varphi_{1}},n_{1}|\dots|\epsilon_k\sqrt{\varphi_k},n_k]\right )\\
&=(1\otimes \textup{ALi}^{\H})\Delta^{\H\F}[\varphi_{1},n_{1}|\dots|\varphi_k,n_k];\\
\end{split}
\end{equation}
after that the statement would follow from (\ref{FormulaCoproductMultiplePolylogarithm}).

For $\epsilon=(\epsilon_1,\dots,\epsilon_k)$ consider a sequence
\[
\begin{split}
	&x^{\epsilon}=(x_0^{\epsilon},\dots,x_{n+1}^{\epsilon})\\
&=(0,1,\underbrace{0,\dots,0, \epsilon_1 \sqrt{\varphi_1}}_{n_1},\underbrace{0,\dots,0, \epsilon_1\epsilon_2 \sqrt{\varphi_1} \sqrt{\varphi_2}}_{n_2},\dots,\underbrace{0,\dots,0,\epsilon_1\epsilon_2\dots\epsilon_k \sqrt{\varphi_1} \sqrt{\varphi_2}\dots \sqrt{\varphi_{k}}}_{n_k}).
\end{split}
\]
By (\ref{FormulaCoaction}) we have
\[
\begin{split}
&\Delta^{\mathcal{HF}}[\epsilon_{1} \sqrt{\varphi_{1}},n_{1}|\dots|\epsilon_k\sqrt{\varphi_k},n_k]\\
&=(-1)^{l(x^{\epsilon})-l(x_I^{\epsilon})}\sum_{I=(i_0,\dots,i_{r+1})} \left (\prod_{p=1}^{r} \textup{I}^{\H}(x_{i_p}^{\epsilon};x_{i_p+1}^{\epsilon},\dots,x_{i_{p+1}-1}^{\epsilon};x_{i_{p+1}^{\epsilon}})\right )\otimes  x_I^{\epsilon}
\end{split}
\]
where the summation goes over all sequences $0=i_0<i_1<\dots<i_r<i_{r+1}=n+1$ with $i_1=1.$ 
From (\ref{EqualityRoots}) it follows that for any sequence $I$ we have
\[
\begin{split}
&\frac{1}{2^{k}}\sum_{\epsilon_{1},\dots, \epsilon_{k}\in \{-1,1\} } \left ( \prod_{i=1}^k \epsilon_i\right )  \left (\prod_{p=1}^{r} \textup{I}^{\H}\left(x_{i_p}^{\epsilon};x_{i_p+1}^{\epsilon},\dots,x_{i_{p+1}-1}^{\epsilon};x_{i_{p+1}}^{\epsilon}\right)\right )\otimes  \Li^{\H}(x_I^{\epsilon})\\
&=\left (\prod_{p=1}^{r} \textup{I}^{\H}\left(x_{i_p}^{2};x_{i_p+1}^{2},\dots,x_{i_{p+1}-1}^{2};x_{i_{p+1}}^{2}\right)\right )\otimes  \textup{ALi}^{\H}(x_I^{\epsilon}),
\end{split}
\]
which implies (\ref{EquationAlternatingProp}).
\end{proof}

\begin{example} We have 

\begin{align*}
\Delta^\H \textup{ALi}^{\H}_{1,1}(a_1,a_2)&=1 \otimes \textup{ALi}^{\H}_{1,1}(a_1,a_2)+\textup{ALi}^{\H}_{1}(a_1)\otimes \textup{ALi}^{\H}_{1}(a_2)+\textup{ALi}^{\H}_{1,1}(a_1,a_2)\otimes 1\\
&\quad-\textup{I}^\H(1,a_1,a_1 a_2)\otimes\textup{ALi}^{\H}_{1}(a_1a_2).\\
\end{align*}
\end{example}

\begin{corollary}\label{LemmaAlternatingQShuffle}
	Alternating polylogarithms satisfy the quasi-shuffle relation
\[
\textup{ALi}^{\H}(x\star y)=\textup{ALi}^{\H}(x)\textup{ALi}^{\H}(y)
\]
for $x,y \in \F_{\tilde{S}}.$
\end{corollary}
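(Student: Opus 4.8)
The plan is to derive the quasi-shuffle relation for alternating polylogarithms from the formal quasi-shuffle relation (Proposition \ref{TheoremFormalSSR}) together with the coproduct formula for alternating polylogarithms established in Proposition \ref{LemmaAlternatingCoproduct}, using the rigidity argument of \S\ref{SectionRigidity}. Concretely, I want to run the same argument that proves the quasi-shuffle relation for ordinary multiple polylogarithms in Proposition \ref{PropositionMapToPolylogarithms}: show that both sides of $\textup{ALi}^{\H}(x\star y)=\textup{ALi}^{\H}(x)\textup{ALi}^{\H}(y)$ have the same coproduct, and then specialize to a point where everything vanishes.

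First I would observe that Proposition \ref{LemmaAlternatingCoproduct} gives exactly the analogue of formula (\ref{FormulaCoproductMultiplePolylogarithm}): the coproduct of $\textup{ALi}^{\H}(x)$ for $x=[\varphi_1,n_1|\dots|\varphi_k,n_k]$ is expressed as a deconcatenation-type sum $\sum_{s=0}^k (\textup{ALi}^{\H}[\varphi_1,n_1|\dots|\varphi_s,n_s]\otimes 1)\bigl((1\otimes \textup{ALi}^{\H})\Delta^{\H\F}[\varphi_{s+1},n_{s+1}|\dots|\varphi_k,n_k]\bigr)$. This is precisely the statement that the $\H_{\widetilde{S}}$-linear extension $\textup{ALi}^{\H}\colon \H_{\widetilde{S}}\times \F_{\widetilde{S}}\lra \H_{\widetilde{S}}$ of the map $\textup{ALi}^{\H}\colon \F_{\widetilde{S}}\lra \H_{\widetilde{S}}$ is compatible with the coproducts $\Delta^{\H\times\F}$ and $\Delta^{\H\H}$, arguing as in the first half of the proof of Proposition \ref{PropositionMapToPolylogarithms} using (\ref{FormulaCoproductSmash}). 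Then, applying Proposition \ref{TheoremFormalSSR} to the smash-coproduct structure, for $x,y\in\F_{\widetilde{S}}$ I get
\[
\Delta^{\H\H}\bigl(\textup{ALi}^{\H}(x\star y)-\textup{ALi}^{\H}(x)\textup{ALi}^{\H}(y)\bigr)=(\textup{ALi}^{\H}\otimes\textup{ALi}^{\H})\bigl(\Delta^{\H\times\F}(x\star y)-\Delta^{\H\times\F}(x)\Delta^{\H\times\F}(y)\bigr)=0,
\]
since $\star$ is a morphism of $\H_{\widetilde{S}}$-comodules and hence a morphism into the smash coproduct Hopf algebra. Finally, specializing $x=[a_1,n_1|\dots|a_k,n_k]$, $y=[a'_1,n'_1|\dots|a'_{k'},n'_{k'}]$ to the point $a_1=\dots=a_k=a'_1=\dots=a'_{k'}=0$ (equivalently all square roots $\to 0$), each $\Li^{\H}$, and therefore each $\textup{ALi}^{\H}$, vanishes, so the difference is a framed constant variation equal to zero by the Rigidity Lemma \cite[Lemma 8.2]{Gon02}.

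The only genuinely new point compared to Proposition \ref{PropositionMapToPolylogarithms} is that alternating polylogarithms live on the covering $\widetilde{S}$ where the square roots $\sqrt{\varphi_i}$ are single-valued, so I must check that the coaction $\Delta^{\H\F}$, the map $\Delta^{\H\times\F}$, and the rigidity argument all make sense over $\widetilde{S}$ rather than $S$; this is routine since $\widetilde{S}$ is again an irreducible algebraic variety and $\F_{\widetilde{S}},\H_{\widetilde{S}}$ are defined exactly as in \S\ref{SectionQuasiShuffle}. The main obstacle, such as it is, is bookkeeping: one must be careful that the averaging over signs $\epsilon_i\in\{-1,1\}$ commutes with the coproduct in the way recorded by Proposition \ref{LemmaAlternatingCoproduct} — but that has already been proven, so here it is used as a black box. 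Thus the corollary follows formally, just as the ordinary quasi-shuffle relation follows from the formal one, and I expect no serious difficulty.
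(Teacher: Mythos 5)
Your proposal is correct and is essentially the paper's own argument: the paper proves this corollary by citing Proposition \ref{LemmaAlternatingCoproduct} and repeating the rigidity-plus-specialization argument from the proof of Proposition \ref{PropositionMapToPolylogarithms}, exactly as you do (including the specialization to the point where all arguments, hence all square roots, vanish). No gaps; your extra remark about working over the covering $\widetilde{S}$ is indeed routine.
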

\begin{proof}
	 The statement follows from Proposition \ref{LemmaAlternatingCoproduct}, see the proof of Proposition \ref{PropositionMapToPolylogarithms}. 
\end{proof}

\subsection{Volumes of orthoscheme}\label{SectionVolumeOrthoschemes}
Our goal in this section is to give an explicit formula for the unipotent variation of framed mixed Hodge-Tate structures $h(\textup{ort}_P)$ over $\mathfrak{M}_{\P}^s$ in terms of alternating polylogarithms. For that, we first construct  a collection of functions on $\mathfrak{M}_\P^s,$ which are square roots of cross-ratios. 

We start with the case $n=1.$ For an alternating polygon $\P=(p_0,p_1,p_2,p_3)$ consider a point $x^{s}\in \mathfrak{M}_{\P}^s$ over 
\[
x=(x_{p_0}, x_{p_1}, x_{p_2},x_{p_3})\in \mathfrak{M}_\P.
\] 
Then $\textup{ort}(x^{s})=(Q;H_1,H_2),$ where $Q$ is a quadric of dimension zero, thus a  pair of points $Q=\{Q_1,Q_2\}$ in $\mathbb{P}^1.$ Since $Q$ is oriented, the pair $\{Q_1,Q_2\}$ is ordered. It follows from the proof of Theorem \ref{TheoremOrthoschemesConfigurations}
that the configuration $(x_{p_0},x_{p_1},x_{p_2},x_{p_3})$ is projectively equivalent to the configuration $(H_0,H_1,H_2,H_3),$ where $H_0=H_2^{\perp}$  and $H_3=H_1^{\perp}.$ A direct computation shows that 
\[
[x_{p_0}, x_{p_1}, x_{p_2},x_{p_3}]=[H_0,H_1,H_2,H_3]=\left(\frac{[Q_1, H_1,Q_2, H_2]+1}{[Q_1, H_1,Q_2, H_2]-1}\right)^2.
\]
We put 
\be\label{FormulaSquareRoot}
\sqrt{\textup{cr}(\P)}=
\begin{cases}
	\dfrac{[Q_1, H_1,Q_2, H_2]+1}{[Q_1, H_1,Q_2, H_2]-1}& \text{\ if\ } \P \text{\ is even, }\\
	\\
		\dfrac{[Q_2, H_1,Q_1, H_2]-1}{[Q_2, H_1,Q_1, H_2]+1} &\text{\ if\ } \P \text{\ is odd}.
\end{cases}
\ee

\begin{lemma} Consider an  alternating polygon $\P.$ The product 
\[
\sqrt{\textup{cr}(\P)}=\prod_{i=1}^n  \sqrt{\textup{cr}(\mathrm{Q}_i)}
\]
does not depend on the choice of a quadrangulation $Q=\{\mathrm{Q}_1,\dots,\mathrm{Q}_n\}\in \mathcal{Q}(\P).$  \end{lemma}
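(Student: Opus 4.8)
The plan is as follows. For a quadrangulation $Q=\{\mathrm{Q}_1,\dots,\mathrm{Q}_n\}\in\mathcal{Q}(\P)$ write $f_Q=\prod_{i=1}^n\sqrt{\textup{cr}(\mathrm{Q}_i)}$. Each factor $\sqrt{\textup{cr}(\mathrm{Q}_i)}$ is, by (\ref{FormulaSquareRoot}) and the construction of the cover, a regular function on $\mathfrak{M}_\P^s$, and by (\ref{FormulaCrossRatioDecomposition}) one has $f_Q^2=\prod_{i=1}^n\textup{cr}(\mathrm{Q}_i)=\textup{cr}(\P)$, which does not involve the quadrangulation. Hence for two quadrangulations $Q,Q'$ the ratio $f_Q/f_{Q'}$ is a regular function on $\mathfrak{M}_\P^s$ whose square is $1$; it therefore takes values in $\{\pm1\}$, and being locally constant on the connected variety $\mathfrak{M}_\P^s$ (a cover of the irreducible $\mathfrak{M}_\P$ attached to a subgroup of $\pi_1(\mathfrak{M}_\P)$), it is a global sign. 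The whole point is to show that this sign equals $+1$.

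Next I would reduce to the case $n=2$. Any two quadrangulations of a convex $(2n+2)$-gon are connected by a chain of flips, where a flip removes a diagonal $d$ of the current quadrangulation — whose two adjacent quadrangles fill out an alternating hexagonal subpolygon $\mathrm{H}\subseteq\P$ — and replaces the two quadrangles cut off by $d$ with the two cut off by another diagonal of $\mathrm{H}$; connectivity of the flip graph is classical and also follows from a short induction on $n$. Under a flip only the factors of $f_Q$ attached to quadrangles contained in $\mathrm{H}$ are altered, and each such factor is the pullback, along the lift $\mathfrak{M}_\P^s\to\mathfrak{M}_{\mathrm{H}}^s$ of the forgetful map $\mathfrak{M}_\P\to\mathfrak{M}_{\mathrm{H}}$, of the corresponding square root on $\mathfrak{M}_{\mathrm{H}}^s$. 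So $f_Q/f_{Q'}$ is pulled back from $\mathfrak{M}_{\mathrm{H}}^s$, and it suffices to prove the lemma when $\P$ is a hexagon; the case $n=1$ is vacuous.

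Finally, the hexagon case. For an alternating $6$-gon $\P$ we have $m=4$, the orthoscheme $\textup{ort}(x^s)=(Q_x;H_1,\dots,H_4)$ is $3$-dimensional, and for each of its three quadrangulations $\{\mathrm{Q}_1,\mathrm{Q}_2\}$ Lemma~\ref{LemmaOrtSum} gives an orthogonal decomposition $E_x=E_{\mathrm{Q}_1}\oplus E_{\mathrm{Q}_2}$ into two nondegenerate $2$-planes. A maximal isotropic subspace of $E_x$ splits as the sum of maximal isotropics of the two summands, so the orientation $\mathcal{O}_x$ of $Q_x$ induces an orientation of each $0$-dimensional quadric $Q_{\mathrm{Q}_i}$, well defined up to \emph{simultaneous} reversal of the two — this is the geometric content of Lemma~\ref{LemmaOrientation}. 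By (\ref{FormulaSquareRoot}) reversing the orientation of $Q_{\mathrm{Q}_i}$ multiplies $\sqrt{\textup{cr}(\mathrm{Q}_i)}$ by $-1$, so a simultaneous reversal leaves the product $f_Q=\sqrt{\textup{cr}(\mathrm{Q}_1)}\,\sqrt{\textup{cr}(\mathrm{Q}_2)}$ unchanged, and $f_Q$ is an unambiguous function of $\mathcal{O}_x$ for each of the three quadrangulations. It remains to check that these three functions of $\mathcal{O}_x$ coincide, which is a bounded explicit computation: choosing projective coordinates on $\mathbb{P}(E_x)=\mathbb{P}^3$ adapted to one of the decompositions, one evaluates the quantities $[Q_1,H_1,Q_2,H_2]$ attached to the quadrangles that occur, applies (\ref{FormulaSquareRoot}) and (\ref{FormulaCrossRatioDecomposition}), and reads off that the three pairings agree; equivalently, one specializes $x^s$ into the hyperbolic locus $\mathfrak{M}_\P^h$, where $\textup{cr}(\P)>0$ and the canonical orientation of the hyperbolic orthoscheme fixes all the signs simultaneously, and then spreads the equality by connectedness of $\mathfrak{M}_\P^s$. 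The formal skeleton above is routine; the one genuine obstacle is precisely this $n=2$ verification — the compatibility of the three orthogonal pairings of $0$-dimensional quadrics inside the $4$-dimensional quadratic space of a $3$-dimensional orthoscheme — which is where the normalization (\ref{FormulaSquareRoot}) and the orientation conventions have to be handled with care.
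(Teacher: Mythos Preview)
Your argument is correct, but it takes a longer route than the paper's. You share the opening observation that $f_Q^2=\textup{cr}(\P)$ forces $f_Q/f_{Q'}\in\{\pm1\}$ to be a global sign on the connected cover $\mathfrak{M}_\P^s$. From there you reduce to the hexagon via connectivity of the flip graph and the existence of the lift $\mathfrak{M}_\P^s\to\mathfrak{M}_{\mathrm{H}}^s$ (which does exist, since every subpolygon of $\mathrm{H}$ is a subpolygon of $\P$), and then settle $n=2$ by an explicit verification or a specialization to $\mathfrak{M}_\P^h$.

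The paper skips the flip-graph reduction entirely and pins down the sign for all $n$ at once by specializing to the hyperbolic locus with a concrete geometric count. For a hyperbolic orthoscheme (say $\P$ odd, canonical orientation), the key claim is that in \emph{any} quadrangulation exactly one quadrangle $\mathrm{Q}_i$ corresponds to an edge of the orthoscheme and the remaining $n-1$ correspond to dihedral angles; by (\ref{FormulaSquareRoot}) the former gives $\sqrt{\textup{cr}(\mathrm{Q}_i)}$ real positive and the latter give $\sqrt{\textup{cr}(\mathrm{Q}_i)}=\lambda i$ with $\lambda>0$. Hence every quadrangulation produces a product lying in $i^{\,n-1}\cdot\mathbb{R}_{>0}$, which fixes the sign uniformly. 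Your approach trades this bit of hyperbolic geometry for a combinatorial reduction plus one bounded computation; the paper's approach trades the flip-graph machinery for the edge/angle count. Both are valid; the paper's is shorter and explains \emph{why} the normalization in (\ref{FormulaSquareRoot}) was chosen as it was.
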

\begin{proof} Since $\textup{cr}(\P)$ does not depend on the choice of a quadrangulation, the product $\prod_{i=1}^n  \sqrt{\textup{cr}(\mathrm{Q}_i)}$ can depend on the quadrangulation only up to a sign. Thus, it is enough to check the statement for any particular oriented orthoscheme, e.g., hyperbolic. Assume that $\P$ is odd, in which case  the orthoscheme has the canonical orientation. By (\ref{FormulaSquareRoot}) $\sqrt{\textup{cr}(\mathrm{Q}_i)}$ is real and positive if $\mathrm{Q}_i$ corresponds to the edge length. Similarly, $\sqrt{\textup{cr}(\mathrm{Q}_i)}$ equals to $\lambda i$ for positive $\lambda$ if  $\mathrm{Q}_i$ corresponds to an angle. The statement follows because every quadrangulation has one quadrangle corresponding to an edge of the orthoscheme and $n-1$ .quadrangles corresponding to angles.
\end{proof}

For an alternating polygon $\P$ we put $\H_\P^s$ to be $\H_S$ for $S=\mathfrak{M}_{\P}^s.$ Since square roots of cross-ratios are regular functions on  $\mathfrak{M}_{\P}^s,$ we have an element
\[
\textup{ALi}^{\H}(\textup{T}_\P)\in \H_\P^s.
\]

\begin{theorem} \label{TheoremVolumeOrt}
We have the following equality of framed variations on $\mathfrak{M}_{\P}^s:$ 
\be \label{FormulaOrthoschemesPolylogarithms}
h(\textup{ort}_\P^s)=\textup{ALi}^{\H}(\textup{T}_\P).
\ee
\end{theorem}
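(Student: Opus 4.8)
The plan is to prove the identity by the rigidity argument of \S\ref{SectionRigidity}: show that both sides of (\ref{FormulaOrthoschemesPolylogarithms}) have the same coproduct in $\H_\P^s$, and then check that they agree after specialization to one point. Both sides are weight-$n$ elements of $\H_\P^s$, so by the Rigidity Lemma it suffices to verify equality of $\Delta^{\H\H}$ and to then specialize. The induction is on $n$; for $n=1$ the statement reduces to the formula $h(S)=-\tfrac12\log^\H([Q_1,H_1,Q_2,H_2])$ from Example \ref{ExampleOneDimensionalSimplex} compared with $\textup{ALi}^\H(\textup{T}_\P)=\mp\textup{ALi}^\H_1(\textup{cr}(\P))=\mp\tfrac12\log^\H\!\big(\tfrac{1+\sqrt{\textup{cr}(\P)}}{1-\sqrt{\textup{cr}(\P)}}\big)$, which matches by the explicit formula (\ref{FormulaSquareRoot}) for $\sqrt{\textup{cr}(\P)}$ in terms of $[Q_1,H_1,Q_2,H_2]$ (the two cases, $\P$ even or odd, produce the two orientations).

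First I would compute the coproduct of the right-hand side. Since $\textup{T}_\P$ lies in the algebra of coinvariants $\F_\P^\H$ (Theorem \ref{TheoremFormalQF}) and $\widetilde{\Delta}^{\F\F}\textup{T}_\P\in\F_\P^\H\otimes\F_\P^\H$ (Proposition \ref{PropositionCoproduct}), and since $\textup{ALi}^\H$ satisfies the quasi-shuffle relation (Corollary \ref{LemmaAlternatingQShuffle}) and has the coproduct given in Proposition \ref{LemmaAlternatingCoproduct} — whose shape is identical to (\ref{FormulaCoproductMultiplePolylogarithm}) but with $\textup{ALi}^\H$ in place of $\Li^\H$ — the same bookkeeping as in the proof of Theorem \ref{TheoremQuadrangulationClusterPolylog} applies. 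Concretely, Proposition \ref{LemmaAlternatingCoproduct} together with $\widetilde\Delta^{\H\F}\textup{T}_\P=0$ gives
\[
\Delta^{\H\H}\textup{ALi}^\H(\textup{T}_\P)=\textup{ALi}^\H(\textup{T}_\P)\otimes 1+1\otimes\textup{ALi}^\H(\textup{T}_\P)+(\textup{ALi}^\H\otimes\textup{ALi}^\H)\widetilde\Delta^{\F\F}(\textup{T}_\P),
\]
and then Proposition \ref{PropositionCoproduct} expands $\widetilde\Delta^{\F\F}\textup{T}_\P$ as a sum over $\mathrm S\in\textup{alt}(\P)$, with the generalized quasi-shuffle relation (Proposition \ref{PropositionMapToGeneralizedPolylogarithms}, equivalently its $\textup{ALi}^\H$-analogue, which follows from Corollary \ref{LemmaAlternatingQShuffle}) killing all terms $\mathrm S\notin\textup{alt}_0(\P)$; the surviving terms are indexed by decompositions $\P=\mathrm S\sqcup\mathrm S'$.

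Next I would compute the coproduct of the left-hand side $h(\textup{ort}_\P^s)$. Since $h\colon\mathcal G\to\H$ is a homomorphism of Hopf algebras (Goncharov, \cite[Theorem 3.11]{Gon99}), $\Delta^{\H\H}h(\textup{ort}_\P^s)=(h\otimes h)\Delta^{\mathcal G}[\textup{ort}_\P^s]$, and Proposition \ref{PropositionOrthoschemeDehn} computes $\Delta^{\mathcal G}\textup{ort}_\P$ as $\sum_{\mathrm S\in\textup{alt}(\P)}\textup{ort}_{\mathrm S}\otimes\prod_i\textup{ort}_{\mathrm S^i}$. The products of even-dimensional orthoschemes vanish in $\mathcal G$, so again only $\mathrm S\in\textup{alt}_0(\P)$ contribute, and for those $\textup{ort}_{\mathrm S^i}$ over the single extra index reduces to $\textup{ort}_{\mathrm S'}$. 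So both coproducts become $\sum_{\P=\mathrm S\sqcup\mathrm S'}(\,\cdot\,)_{\mathrm S}\wedge(\,\cdot\,)_{\mathrm S'}$ with matching combinatorics; by the induction hypothesis the factors agree term by term, hence $\Delta^{\H\H}h(\textup{ort}_\P^s)=\Delta^{\H\H}\textup{ALi}^\H(\textup{T}_\P)$. Finally, rigidity reduces the proof to checking $h(\textup{ort}_\P^s)=\textup{ALi}^\H(\textup{T}_\P)$ at one point; I would take the degenerate point of $\overline{\mathfrak M}_\P^s$ where $x_{p_{2i}}=x_{p_{2i+1}}$ for all $i$, so that every $\textup{cr}(\mathrm Q)\to 0$ and hence $\textup{ALi}^\H(\textup{T}_\P)\to 0$; the geometric side also degenerates to a scissors-trivial (volume-zero, framing-trivial) orthoscheme, so both sides vanish. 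The main obstacle I anticipate is the precise matching of signs and orientations: the even/odd distinction in the definition of $\textup{T}_\P$, the sign conventions in $\textup{QLi}$ versus $\textup{ALi}$, the choice of orientation of $\textup{ort}_\P$ (canonical for $\P$ odd, opposite for $\P$ even), and the sign in (\ref{FormulaSquareRoot}) must all line up in the base case and be propagated consistently through the $\textup{alt}_0(\P)$ sum — this requires care but no new idea, since the coproduct bookkeeping is structurally the same as in Theorem \ref{TheoremQuadrangulationClusterPolylog}.
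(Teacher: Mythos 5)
Your overall skeleton is the same as the paper's: induction on $n$, base case via Example \ref{ExampleOneDimensionalSimplex} and (\ref{FormulaSquareRoot}), comparison of $\Delta^{\H\H}$ of both sides using Proposition \ref{PropositionOrthoschemeDehn} on the scissors side and Proposition \ref{LemmaAlternatingCoproduct}, Corollary \ref{LemmaAlternatingQShuffle}, Theorem \ref{TheoremFormalQF} and Proposition \ref{PropositionCoproduct} on the polylogarithm side, then rigidity plus a specialization. However, your key reduction to $\textup{alt}_0(\P)$ is wrong, and the step would fail as written. Theorem \ref{TheoremVolumeOrt} is an equality in $\H_\P^s$, not modulo products, so the argument of Theorem \ref{TheoremQuadrangulationClusterPolylog} cannot be transplanted: Proposition \ref{PropositionMapToGeneralizedPolylogarithms} is a statement in the Lie coalgebra $\L$ (vanishing \emph{modulo products}), and its $\textup{ALi}^\H$-analogue via Corollary \ref{LemmaAlternatingQShuffle} says precisely that $\textup{ALi}^\H\bigl(\textup{T}_{\mathrm{S}^0}\star\cdots\star\textup{T}_{\mathrm{S}^{2r}}\bigr)=\prod_i\textup{ALi}^\H(\textup{T}_{\mathrm{S}^i})$, which is a nonzero product in $\H$, not zero. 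On the scissors side your vanishing claim is also false: for $\mathrm{S}\in\textup{alt}(\P)$ every complement component $\mathrm{S}^i$ is an alternating polygon, hence an \emph{odd}-dimensional orthoscheme, and products of odd-dimensional classes do not vanish in $\mathcal{G}$; the vanishing of products of even-dimensional simplices has already been used inside the proof of Proposition \ref{PropositionOrthoschemeDehn} to discard the non-alternating subsets $I$, and it buys nothing further. Concretely, for the octagon $\P=(0,\dots,7)$ and $\mathrm{S}=(0,3,4,7)$ the term $\textup{ort}_{\mathrm{S}}\otimes\textup{ort}_{(0,1,2,3)}\cdot\textup{ort}_{(4,5,6,7)}$ survives in $\Delta^{\mathcal G}\textup{ort}_\P$, and the matching term $\textup{ALi}^\H(\textup{T}_{\mathrm{S}})\otimes\textup{ALi}^\H(\textup{T}_{(0,1,2,3)})\textup{ALi}^\H(\textup{T}_{(4,5,6,7)})$ survives on the other side.

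The repair is exactly what the paper does and costs no new idea: do not discard these terms, but keep the full sum over $\textup{alt}(\P)$ and match it term by term. Since $h$ is a Hopf algebra homomorphism, $\Delta^{\H\H}h(\textup{ort}_\P^s)=\sum_{\mathrm{S}\in\textup{alt}(\P)}h(\textup{ort}_{\mathrm{S}})\otimes\prod_i h(\textup{ort}_{\mathrm{S}^i})$, while Proposition \ref{LemmaAlternatingCoproduct}, Theorem \ref{TheoremFormalQF} and Corollary \ref{LemmaAlternatingQShuffle} give $\Delta^{\H\H}\textup{ALi}^\H(\textup{T}_\P)=\sum_{\mathrm{S}\in\textup{alt}(\P)}\textup{ALi}^\H(\textup{T}_{\mathrm{S}})\otimes\prod_i\textup{ALi}^\H(\textup{T}_{\mathrm{S}^i})$; the induction hypothesis applies to every factor (each $\mathrm{S}$, $\mathrm{S}^i$ is a smaller alternating polygon), so the two coproducts agree. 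One more caution on your final step: "volume-zero, framing-trivial" is not by itself a reason for a framed class to vanish; the actual reason the geometric side vanishes at the degenerate point (already at $x_{p_0}=x_{p_1}$, as in the paper) is that the quadric $Q$ becomes singular there, which one reads off from (\ref{FormulaQuad3}), while the vanishing of $\textup{ALi}^\H(\textup{T}_\P)$ is clear because all the cross-ratios specialize to $0$.
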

\begin{proof} 

We prove the statement by induction on $n.$ We start with the case $n=1.$
Consider a point 
$
x^{s}=(x_{p_0}, x_{p_1}, x_{p_2},x_{p_3})\in \mathfrak{M}_{\P}^s.
$
From (\ref{FormulaSquareRoot}) we have an equality
\[
[Q_1, H_1,Q_2, H_2]=
\begin{cases}
	\dfrac{\sqrt{\textup{cr}(\P)}+1}{\sqrt{\textup{cr}(\P)}-1}& \text{\ if\ } \P \text{\ is even, }\\
	\\
		\dfrac{1-\sqrt{\textup{cr}(\P)}}{1+\sqrt{\textup{cr}(\P)}}&\text{\ if\ } \P \text{\ is odd}.
\end{cases}
\]
Recall that by Example \ref{ExampleOneDimensionalSimplex} we have an equality
$
h[\textup{ort}(x^{s})]=-\frac{1}{2}\log^\H\left([Q_1, H_1,Q_2, H_2] \right).
$
For an even polygon $\P=(0,1,2,3)$
\[
\begin{split}
&\textup{ALi}^{\H}_{1}(\textup{T}_\P)=-\textup{ALi}^{\H}_{1}([(0,1,2,3),1])=-\frac{1}{2}\log^\H\frac{1+\sqrt{\textup{cr}(0,1,2,3)}}{1-\sqrt{\textup{cr}(0,1,2,3)}},\\
\end{split}
\]
so $h(\textup{ort}_\P)=\textup{ALi}^{\H}(\textup{T}_\P).$ For an odd polygon $\P=(0,1,2,3)$ we have 
\[
\begin{split}
&\textup{ALi}^{\H}_{1}(\textup{T}_\P)=\textup{ALi}^{\H}_{1}([(1,2,3,4),1])\\
&=\frac{1}{2}\log^\H\frac{1+\sqrt{\textup{cr}(1,2,3,4)}}{1-\sqrt{\textup{cr}(1,2,3,4)}}=-\frac{1}{2}\log^\H\frac{1-\sqrt{\textup{cr}(1,2,3,4)}}{1+\sqrt{\textup{cr}(1,2,3,4)}},\\
\end{split}
\]
so again $h(\textup{ort}_\P)=\textup{ALi}^{\H}(\textup{T}_\P).$ This finishes the proof for $n=1.$

Next, assume that $n>1.$ From Theorem \ref{TheoremFormalQF}	we know that $\Delta^{\H\F}\textup{T}_\P=0.$ Thus Proposition~\ref{LemmaAlternatingCoproduct} and Corollary~\ref{LemmaAlternatingQShuffle} imply that
\[
\Delta^{\H\H}\textup{ALi}^{\H}(\textup{T}_\P)=(\textup{ALi}^{\H}\otimes \textup{ALi}^{\H})\Delta^{\F\F}\textup{T}_\P=\sum_{S\in\textup{Alt}(\P)}\textup{ALi}^{\H}(\textup{T}_{\mathrm{S}})\otimes \left (\prod_{i=0}^{2r}\textup{ALi}^{\H}(\textup{T}_{\mathrm{S}_i})\right). 
\]
  Comparing it with Proposition \ref{PropositionOrthoschemeDehn}  we see  that by induction we have
\[
\Delta^{\H\H}\textup{ALi}^{\H}(\textup{T}_\P)=\Delta^{\H\H}h(\textup{ort}_\P),
\]
   so the variation $\textup{ALi}^{\H}(\textup{T}_\P)-h(\textup{ort}_\P)$ is constant on $\mathfrak{M}_{\P}^s.$ On the divisor $x_{p_0}=x_{p_1}$ both sides equal to zero. Indeed, this is obvious for alternating polylogarithms. For $h(\textup{ort}_\P)$ it follows from the fact that in this specialization the quadric $Q$ becomes singular, which can be easily deduced from  (\ref{FormulaQuad3}). This finishes the proof of the theorem.
\end{proof}

Applying the real period map (\ref{FormulaRealPeriod}) to (\ref{FormulaOrthoschemesPolylogarithms}) we obtain Theorem \ref{MainTheoremQuadrangulationVolume}. 

\bibliographystyle{alpha}      
\bibliography{Polylogarithms_Bibliography}

\end{document}